\newcommand{\N}{{\mathds{N}}}
\newcommand{\Z}{{\mathds{Z}}}
\newcommand{\Q}{{\mathds{Q}}}
\newcommand{\R}{{\mathds{R}}}
\newcommand{\C}{{\mathds{C}}}
\newcommand{\T}{{\mathds{T}}}
\newcommand{\D}{{\mathfrak{D}}}
\newcommand{\A}{{\mathfrak{A}}}
\newcommand{\B}{{\mathfrak{B}}}
\newcommand{\bigslant}[2]{{\raisebox{.2em}{$#1$}\left/\raisebox{-.2em}{$#2$}\right.}}
\newcommand{\Nbar}{\overline{\N}_\ast}
\newcommand{\Lip}{{\mathsf{L}}}
\newcommand{\Hilbert}{{\mathscr{H}}}
\newcommand{\dist}{{\mathsf{dist}}}
\newcommand{\propinquity}{{\mathsf{\Lambda}}}
\newcommand{\dpropinquity}[1]{{\mathsf{\Lambda}^\ast_{#1}}}
\newcommand{\Kantorovich}[1]{{\mathsf{mk}_{#1}}}
\newcommand{\Haus}[1]{{\mathsf{Haus}_{#1}}}
\newcommand{\StateSpace}{{\mathscr{S}}}
\newcommand{\mongekant}{{Mon\-ge-Kan\-to\-ro\-vich metric}}
\newcommand{\qms}{quantum locally compact metric space}
\newcommand{\Lqcms}{{\JLL} quantum compact metric space}
\newcommand{\qcms}{quantum compact metric space}
\newcommand{\unit}{1}
\newcommand{\sa}[1]{{\mathfrak{sa}\left({#1}\right)}}
\newcommand{\Adm}{{\mathrm{Adm}}}
\newcommand{\JLL}{Lei\-bniz}
\newcommand{\dom}[1]{{\operatorname*{dom}({#1})}}
\newcommand{\diam}[2]{{\mathrm{diam}\left({#1},{#2}\right)}}
\newcommand{\tunnelset}[3]{{\text{\calligra Tunnels}\,\left[\left({#1}\right)\stackrel{#3}{\longrightarrow}\left({#2}\right)\right]}}
\newcommand{\journeyset}[3]{{\text{\calligra Journeys}\left[\left({#2}\right)\stackrel{#1}{\longrightarrow}\left({#3}\right)\right]}}
\newcommand{\reversejourneyset}[3]{{\text{\calligra Hom}\left[\left({#2}\right)\stackrel{#1}{\longrightarrow}\left({#3}\right)\right]}}
\newcommand{\bridgelength}[2]{{\lambda\left({#1}\middle|{#2}\right)}}
\newcommand{\treklength}[1]{{\lambda\left({#1}\right)}}
\newcommand{\trekset}[2]{{\text{\calligra Treks}\left(\left({#1}\right)\longrightarrow\left({#2}\right)\right)}}
\newcommand{\bridgenorm}[2]{{\mathsf{bn}_{ {#1}  }\left({#2}\right)}}
\newcommand{\lambdaitineraries}[4]{{\text{\calligra Itineraries}\left( {#2}\stackrel{{#1} }{\longrightarrow}{#3}\middle|{#4}  \right)}}
\newcommand{\Jordan}[2]{{{#1}\circ{#2}}} 
\newcommand{\Lie}[2]{{\left\{{#1},{#2}\right\}}} 
\newcommand{\targetsettunnel}[3]{{\mathfrak{t}_{#1}\left({#2}\middle\vert{#3}\right)}}
\newcommand{\targetsetjourney}[3]{{\mathfrak{T}_{#1}\left({#2}\middle\vert{#3}\right)}}
\newcommand{\LQCMS}{{\mathcal{L}^\ast}}
\newcommand{\tunneldepth}[2]{{\delta\left(#1\right)}}
\newcommand{\tunnelreach}[2]{{\rho\left(#1\right)}}
\newcommand{\tunnellength}[2]{{\lambda\left(#1\right)}}
\newcommand{\journeylength}[1]{{\lambda\left(#1\right)}}
\newcommand{\prox}{{\mathrm{prox}}}
\newcommand{\co}[1]{{\overline{\mathrm{co}}\left(#1\right)}}
\newcommand{\alg}[1]{{\mathfrak{#1}}}
\theoremstyle{plain}
\newtheorem{theorem}{Theorem}[section]
\newtheorem{corollary}[theorem]{Corollary}
\newtheorem{lemma}[theorem]{Lemma}
\newtheorem{proposition}[theorem]{Proposition}
\newtheorem{theorem-definition}[theorem]{Theorem-Definition}
\theoremstyle{definition}
\newtheorem{definition}[theorem]{Definition}
\newtheorem{notation}[theorem]{Notation}
\newtheorem{convention}[theorem]{Convention}
\newtheorem{hypothesis}[theorem]{Hypthesis}
\theoremstyle{remark}
\newtheorem{example}[theorem]{Example}
\newtheorem{remark}[theorem]{Remark}
\renewcommand{\geq}{\geqslant}
\renewcommand{\leq}{\leqslant}
\numberwithin{equation}{section}
\begin{document}

\title{The Dual Gromov-Hausdorff Propinquity}
\author{Fr\'{e}d\'{e}ric Latr\'{e}moli\`{e}re}
\ead{frederic@math.du.edu}
\ead[url]{http://www.math.du.edu/\symbol{126}frederic}
\address{Department of Mathematics \\ University of Denver \\ Denver CO 80208}

\date{\today}
\begin{keyword}
Noncommutative metric geometry, Gromov-Hausdorff distance, Monge-Kantorovich distance, Quantum Metric Spaces.

\MSC 46L89 \sep 46L30 \sep 58B34
\end{keyword}

\begin{abstract}
Motivated by the quest for an analogue of the Gromov-Hausdorff distance in noncommutative geometry which is well-behaved with respect to C*-algebraic structures, we propose a complete metric on the class of {\Lqcms s}, named the dual Gromov-Hausdorff propinquity. This metric resolves several important issues raised by recent research in noncommutative metric geometry: it makes *-isomorphism a necessary condition for distance zero, it is well-adapted to Leibniz seminorms, and --- very importantly --- is complete, unlike the quantum propinquity which we introduced earlier. Thus our new metric provides a natural tool for noncommutative metric geometry, designed to allow for the generalizations of techniques from metric geometry to C*-algebra theory.

\bigskip

{\noindent \textsc{R{\'e}sum{\'e}:} Motiv{\'e}s par la qu{\^e}te d'une m{\'e}trique analogue {\`a} la distance de Gromov-Hausdorff pour la g{\'e}om{\'e}trie noncommutative et adapt{\'e}e aux C*-alg{\`e}bres, nous proposons une distance compl{\`e}te sur la classe des espaces m{\'e}triques compacts quantiques de Leibniz. Cette nouvelle distance, que nous appelons la proximit{\'e} duale de Gromov-Hausdorff, r{\'e}sout plusieurs probl{\`e}mes importants que la recherche courante en g{\'e}om{\'e}trie m{\'e}trique noncommutative a r{\'e}v{\'e}l{\'es}. En particulier, il est n{\'e}cessaire pour les C*-alg{\`e}bres d'{\^e}tre isomorphes pour avoir distance z{\'e}ro, et tous les espaces quantiques compacts impliqu{\'e}s dans le calcul de la proximit{\'e} duale sont de type Leibniz. En outre, notre distance est compl{\`e}te. Notre proximit{\'e} duale de Gromov-Hausdorff est donc un nouvel outil naturel pour le d{\'e}veloppement de la g{\'e}om{\'e}trie m{\'e}trique noncommutative.}
\end{abstract}
\maketitle



\section{Introduction}

Noncommutative metric geometry proposes to study certain classes of noncommutative algebras as generalizations of algebras of Lipschitz functions over metric spaces. Inspired by Connes' pioneering work on noncommutative metric geometry \cite{Connes89,Connes} and, in particular, the construction of a metric on the state space of C*-algebras endowed with a spectral triple, Rieffel introduced in \cite{Rieffel98a,Rieffel99} the notion of a compact quantum metric space, and then defined the quantum Gromov-Hausdorff distance \cite{Rieffel00}, a fascinating generalization of the Gromov-Hausdorff distance \cite{Gromov81} to noncommutative geometry. Various examples of compact quantum metric spaces \cite{Rieffel02,Ozawa05,Li09b,Li05} and convergence results for the quantum Gromov-Hausdorff distance have since been established \cite{Rieffel00,Li05,Latremoliere05,Rieffel01,Li09b,Rieffel09,Rieffel10,Rieffel10c}, often motivated by the desire to provide a formal framework to certain finite dimensional approximations of C*-algebras found in the mathematical physics literature (e.g. \cite{Connes97,Seiberg99,tHooft02,Zumino98}). Furthermore, the introduction of noncommutative metric information on C*-algebras, encoded in special semi-norms called Lip-norms, offers the possibility to extend techniques from metric geometry \cite{Gromov} to the realm of noncommutative geometry, opening a new avenue for the study of quantum spaces and their applications to mathematical physics.

To implement the extension of metric geometry to noncommutative geometry, we however require an analogue of the Gromov-Hausdorff distance which is well-behaved with respect to the underlying C*-algebraic structure, rather than with only the order structure on the self-adjoint part of C*-algebras, as with the quantum Gromov-Hausdorff distance. We propose such a metric in this paper, the dual Gromov-Hausdorff propinquity, which addresses several difficulties encountered during recent developments in noncommutative metric geometry \cite{Rieffel09,Rieffel10,Rieffel10c,Rieffel11,Rieffel12}, where the study of the behavior of such C*-algebraic structures as projective modules under metric convergence is undertaken. Indeed, our metric only involves {\Lqcms s}, i.e. quantum compact metric spaces described as C*-algebras endowed with Leibniz Lip-norms, and makes *-isomorphism of the underlying C*-algebras a necessary condition for distance zero. Moreover, our dual Gromov-Hausdorff propinquity is complete, which is an essential property of the Gromov-Hausdorff distance, and which differentiates our new metric from our earlier quantum Gromov-Hausdorff propinquity \cite{Latremoliere13}. Our dual Gromov-Hausdorff propinquity dominates Rieffel's quantum Gromov-Hausdorff distance, and is dominated by the Gromov-Hausdorff distance when restricted to classical compact metric spaces. It thus offers a natural framework for a noncommutative theory of quantum metric spaces.

The model for a quantum compact metric space is derived from the following construction. Let $(X,\mathsf{m})$ be a compact metric space. For any function $f: X\rightarrow\C$, we define the Lipschitz constant of $f$ as:
\begin{equation}\label{Lip-eq}
\mathsf{Lip}(f) = \sup \left\{\frac{|f(x)-f(y)|}{\mathsf{m}(x,y)} : x,y\in X\text{ and }x\not=y \right\}\text{,}
\end{equation}
which may be infinite. A function $f :X\rightarrow\C$ with a finite Lipschitz constant is called a Lipschitz function over $X$. The space of $\C$-valued Lipschitz functions is norm dense in the C*-algebra $C(X)$ of $\C$-valued continuous functions over $X$. Moreover, $\mathsf{Lip}$ is a seminorm on the space of $\C$-valued Lipschitz functions. However, we shall only work with the restriction of $\mathsf{Lip}$ to real-valued Lipschitz functions, which form a dense subset of the self-adjoint part $\sa{C(X)}$ of $C(X)$, because real-valued Lipschitz functions enjoy an extension property given by McShane's theorem \cite{McShane34} which will prove important in our work, as we shall see in a few paragraphs. Thus, unless explicitly stated otherwise, all the Lipschitz seminorms in this introduction are assumed to be restricted to real-valued Lipschitz functions.

A fundamental observation, due to Kantorovich, is that the dual seminorm of $\mathsf{Lip}$ induces a metric $\Kantorovich{\mathsf{Lip}}$ on the state space $\StateSpace(C(X))$ of $C(X)$, i.e. the space of Radon probability measures on $X$, and the topology for this metric is given by the weak* topology restricted to $\StateSpace(C(X))$. Moreover, the restriction of $\Kantorovich{\mathsf{Lip}}$ to the space $X$ identified with the subset of Dirac probability measures in $\StateSpace(C(X))$ is given by $\mathsf{m}$: thus the Lipschitz seminorm encodes all the metric information given by the metric $\mathsf{m}$ at the level of the C*-algebra $C(X)$. 

The distance $\Kantorovich{\mathsf{Lip}}$ is known as the Monge-Kantorovich metric, and was introduced by Kantorovich in \cite{Kantorovich40} as part of his research on the transportation problem introduced by Monge in 1781. The original formulation of the distance $\Kantorovich{\mathsf{Lip}}$ between two probability measures $\mu,\nu$ on $X$ involved minimizing $\displaystyle\int_{X\times X}\mathsf{m}\,d\gamma$ over all Radon probability measures $\gamma$ whose marginals are given by $\mu$ and $\nu$.  The duality relationship between the {\mongekant} and the Lipschitz seminorm was first made explicit by Kantorovich and Rubinstein in \cite{Kantorovich58}. This metric has since acquired several other names. Most notably, Dobrushin named this metric the Wasserstein metric in \cite{Dobrushin70}, after Wasserstein rediscovered it in the context of probability theory in \cite{Wasserstein69}. The work of Dobrushin in \cite{Dobrushin70}, incidentally, plays a fundamental role in our own work on generalizing the theory of quantum compact metric spaces to quantum locally compact spaces in \cite{Latremoliere12b}.

Rieffel proposed \cite{Rieffel98a,Rieffel99} to generalize the above model to the noncommutative world, as well as to extend the construction of metrics on the state spaces of C*-algebras from spectral triples proposed by Connes \cite{Connes89,Connes}. Thus, Rieffel defines a compact quantum metric space $(\A,\Lip)$ as a pair of an order-unit space $\A$ endowed with a densely defined seminorm $\Lip$ which vanishes exactly on the scalar multiples of the order-unit and such that the topology of the associated {\mongekant} $\Kantorovich{\Lip}$, defined between any two states $\varphi,\psi$ of $\A$ by:
\begin{equation*}
\Kantorovich{\Lip}(\varphi,\psi) = \sup\left\{ |\varphi(a)-\psi(a)| : a\in\A\text{ and }\Lip(a)\leq 1 \right\}
\end{equation*}
is the weak* topology restricted to the state space of $\A$. Note that this original definition does not involve any C*-algebraic structure. The seminorm $\Lip$ of a quantum compact metric space $(\A,\Lip)$ is called a Lip-norm \cite{Rieffel99}.

Rieffel then constructed the quantum Gromov-Hausdorff distance in analogy with Gromov's distance \cite{Gromov81,Gromov}, with the hope to initiate a new approach to problems in noncommutative geometry and mathematical physics based upon generalizations of ideas from metric geometry, where the Gromov-Hausdorff distance is an important tool. Given two compact metric spaces $(X,\mathsf{d}_X)$ and $(Y,\mathsf{d}_Y)$, the Gromov-Hausdorff distance may be defined as the infimum of the Hausdorff distance between $\iota_X(X)$ and $\iota_Y(Y)$ for any two isometric functions $\iota_X:X\hookrightarrow Z$ and $\iota_Y:Y\hookrightarrow Z$ into a compact metric space $(Z,\mathsf{d}_Z)$, taken over all such isometric embeddings. This distance may also be obtained by restricting one's attention to admissible metrics on the disjoint union $X\coprod Y$, i.e. metrics for which the canonical embeddings $X\hookrightarrow X\coprod Y$ and $Y\hookrightarrow X\coprod Y$ are isometries, and defining our distance between $(X,\mathsf{d}_X)$ and $(Y,\mathsf{d}_Y)$ as the infimum over all such admissible metrics of the Hausdorff distance between $X$ and $Y$ in $X\coprod Y$. 

In this context, we note that if $\mathsf{d}$ is an admissible metric on $X\coprod Y$, then any real-valued Lipschitz function on $(X,\mathsf{d}_X)$ can be extended to a real-valued function on $X\coprod Y$ with the same Lipschitz constant for $\mathsf{d}$ by McShane's theorem \cite{McShane34}. Thus, the Lipschitz seminorm associated with $\mathsf{d}_X$ is easily seen to be the quotient of the Lipschitz seminorm associated with $\mathsf{d}$. Conversely, if the Lipschitz seminorms for $\mathsf{d}_X$ and $\mathsf{d}_Y$ are the respective quotients of the Lipschitz seminorm for some metric $\mathsf{d}$ on $X\coprod Y$ along the canonical surjections from $X\coprod Y$ onto $X$ and $Y$, then $\mathsf{d}$ is easily seen to be admissible. Thus, admissibility can be rephrased in terms of Lipschitz seminorms, in the spirit of Rieffel's approach to compact quantum metric spaces. However, this observation rests squarely on our choice to only work with real-valued Lipschitz functions. As seen for instance in \cite{Rieffel06}, a complex-valued function on $X$ with a finite Lipschitz constant $L$ can be extended to a Lipschitz function over $X\coprod Y$ for any admissible metric $\mathsf{d}$, but the infimum of the set of Lipschitz constants of all such extensions will typically be strictly greater than $L$ (see \cite[Corollary 4.1]{Rieffel06}, where it is shown that the best one may hope in general is $\frac{4L}{\pi}$). This is the very reason for working with seminorms on dense subspaces of the self-adjoint part of C*-algebras, rather than seminorms on dense subsets of C*-algebras.

The construction of the quantum Gro\-mov-Haus\-dorff distance proceeds naturally by duality as follows: given two compact quantum metric spaces $(\A,\Lip_\A)$ and $(\B,\Lip_\B)$, we consider the set $\Adm(\Lip_\A,\Lip_\B)$ of all Lip-norms on $\A\oplus\B$ whose quotients to $\A$ and $\B$ are, respectively, $\Lip_\A$ and $\Lip_\B$. For any such admissible Lip-norm $\Lip$, one may consider the Haus\-dorff distance between $\StateSpace(\A)$ and $\StateSpace(\B)$ identified with their isometric copies in $\StateSpace(\A\oplus\B)$, where the state spaces $\StateSpace(\A)$, $\StateSpace(\B)$ and $\StateSpace(\A\oplus\B)$ are equipped with the metrics dual to, respectively, $\Lip_\A$, $\Lip_\B$ and $\Lip$. The infimum of these Haus\-dorff distances over all possible choices of $\Lip \in \Adm(\Lip_\A,\Lip_\B)$ is the quantum Gro\-mov-Haus\-dorff distance between $(\A,\Lip_\A)$ and $(\B,\Lip_\B)$.

As the quantum Gromov-Hausdorff distance is defined on a class of order-unit spaces, rather than C*-algebras, it is informally unaware of the multiplicative structure --- thus, two non-isomorphic C*-algebras may be at distance zero for Rieffel's distance, as long as their self-adjoint parts are isomorphic as order-unit spaces and the quantum metric structures are isomorphic in a natural sense \cite{Rieffel00}. In other words, when working with C*-algebras, distance zero for the quantum Gromov-Hausdorff distance gives rise to a Jordan isomorphism between self-adjoint parts, rather than a full *-isomorphism \cite{Alfsen01}. 

This weakened form of the desired coincidence property sparked a lot of research toward the goal of strengthening Rieffel's distance to make various stronger notions of isomorphisms necessary for distance zero \cite{kerr02,li03,li06,kerr09,Wu06b,Latremoliere13}. All of these approaches involve replacing the {\mongekant} on the state space with various alternative structures: in \cite{kerr02}, matricial-valued completely positive unital maps are used in lieu of states, in \cite{li03,li06,kerr09,Latremoliere13} the state space is replaced by objects built from the noncommutative analogues of the unit Lipschitz ball, such as the Lipschitz ball itself \cite{li06,kerr09}, or the restriction of the graph of the multiplication to the Lipschitz ball \cite{li03}, while in \cite{Wu06b}, the focus is on operator spaces and employs an enriched structure compared to compact quantum metric spaces \cite{Wu05,Wu06b}, adapted to noncommutative Banach spaces. In general, the methods employed to strengthen the quantum Gromov-Hausdorff distance relied on changing the object used to compute the distance rather than adding requirements to the Lip-norms themselves. Contrary to these earlier constructions, our new dual Gromov-Hausdorff propinquity relies solely on the natural metric on the state spaces of {\Lqcms s}, as does the quantum Gromov-Hausdorff distance, but add a natural requirement on Lip-norms.

Indeed, Lipschitz seminorms, as defined by Equation (\ref{Lip-eq}), posses a property which did not appear in the original work of Rieffel, and which connects the metric structure and the C*-algebraic structure: they satisfy the Leibniz identity
\begin{equation}\label{Leibniz-eq}
\forall f,g\in C(X) \quad \mathsf{Lip}(fg)\leq\mathsf{Lip}(f)\|g\|_{C(X)} + \|f\|_{C(X)}\mathsf{Lip}(g)\text{,}
\end{equation}
where we used the notations of Equation (\ref{Lip-eq}). Most examples of Lip-norms are also Leibniz seminorms, in an appropriate sense which we defined in \cite{Latremoliere13} (and often, in the same sense as Equation (\ref{Leibniz-eq}) if the Lip-norms are given on a dense subset of the whole C*-algebra rather than its self-adjoint part). This property has recently proven very desirable (e.g. \cite{Rieffel09,Rieffel10,Rieffel10c,Rieffel11,Rieffel12,Latremoliere13}) in order to study the behavior of C*-algebraic related structures, such as projective modules, under metric convergence. However, the quantum Gromov-Hausdorff distance does not provide a framework to work only with such Lip-norms: by its very construction, it forces one to consider many Lip-norms which may well not be connected to any underlying multiplicative structure. Our current research on the theory of Gromov-Hausdorff convergence for quantum locally compact metric spaces \cite{Latremoliere05b,Latremoliere12b} revealed other advantages in working with Leibniz seminorms, and in fact spawned our current work on finding a well-behaved metric for {\Lqcms s}. Yet, efforts in constructing a metric which only involve {\Lqcms s} have met with difficulties \cite{Rieffel09,Rieffel10c}. 

In particular, Rieffel introduced the quantum proximity in \cite{Rieffel10c} as a step toward the resolution of all the aforementioned issues by adding a strong form of the Leibniz property to the notion of admissibility of Lip-norms in the construction of the quantum Gromov-Hausdorff distance.  Yet the quantum proximity is unlikely to satisfy the triangle inequality, because the proof of the triangle inequality for the quantum Gromov-Hausdorff distance relies on taking the quotients of Lip-norms, and it is known \cite{Blackadar91} that the quotient of a Leibniz seminorm is not necessarily Leibniz. The matter of addressing this issue, as well as the coincidence property, while retaining the fundamental idea of Connes and Rieffel to use the {\mongekant} on the state spaces of quantum compact metric spaces as the fundamental tool of noncommutative metric geometry, remained an unsolved challenge for some time.

Recently \cite{Latremoliere13,Latremoliere13c}, we introduced a metric on the class of {\Lqcms s}, called the quantum Gromov-Hausdorff propinquity, to play the role intended by the quantum proximity from which it draws its name. Our new metric put very strong structural requirements on admissible Lip-norms, which are always built using bimodules which are themselves C*-algebras, and, as many approaches before, replaces the state space by the Lipschitz ball in some of the computations of the distance between {\Lqcms s}. This metric answers questions raised in \cite{Rieffel10} in addition to the ones we have discussed so far, with the goal to facilitate the study of convergence of modules and other such C*-algebraic related structures \cite{Rieffel10,Rieffel10c}. Yet our exploration of the question of completeness for the quantum propinquity revealed challenges caused by our restrictions on admissible Lip-norms. 

The result of our investigation into the matter of completeness for the quantum Gromov-Hausdorff propinquity is our definition of a dual version of the quantum propinquity, which is the new metric presented in this paper. Remarkably, this dual Gromov-Hausdorff propinquity enjoys the same desirable property as the quantum Gromov-Hausdorff propinquity: distance zero implies *-isomorphisms, all admissible Lip-norms are Leibniz, and one may even use our construction to work with subclasses of Leibniz Lip-norms if desired. Yet, in addition to providing an actual metric as a replacement for the quantum proximity, our dual Gromov-Hausdorff propinquity is also complete. This is the main reason for our interest in this metric: the completeness of the Gromov-Hausdorff distance is an essential property \cite{Gromov}, which the quantum Gromov-Hausdorff distance shares, and we feel that a C*-algebraic analogue for these distances ought to possess this property as well. Moreover, the dual Gromov-Hausdorff propinquity only uses the {\mongekant} on the state spaces of {\Lqcms s}, just as the quantum Gromov-Hausdorff distance does. It thus avoids the use of substitute structures to achieve the proper coincidence property and has a very natural physical interpretation in terms of states. The proof of the completeness of our metric occupies a large part of this paper, while we prove the coincidence property by establishing all the needed estimates to apply our work in \cite{Latremoliere13} to our new metric.

The main difference between the approach which we propose in this paper and earlier approaches to strengthening the quantum Gromov-Hausdorff distance is that we focus on additional properties of Lip-norms, rather than changing the underlying objects used to measure the distance between compact quantum metric spaces. As a result, our distance is very close in spirit to the original quantum Gromov-Hausdorff distance, yet is well-behaved with respect to Leibniz Lip-norms on C*-algebras, and thus offers a natural framework to study the extension of metric geometry to quantum spaces.

After a reminder about the class of {\Lqcms s}, we define our new metric. We then show that it has the desirable coincidence property. This section uses some results from \cite{Latremoliere13} to which we refer when appropriate. Afterward, we prove a comparison theorem relating our new metric with the quantum propinquity and the quantum Gromov-Hausdorff distance, as well as Rieffel's proximity. We conclude with the important result that the dual propinquity is complete.

\section{Leibniz Quantum Compact Metric Spaces}

This section provides the framework within which our current work takes place. We first set some basic notations. We refer, for instance, to \cite{Pedersen79} for the general theory of C*-algebras.

\begin{notation}
Let $\A$ be a unital C*-algebra. The unit of $\A$ is denoted by $\unit_\A$. The self-adjoint part $\{a\in\A: a=a^\ast\}$ of $\A$ is denoted by $\sa{\A}$. The state space of $\A$ is denoted by $\StateSpace(\A)$. The norm of $\A$ is denoted by $\|\cdot\|_\A$.
\end{notation}

At the root of our work is a pair $(\A,\Lip)$ of a unital C*-algebra $\A$ and a seminorm $\Lip$ which enjoys various properties. The following definition contains the minimal assumptions we will make on such a pair:

\begin{definition}\label{Lipschitz-pair-def}
A \emph{Lipschitz pair} $(\A,\Lip)$ is given by a unital C*-algebra $\A$ and a seminorm $\Lip$ defined on a norm-dense subspace $\dom{\Lip}$ of $\sa{\A}$ such that:
\begin{equation*}
\left\{ a \in \dom{\Lip} : \Lip(a) = 0 \right\} = \R \unit_\A \text{.}
\end{equation*}
\end{definition}

\begin{convention}
We adopt the usual convention that if $\Lip$ is a seminorm defined on a dense subspace $\dom{\Lip}$ of a topological vector space $V$, and if $a\in V$ is not in the domain of $\Lip$, then $\Lip(a) = \infty$. With this convention, we observe that:
\begin{equation*}
\dom{\Lip} = \left\{ a \in V : \Lip(a) < \infty \right\} \text{.}
\end{equation*}
Note that with this convention, we do not introduce any ambiguity when talking about lower semi-continuous seminorms by exchanging the original seminorm with its extension.
\end{convention}

The central object of noncommutative metric geometry is the generalization of the {\mongekant} \cite{Kantorovich40,Kantorovich58} to any Lipschitz pair:

\begin{definition}
The \emph{\mongekant} $\Kantorovich{\Lip}$ associated with a Lipschitz pair $(\A,\Lip)$ is the extended metric on the state space $\StateSpace(\A)$ of $\A$ defined by setting for all $\varphi,\psi \in \StateSpace(\A)$:
\begin{equation*}
\Kantorovich{\Lip}(\varphi,\psi) = \sup \left\{ |\varphi(a) - \psi(a)| : a\in\sa{\A} \text{ and }\Lip(a)\leq 1 \right\}\text{.}
\end{equation*}
\end{definition}

\begin{remark}
Definition (\ref{Lipschitz-pair-def}) is designed to ensure that the {\mongekant} associated with a Lipschitz pair is indeed an extended metric. Specifically, for a Lipschitz pair $(\A,\Lip)$, the {\mongekant} satisfies the axioms of a metric --- triangle inequality, coincidence property and symmetry --- though it may take the value $\infty$.
\end{remark}

Rieffel introduced the notion of a {\qcms} \cite{Rieffel98a,Rieffel99}, which actually can be defined in the more general context of order-unit spaces endowed with densely-defined seminorms; however, for our purpose, we focus on {\qcms s} whose underlying space is given by a C*-algebra --- in fact, our work is precisely aimed at solving various difficulties introduced by the restriction to the category of C*-algebras. The definition of a {\qcms} extends to the setting of Lipschitz pairs the fundamental property of the {\mongekant} from the classical setting:

\begin{definition}\label{qcms-def}
A \emph{quantum compact metric space} $(\A,\Lip)$ is a Lipschitz pair whose associated {\mongekant} metrizes the weak* topology of $\StateSpace(\A)$. The seminorm $\Lip$ is then called a \emph{Lip-norm}.
\end{definition}

\begin{remark}
Definition (\ref{qcms-def}) includes the requirement that the {\mongekant} of a {\qcms} $(\A,\Lip)$ is an actual metric, which gives the state space of $\A$ a finite diameter, since $\StateSpace(\A)$ is compact in the weak* topology.
\end{remark}

Our first examples of {\qcms s} are our original inspiration:

\begin{example}\label{classical-example}
The fundamental example of a quantum compact metric space is given by a pair $(C(X),\mathsf{Lip})$ where $(X,\mathsf{m})$ is a compact metric space and $\mathsf{Lip}$ is the Lipschitz seminorm associated to $\mathsf{m}$, as defined by Equation (\ref{Lip-eq}).
\end{example}

We note, using the notations in Example (\ref{classical-example}), that the set:
\begin{equation*}
\left\{ f: X\rightarrow \R : \|f\|_{C(X)}\leq 1 \text{ and }\mathsf{Lip}(f)\leq 1 \right\}
\end{equation*}
is compact for the uniform norm by Arz{\'e}la-Ascoli's theorem. In \cite[Theorem 1.9]{Rieffel98a}, Rieffel proposes a noncommutative analogue of the Arz{\'e}la-Ascoli theorem as a characterization of {\qcms s}. Later, an equivalent characterization of {\qcms s} was given in \cite{Ozawa05}. 

\begin{notation}
The diameter of a subset $A$ of a metric space $(E,\mathsf{d})$ is denoted by $\diam{A}{\mathsf{d}}$.
\end{notation}

\begin{theorem}[Theorem 1.9, \cite{Rieffel98a} and Proposition 1.3, \cite{Ozawa05}]\label{az-thm}
Let $(\A,\Lip)$ be a Lipschitz pair. The following assertions are equivalent:
\begin{enumerate}
\item $(\A,\Lip)$ is a {\qcms},
\item the diameter $\diam{\StateSpace(\A)}{\Kantorovich{\Lip}}$ is finite and:
\begin{equation*}
\left\{a\in\sa{\A} : \Lip(a)\leq 1\text{ and }\|a\|_\A \leq 1 \right\}
\end{equation*}
is norm precompact,
\item for some $\varphi\in\StateSpace(\A)$, the set:
\begin{equation*}
\left\{a\in\sa{\A} : \Lip(a)\leq 1\text{ and }\varphi(a) = 0 \right\}
\end{equation*}
is norm precompact,
\item for all $\varphi\in\StateSpace(\A)$, the set:
\begin{equation*}
\left\{a\in\sa{\A} : \Lip(a)\leq 1\text{ and }\varphi(a) = 0 \right\}
\end{equation*}
is norm precompact.
\end{enumerate}
\end{theorem}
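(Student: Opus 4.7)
The plan is to close the chain of implications $(4) \Rightarrow (3) \Rightarrow (2) \Rightarrow (3) \Rightarrow (1) \Rightarrow (4)$; the first arrow is trivial. The key observation used throughout is a \emph{shift trick}: for any $a \in \dom{\Lip}$ and any state $\varphi$, the element $a' = a - \varphi(a)\unit_\A$ satisfies $\Lip(a') = \Lip(a)$, $\varphi(a') = 0$, and $\psi_1(a') - \psi_2(a') = \psi_1(a) - \psi_2(a)$ for all states $\psi_1,\psi_2$. Writing $L_\varphi := \{a \in \sa{\A} : \Lip(a) \leq 1, \varphi(a) = 0\}$, this lets me replace the supremum in the definition of $\Kantorovich{\Lip}(\psi_1,\psi_2)$ by a supremum over $L_\varphi$ without loss.

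To prove $(3) \Rightarrow (2)$: if $L_\varphi$ is norm-precompact, then $D := \sup_{a\in L_\varphi}\|a\|_\A$ is finite, so the shift trick yields $\Kantorovich{\Lip}(\psi,\varphi) \leq D$ for every state, whence $\diam{\StateSpace(\A)}{\Kantorovich{\Lip}} \leq 2D$. Decomposing any $a$ with $\Lip(a) \leq 1$ and $\|a\|_\A \leq 1$ as $(a - \varphi(a)\unit_\A) + \varphi(a)\unit_\A$ exhibits the unit $\Lip$-ball as a subset of $L_\varphi + [-1,1]\unit_\A$, which is precompact. Conversely, for $(2) \Rightarrow (3)$, if the diameter $D$ is finite then for $a \in L_\varphi$ the estimate $\|a\|_\A = \sup_\psi|\psi(a) - \varphi(a)| \leq D\cdot\Lip(a) \leq D$ shows $L_\varphi$ sits inside $\max(1,D)$ times the precompact set furnished by (2), hence is itself precompact.

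For $(3) \Rightarrow (1)$: one inclusion of topologies is immediate, because $\Kantorovich{\Lip}$-convergence forces pointwise convergence on the norm-dense subspace $\dom{\Lip}$, and the state space is norm-bounded, so weak* convergence follows. Conversely, given a weak*-convergent net $(\psi_\lambda) \to \psi$ and $\epsilon > 0$, I extract a finite $\epsilon$-net $\{b_1,\dots,b_k\}$ in the norm-precompact set $L_\varphi$; for $\lambda$ large, $|\psi_\lambda(b_j) - \psi(b_j)| < \epsilon$ for each $j$, and a three-$\epsilon$ approximation inside $L_\varphi$, combined with the shift trick, yields $\Kantorovich{\Lip}(\psi_\lambda,\psi) \leq 3\epsilon$. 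For $(1) \Rightarrow (4)$: the isometric embedding $T : \sa{\A} \to C(\StateSpace(\A))$ given by $T(a)(\psi) = \psi(a)$ sends $L_\varphi$ to a family of $1$-Lipschitz functions on the now compact metric space $(\StateSpace(\A),\Kantorovich{\Lip})$ vanishing at $\varphi$; this family is equicontinuous and uniformly bounded by the diameter, so Arz\'ela-Ascoli gives precompactness of $T(L_\varphi)$ in $C(\StateSpace(\A))$, and hence of $L_\varphi$ in $\sa{\A}$.

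The main obstacle is the step $(3) \Rightarrow (1)$: weak*-convergence of states only provides pointwise convergence on $\sa{\A}$, whereas the Monge-Kantorovich distance requires uniform control over the entire Lipschitz ball $\{a : \Lip(a) \leq 1\}$, which is norm-unbounded. The shift trick is what allows the reduction to the norm-bounded slice $L_\varphi$, and it is precisely the norm-precompactness hypothesis of (3) that then converts pointwise into uniform convergence via the finite-net argument; without this hypothesis the implication fails.
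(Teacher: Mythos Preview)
The paper does not supply its own proof of this theorem: it is stated with attribution to Rieffel (\cite{Rieffel98a}, Theorem~1.9) and Ozawa--Rieffel (\cite{Ozawa05}, Proposition~1.3), and is used throughout as a black box. There is therefore no in-paper argument to compare against.

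Your argument is correct and is essentially the standard one from those references. The shift trick, the Arz\`ela--Ascoli step for $(1)\Rightarrow(4)$ via the Kadison embedding $a\mapsto\widehat a\in C(\StateSpace(\A))$, and the finite-net upgrade from pointwise to uniform convergence for $(3)\Rightarrow(1)$ are exactly the ingredients Rieffel and Ozawa--Rieffel use. One minor cosmetic point: your chain $(4)\Rightarrow(3)\Rightarrow(2)\Rightarrow(3)\Rightarrow(1)\Rightarrow(4)$ is a slightly awkward way to record the logic; it would read more cleanly as $(4)\Rightarrow(3)$, $(3)\Leftrightarrow(2)$, $(3)\Rightarrow(1)$, $(1)\Rightarrow(4)$, which is what you actually do.
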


We generalized the notion of {\qcms s} to the concept of quantum locally compact metric spaces in \cite{Latremoliere12b}, and proved a generalization of Rieffel's characterization for our {\qms s}, in the spirit of our earlier work \cite{Latremoliere05b} on the bounded-Lipschitz distance.

We note the following useful necessary condition on Lipschitz pairs to be {\qcms s}:
\begin{proposition}\label{qcms-separable-prop}
If $(\A,\Lip)$ is a {\qcms} then $\A$ is separable and, in particular, the domain of $\Lip$ contains a countable dense subset of $\sa{\A}$.
\end{proposition}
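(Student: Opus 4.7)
The plan is to deduce separability of $\A$ from the weak*-metrizability of its state space via a concrete isometric embedding of $\sa{\A}$ into the continuous functions on $\StateSpace(\A)$, and then use density of $\dom{\Lip}$ to extract a countable dense subset sitting inside $\dom{\Lip}$.

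First, I would extract the topological content of being a {\qcms}. By Definition \ref{qcms-def}, the {\mongekant} $\Kantorovich{\Lip}$ is an actual metric on $\StateSpace(\A)$ which metrizes the weak* topology. Since $\A$ is unital, $\StateSpace(\A)$ is weak*-compact by Banach-Alaoglu, hence $(\StateSpace(\A),\Kantorovich{\Lip})$ is a compact metric space; in particular it is a separable topological space with respect to its weak* topology.

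Next, I would invoke the evaluation map $\iota : \sa{\A} \to C(\StateSpace(\A),\R)$ defined by $\iota(a)(\varphi) = \varphi(a)$ for every $\varphi \in \StateSpace(\A)$ and $a \in \sa{\A}$. This map is linear and, by the standard fact that $\|a\|_\A = \sup\{|\varphi(a)| : \varphi \in \StateSpace(\A)\}$ for self-adjoint $a$ in a unital C*-algebra, it is an isometry. Since $\StateSpace(\A)$ is a compact metrizable space, $C(\StateSpace(\A),\R)$ is separable in the supremum norm (for instance by the Stone-Weierstrass theorem applied to a countable family of continuous functions separating points, or by standard partition-of-unity arguments). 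Hence $\sa{\A}$, being isometric to a subspace of a separable Banach space, is itself norm-separable, and therefore $\A = \sa{\A} + i \sa{\A}$ is norm-separable.

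Finally, for the second assertion, fix a countable norm-dense subset $\{a_n : n \in \N\} \subseteq \sa{\A}$. Because $\dom{\Lip}$ is norm-dense in $\sa{\A}$ by Definition \ref{Lipschitz-pair-def}, for each $n, k \in \N$ I can choose $b_{n,k} \in \dom{\Lip}$ with $\|a_n - b_{n,k}\|_\A < \frac{1}{k}$. A standard $\frac{\varepsilon}{2}$-argument shows that $\{b_{n,k} : n,k \in \N\}$ is then a countable subset of $\dom{\Lip}$ which is norm-dense in $\sa{\A}$, completing the proof.

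I do not anticipate a real obstacle here; the only point requiring care is noting that one must embed into real-valued continuous functions on $\StateSpace(\A)$ (which is why $\sa{\A}$ appears rather than all of $\A$), and that the density of $\dom{\Lip}$ in $\sa{\A}$ — built into the definition of a Lipschitz pair — is precisely what allows the final upgrade from ``$\sa{\A}$ is separable'' to ``$\dom{\Lip}$ contains a countable dense subset of $\sa{\A}$''.
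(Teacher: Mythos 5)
Your proof is correct, but it takes a genuinely different route from the paper's. The paper deduces separability from the noncommutative Arzel\`a--Ascoli characterization of Theorem (\ref{az-thm}): the set $\{a\in\sa{\A} : \Lip(a)\leq 1\text{ and }\|a\|_\A\leq 1\}$ is norm-precompact, hence totally bounded, hence norm-separable, and a countable dense subset of this ball, scaled by rationals, directly yields a countable subset of $\dom{\Lip}$ which is dense in $\dom{\Lip}$ and therefore in $\sa{\A}$. You instead work on the dual side: metrizability of the weak* topology on the weak*-compact set $\StateSpace(\A)$ makes $(\StateSpace(\A),\Kantorovich{\Lip})$ a compact metric space, so $C(\StateSpace(\A),\R)$ is separable, and the evaluation map is an isometry of $\sa{\A}$ onto a subspace of it because $\|a\|_\A=\sup\{|\varphi(a)| : \varphi\in\StateSpace(\A)\}$ for self-adjoint $a$ (the same fact the paper invokes elsewhere from Kadison--Ringrose). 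Your route uses only Definition (\ref{qcms-def}) together with classical facts and avoids Theorem (\ref{az-thm}) entirely, which is a mild gain in self-containedness; the trade-off is that your countable dense subset of $\dom{\Lip}$ comes from an abstract $\frac{\varepsilon}{2}$-perturbation of a dense set in $\sa{\A}$, whereas the paper's construction produces a dense set whose elements carry explicit Lip-norm control (rational multiples of elements of the unit Lipschitz ball), a form that is sometimes more convenient downstream. Both arguments are complete.
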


\begin{proof}
By Theorem (\ref{az-thm}), the set:
\begin{equation*}
\mathfrak{b} = \left\{ a\in\sa{\A} : \Lip(a)\leq 1 \text{ and } \|a\|_\A\leq 1 \right\}
\end{equation*}
is pre-compact for the norm topology of $\sa{\A}$, so it is totally bounded in norm. Consequently, it is separable in norm. Let $\mathfrak{a}$ be a countable and norm-dense subset of $\mathfrak{b}$.

We define:
\begin{equation*}
\mathfrak{l} = \left\{ qa : q\in\Q, a\in \mathfrak{a}  \right\} \subseteq \dom{\Lip}\text{.}
\end{equation*}

Now, $\mathfrak{l}$ is countable and dense in $\dom{\Lip}$: if $a\in\sa{\A}$ with $\Lip(a)<\infty$ and $a\not=0$, then let $r = \max\{\|a\|_\A,\Lip(a)\}$. Since $r^{-1}a\in\mathfrak{b}$, there exists a sequence $(a_n)_{n\in\N}$ in $\mathfrak{a}$ converging in norm to $r^{-1}a$. Then, for any sequence $(q_n)_{n\in\N}$ in $\Q$ converging to $r = \max\{\|a\|_\A,\Lip(a)\}$,  we have $q_n(r^{-1}a_n)\in\mathfrak{l}$ for all $n\in\N$ and $(q_nr^{-1}a_n)_{n\in\N}$ converges to $a$. (Of course, $0\in\mathfrak{l}$.)

Last, since $\dom{\Lip}$ is norm dense in $\sa{\A}$, we conclude that $\mathfrak{l}$ is dense in $\sa{\A}$. Thus $\sa{\A}$ is separable in norm. Since any element $a$ of $\A$ is of the form $a=\frac{a+a^\ast}{2} + i\frac{a-a^\ast}{2i}$, with $\frac{a+a^\ast}{2}, \frac{a-a^\ast}{2i} \in \sa{\A}$, we conclude that $\A$ is separable.
\end{proof}

\begin{remark}
In \cite[Proposition 1.1]{Rieffel02}, Rieffel proved that one may always construct one (and in fact, many) Lip-norms on a separable unital C*-algebra. Thus, a unital C*-algebra is separable if, and only if it can be equipped with a quantum compact metric structure.
\end{remark}

Using Rieffel's characterization, several examples of {\qcms s} have been established.

\begin{example}[\cite{Rieffel98a}]\label{action-example}
Let $\alpha$ be a strongly continuous action of a compact group $G$ on a unital C*-algebra $\A$ by *-automorphisms, such that:
\begin{equation*}
\left\{ a \in \A : \forall g \in G \quad \alpha^g(a) = a \right\} = \C\unit_\A\text{.}
\end{equation*}
Let $\mathsf{l}$ be a continuous length function on $G$, and denote the unit of $G$ by $e$. If, for all $a\in \sa{\A}$, we define:
\begin{equation*}
\Lip(a) = \sup\left\{ \frac{\|a-\alpha^g(a)\|_\A}{\mathsf{l}(g)} : g \in G\setminus\{e\} \right\}\text{,}
\end{equation*}
then $(\A,\Lip)$ is a {\qcms}.
\end{example}

\begin{example}\label{qt-example}
An important special case of Example (\ref{action-example}) is given by the twisted C*-algebras $C^\ast\left(\widehat{G},\sigma\right)$ of the Pontryagin dual $\widehat{G}$ of a compact metrizable group $G$, where $\sigma$ is a skew-bicharacter of $\widehat{G}$, with $\alpha$ the dual action of $G$ on $C^\ast\left(\widehat{G},\sigma\right)$, using any continuous length function $\mathsf{l}$ on $G$. In particular, for the $d$-torus $G=\T^d$, we see that the quantum tori are {\qcms s}.
\end{example}

\begin{example}[\cite{Connes89,Ozawa05}]\label{Hyperbolic-example}
Let $G$ be a finitely generated group and let $\mathsf{l}$ be the length function associated with some set of generators of $G$ such that $(G,\mathsf{l})$ is a Gromov Hyperbolic group. Let $\lambda$ be the left regular representation of $G$ on the Hilbert space $\ell^2(G)$, and for all  $\xi\in\ell^2(G)$, define $D\xi$ as $g\in G\mapsto \mathsf{l}(g)\xi(g)$ wherever $D\xi\in\ell^2(G)$. Last, we define, for all $a\in C_{\mathrm{red}}^\ast(G)$:
\begin{equation*}
\Lip(a) = \|[D,\lambda(a)]\|_{\B(\ell^2(G))}
\end{equation*}
where the norm $\|\cdot\|_{\B(\ell^2(G))}$ is the operator norm for linear operators on $\ell^2(G)$, which we allow to be infinity for unbounded operators. Note that $D$ is densely defined on $\ell^2(G)$.

This construction follows Connes' original approach to metric structures in noncommutative geometry laid out in \cite{Connes89}, where in particular Connes establishes that $(C^\ast_{\mathrm{red}}(G),\Lip)$ is a Lipschitz pair.
In \cite{Ozawa05}, Ozawa and Rieffel proved that $(C_{\mathrm{red}}^\ast(G),\Lip)$ is a {\qcms}.
\end{example}

\begin{example}[\cite{Rieffel02}]\label{Z-example}
Let $d\in \N,d>0$ and let $\sigma$ be a skew-bicharacter of $\Z^d$. Let us denote the left-regular representation of $C^\ast(\Z^d,\sigma)$ on $\ell^2(\Z^d)$ by $\lambda$. If $\mathsf{l}$ is the word metric on $\Z^d$ for some finite set of generators of $\Z^d$, and if we define $D$ as the multiplication operator by $\mathsf{l}$ on $\ell^2(\Z)$, then the seminorm:
\begin{equation*}
\Lip(a) = \|[D,\lambda(a)]\|_{\B(\ell^2(\Z))}\mathrm{,}
\end{equation*}
defined for all $a \in C^\ast(\Z^d,\sigma)$, is a Lip-norm. 
\end{example}

Examples (\ref{Hyperbolic-example}) and (\ref{Z-example}) are special cases of a general construction of metrics from spectral triples, pioneered by Connes in \cite{Connes89}. If $(\A,\mathscr{H},D)$ is a spectral triple over some *-algebra $\A$, then we can define a seminorm:
\begin{equation}\label{lip-spectral-eq}
\Lip : a\in\A\mapsto \|[D,\pi(a)]\|_{\B(\Hilbert)}
\end{equation}
where the norm $\|\cdot\|_{\B(\Hilbert)}$ is the operator norm on $\Hilbert$ and $\pi$ is the chosen representation of $\A$ on $\Hilbert$. It is not known under what conditions such a construction leads to a Lip-norm on $\A$, as it is usually difficult to check when a Lipschitz pair is a quantum compact metric space. Other examples of such a construction where a spectral triple actually leads to a Lip-norm, besides the one we have listed, include quantum compact metric spaces associated with spectral triples on C*-crossed-products \cite{Bellissard10}, spectral triples on quantum tori \cite{Rieffel98a}, and spectral triples on Connes-Dubois-Landi-Violette spheres \cite{Li05}, to name a few.

The classical Lipschitz seminorm enjoys an additional property related to the multiplication of functions, given by Inequality (\ref{Leibniz-eq}). Indeed, if $(\A,\Lip)$ is any of the quantum compact metric spaces given by Examples (\ref{classical-example}), (\ref{action-example}), (\ref{qt-example}), (\ref{Hyperbolic-example}), (\ref{Z-example}), then $\Lip$ is in fact densely defined on $\A$, and for all $a,b \in \A$ we have:
\begin{equation}\label{full-Leibniz-eq0}
\Lip(ab)\leq\Lip(a)\|b\|_\A + \|a\|_\A\Lip(b) \text{.}
\end{equation}
More generally, Inequality (\ref{full-Leibniz-eq0}) is satisfied by any seminorm defined by a spectral triple using Equation (\ref{lip-spectral-eq}).

This connection between noncommutative metric structure and C*-algebraic structure was absent from the original work of Rieffel \cite{Rieffel98a,Rieffel99,Rieffel00} but appeared to be very important in more recent developments in noncommutative metric geometry, where the consequences of convergence for various structures, such as projective modules \cite{Rieffel09}, is investigated. We shall therefore adopt the perspective that such an additional property should be incorporated in our theory, and as we shall prove, this new approach allows for the definition of a stronger metric than the quantum Gromov-Hausdorff distance which fits the C*-algebraic framework as described in our introduction.

As discussed in our introduction, the natural dual notion for isometric embeddings used in the definition of the Gromov-Hausdorff distance \cite{Gromov} is given by the notion of admissible Lip-norms, which reflects the fact that real-valued Lipschitz functions can be extended, without growing their Lipschitz seminorm, from closed subsets of a metric space to the whole space \cite{McShane34}. This property is not valid for complex-valued functions \cite{Rieffel06}, and this fact alone justifies that we focus on Lip-norms densely defined only on the self-adjoint part of C*-algebras (other interesting and relevant issues are discussed in \cite{Rieffel10c}).
We thus propose to express the Leibniz property in terms of the Jordan-Lie algebra structure of the self-adjoint part of C*-algebras, which will prove enough for our need. As we shall see in this paper, our constructions and main results are valid if we were to require Lip-norms to be densely defined on the whole C*-algebra and to satisfy Equation (\ref{full-Leibniz-eq0}), rather than our more general Definition (\ref{Leibniz-def}) below.

\begin{notation}
Let $\A$ be a C*-algebra. Then for all $a,b \in \A$, we use the following notation for the Jordan product of $a$ and $b$:
\begin{equation*}
\Jordan{a}{b} = \frac{1}{2}\left(ab+ba\right)\text{.}
\end{equation*}
We also use the following notation for the Lie product:
\begin{equation*}
\Lie{a}{b} = \frac{1}{2i}\left(ab-ba\right)\text{.}
\end{equation*}
Our choice of normalization for the Lie product means that $\sa{\A}$ is closed under both the Jordan and the Lie products, so that $(\sa{\A},\Jordan{\cdot}{\cdot},\Lie{\cdot}{\cdot})$ is a Jordan-Lie algebra \cite{Alfsen01,landsman98}.
\end{notation}

\begin{definition}\label{Leibniz-def}
A seminorm $\Lip$ defined on a dense Jordan-Lie sub-algebra $\dom{\Lip}$ of the self-adjoint part $\sa{\A}$ of a C*-algebra $\A$ is a \emph{Leibniz seminorm} when, for all $a,b \in \dom{\Lip}$:
\begin{equation*}
\Lip(\Jordan{a}{b}) \leq \|a\|_\A\Lip(b) + \|b\|_\A \Lip(a)\text{,}
\end{equation*}
and
\begin{equation*}
\Lip(\Lie{a}{b}) \leq \|a\|_\A\Lip(b) + \|b\|_\A \Lip(a)\text{.}
\end{equation*}
\end{definition}

We are now able to define the objects of our study.

\begin{definition}
A \emph{\Lqcms} $(\A,\Lip)$ is a quantum compact metric space whose Lip-norm $\Lip$ is also a Leibniz seminorm which is lower semi-continuous with respect to the norm of $\A$.
\end{definition}

Requiring the Leibniz property introduced in Definition (\ref{Leibniz-def}) is central to this paper: we shall see how this additional requirement, absent from the original construction of the quantum Gromov-Hausdorff distance, will allow us to obtain the strong coincidence property for our new metric, which shows the dual propinquity is sensitive to the C*-algebraic structure, rather than the order-structure, of the underlying space.

The requirement of lower semi-continuity is motivated by the fact that the notion of isometry between {\Lqcms s} can be expressed in terms of closed Lip-norms \cite{Rieffel00}. In the case when the underlying space is a Banach space (which is our situation, as we work with C*-algebras), a Lip norm is closed if and only if it is lower semi-continuous: indeed, the Lip-norm of a {\qcms} $(\A,\Lip)$ is closed when $\{a\in\sa{\A} : \Lip(a)\leq 1\}$ is norm-closed. We note moreover that in general, by \cite{Rieffel10c}, the closure of a Lip-norm (defined in \cite{Rieffel99}) with the Leibniz property will have the Leibniz property; hence the assumption of lower semi-continuity can be conveniently made without loss of generality. On the other hand, all the examples of Lip-norms given in Example (\ref{classical-example}) are lower semi-continuous.

The class of {\Lqcms s} can be endowed with a natural category structure, where a morphism $\pi$ from a {\Lqcms} $(\A,\Lip_\A)$ to a {\Lqcms} $(\B,\Lip_\B)$ is a unital *-morphism $\pi : \A\rightarrow\B$ such that the dual map $\pi^\ast:\StateSpace(\B)\rightarrow\StateSpace(\A)$ is a Lipschitz function from $\Kantorovich{\Lip_\B}$ to $\Kantorovich{\Lip_\A}$. In this case, an isomorphism of {\Lqcms s} is given by a *-isomorphism whose dual map is a bi-Lipschitz map. For our purpose, we shall focus on the stronger notion of isometric isomorphism:

\begin{definition}\label{iso-iso-def}
An \emph{isometric isomorphism} $\pi$ from a {\Lqcms} $(\A,\Lip_\A)$ to a {\Lqcms} $(\B,\Lip_\B)$ is a *-isomorphism from $\A$ to $\B$ such that for all $a\in\sa{\A}$ we have $\Lip_\B\circ \pi(a) = \Lip_\A(a)$.
\end{definition}

\begin{remark}
Our definition of isometry is compatible with \cite[Definition 6.3]{Rieffel00}, since the Lip-norm $\Lip$ of a {\Lqcms} is closed, i.e. the set:
\begin{equation*}
\left\{ a \in \sa{\A} : \Lip(a)\leq 1\right\}
\end{equation*}
is closed in $\A$. Indeed, as $\A$ is a C*-algebra and thus complete for its norm, the assumption of lower semi-continuity on $\Lip$ is equivalent to the notion of closed Lip-norm. Moreover, by \cite[Theorem 6.2]{Rieffel00}, a *-isomorphism between two {\Lqcms s} is an isometric isomorphism if and only if its dual map is an isometry between the state spaces endowed with their respective {\mongekant s}. Thus, isometric isomorphisms are isomorphisms in the category of {\Lqcms s}, as would be expected.
\end{remark}

We shall see that working with our dual Gromov-Hausdorff propinquity is somewhat akin to working in a different category with a weaker notion of morphism, analogous to the notion of a correspondence between metric spaces. As seen in \cite{Gromov}, the Gromov-Hausdorff distance can be defined in terms of correspondences between metric spaces, and their ``distortion'' --- our construction of the dual propinquity can be seen as an attempt to provide a noncommutative version of this approach as well.

\section{The Dual Gromov-Hausdorff Propinquity}

We now define our new metric, which is the subject of our paper. We begin with a very brief motivation for our definition.

For any two compact metric spaces $(X,\mathsf{d}_X)$, $(Y,\mathsf{d}_Y)$, the Gromov-Hausdorff distance $\dist_{GH}((X,\mathsf{d}_X),(Y,\mathsf{d}_Y))$ is the infimum of the Hausdorff distance between $f(X)$ and $g(Y)$, where $f$ and $g$ are isometries from $X$ and $Y$ to some metric space $(Z,\mathsf{d}_Z)$, respectively \cite{Gromov}. One may even choose $Z=X\coprod Y$.

Given two {\Lqcms s} $(\A,\Lip_\A)$ and $(\B,\Lip_\B)$, the natural dual picture for such an embedding would thus be given by two *-epi\-morphisms $\pi_\A:\D\rightarrow\A$ and $\pi_\B:\D\rightarrow\B$, where $(\D,\Lip_\D)$ is yet another {\Lqcms}, and such that the dual maps of $\pi_\A$ and $\pi_\B$ are isometries, respectively, from $(\StateSpace(\A),\Kantorovich{\Lip_\A})$ and from $(\StateSpace
(\B),\Kantorovich{\Lip_\B})$ to $(\StateSpace(\D),\Kantorovich{\Lip_\D})$. One may wish to then take the infimum of the Hausdorff distance between the images of $\StateSpace(\A)$ and $\StateSpace(\B)$ in $\StateSpace(\D)$ for $\Kantorovich{\Lip_\D}$ over all possible such embeddings. Up to metric equivalence, one may even be tempted to always take $\D$ of the form $\A\oplus\B$.

There is one major obstruction to this idea: because the quotient of Leibniz seminorms may not be Leibniz, it is not known how to show whether the above construction would give an object which satisfies the triangle inequality, as seen in \cite{Rieffel10c}. One solution is to forget the Leibniz property, which led to the original quantum Gromov-Hausdorff distance $\dist_q$. Yet, it is desirable to keep some connection between the quantum metric structure and the C*-algebraic structure, so that distance zero implies *-isomorphism (rather than Jordan isomorphism, as with $\dist_q$), and more generally to be able to work within the category of C*-algebras and associated structures. It is however very unlikely that we will obtain the triangle inequality if one simply restrict the Lip-norms in the construction of $\dist_q$ to be Leibniz. A new solution is needed.

Moreover, suppose that we are given two *-epimorphisms $\pi_\A:\D\rightarrow\A$ and $\pi_\B :\D\rightarrow\B$ from some {\Lqcms} $(\D,\Lip_\D)$. One can construct a seminorm $\Lip$ on $\A\oplus\B$ from $\Lip_\D$ (essentially by doubling $\D$ to $\D\oplus\D$, making both copies of $\StateSpace(\D)$ in $\StateSpace(\D\oplus\D)$ as close as we want), in such a manner that the Hausdorff distance between $\StateSpace(\A)$ and $\StateSpace(\B)$ in $(\StateSpace(\A\oplus\B),\Kantorovich{\Lip})$ is arbitrarily close to the Hausdorff distance between $\StateSpace(\A)$ and $\StateSpace(\B)$ in $(\StateSpace(\D),\Kantorovich{\Lip_\D})$. However this process, which we describe in detail in the proof of Lemma (\ref{dist-q-lemma}), involves taking a quotient of a Leibniz seminorm, which in general will fail to be a Leibniz seminorm. Thus, if we wish to remain within the class of {\Lqcms s} in the construction of our new metric, we can not restrict ourselves to always choosing $\D = \A\oplus\B$, as is done with the quantum Gromov-Hausdorff distance, without risking to make our distance larger than necessary. In particular, the quantum proximity \cite{Rieffel10c} may be larger than it needs to be since it only involves 
(strongly) Leibniz Lip-norms on $\A\oplus\B$ rather than the more general setting we will now present for our new metric.

The subject of this paper is to fix these issues. We start from the general dual picture proposed above, and offer a mean to construct a well-behaved metric out of this general intuition. We first implemented some of the techniques we shall use here when working with the quantum Gromov-Hausdorff propinquity \cite{Latremoliere13}, which was also motivated by the question of defining a metric on {\Lqcms s}, yet focused on seminorms defined using some very specific bimodules constructed from C*-algebras, as appear in the literature on the subject recently. In contrast, this paper offers a construction which is as close to the original quantum Gromov-Hausdorff distance (and the quantum proximity) as possible, and will be proven to lead to a \emph{complete} metric. It is, informally, a dual version of the quantum propinquity --- though it is in fact a weaker metric in general. Due to the origin of our method in our earlier work, we employ an analogue terminology in the present paper, yet we ``dualize'' some terms to help our intuition in understanding similarities and differences between our two families of propinquities. Also, we note that the numerical quantities proposed in this paper to build our metric are quite different from the ones introduced in \cite{Latremoliere13}.

We first formalize our ``dual isometric embeddings'' as follows:

\begin{definition}\label{tunnel-def}
Let $(\A_1,\Lip_1)$, $(\A_2,\Lip_2)$ be two {\Lqcms s}. A \emph{tunnel} $(\D,\Lip_\D,\pi_1,\pi_2)$ from $(\A_1,\Lip_1)$ to $(\A_2,\Lip_2)$ is a {\Lqcms} $(\D,\Lip_\D)$ such that, for all $j\in\{1,2\}$, the map $\pi_j : \D \twoheadrightarrow \A_j$ is a unital *-epimorphism such that for all $a\in \sa{\A_j}$:
\begin{equation*}
\Lip_j(a) = \inf \{ \Lip_\D(d) : d\in\sa{\D} \text{ and }\pi_j(d) = a \} \text{.}
\end{equation*}
\end{definition}

A tunnel is just a generalization of an admissible Lip-norm in the sense of \cite{Rieffel00}, where the {\Lqcms s} are quotients of arbitrary {\Lqcms s} rather than their coproduct only, and where the Leibniz property in the sense of Definition (\ref{Leibniz-def}), is imposed.

We now define two numerical quantities associated with tunnels, which will be the base for the construction of our distance. The first numerical quantity is rather natural, since we work with an extension of the Gromov-Hausdorff distance inspired by Rieffel's original construction for the quantum Gromov-Hausdorff distance. We call this quantity the reach of a tunnel.

\begin{notation}
The Hausdorff distance on the class of compact subsets of a metric space $(E,\mathsf{d})$ is denoted by $\Haus{\mathrm{d}}$.
\end{notation}

\begin{notation}\label{dual-map-notation}
If $\pi : \A \rightarrow \B$ is a continuous linear map from a topological vector space $\A$ to a topological vector space $\B$ then we denote by $\pi^\ast$ the dual map $T \in \B^\ast \mapsto T\circ\pi \in \A^\ast$.
\end{notation}

\begin{definition}\label{tunnel-reach-def}
Let $(\A,\Lip_\A)$ and $(\B,\Lip_\B)$ be two {\Lqcms s}. The \emph{reach} of a tunnel $\tau = (\D,\Lip_\D,\pi_\A,\pi_\B)$ from $(\A,\Lip_\A)$ to $(\B,\Lip_\B)$ is the non-negative real number:
\begin{equation*}
\tunnelreach{\tau}{\Lip_\A,\Lip_\B} = \Haus{\Kantorovich{\Lip_\D}}\left(\pi_\A^\ast\left(\StateSpace(\A)\right),\pi_\B^\ast\left(\StateSpace(\B)\right)\right) \text{.}
\end{equation*}
\end{definition}

\begin{remark}
If $(\D,\Lip_\D,\pi_\A,\pi_\B)$ is a quadruple where $(\D,\Lip_\D)$ is a {\Lqcms}, and $\pi_\A:\D\twoheadrightarrow\A$ and $\pi_\B:\D\twoheadrightarrow\B$ are *-epimorphism onto some unital C*-algebras $\A$ and $\B$, respectively, then the quotient seminorms $\Lip_\A$ and $\Lip_\B$ of $\Lip_\D$ by $\pi_\A$ and $\pi_\B$ are closed Lip-norms on $\A$ and $\B$, by \cite[Proposition 3.1, Proposition 3.3]{Rieffel00} --- although they may not be Leibniz. This observation however implies that a tunnel contains enough information to recover the quantum metric structure on its domain and codomain. Thus, there is no need to decorate our notation for the reach of a tunnel with the Lip-norms $\Lip_\A$ and $\Lip_\B$, unlike the situation with the notation for the reach of a bridge in \cite{Latremoliere13}. This remark will apply as well to the notations for depth and length of a tunnel.
\end{remark}

The second numerical quantity is new to our metric. The reason for its introduction, which will be formalized in Proposition (\ref{tunnel-lift-bound-prop}) and used with full strength in Proposition (\ref{tunnel-product-prop}) and our main theorems, Theorem (\ref{coincidence-thm}) and Theorem (\ref{completeness-thm}), is to help control the norm of the lift of Lipschitz elements. We call this quantity the depth of a tunnel.

\begin{notation}
If $A\subseteq E$ is a subset of a topological vector space $E$, then $\co{A}$ is the closure of the convex envelope of $A$, i.e. the smallest closed convex subset of $E$ containing $A$.
\end{notation}

\begin{definition}\label{tunnel-depth-def}
Let $(\A,\Lip_\A)$ and $(\B,\Lip_\B)$ be two {\Lqcms s}. The \emph{depth} of a tunnel $\tau = (\D,\Lip_\D,\pi_\A,\pi_\B)$ from $(\A,\Lip_\A)$ to $(\B,\Lip_\B)$ is the non-negative real number:
\begin{equation*}
\tunneldepth{\tau}{\Lip_\A,\Lip_\B} = \Haus{\Kantorovich{\Lip_\D}}\left(\StateSpace(\D),\co{\StateSpace(\A)\cup\StateSpace(\B)}\right) \text{.}
\end{equation*}
\end{definition}

One reason for the depth of tunnels not to appear in Rieffel's original construction may be the following observation:

\begin{remark}\label{null-depth-rmk}
If $\D=\A\oplus\B$, $\pi_\A : (a,b)\in\D\mapsto a$ and $\pi_\B:(a,b)\in\D\mapsto b$, and if $(\D,\Lip_\D,\pi_\A,\pi_\B)$ is a tunnel, then $\tunneldepth{\tau}{\Lip_\A,\Lip_\B} = 0$.
\end{remark}

Given our basic numerical quantities for tunnels, we then define:

\begin{definition}\label{tunnel-length-def}
Let $(\A,\Lip_\A)$ and $(\B,\Lip_\B)$ be two {\Lqcms s}. The \emph{length} of a tunnel $\tau = (\D,\Lip_\D,\pi_\A,\pi_\B)$ from $(\A,\Lip_\A)$ to $(\B,\Lip_\B)$ is the non-negative number:
\begin{equation*}
\tunnellength{\tau}{\Lip_\A,\Lip_\B} = \max\{ \tunnelreach{\tau}{\Lip_\A,\Lip_\B}, \tunneldepth{\tau}{\Lip_\A,\Lip_\B} \} \text{.}
\end{equation*}
\end{definition}

The informal idea to define the dual propinquity should be to take the infimum of the lengths of all tunnels between two given {\Lqcms s}. This idea, as we discussed, will likely fail to satisfy the triangle inequality. As with the quantum propinquity \cite{Latremoliere13}, we introduce a notion of ``paths'' made of tunnels, which we shall call journeys. Such an approach is used, for instance, in the definition of the quotient (pseudo)metric of a metric \cite{Gromov}. In general however, such a construction leads to a pseudo-metric, and one main result of this paper is that the dual Gromov-Hausdorff propinquity is in fact a metric.

Introducing journeys opens an interesting possibility: we may restrict our attention to journeys made of specific choices of tunnels, with additional properties which may be of use for a particular purpose. We thus introduce the following notion of a compatible class of tunnels, which is a class of tunnels rich enough to guarantee that the construction of our metric, when specialized to such a compatible class, is indeed a metric. We begin with the definition of the reversed tunnel of a tunnel, which is trivially checked to be tunnel in its own right:

\begin{definition}\label{reversed-tunnel-def}
Let $(\A,\Lip_\A)$ and $(\B,\Lip_\B)$ be two {\Lqcms s}. Let $\tau = (\D,\Lip_\D,\pi_\A,\pi_\B)$ be a tunnel from $(\A,\Lip_\A)$ to $(\B,\Lip_\B)$. The \emph{reversed tunnel} $\tau^{-1}$ of $\tau$ is the tunnel $(\D,\Lip_\D,\pi_\B,\pi_\A)$ from $(\B,\Lip_\B)$ to $(\A,\Lip_\A)$.
\end{definition}

\begin{definition}\label{compatible-tunnel-def}
Let $\mathcal{C}$ be a nonempty class of {\Lqcms s}. A class $\mathcal{T}$ of tunnels is \emph{compatible} with $\mathcal{C}$ (or \emph{$\mathcal{C}$-compatible}) when:
\begin{enumerate}
\item for all $\tau = (\D,\Lip_\D,\pi,\rho)\in\mathcal{T}$, we have $\tau^{-1} = (\D,\Lip_\D,\rho,\pi)\in\mathcal{T}$,
\item for all $(\A,\Lip_\A),(\B,\Lip_\B)\in\mathcal{C}$, there exists a finite family $(\A_j,\Lip_j)_{j\in\{1,\ldots,n+1\}}$ in $\mathcal{C}$ for some $n\in\N$ with $(\A_1,\Lip_1)=(\A,\Lip_\A)$, $(\A_{n+1},\Lip_{n+1})=(\B,\Lip_\B)$, as well as a finite family $(\tau_j)_{j\in\{1,\ldots,n\}}$ in $\mathcal{T}$ such that for all $j\in\{1,\ldots,n\}$ the tunnel $\tau_j$ goes from $(\A_j,\Lip_j)$ to $(\A_{j+1},\Lip_{j+1})$,
\item for all $(\A,\Lip_\A), (\B,\Lip_\B) \in\mathcal{C}$, if there exists a *-isomorphism $h : \A\rightarrow \B$ such that $\Lip_\B\circ h = \Lip_\A$, then we have $(\A,\Lip_\A,\mathrm{id}_\A,h)\in\mathcal{T}$ and $(\B,\Lip_\B,\mathrm{id}_\B,h^{-1})\in\mathcal{T}$, where $\mathrm{id}_\A : a\in\A\mapsto a$.
Note in particular that $(\A,\Lip_\A,\mathrm{id}_\A,\mathrm{id}_\A) \in \mathcal{T}$,
\item any tunnel in $\mathcal{T}$ is from an element in $\mathcal{C}$ and to an element of $\mathcal{C}$.
\end{enumerate}
\end{definition}

\begin{remark}\label{iso-tunnel-rmk}
Let $\mathcal{C}$ be a nonempty class of {\Lqcms s}. Let $(\A,\Lip_\A), (\B,\Lip_\B) \in\mathcal{C}$ such that there exists some *-isomorphism $h : \A\rightarrow \B$ such that $\Lip_\B\circ h = \Lip_\A$. It is then easy to check that, indeed, $(\A,\Lip_\A,\mathrm{id}_\A,h)$ and $(\B,\Lip_\B,h^{-1},\mathrm{id}_\B)$ are tunnels between $(\A,\Lip_\A)$ and $(\B,\Lip_\B)$.
\end{remark}

\begin{notation}
Let $\mathcal{C}$ be a nonempty class of {\Lqcms s} and let $\mathcal{T}$ be a class of tunnels compatible with $\mathcal{C}$. The set of all tunnels in $\mathcal{T}$ from $(\A,\Lip_\A)\in\mathcal{C}$ to $(\B,\Lip_\B)\in\mathcal{C}$ is denoted by:
\begin{equation*}
\tunnelset{\A,\Lip_\A}{\B,\Lip_\B}{\mathcal{T}}\text{.}
\end{equation*}
\end{notation}

\begin{example}\label{Lqcms-example}
The class $\mathcal{LT}$ of all possible tunnels over the class $\LQCMS$ of all {\Lqcms s} is $\LQCMS$-compatible. By Remark (\ref{iso-tunnel-rmk}) and Definition (\ref{reversed-tunnel-def}), it is easy to check that Assertions (1), (3) and (4) of Definition (\ref{compatible-tunnel-def}) are met. To show that Assertion (2) is met, it is sufficient to prove that given $(\A,\Lip_\A)$ and $(\B,\Lip_\B)$ in $\LQCMS$, there exists a tunnel $\tau$ from $(\A,\Lip_\A)$ and $(\B,\Lip_\B)$. 

In \cite[Proposition (4.6)]{Latremoliere13}, we constructed a bridge between $(\A,\Lip_\A)$ and $(\B,\Lip_\B)$, and from this (and in fact, any) bridge, Theorem (\ref{ncpropinquity-comparison-thm}), proven later in this paper, constructs a tunnel. While these two last results give all the necessary details, we can however give a brief explicit description of this tunnel here. 

Since $\A$ and $\B$ are separable as {\qcms s} by Proposition (\ref{qcms-separable-prop}), there exists two unital faithful representations $\omega_\A$ and $\omega_\B$ of $\A$ and $\B$, respectively, on some separable Hilbert space $\Hilbert$. Let:
\begin{equation*}
D = \max\left\{\diam{\StateSpace(\A)}{\Kantorovich{\Lip_\A}},\diam{\StateSpace(\B)}{\Kantorovich{\Lip_\B}}\right\}\text{.}
\end{equation*}
For all $(a,b)\in\sa{\A\oplus\B}$, we set:
\begin{equation*}
\Lip(a,b) = \sup\left\{\Lip_\A(a),\Lip_\B(b),\frac{1}{D}\left\|\omega_\A(a)-\omega_\B(b)\right\|_{\Hilbert\rightarrow\Hilbert}\right\}\text{,}
\end{equation*}
where $\|\cdot\|_{\Hilbert\rightarrow\Hilbert}$ is the operator norm for linear operators on $\Hilbert$.

Let $\pi_\A$ and $\pi_\B$ be the respective canonical surjections from $\A\oplus\B$ onto $\A$ and $\B$. Then one may check, using the techniques of \cite[Proposition (4.6)]{Latremoliere13}, that $(\A\oplus\B,\Lip,\pi_\A,\pi_\B)$ is a tunnel from $(\A,\Lip_\A)$ to $(\B,\Lip_\B)$. Thus Assertion (2) of Definition (\ref{compatible-tunnel-def}) is met as well. Note that moreover, Theorem (\ref{ncpropinquity-comparison-thm}) will show that the length of this tunnel is no more than $D$.
\end{example}

In view of the common use of Example (\ref{Lqcms-example}) in the rest of this paper, we introduce the following notation simplification:
\begin{notation}
The class of all tunnels from a {\Lqcms} $(\A,\Lip_\A)$ to a {\Lqcms} $(\B,\Lip_\B)$ is denoted simply as:
\begin{equation*}
\tunnelset{\A,\Lip_\A}{\B,\Lip_\B}{}\text{.}
\end{equation*}
\end{notation}

\begin{example}\label{CM-example}
A compact JLC*-metric space $(\A,\Lip)$ is a {\Lqcms} where the Lip-norm is JL-strongly Leibniz, in the sense that its domain is closed under the inverse map, and it also satisfies, for all $a\in\mathrm{GL}(\A)\cap\sa{\A}$:
\begin{equation*}
\Lip\left(a^{-1}\right)\leq \|a^{-1}\|_\A^2\Lip(a)\text{.}
\end{equation*}
This notion differs slightly from the notion of a compact C*-metric space introduced in \cite{Rieffel10c}, to which we shall return momentarily. The letters JL are meant for Jordan-Lie to emphasize the form of the Leibniz property as defined in Definition (\ref{Leibniz-def}).
Let $\mathcal{JLC}^\ast$ be the class of all compact JLC*-metric spaces \cite{Rieffel10c}. The class $\mathcal{JLCT}^\ast$ of all tunnels $(\D,\Lip_\D,\pi,\rho)$ from and to elements of $\mathcal{JLC}^\ast$, and such that $(\D,\Lip_\D)$ is a compact JLC*-metric space, is $\mathcal{JLC}^\ast$-compatible. Indeed, the construction of a tunnel between arbitrary {\Lqcms s} in Example (\ref{Lqcms-example}), when specialized to a pair of JLC*-metric spaces, lead to a tunnel in $\mathcal{JLCT}^\ast$, as can be easily checked.

Now, the class $\mathcal{CM}$ of compact C*-metric spaces is defined as a subclass of our class $\mathcal{JLC}$: a {\Lqcms} $(\A,\Lip) \in \mathcal{JLCM}$ is a compact C*-metric space if $\Lip$ is defined on a dense *-subalgebra of $\A$, closed under the inverse map, and such that for all $a,b \in \A$ we have:
\begin{equation}\label{full-Leibniz-eq}
\Lip(ab)\leq\|a\|_\A\Lip(b)+\Lip(a)\|b\|_\A\text{,}
\end{equation}
while for all $a\in\mathrm{GL}(\A)$ in the domain of $\Lip$, we have:
\begin{equation*}
\Lip\left(a^{-1}\right)\leq \|a^{-1}\|_\A^2\Lip(a)\text{.}
\end{equation*}
It is easy to check that Inequality (\ref{full-Leibniz-eq}) implies the Leibniz property of Definition (\ref{Leibniz-def}) --- see for instance \cite[Proposition 2.17]{Latremoliere13}. In \cite{Rieffel10c}, the notion of compact C*-metric space does not require the underlying algebra to be Banach --- only C*-normed --- but we focus in this paper on various subclasses of {\Lqcms s}.

With this in mind, the class $\mathcal{CMT}$ of all tunnels $(\D,\Lip_\D,\pi,\rho)$ from and to elements of $\mathcal{CM}$ and with $(\D,\Lip_\D)\in\mathcal{CM}$ is $\mathcal{CM}$-compatible. Indeed, Assertions (1), (3) and (4) of Definition (\ref{compatible-tunnel-def}) are automatic, while Assertion (2) follows again from the same construction as proposed in Example (\ref{Lqcms-example}), where it is easy to check that the constructed tunnel is in fact Strong Leibniz in that case.
\end{example}

\begin{example}\label{sum-example}
Another example is the class $\mathcal{SUM}_{\mathcal{C}}$ of tunnels of the form $(\A\oplus\B,\Lip,\pi_\A,\pi_\B)$ where $\pi_\A$ and $\pi_\B$ are the canonical surjections on each term, and $(\A,\Lip_\A)$,$(\B,\Lip_\B)$ ranges over all {\Lqcms s} in some nonempty subclass $\mathcal{C}$ of $\LQCMS$. The class $\mathcal{SUM}_{\mathcal{C}}$ is compatible with $\mathcal{C}$, again thanks to the construction of a tunnel between arbitrary elements of $\mathcal{C}$ given in Example (\ref{Lqcms-example}), which is an element of $\mathcal{SUM}_{\mathcal{C}}$ by construction.
\end{example}

We now have the tools to define journeys, i.e. path of tunnels in some appropriate class:

\begin{definition}\label{journey-def}
Let $\mathcal{C}$ be a nonempty class of {\Lqcms s} and $\mathcal{T}$ be a compatible class of tunnels for $\mathcal{C}$. A $\mathcal{T}$-\emph{journey} from $(\A,\Lip_\A)$ to $(\B,\Lip_\B)$, with $(\A,\Lip_\A),(\B,\Lip_\B)\in\mathcal{C}$, is a finite family:
\begin{equation*}
(\A_j,\Lip_j,\tau_j,\A_{j+1},\Lip_{j+1} : j=1,\ldots,n)
\end{equation*}
where:
\begin{enumerate}
\item $(\A_j,\Lip_j)$ is a {\Lqcms} in $\mathcal{C}$ for all $j\in\{1,\ldots,n+1\}$,
\item $(\A_1,\Lip_1) = (\A,\Lip_\A)$,
\item $(\A_{n+1},\Lip_{n+1}) = (\B,\Lip_\B)$,
\item $\tau_j$ is a tunnel in $\mathcal{T}$ from $(\A_j,\Lip_j)$ to $(\A_{j+1},\Lip_{j+1})$ for all $j\in\{1,\ldots,n\}$.
\end{enumerate}
The integer $n$ is called the size of the journey.
\end{definition}

\begin{notation}
Let $\mathcal{C}$ be a nonempty class of {\Lqcms s} and let $\mathcal{T}$ be a class of tunnels compatible with $\mathcal{C}$. The set of all $\mathcal{T}$-journeys from $(\A,\Lip_\A)\in\mathcal{C}$ to $(\B,\Lip_\B)\in\mathcal{C}$ is denoted by:
\begin{equation*}
\journeyset{\mathcal{T}}{\A,\Lip_\A}{\B,\Lip_\B}\text{.}
\end{equation*}
\end{notation}

When we do not specify a class of tunnels for a given journey, we mean that the journey is made of arbitrary tunnels between arbitrary {\Lqcms s}. However, we wish to emphasize the relative freedom which journeys afford us in the construction of the dual propinquity. Namely, the dual propinquity is really a collection of various metrics (up to isometric-isomorphism) on various subclasses of {\Lqcms s}. Indeed, journeys involve many intermediate {\Lqcms s}: using the notations of Definition (\ref{journey-def}), for a journey of size $n$, we have much liberty in choosing the {\Lqcms s} $(\A_2,\Lip_2),\ldots,(\A_{n},\Lip_n)$ and the tunnels $\tau_1,\ldots,\tau_n$. Depending on our application of the dual propinquity, we may have an interest in choosing all these intermediate {\Lqcms s} in a special class, such as the class of all compact C*-metric spaces, as in Example (\ref{CM-example}), for instance. Doing so would allow us to carry various computations within all these intermediate objects in the journey which may require some additional structure compared with what we assume a {\Lqcms} to be.

This freedom, which is shared with the quantum propinquity, is only constrained by the requirements listed as Definition (\ref{compatible-tunnel-def}) on the class of tunnels which we can choose from to build journeys. These requirements, as we shall see, are really the minimum needed to obtain a metric from our construction of the dual propinquity.

Once a class of compatible tunnels is chosen, we can thus work entirely within our favorite class of {\Lqcms s}. The proof of Theorem (\ref{coincidence-thm}) is valid for whatever choice of a compatible class of tunnel we pick: it should be noted that its conclusion will give an isometric isomorphism of {\Lqcms s} and nothing more; any preservation of additional structure would require careful inspection and possibly additional hypotheses. Our work on completeness is presented in the largest tunnel class (Example (\ref{Lqcms-example})), and thus whether a particular class of {\Lqcms s} is complete for some choice of a class of compatible tunnels would require that the constructions presented in the last part of our paper are compatible with the choice of tunnels as well. Last, the quantum propinquity of \cite{Latremoliere13} can be seen as a special case of our dual propinquity where tunnels are restricted to those which come from bridges in the manner we shall describe later in Theorem (\ref{ncpropinquity-comparison-thm}).

We now return to our main topic. A journey has a natural length:

\begin{definition}\label{length-def}
Let $(\A,\Lip_\A)$ and $(\B,\Lip_\B)$ be two {\Lqcms s}. The \emph{length} of a journey $\Upsilon = (\A_j,\Lip_j,\tau_j,\A_{j+1},\Lip_{j+1} : j=1,\ldots,n)$ from $(\A,\Lip_\A)$ to $(\B,\Lip_\B)$ is:
\begin{equation*}
\journeylength{\Upsilon} = \sum_{j=1}^n \tunnellength{\tau_j}{\Lip_j,\Lip_{j+1}} \text{.}
\end{equation*}
\end{definition}

We now can define our family of metrics, which we call the dual Gromov-Hausdorff propinquities.

\begin{definition}\label{dual-propinquity-def}
Let $\mathcal{C}$ be an admissible class of {\Lqcms s} and $\mathcal{T}$ be a class of tunnels compatible with $\mathcal{C}$. The \emph{dual Gromov-Hausdorff $\mathcal{T}$-propinquity} $\dpropinquity{\mathcal{T}}$ between two {\Lqcms s} $(\A,\Lip_\A)$ and $(\B,\Lip_\B)$ in $\mathcal{C}$ is defined as:
\begin{equation*}
\dpropinquity{\mathcal{T}}((\A,\Lip_\A),(\B,\Lip_\B)) = \inf \left\{ \journeylength{\Upsilon} : \Upsilon \in \journeyset{\mathcal{T}}{\A,\Lip_\A}{\B,\Lip_\B} \right\}\text{.}
\end{equation*}
\end{definition}

The dual propinquity associated with the category $\mathcal{CM}$ (Example (\ref{CM-example})) of compact C*-metric spaces is a direct extension of Rieffel's proximity. An even closer extension of the proximity is given by restricting one's attention to the class $\mathcal{SUM}_{\mathcal{CM}}$, as in Example (\ref{sum-example}). On the other hand, by default, we work with the dual propinquity associated with the class of all {\Lqcms s} (Example (\ref{Lqcms-example})). We introduce two notations to stress the role of the most important examples of dual propinquity:

\begin{notation}
The dual propinquity $\dpropinquity{}$ designates the dual propinquity $\dpropinquity{\mathcal{LT}}$.
\end{notation}

\begin{notation}
The dual propinquity $\dpropinquity{\ast}$ designates the dual propinquity $\dpropinquity{\mathcal{CMT}}$.
\end{notation}

At the level of generality which we have chosen, the dual propinquity is never infinite, though more interesting estimates depend on the choice of a class of tunnel. Moreover, the dual propinquity is zero between two isometrically isomorphic {\Lqcms s}. These properties partially illustrate the role of the notion of compatibility for tunnel.

\begin{proposition}\label{bounded-ncprop-prop}
For any class $\mathcal{C}$ of {\Lqcms s} and any $\mathcal{C}$-compatible class $\mathcal{T}$ of tunnels, and for any $(\A,\Lip_\A)$,$(\B,\Lip_\B)$ in $\mathcal{C}$, we have:
\begin{equation*}
\dpropinquity{\mathcal{T}}((\A,\Lip_\A),(\B,\Lip_\B)) < \infty\text{.}
\end{equation*}
Moreover, if there exists a *-isomorphism $h: \A\rightarrow \B$ such that $\Lip_\B\circ h=\Lip_\A$, then:
\begin{equation*}
\dpropinquity{\mathcal{T}}((\A,\Lip_\A),(\B,\Lip_\B)) = 0\text{.}
\end{equation*}
\end{proposition}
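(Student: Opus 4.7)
The plan is to read both claims directly off the axioms for a $\mathcal{C}$-compatible class of tunnels in Definition \ref{compatible-tunnel-def}, together with the basic fact that the state space of any {\qcms} has finite diameter for the associated {\mongekant}. For the finiteness claim, Assertion (2) of Definition \ref{compatible-tunnel-def} produces, for any pair of {\Lqcms s} in $\mathcal{C}$, at least one $\mathcal{T}$-journey $\Upsilon = (\A_j, \Lip_j, \tau_j, \A_{j+1}, \Lip_{j+1} : j=1,\ldots,n)$ connecting them. For each tunnel $\tau_j = (\D_j, \Lip_{\D_j}, \pi_j, \pi_j')$ in that journey, both the reach and the depth are Hausdorff distances between subsets of $\StateSpace(\D_j)$ for $\Kantorovich{\Lip_{\D_j}}$; since $(\D_j, \Lip_{\D_j})$ is a {\qcms}, Theorem \ref{az-thm} tells us $\diam{\StateSpace(\D_j)}{\Kantorovich{\Lip_{\D_j}}}$ is finite, and it bounds $\tunnellength{\tau_j}{\Lip_j, \Lip_{j+1}}$. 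Summing the finitely many lengths yields a finite $\journeylength{\Upsilon}$, and so the infimum defining $\dpropinquity{\mathcal{T}}((\A,\Lip_\A),(\B,\Lip_\B))$ is finite.

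For the vanishing claim, the key ingredient is Assertion (3) of Definition \ref{compatible-tunnel-def}: if $h:\A\to\B$ is a *-isomorphism with $\Lip_\B\circ h=\Lip_\A$, the quadruple $\tau=(\A,\Lip_\A,\mathrm{id}_\A,h)$ lies in $\mathcal{T}$ and is genuinely a tunnel from $(\A,\Lip_\A)$ to $(\B,\Lip_\B)$; the quotient of $\Lip_\A$ by $\mathrm{id}_\A$ is trivially $\Lip_\A$, while the quotient by $h$ recovers $\Lip_\B$ precisely because $\Lip_\A=\Lip_\B\circ h$ and $h$ is bijective. I would then compute the length of $\tau$ directly: one has $\mathrm{id}_\A^\ast(\StateSpace(\A))=\StateSpace(\A)$, and because pulling states back along a unital *-isomorphism is a weak-* homeomorphism of state spaces, also $h^\ast(\StateSpace(\B))=\StateSpace(\A)$. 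Both pullback sets therefore coincide with $\StateSpace(\A)=\StateSpace(\D)$, so the reach is zero and the convex hull in the depth formula reduces to all of $\StateSpace(\A)$, giving depth zero as well. Thus $\tau$ alone is a journey of length zero in $\journeyset{\mathcal{T}}{\A,\Lip_\A}{\B,\Lip_\B}$, forcing $\dpropinquity{\mathcal{T}}((\A,\Lip_\A),(\B,\Lip_\B))=0$.

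There is no genuine obstacle here; the proposition is essentially a sanity check confirming that Definition \ref{compatible-tunnel-def} was formulated with enough content to ensure $\dpropinquity{\mathcal{T}}$ is real-valued and vanishes on isometrically isomorphic pairs. The only technical point to record carefully is that the reach and depth of any tunnel are automatically controlled by the {\mongekant}-diameter of the intermediate state space, which is an immediate consequence of the {\qcms} axiom imposed on the tunnel's underlying Lipschitz pair.
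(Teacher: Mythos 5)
Your proposal is correct and follows essentially the same route as the paper: finiteness comes from Assertion (2) of Definition (\ref{compatible-tunnel-def}) supplying at least one journey (whose length is a finite sum of tunnel lengths, each bounded by the finite {\mongekant}-diameter of the intermediate state space), and the vanishing claim comes from Assertion (3) supplying the tunnel $(\A,\Lip_\A,\mathrm{id}_\A,h)$, which has reach and depth zero. The paper's own proof is simply a terser version of this; your verification that $(\A,\Lip_\A,\mathrm{id}_\A,h)$ is genuinely a tunnel of length zero is the content the paper delegates to Remark (\ref{iso-tunnel-rmk}) and the phrase ``trivially zero.''
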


\begin{proof}
By Definition (\ref{compatible-tunnel-def}), the set $\journeyset{\mathcal{T}}{\A,\Lip_\A}{\B,\Lip_\B}$ is not empty, hence our first inequality. By the same definition, $\iota_\A = (\A,\Lip_\A,\mathrm{id}_\A,h)\in\mathcal{T}$, where $\mathrm{id}_\A$ is the identity map on $\A$, and the length of the tunnel $\iota_\A$ is trivially zero. This concludes our proposition.
\end{proof}

The most natural classes of tunnels proposed in Examples (\ref{Lqcms-example}),(\ref{CM-example}) and Example (\ref{sum-example}) give rise to noncommutative propinquities with a more explicit upper bound. We propose a terminology for such classes, as it may be useful for later use.

\begin{definition}
Let $\mathcal{C}$ be a class of {\Lqcms s}. A class $\mathcal{T}$ of tunnels which is compatible with $\mathcal{C}$ is \emph{$\mathcal{C}$-regular} when, for all $(\A,\Lip_\A)$, $(\B,\Lip_\B)$ in $\mathcal{C}$:
\begin{equation*}
\dpropinquity{\mathcal{T}}((\A,\Lip_\A),(\B,\Lip_\B))\leq\max\{\diam{\StateSpace(\A)}{\Kantorovich{\Lip_\A}},\diam{\StateSpace(\B)}{\Kantorovich{\Lip_\B}}\text{.}
\end{equation*}
\end{definition}

We shall see, using Theorem (\ref{ncpropinquity-comparison-thm}), that the class of tunnels in Examples (\ref{Lqcms-example}) and (\ref{CM-example}) are regular.



We conclude this section with the result that the dual Gromov-Hausdorff propinquity satisfies the triangle inequality and is symmetric. In the process of doing so, we describe a category structure on every nonempty class of {\Lqcms s}, whose Hom-sets are given by sets of reduced journeys, to be defined below. We will use this observation to provide some guidance to the intuition of the main theorem of this paper in the next section.

\begin{definition}
Let $\mathcal{C}$ be a nonempty class of {\Lqcms s} and $\mathcal{T}$ be a class of tunnels compatible with $\mathcal{C}$. Let $(\A,\Lip_\A)$, $(\B,\Lip_\B)$ and $(\D,\Lip_\D)$ in $\mathcal{C}$. Let:
\begin{equation*}
\Upsilon_1 = \left( \A_j,\Lip_j,\tau_j,\A_{j+1},\Lip_{j+1} : j=1,\ldots,n_1 \right)
\end{equation*}
be a journey from $(\A,\Lip_\A)$ to $(\B,\Lip_\B)$, and:
\begin{equation*}
\Upsilon_2 = \left( \B_j,\Lip^j,\tau^j,\B_{j+1},\Lip^{j+1} : j=1,\ldots,n_2 \right)
\end{equation*}
be a journey from $(\B,\Lip_\B)$ to $(\D,\Lip_\D)$.

If we set, for all $j\in\{1,\ldots,n_1+n_2\}$:
\begin{equation*}
\begin{split}
\D_j &=  \left\{\begin{array}{l} \A_j \text{ if $j\leq n_1$}\\ \B_{j-n_1} \text{ if $j>n_1$}\end{array}\right.\text{ and }\\
\Lip[j] &= \left\{\begin{array}{l} \Lip_j \text{ if $j\leq n_1$}\\ \Lip^{j-n_1} \text{ if $j>n_1$}\end{array}\right.\text{ and }\\
\rho_j  &= \left\{\begin{array}{l} \tau_j \text{ if $j\leq n_1$}\\ \tau^{j-n_1} \text{ if $j>n_1$}\end{array}\right.\text{,}
\end{split}
\end{equation*}
then $\Upsilon_1\star \Upsilon_2$ is the $\mathcal{T}$-journey $(\D_j,\Lip[j],\rho_j,\D_{j+1},\Lip[j+1] : j=1,\ldots,n_1+n_2)$ from $(\A,\Lip_\A)$ to $(\D,\Lip_\D)$. We call $\Upsilon_1\star\Upsilon_2$ the \emph{composition} of $\Upsilon_1$ by $\Upsilon_2$.
\end{definition}

We already used two of the four conditions for a class of tunnels to be compatible with some class $\mathcal{C}$ of {\Lqcms s} in Proposition (\ref{bounded-ncprop-prop}): the existence in the class of a tunnel of length zero for all isometric isomorphisms between elements of $\mathcal{C}$ and the existence of at least one journey between any two elements of $\mathcal{C}$. The condition regarding closure under the reverse operation allows us to define reverse journeys, which will give us that the dual Gromov-Hausdorff propinquity is symmetric.

\begin{definition}
Let $\mathcal{C}$ be a nonempty class of {\Lqcms s}. Let $\mathcal{T}$ be a $\mathcal{C}$-compatible class of tunnels and let $\Upsilon = (\A_j,\Lip_j,\tau_j,\A_{j+1},\Lip_{j+1} : j=1,\ldots,n)$ be a $\mathcal{T}$-journey. The \emph{reversed journey} $\Upsilon^{-1}$ is the $\mathcal{T}$-journey:
\begin{equation*}
(\A_{n-j+1},\Lip_{n-j+1},\tau_j^{-1},\A_{n-j},\Lip_{n-j} : j=1,\ldots,n)\text{.}
\end{equation*}
\end{definition}

The two operations of composition and reverse of journeys allow us to prove the following properties of the dual Gromov-Hausdorff propinquity, bringing us closer to proving that the dual Gromov-Hausdorff propinquity is a metric.

\begin{theorem}\label{triangle-thm}
Let $\mathcal{C}$ be a nonempty class of {\Lqcms s} and let $\mathcal{T}$ be a $\mathcal{C}$-compatible class of tunnels.
\begin{enumerate}
\item For all $(\A,\Lip_\A),(\B,\Lip_\B)\in\mathcal{C}$, we have:
\begin{equation*}
\dpropinquity{\mathcal{T}}((\A,\Lip_\A),(\B,\Lip_\B)) = \dpropinquity{\mathcal{T}}((\B,\Lip_\B),(\A,\Lip_\A)) \text{.}
\end{equation*}
\item For all $(\A,\Lip_\A),(\B,\Lip_\B),(\D,\Lip_\D)\in\mathcal{C}$, we have:
\begin{equation*}
\dpropinquity{\mathcal{T}}((\A,\Lip_\A),(\D,\Lip_\D)) \leq \dpropinquity{\mathcal{T}}((\A,\Lip_\A),(\B,\Lip_\B)) +  \dpropinquity{\mathcal{T}}((\B,\Lip_\B),(\D,\Lip_\D)) \text{.}
\end{equation*}
\end{enumerate}
\end{theorem}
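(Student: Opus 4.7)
The plan is to observe that both the length of a tunnel and the length of a journey behave well under the two operations just defined (reversal and composition), and that these operations set up bijections between the appropriate journey sets. Once this is in place, each of the two assertions reduces to a one-line infimum argument.

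For the symmetry statement, I would first unpack the definition of tunnel length and notice that both the reach (Definition \ref{tunnel-reach-def}) and the depth (Definition \ref{tunnel-depth-def}) of a tunnel $\tau = (\D,\Lip_\D,\pi_\A,\pi_\B)$ are manifestly symmetric in the two surjections: the Hausdorff distance is a symmetric function of its two arguments, and the convex hull $\co{\StateSpace(\A)\cup\StateSpace(\B)}$ does not depend on the order in which we list $\A$ and $\B$. Hence $\tunnellength{\tau^{-1}}{\Lip_\B,\Lip_\A} = \tunnellength{\tau}{\Lip_\A,\Lip_\B}$. Summing over the tunnels of a journey, I obtain $\journeylength{\Upsilon^{-1}} = \journeylength{\Upsilon}$. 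The map $\Upsilon \mapsto \Upsilon^{-1}$ is clearly an involutive bijection from $\journeyset{\mathcal{T}}{\A,\Lip_\A}{\B,\Lip_\B}$ onto $\journeyset{\mathcal{T}}{\B,\Lip_\B}{\A,\Lip_\A}$ (this is where we invoke condition (1) of Definition \ref{compatible-tunnel-def}, which guarantees $\mathcal{T}$ is closed under reversal, so the reversed journey stays in the same class). Taking infima over each side gives the equality of the two values of $\dpropinquity{\mathcal{T}}$.

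For the triangle inequality, I would fix $\varepsilon > 0$ and choose, via Proposition \ref{bounded-ncprop-prop}, a $\mathcal{T}$-journey $\Upsilon_1$ from $(\A,\Lip_\A)$ to $(\B,\Lip_\B)$ with $\journeylength{\Upsilon_1} \leq \dpropinquity{\mathcal{T}}((\A,\Lip_\A),(\B,\Lip_\B)) + \varepsilon/2$, and similarly a $\mathcal{T}$-journey $\Upsilon_2$ from $(\B,\Lip_\B)$ to $(\D,\Lip_\D)$ with $\journeylength{\Upsilon_2} \leq \dpropinquity{\mathcal{T}}((\B,\Lip_\B),(\D,\Lip_\D)) + \varepsilon/2$. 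The composition $\Upsilon_1 \star \Upsilon_2$ is by construction a $\mathcal{T}$-journey from $(\A,\Lip_\A)$ to $(\D,\Lip_\D)$ (here the common endpoint $(\B,\Lip_\B)$ is shared by the two journeys, which is exactly what the composition was designed to accommodate), and since the length of a journey is defined as a sum over its constituent tunnels, $\journeylength{\Upsilon_1 \star \Upsilon_2} = \journeylength{\Upsilon_1} + \journeylength{\Upsilon_2}$. Taking the infimum over all choices of $\Upsilon_1$ and $\Upsilon_2$, and then letting $\varepsilon \to 0$, yields the triangle inequality.

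There is really no serious obstacle: the whole design of journeys, with the length defined as an additive sum over tunnels, is precisely engineered to force the triangle inequality by concatenation, and the symmetry in the reach and depth is built into the way the Hausdorff distance and convex hulls enter Definitions \ref{tunnel-reach-def} and \ref{tunnel-depth-def}. The only subtlety worth a careful sentence is that both operations must land inside the chosen class $\mathcal{T}$, which is exactly what parts of Definition \ref{compatible-tunnel-def} have been set up to ensure; without compatibility the argument would fail because we could not guarantee that the concatenated or reversed journey is still an element of the admissible class used in the infimum.
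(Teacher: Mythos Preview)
Your proposal is correct and follows essentially the same approach as the paper: both arguments use the reversal operation together with $\journeylength{\Upsilon^{-1}} = \journeylength{\Upsilon}$ for symmetry, and the composition $\Upsilon_1\star\Upsilon_2$ together with additivity $\journeylength{\Upsilon_1\star\Upsilon_2} = \journeylength{\Upsilon_1} + \journeylength{\Upsilon_2}$ for the triangle inequality. Your version is slightly more explicit in justifying the length invariance under reversal (via symmetry of the Hausdorff distance and of the convex hull in Definitions \ref{tunnel-reach-def} and \ref{tunnel-depth-def}) and phrases symmetry as a bijection of journey sets rather than two $\varepsilon$-inequalities, but these are cosmetic differences.
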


\begin{proof}
The proof follows the scheme of \cite[Proposition 2.3.7]{Latremoliere13}. Let $\varepsilon > 0$. Let $\Upsilon_1 \in \journeyset{\mathcal{T}}{\A,\Lip_\A}{\B,\Lip_\B}$ and $\Upsilon_2 \in \journeyset{\mathcal{T}}{\B,\Lip_\B}{\D,\Lip_\D}$ such that:
\begin{equation*}
\journeylength{\Upsilon_1} \leq \dpropinquity{\mathcal{T}}((\A,\Lip_\A),(\B,\Lip_\B)) + \varepsilon
\end{equation*}
and
\begin{equation*}
\journeylength{\Upsilon_2} \leq \dpropinquity{\mathcal{T}}((\B,\Lip_\B),(\D,\Lip_\D)) + \varepsilon\text{.}
\end{equation*}
First, note that $\journeylength{\Upsilon_1^{-1}} = \journeylength{\Upsilon_1}$ and thus:
\begin{equation*}
\dpropinquity{\mathcal{T}}((\B,\Lip_\B),(\A,\Lip_\A))\leq \dpropinquity{\mathcal{T}}((\A,\Lip_\A),(\B,\Lip_\B)) + \varepsilon\text{.}
\end{equation*}
Since $\varepsilon>0$ is arbitrary, and by symmetry of the roles of $(\A,\Lip_\A)$ and $(\B,\Lip_\B)$, we conclude that $\dpropinquity{\mathcal{T}}((\A,\Lip_\A),(\B,\Lip_\B)) = \dpropinquity{\mathcal{T}}((\B,\Lip_\B),(\A,\Lip_\A)) \text{.}$

Similarly, we have $\Upsilon_1\star\Upsilon_2\in\journeyset{\mathcal{T}}{\A,\Lip_\A}{\D,\Lip_\D}$ and $\journeylength{\Upsilon_1\star\Upsilon_2} = \journeylength{\Upsilon_1} + \journeylength{\Upsilon_2}$, which implies:
\begin{equation*}
\dpropinquity{\mathcal{T}}((\A,\Lip_\A),(\D,\Lip_\D)) \leq \dpropinquity{\mathcal{T}}((\A,\Lip_\A),(\B,\Lip_\B)) + \dpropinquity{\mathcal{T}}((\B,\Lip_\B),(\D,\Lip_\D)) + 2\varepsilon\text{,}
\end{equation*}
which implies the triangle inequality since $\varepsilon > 0$ is arbitrary.
\end{proof}

\begin{remark}
Let $\mathcal{C}$ be an admissible class of {\Lqcms s} and $\mathcal{T}$ be a class of tunnels compatible with $\mathcal{C}$. The dual Gromov-Hausdorff $\mathcal{T}$-propinquity $\dpropinquity{\mathcal{T}}$ is by construction, the largest pseudo-distance $\mathsf{D}$ between elements of $\mathcal{C}$ with the property that, for all $(\A,\Lip_\A),(\B,\Lip_\B) \in \mathcal{C}$, and for any tunnel $\tau$ in $\mathcal{T}$, we have:
\begin{equation*}
\mathsf{D}((\A,\Lip_\A),(\B,\Lip_\B))\leq \tunnellength{\tau}{}\text{.}
\end{equation*}
As we have discussed at the beginning of this section, the quantity:
\begin{equation*}
\inf\left\{\tunnellength{\tau}{} : \tau \in\tunnelset{\A,\Lip_\A}{\B,\Lip_\B}{\mathcal{T}}\right\}
\end{equation*}
may not define a pseudo-metric as it may generally fail to satisfy the triangle inequality. We shall prove in Theorem (\ref{coincidence-thm}) that the dual propinquity is actually the largest \emph{metric} dominated by this quantity, up to isometric-isomorphism.
\end{remark}

We conclude this section with an observation. Let $\mathcal{C}$ be a nonempty class of {\Lqcms s} and $\mathcal{T}$ be a class of tunnels compatible with $\mathcal{T}$. We define a \emph{reduced $\mathcal{T}$-journey} $\Upsilon$ as a $\mathcal{T}$-journey which is either of the form $(\A,\Lip_\A,\mathrm{id}_\A,h)$ for some $(\A,\Lip_\A)\in\mathcal{C}$ and some isometric isomorphism $h$, or such that if $\Upsilon = (\A_j,\Lip_j,\tau_j,\A_{j+1},\Lip_{j+1} : j=1,\ldots,n)$, then for all $j,k\in \{1,\ldots,n+1\}$, if $j\not=k$ then $(\A_j,\Lip_j)$ is not isomorphic, as a {\Lqcms}, to $(\A_k,\Lip_k)$. The process of \emph{reducing} a $\mathcal{T}$-journey consists in removing any loop (i.e. sub-finite family starting and ending at isomorphic {\Lqcms}) to obtain a reduced journey, up to obvious changes to tunnels (by composing by isomorphisms of {\Lqcms s} the surjections of a tunnel).

It should be observed that reducing a journey reduces its length, and that the journeys consisting of a single tunnel defined by an isomorphism have length zero. Moreover, reducing a $\mathcal{T}$-journey leads to another $\mathcal{T}$-journey, as long as $\mathcal{T}$ has the additional property that if $\tau = (\D,\Lip_\D,\pi_\A,\pi_\B)\in\mathcal{T}$ is a tunnel from $(\A,\Lip_\A)$ to $(\B,\Lip_\B)$ and if there exists an isometric isomorphism $h:(\A,\Lip_\A)\rightarrow (\A',\Lip_\A')\in\mathcal{C}$, then $(\D,\Lip_\D,h\circ \pi_\A ,\pi_\B)$ is also in $\mathcal{T}$. We shall assume this property for the rest of this section, and note that any $\mathcal{C}$-compatible class of tunnel may be completed into a class with the above property while remaining compatible with $\mathcal{C}$.

Last, let us define the composition of two reduced journeys as the reduction of the star product of journeys. We then easily check that the composition of journeys is associative and that identity journeys are neutral (where the identity journeys consist of the single tunnel $(\A,\Lip_\A,\mathrm{id}_\A,\mathrm{id}_\A)$ for all $(\A,\Lip_\A)\in\mathcal{C}$). Moreover the reverse of a reduced journey $\Upsilon$ is the inverse of $\Upsilon$ for this composition.

We now define a category whose objects are given by the class $\mathcal{C}$. For any two objects $(\A,\Lip_\A)$ and $(\B,\Lip_\B)$, we denote the set of reduced $\mathcal{T}$-journeys by:
\begin{equation*}
\reversejourneyset{\mathcal{T}}{\A,\Lip_\A}{\B,\Lip_\B}
\end{equation*}
which is a nonempty subset of $\journeyset{\mathcal{T}}{\A,\Lip_\A}{\B,\Lip_\B}$. Now, it is easy to check that the sets $\reversejourneyset{\mathcal{T}}{\cdot}{\cdot}$ are the Hom-sets of a category over $\mathcal{C}$. Moreover, this category is a groupoid (more formally, if $\mathcal{C}$ is a set then our category is a groupoid; if $\mathcal{C}$ is not a set, then all arrows are invertible). Furthermore, our category is ``connected'', in the sense given by Assertion (2) of Definition (\ref{compatible-tunnel-def}): no Hom-set is ever empty.

Thus, up to reduction, journeys should be thought of as morphisms of a category, with a length associated to each morphism. This picture should be understood as a noncommutative analogue of the idea to define the Gromov-Hausdorff distance using correspondences as morphisms: to each correspondence, one associate a distortion, i.e. a numerical quantity, and the Gromov-Hausdorff distance is then taken as the infimum of all distortion over all correspondence between two given metric spaces. Our notion of a journey could be seen as our replacement for correspondences, and length as a measurement of distortion.

We also note that we could have done the same construction with the quantum propinquity \cite{Latremoliere13}, introducing a natural notion of reduced treks.

\section{Distance Zero}

One of the two main theorems of this paper is that the dual Gromov-Hausdorff propinquity is null between two {\Lqcms s} if and only if their underlying C*-algebras are *-isomorphic and their state spaces are isometric, thus completing the proof that $\dpropinquity{}$ is a metric on the isometry, *-isomorphism equivalence classes of {\Lqcms s}. The other main theorem will concern completeness and will be proven in the last section of this paper.

The proof of our first main theorem follows the scheme of \cite{Latremoliere13}. Seeing a journey as a morphism akin to a correspondence, we first introduce the notion of an ``image'' of an element of a {\Lqcms} by a journey, which is a subset of the co-domain of the journey. There is, in fact, a family of possible images indexed by a real number, and we call these sets \emph{target sets}.

We then proceed to give estimates on the norm-radius of target sets computed from the lengths of journeys.  We then note that journeys possess an analogue of the algebraic morphism properties between C*-algebras, expressed in terms of target sets. The estimates we obtain are analogue of the results about treks for the quantum propinquity, so once they are established, we progress toward our goal in a similar fashion as for the quantum propinquity in \cite{Latremoliere13}.

Results about journey are derived by inductions based upon results on tunnels. We therefore begin with the notion of the target set of a tunnel.

\begin{definition}\label{tunnel-targetset-def}
Let $(\A,\Lip_\A)$ and $(\B,\Lip_\B)$ be two {\Lqcms s}. Let:
\begin{equation*}
\tau = (\D,\Lip_\D,\pi_\A,\pi_\B) \in \tunnelset{\A,\Lip_\A}{\B,\Lip_\B}{}\text{.}
\end{equation*}

For any $a\in\sa{\A}$ and $r\geq \Lip_\A(a)$, the $r$-target set of $a$ for $\tau$ is defined as:
\begin{equation*}
\targetsettunnel{\tau}{a}{r} = \left\{ \pi_\B(d) \in \sa{\B} \left\vert \begin{array}{l}
d\in\sa{\D},
\Lip_\D(d) \leq r,\\
\pi_\A(d) = a
\end{array}
\right. \right\}\text{.}
\end{equation*}
\end{definition}

A first observation is that target sets are non-empty compact sets.

\begin{lemma}\label{tunnelset-not-empty-lemma}
Let $(\A,\Lip_\A)$ and $(\B,\Lip_\B)$ be two {\Lqcms s}, and let:
\begin{equation*}
\tau \in \tunnelset{\A,\Lip_\A}{\B,\Lip_\B}{}\text{.}
\end{equation*}

For any $a\in\sa{\A}$ with $\Lip_\A(a)<\infty$ and $r\geq \Lip_\A(a)$, the set $\targetsettunnel{\tau}{a}{r}$ is not empty and compact in $\B$.
\end{lemma}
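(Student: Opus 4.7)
The plan is to reduce both assertions to the compactness of a single set in $\D$: for fixed $a \in \sa{\A}$ with $\Lip_\A(a) < \infty$ and $r \geq \Lip_\A(a)$, consider
\begin{equation*}
T = \left\{ d \in \sa{\D} : \pi_\A(d) = a \text{ and } \Lip_\D(d) \leq r \right\}\text{,}
\end{equation*}
so that $\targetsettunnel{\tau}{a}{r} = \pi_\B(T)$. Once $T$ is shown to be non-empty and norm-compact in $\D$, continuity of $\pi_\B$ finishes the proof.

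For non-emptiness, I would fix a state $\varphi \in \StateSpace(\A)$ and set $\psi = \varphi \circ \pi_\A \in \StateSpace(\D)$. Since $\Lip_\A$ is the quotient of $\Lip_\D$ along $\pi_\A$, pick a sequence $(d_n)_{n \in \N}$ with $\pi_\A(d_n) = a$ and $\Lip_\D(d_n) \to \Lip_\A(a)$. Translating by scalars (replacing $d_n$ by $d_n - \psi(d_n)\unit_\D + \varphi(a)\unit_\D$, which preserves both $\pi_\A(d_n) = a$ and $\Lip_\D(d_n)$ since $\Lip_\D$ kills constants), I may assume $\psi(d_n) = \varphi(a)$ for all $n$. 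Then $(d_n - \varphi(a)\unit_\D)$ lies in the set $\{e \in \sa{\D} : \psi(e) = 0, \Lip_\D(e) \leq \Lip_\A(a) + 1\}$, which is norm-precompact by Theorem \ref{az-thm} applied to the {\Lqcms} $(\D,\Lip_\D)$. Extracting a norm-convergent subsequence with limit $d$ and invoking lower semi-continuity of $\Lip_\D$ yields $\Lip_\D(d) \leq \Lip_\A(a) \leq r$ and $\pi_\A(d) = a$ by continuity of $\pi_\A$, so $d \in T$.

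For compactness, the same ingredients apply. The set $T$ is closed in $\D$: the condition $\pi_\A(d) = a$ cuts out a closed affine subspace since $\pi_\A$ is norm-continuous, while $\{d \in \sa{\D} : \Lip_\D(d) \leq r\}$ is closed by the lower semi-continuity of $\Lip_\D$ inherent in the definition of a {\Lqcms}. On the other hand, the argument above shows $T$ is contained in the translate by $\varphi(a)\unit_\D$ of $\{e \in \sa{\D} : \psi(e) = 0, \Lip_\D(e) \leq r\}$, and this latter set is norm-precompact by Theorem \ref{az-thm}. A closed subset of a norm-precompact set in the Banach space $\D$ is norm-compact, so $T$ is compact; then $\targetsettunnel{\tau}{a}{r} = \pi_\B(T)$ is compact as the continuous image of a compact set.

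The only genuinely delicate point is handling the equality case $r = \Lip_\A(a)$, where the infimum defining the quotient seminorm must actually be attained; this is exactly what the scalar-shift-plus-Arz\'ela-Ascoli argument is designed to deliver, and it is really the lower semi-continuity built into the definition of a {\Lqcms} that makes the attainment work. Everything else is a routine combination of continuity of $\pi_\A$, continuity of $\pi_\B$, and the Rieffel--Ozawa compactness criterion in Theorem \ref{az-thm}.
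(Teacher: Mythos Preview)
Your proof is correct and follows essentially the same approach as the paper: both arguments define the lift set $T=\{d\in\sa{\D}:\pi_\A(d)=a,\ \Lip_\D(d)\leq r\}$, show it is closed and contained in a translate of the precompact set $\{e\in\sa{\D}:\psi(e)=0,\ \Lip_\D(e)\leq r'\}$ from Theorem~\ref{az-thm}, and then use lower semi-continuity of $\Lip_\D$ together with a convergent subsequence of approximate lifts to handle the boundary case $r=\Lip_\A(a)$. Your scalar translation step is in fact redundant (since $\psi(d_n)=\varphi(\pi_\A(d_n))=\varphi(a)$ already), but this does not affect correctness.
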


\begin{proof}
Let $\varepsilon > 0$. By Definition (\ref{tunnel-def}), there exists $d_\varepsilon\in\sa{\D}$ such that $\pi_\A(d_\varepsilon) = a$ and:
\begin{equation*}
\Lip_\A(a)\leq \Lip_\D(d_\varepsilon) \leq\Lip_\A(a)+\varepsilon\text{.}
\end{equation*} 
Let $\varphi_\A\in\StateSpace(\A)$ and let $l = \max\{r,\Lip_\A(a) + 1\}$. The set:
\begin{equation*}
\mathfrak{d}=\{ d \in \sa{\D} : \Lip_\D(d)\leq l \text{ and }\varphi_\A\circ\pi_\A(d) = 0 \}
\end{equation*}
is norm compact by Theorem (\ref{az-thm}), since $\Lip_\D$ is lower semi-continuous. 

Now, for any $t\in\{r,\Lip_\A(a)+1\}$, the set:
\begin{equation*}
\mathfrak{l}_t=\left\{ d \in \sa{\A} : \pi_\A(d) = a \text{ and } \Lip_\D(d)\leq t \right\}
\end{equation*}
is closed since $\Lip_\D$ is lower semi-continuous and $\pi_\A$ is continuous. Moreover, for any $t\in\{r,\Lip_\A(a)+1\}$, we have:
\begin{equation*}
\mathfrak{l}_t \subseteq \left\{ d + \varphi_\A(a)\unit_\D : d\in \mathfrak{d} \right\}
\end{equation*}
by construction. Since $\mathfrak{d}$ is compact in $\D$ and $d\in\D\mapsto d+\varphi_\A(a)\unit_\D$ is continuous, the set $\mathfrak{l}_t$ is also compact in $\D$ for any $t\in\{r,\Lip_\A(a)+1\}$.

Therefore, in particular, the continuous image $\targetsettunnel{\tau}{a}{r}$ for $\pi_\B$ of $\mathfrak{l}_r$ is norm compact in $\B$.

Moreover, for all $n\in\N,n>0$, we have $d_{n^{-1}} \in \mathfrak{l}_{\Lip_\A(a)+1}\text{.}$ Thus, there exists a convergent subsequence $(d_{n_k^{-1}})_{k\in\N}$ which converges in norm to some $f \in \sa{\D}$. By lower-semi-continuity of $\Lip_\D$, we have $\Lip_\A(a)\leq \Lip_\D(f) \leq \Lip_\A(a)$, while by continuity of $\pi_\A$ we have $\pi_\A(f) = a$. Thus $\pi_\B(f)\in \targetsettunnel{\tau}{a}{r}$ for all $r\geq \Lip_\A(a)$ as desired.
\end{proof}

\begin{remark}
Using the notations of Definition (\ref{tunnel-targetset-def}), we note that if $a\in\sa{\A}$ is not in the domain of $\Lip_\A$, i.e. $\Lip_\A(a)=\infty$ with our convention, then $\targetsettunnel{\tau}{a}{\infty}$ is not empty since $\pi_\A$ is surjective, though it is not compact in general.
\end{remark}

The crucial property of the target sets for a tunnel $\tau$ is that their diameters are controlled by the length $\tau$. Consequently, when two {\Lqcms s} are close for the dual Gromov-Hausdorff propinquity, then one may expect that target sets for appropriately chosen tunnels have diameters of the order of the distance between our two {\Lqcms s}. Thus, if two {\Lqcms s} $(\A,\Lip_\A)$ and $(\B,\Lip_\B)$ are in fact at distance zero, one may find a sequence of target sets for any $a\in\sa{\A}$ associated to tunnels of ever smaller length, which converges to a singleton: the element in this singleton would then be our candidate as an image for $a$ by some prospective isometric isomorphism in the sense of Definition (\ref{iso-iso-def}). This general intuition will be the base for our proof of Theorem (\ref{coincidence-thm}). We now state the fundamental property of target sets upon which all our estimates rely.

\begin{proposition}\label{tunnel-fundamental-prop}
Let $(\A,\Lip_\A)$ and $(\B,\Lip_\B)$ be two {\Lqcms s} and:
\begin{equation*}
\tau \in \tunnelset{\A,\Lip_\A}{\B,\Lip_\B}{}\text{.}
\end{equation*}
If $a\in\sa{\A}$ with $\Lip_\A(a)<\infty$, if $r \geq \Lip_\A(a)$, and if $b\in \targetsettunnel{\tau}{a}{r}$ then:
\begin{equation*}
\|b\|_\B \leq \|a\|_\A + r\tunnelreach{\tau}{\Lip_\A,\Lip_\B}\text{,}
\end{equation*}
and $\Lip_\B(b)\leq r$.
\end{proposition}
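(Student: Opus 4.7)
The plan is straightforward: unpack the definitions of target set, tunnel, and reach, and use the fact that the Monge-Kantorovich metric on $\StateSpace(\D)$ evaluates differences of states against elements of bounded $\Lip_\D$-seminorm.

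First I would fix a witness $d\in\sa{\D}$ with $\pi_\A(d)=a$, $\pi_\B(d)=b$, and $\Lip_\D(d)\leq r$, as granted by $b\in\targetsettunnel{\tau}{a}{r}$. The bound $\Lip_\B(b)\leq r$ then falls out immediately from Definition \ref{tunnel-def}, because $\Lip_\B$ is the quotient seminorm of $\Lip_\D$ through $\pi_\B$, so $\Lip_\B(b)=\inf\{\Lip_\D(d') : \pi_\B(d')=b\}\leq \Lip_\D(d)\leq r$. This takes care of the second assertion with no extra work.

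For the norm inequality, the key observation is that $b=\pi_\B(d)$ is self-adjoint (since $\pi_\B$ is a $\ast$-morphism and $d$ is self-adjoint), so $\|b\|_\B=\sup_{\varphi\in\StateSpace(\B)}|\varphi(b)|$. I would fix an arbitrary $\varphi\in\StateSpace(\B)$ and use the definition of the reach: since $\pi_\B^\ast\varphi\in\pi_\B^\ast(\StateSpace(\B))$, Definition \ref{tunnel-reach-def} (together with compactness of state spaces, which makes the Hausdorff distance attained) produces some $\psi\in\StateSpace(\A)$ with
\begin{equation*}
\Kantorovich{\Lip_\D}\bigl(\pi_\B^\ast\varphi,\pi_\A^\ast\psi\bigr)\leq\tunnelreach{\tau}{\Lip_\A,\Lip_\B}.
\end{equation*}
Because $\Lip_\D(d)\leq r$, the rescaling property of the Monge-Kantorovich metric gives
\begin{equation*}
|\varphi(b)-\psi(a)|=\bigl|\pi_\B^\ast\varphi(d)-\pi_\A^\ast\psi(d)\bigr|\leq r\,\Kantorovich{\Lip_\D}\bigl(\pi_\B^\ast\varphi,\pi_\A^\ast\psi\bigr)\leq r\,\tunnelreach{\tau}{\Lip_\A,\Lip_\B}.
\end{equation*}

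Combining with $|\psi(a)|\leq\|a\|_\A$ yields $|\varphi(b)|\leq\|a\|_\A+r\,\tunnelreach{\tau}{\Lip_\A,\Lip_\B}$, and taking the supremum over $\varphi\in\StateSpace(\B)$ closes the proof. The only subtle point is ensuring the infimum in the definition of the reach is actually attained (or can be approximated up to an arbitrary $\varepsilon>0$); since $\pi_\A^\ast(\StateSpace(\A))$ is weak*-compact and $\Kantorovich{\Lip_\D}$ metrizes the weak* topology on $\StateSpace(\D)$, this is routine, and even an $\varepsilon$-approximation suffices because $\varepsilon$ can be sent to zero at the end. There is no real obstacle here; the proposition is essentially an unpacking lemma that will be the workhorse underlying the later estimates on journeys and target sets.
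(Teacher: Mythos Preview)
Your proof is correct and follows essentially the same approach as the paper: pick a lift $d$ witnessing $b\in\targetsettunnel{\tau}{a}{r}$, use the reach to pair any state on $\B$ with a nearby state on $\A$ inside $\StateSpace(\D)$, bound the resulting difference by $r\,\tunnelreach{\tau}{}$ via $\Lip_\D(d)\leq r$, and take the supremum over states to recover $\|b\|_\B$; the Lip-norm bound is immediate from the quotient property. The only cosmetic differences are the order of the two assertions and the naming of the states.
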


\begin{proof}
Let $a\in\sa{\A}$, $r \geq \Lip_\A(a)$ and $b\in \targetsettunnel{\tau}{a}{r}$. Let $\psi\in\StateSpace(\B)$. By Definition (\ref{tunnel-length-def}), there exists $\varphi \in \StateSpace(\A)$ such that:
\begin{equation*}
\Kantorovich{\Lip_\D}(\varphi\circ\pi_\A,\psi\circ\pi_\B) \leq \tunnelreach{\tau}{\Lip_\A,\Lip_\B}\text{.}
\end{equation*}
By Definition (\ref{tunnel-targetset-def}), there exists $d \in \sa{\D}$ such that $\pi_\A(d)=a$, $\pi_\B(d) = b$ and $\Lip_\D(d)\leq r$. Then:
\begin{equation*}
\begin{split}
|\psi(b)| &= |\psi\circ\pi_\B(d)|\\
&\leq |\psi\circ\pi_\B(d) - \varphi\circ\pi_\A(d)| + |\varphi\circ\pi_\A(d)|\\
&\leq \Lip_\D(d)\Kantorovich{\Lip_\D}(\pi_\A^\ast(\varphi),\pi_\B^\ast(\psi)) + |\varphi(a)|\\
&\leq r\tunnelreach{\tau}{\Lip_\A,\Lip_\B} + \|a\|_\A \text{.}
\end{split}
\end{equation*}
Thus, by \cite[Theorem 4.3.4]{Kadison97}, since $b\in\sa{\B}$:
\begin{equation*}
\|b\|_\B = \sup \{ |\psi(b)| : \psi\in\StateSpace(\B) \} \leq r\tunnelreach{\tau}{\Lip_\A,\Lip_\B} + \|a\|_\A\text{.}
\end{equation*}
Since $\Lip_\B$ is the quotient seminorm of $\Lip_\D$ on $\B$ via $\pi_\B$, we have:
\begin{equation*}
\Lip_\B(b) = \Lip_\B(\pi_\B(d)) \leq \Lip_\D(d) \leq r\text{,}
\end{equation*}
as desired.
\end{proof}

We can now relate the linear structure of {\Lqcms s} and target sets of tunnels. An important consequence of this relation regards the diameter of target sets. Note that this result only involves the reach of tunnels (see Definition (\ref{tunnel-reach-def})).

\begin{corollary}\label{tunnel-linear-corollary}
Let $(\A,\Lip_\A)$ and $(\B,\Lip_\B)$ be two {\Lqcms s} and:
\begin{equation*}
\tau \in \tunnelset{\A,\Lip_\A}{\B,\Lip_\B}{}\text{.}
\end{equation*}

Let $a,a'\in \sa{\A}$ with $\Lip_\A(a)<\infty$ and $\Lip_\A(a')<\infty$. Let $r \geq \Lip_\A(a)$ and $r' \geq \Lip_\A(a')$. Then:
\begin{enumerate}
\item For all $b\in\targetsettunnel{\tau}{a}{r}$, $b'\in\targetsettunnel{\tau}{a'}{r'}$ and $t\in\R$, we have:
\begin{equation*}
b + tb' \in \targetsettunnel{\tau}{a+ta'}{r+|t|r'}\text{,}
\end{equation*}
\item If $r \geq \max\{\Lip_\A(a),\Lip_\A(a')\}$ then:
\begin{equation*}
\sup \left\{ \|b-b'\|_\B : b\in\targetsettunnel{\tau}{a}{r},b'\in\targetsettunnel{\tau}{a'}{r} \right\} \leq \|a-a'\|_\A + 2r\tunnelreach{\tau}{\Lip_\A,\Lip_\B}\text{.}
\end{equation*}
\item In particular:
\begin{equation*}
\diam{\targetsettunnel{\tau}{a}{r}}{\|\cdot\|_\B} \leq 2r\tunnelreach{\tau}{\Lip_\A,\Lip_\B}\text{.}
\end{equation*}
\end{enumerate}
\end{corollary}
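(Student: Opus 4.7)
The plan is to derive all three assertions as straightforward algebraic consequences of Definition (\ref{tunnel-targetset-def}) together with the key estimate provided by Proposition (\ref{tunnel-fundamental-prop}).

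For Assertion (1), I would lift $b$ and $b'$: by definition of the target sets, there exist $d,d'\in\sa{\D}$ with $\pi_\A(d)=a$, $\pi_\B(d)=b$, $\Lip_\D(d)\leq r$, and $\pi_\A(d')=a'$, $\pi_\B(d')=b'$, $\Lip_\D(d')\leq r'$. Since $\pi_\A$ and $\pi_\B$ are unital $*$-morphisms, they are $\R$-linear, so for any $t\in\R$, the element $d+td'\in\sa{\D}$ satisfies $\pi_\A(d+td')=a+ta'$ and $\pi_\B(d+td')=b+tb'$. Finally, $\Lip_\D$ is a seminorm, so $\Lip_\D(d+td')\leq\Lip_\D(d)+|t|\Lip_\D(d')\leq r+|t|r'$. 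This exhibits $b+tb'$ as an element of $\targetsettunnel{\tau}{a+ta'}{r+|t|r'}$.

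For Assertion (2), I would apply Assertion (1) with $t=-1$ and common bound $r\geq\max\{\Lip_\A(a),\Lip_\A(a')\}$ to conclude that $b-b'\in\targetsettunnel{\tau}{a-a'}{2r}$ for any $b\in\targetsettunnel{\tau}{a}{r}$ and $b'\in\targetsettunnel{\tau}{a'}{r}$. Since $\Lip_\A(a-a')\leq 2r$ so that $b-b'$ lies in a legitimate target set, Proposition (\ref{tunnel-fundamental-prop}) applies and yields
\begin{equation*}
\|b-b'\|_\B \leq \|a-a'\|_\A + 2r\,\tunnelreach{\tau}{\Lip_\A,\Lip_\B}\text{,}
\end{equation*}
and taking the supremum over all such $b,b'$ gives the claimed inequality.

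Assertion (3) is then the special case $a=a'$ of Assertion (2), since $\|a-a\|_\A=0$. I do not anticipate any real obstacle here: the content of the corollary is essentially that target sets behave linearly in their first argument and that the reach controls their diameter, and both facts are immediate once Proposition (\ref{tunnel-fundamental-prop}) is in hand. The only minor point to check carefully is that the hypotheses $r\geq\Lip_\A(a)$ (resp.\ $r'\geq\Lip_\A(a')$) ensure the target sets are nonempty via Lemma (\ref{tunnelset-not-empty-lemma}), so that Assertion (2) and (3) are not vacuous.
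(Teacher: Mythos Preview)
Your proposal is correct and follows essentially the same approach as the paper: lift $b$ and $b'$ to $d,d'\in\sa{\D}$ via Definition (\ref{tunnel-targetset-def}), use linearity of the epimorphisms and the seminorm property of $\Lip_\D$ for Assertion (1), then specialize to $t=-1$ with a common bound $r$ and invoke Proposition (\ref{tunnel-fundamental-prop}) for Assertion (2), with Assertion (3) as the case $a=a'$. Your additional remark on nonemptiness via Lemma (\ref{tunnelset-not-empty-lemma}) is a nice touch the paper leaves implicit.
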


\begin{proof}
Let $b\in\targetsettunnel{\tau}{r}{a}$, $b'\in\targetsettunnel{\tau}{r'}{a'}$ and $t\in\R$. By Definition (\ref{tunnel-targetset-def}), there exists $d\in\sa{\D}$ such that $\pi_\B(d) = b$, $\pi_\A(d) = a$, $\Lip_\D(d)\leq r$. Similarly, there exists $d'\in\sa{\D}$ such that $\pi_\A(d')=a'$, $\pi_\B(d')=b'$ and $\Lip_\D(d')\leq r'$. Then:
\begin{equation*}
\Lip_\D(d+td') \leq \Lip_\D(d) + |t|\Lip_\D(d')\leq r + |t|r' \text{,}
\end{equation*}
and $\pi_\A(d+td') = a+ta'$, so $b+tb'=\pi_\B(d+td') \in \targetsettunnel{\tau}{a+ta'}{r+|t|r'}$ by Definition (\ref{tunnel-targetset-def}). This completes the proof of (1).

Now, let $a,a' \in \sa{\A}$ and $r \geq \max\{\Lip_\A(a),\Lip_\A(a')\}$. Then if $b\in\targetsettunnel{\tau}{a}{r}$ and $b'\in\targetsettunnel{\tau}{a'}{r}$ then $b-b' \in \targetsettunnel{\tau}{a-a'}{2r}$ by the proof of (1). By Proposition (\ref{tunnel-fundamental-prop}), we have:
\begin{equation}\label{tunnel-algebra-eq1}
\|b-b'\|_\B \leq \|a-a'\|_\A + 2r\tunnelreach{\tau}{\Lip_\A,\Lip_\B}\text{.}
\end{equation}
This proves Assertion (2) of our proposition.

Assertion (3) is now obtained from Inequality (\ref{tunnel-algebra-eq1}) with $a=a'$. This completes our proof.
\end{proof}

Our next goal is to relate the multiplicative structure of {\Lqcms s} with target sets of tunnels. This requires us to obtain some bound on the norm of lifts of elements with finite Lip-norm since the Leibniz property given in Definition (\ref{Leibniz-def}) involves both Lip-norms and norms of elements. Such estimates are made possible thanks to our Definition (\ref{tunnel-depth-def}) of the depth of a tunnel.

\begin{proposition}\label{tunnel-lift-bound-prop}
Let $(\A,\Lip_\A)$ and $(\B,\Lip_\B)$ be two {\Lqcms s} and:
\begin{equation*}
\tau \in \tunnelset{\A,\Lip_\A}{\B,\Lip_\B}{}\text{.}
\end{equation*}

If $a\in\sa{\A}$ with $\Lip_\A(a)<\infty$, if $r \geq \Lip_\A(a)$ and if $d\in\sa{\D}$ with $\Lip_\D(d)\leq r$ and $\pi_\A(d) = a$, then:
\begin{equation*}
\|d\|_\D \leq \|a\|_\A + 2 r\tunnellength{\tau}{\Lip_\A,\Lip_\B}\text{.}
\end{equation*}
\end{proposition}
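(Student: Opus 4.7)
The plan is to exploit the depth of the tunnel, which measures how well states of $\D$ can be approximated (in $\Kantorovich{\Lip_\D}$) by convex combinations of pulled-back states from $\A$ and $\B$. Since $d\in\sa{\D}$ satisfies $\|d\|_\D = \sup\{|\varphi(d)| : \varphi\in\StateSpace(\D)\}$, it suffices to bound $|\varphi(d)|$ uniformly by $\|a\|_\A + 2r\tunnellength{\tau}{\Lip_\A,\Lip_\B}$ for each state $\varphi$ of $\D$.

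First I would fix $\varphi\in\StateSpace(\D)$ and use Definition (\ref{tunnel-depth-def}) to produce, for each $\varepsilon>0$, a state $\mu\in\co{\pi_\A^\ast(\StateSpace(\A))\cup\pi_\B^\ast(\StateSpace(\B))}$ with
\begin{equation*}
\Kantorovich{\Lip_\D}(\varphi,\mu) \leq \tunneldepth{\tau}{\Lip_\A,\Lip_\B}+\varepsilon.
\end{equation*}
Since $\Lip_\D(d)\leq r$, the $\Kantorovich{\Lip_\D}$-Lipschitz property of $d$ yields $|\varphi(d)-\mu(d)|\leq r(\tunneldepth{\tau}{\Lip_\A,\Lip_\B}+\varepsilon)$, so the task reduces to bounding $|\mu(d)|$.

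Next I would bound $|\mu(d)|$ on the convex hull before passing to its closure. For $\mu = \pi_\A^\ast(\varphi_\A)$ with $\varphi_\A\in\StateSpace(\A)$, we have $\mu(d)=\varphi_\A(\pi_\A(d))=\varphi_\A(a)$, so $|\mu(d)|\leq\|a\|_\A$. For $\mu = \pi_\B^\ast(\psi)$, we get $\mu(d)=\psi(\pi_\B(d))$, and since $\pi_\B(d)\in\targetsettunnel{\tau}{a}{r}$, Proposition (\ref{tunnel-fundamental-prop}) gives $\|\pi_\B(d)\|_\B\leq\|a\|_\A+r\tunnelreach{\tau}{\Lip_\A,\Lip_\B}$. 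A convex combination is bounded by the maximum of these bounds, and the resulting estimate $|\mu(d)|\leq\|a\|_\A+r\tunnelreach{\tau}{\Lip_\A,\Lip_\B}$ passes to the weak*-closure (on which the evaluation $\mu\mapsto\mu(d)$ is continuous and the bound is constant).

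Combining these two estimates and using $\max\{\tunnelreach{\tau}{\Lip_\A,\Lip_\B},\tunneldepth{\tau}{\Lip_\A,\Lip_\B}\}=\tunnellength{\tau}{\Lip_\A,\Lip_\B}$ from Definition (\ref{tunnel-length-def}) gives, after letting $\varepsilon\to 0$,
\begin{equation*}
|\varphi(d)| \leq \|a\|_\A + r\tunnelreach{\tau}{\Lip_\A,\Lip_\B} + r\tunneldepth{\tau}{\Lip_\A,\Lip_\B} \leq \|a\|_\A + 2r\tunnellength{\tau}{\Lip_\A,\Lip_\B}
\end{equation*}
for every $\varphi\in\StateSpace(\D)$. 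Taking the supremum in $\varphi$ and invoking \cite[Theorem 4.3.4]{Kadison97} yields the desired bound on $\|d\|_\D$. The only delicate step is the passage from convex combinations to their weak*-closure; this is routine once one notes that $\mu\mapsto\mu(d)$ is weak*-continuous and that the bound on the convex hull does not depend on the particular convex decomposition.
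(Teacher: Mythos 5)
Your proposal is correct and follows essentially the same route as the paper: fix a state $\psi$ of $\D$, use the depth to approximate it by a convex combination $t\varphi_\A\circ\pi_\A+(1-t)\varphi_\B\circ\pi_\B$, bound the latter's evaluation at $d$ by $\max\{\|a\|_\A,\|\pi_\B(d)\|_\B\}$, and control $\|\pi_\B(d)\|_\B$ via Proposition (\ref{tunnel-fundamental-prop}). Your extra care in passing from the convex hull to its closure via weak*-continuity of $\mu\mapsto\mu(d)$ is a harmless refinement of a step the paper handles implicitly by working directly with an explicit convex combination.
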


\begin{proof}
Let $d\in\sa{\D}$ such that $\pi_\A(d) = a$ and $\Lip_\D(d)\leq r$. Write $b=\pi_\B(d)$, so that $b\in\targetsettunnel{\tau}{a}{r}$ by Definition (\ref{tunnel-targetset-def}).

Let $\psi \in \StateSpace(\D)$ and let $\varepsilon > 0$. By Definition (\ref{tunnel-depth-def}), there exist $t_\varepsilon \in [0,1]$, $\varphi_{\A,\varepsilon} \in \StateSpace(\A)$ and $\varphi_{\B,\varepsilon} \in \StateSpace(\B)$ such that:
\begin{equation*}
\Kantorovich{\Lip_\D}\left(\psi,(t_\varepsilon\varphi_{\A,\varepsilon}\circ\pi_\A + (1-t_\varepsilon)\varphi_{\B,\varepsilon}\circ\pi_\B))\right) < \tunneldepth{\tau}{\Lip_\A,\Lip_\B} + \varepsilon\text{.}
\end{equation*}
We write $\mu_\varepsilon = t_\varepsilon\varphi_{\A,\varepsilon}\circ\pi_\A + (1-t_\varepsilon)\varphi_{\B,\varepsilon}\circ\pi_\B \in \StateSpace(\D)$.

Now:
\begin{equation*}
\begin{split}
|\psi(d)| &\leq |\psi(d) - \mu_\varepsilon(d)| + |\mu_\varepsilon(d)|\\
&\leq r \Kantorovich{\Lip_\D}\left(\psi,\mu_\varepsilon\right) + t_\varepsilon|\varphi_{\A,\varepsilon}(a)| + (1-t_\varepsilon)|\varphi_{\B,\varepsilon}(b)|\\
&\leq r(\tunneldepth{\tau}{\Lip_\A,\Lip_\B} + \varepsilon) + t_\varepsilon\|a\|_\A + (1-t_\varepsilon)\|b\|_\B\\
&\leq r(\tunneldepth{\tau}{\Lip_\A,\Lip_\B} + \varepsilon) + \max\{\|a\|_\A,\|b\|_\B\}\text{.}
\end{split}
\end{equation*}
As $\varepsilon > 0$ is arbitrary, we conclude $|\psi(d)|\leq r\tunneldepth{\tau}{\Lip_\A,\Lip_\B} + \max\{\|a\|_\A,\|b\|_\B\}$. Since $\psi\in\StateSpace(\D)$ is arbitrary, we conclude $\|d\|_\D \leq r\tunneldepth{\tau}{\Lip_\A,\Lip_\B} + \max\{\|a\|_\A,\|b\|_\B\}$.

Now, by Proposition (\ref{tunnel-fundamental-prop}), we have $\|b\|_\B \leq \|a\|_\A + r\tunnelreach{\tau}{\Lip_\A,\Lip_\B}$. Hence our proposition is proven.
\end{proof}

\begin{remark}\label{generalized-tunnel-rmk}
The proofs of Proposition (\ref{tunnel-lift-bound-prop}) and Proposition (\ref{tunnel-fundamental-prop}) do not involve the Leibniz property of the various Lip-norms involved. Thus, we can still apply these propositions to a generalized notion of tunnel between {\qcms s} $(\A,\Lip_\A)$ and $(\B,\Lip_\B)$, given as a quadruple $(\D,\Lip_\D,\pi_\A,\pi_\B)$ of a {\qcms} $(\D,\Lip_\D)$ and two *-epimorphisms $\pi_\A:\D\rightarrow\A$ and $\pi_\B:\D\rightarrow\B$ such that the respective quotient seminorms of $\Lip_\D$ for $\pi_\A$ and $\pi_\B$ are given by $\Lip_\A$ and $\Lip_\B$. The notions of reach, depth and length are defined identically for these generalized tunnels as in Definitions (\ref{tunnel-reach-def}), (\ref{tunnel-depth-def}) and (\ref{tunnel-length-def}). This remark will prove useful for Lemma (\ref{tunnel-length-lemma}), where we will construct quadruples which will eventually turn out to be genuine tunnels, though the proof of this fact will require dealing with them as generalized tunnels first.
\end{remark}

We thus can now establish the analogue of a multiplicative morphism property for target sets. This property makes explicit use of the Leibniz property of Definition (\ref{Leibniz-def}), and it is the argument which will allow us to prove that the maps constructed from target sets, when the distance between two {\Lqcms s} is zero, are indeed multiplicative.

\begin{proposition}\label{tunnel-product-prop}
Let $(\A,\Lip_\A)$ and $(\B,\Lip_\B)$ be two {\Lqcms s} and:
\begin{equation*}
\tau \in \tunnelset{\A,\Lip_\A}{\B,\Lip_\B}{}\text{.}
\end{equation*}

Let $a,a'\in\sa{\A}$ with $\Lip_\A(a) < \infty$ and $\Lip_\A(a') < \infty$, and let $r\geq\max\{\Lip_\A(a),\Lip_\A(a')\}$. Let $b\in\targetsettunnel{\tau}{a}{r}$, $b'\in\targetsettunnel{\gamma}{a'}{r}$. Then:
\begin{equation*}
\Jordan{b}{b'} \in \targetsettunnel{\tau}{\Jordan{a}{a'}}{(\|a\|+\|a'\|+4r \tunnellength{\tau}{\Lip_\A,\Lip_\B})r}\text{,}
\end{equation*}
while:
\begin{equation*}
\Lie{b}{b'} \in \targetsettunnel{\tau}{\Lie{a}{a'}}{(\|a\|+\|a'\|+4r \tunnellength{\tau}{\Lip_\A,\Lip_\B})r}\text{.}
\end{equation*}
\end{proposition}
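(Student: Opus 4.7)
My plan is to lift the products upstairs into $\D$ and apply the Leibniz condition there, using Proposition (\ref{tunnel-lift-bound-prop}) to control the norms of the lifts.

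First, I would invoke Definition (\ref{tunnel-targetset-def}) to produce lifts $d, d' \in \sa{\D}$ with $\pi_\A(d)=a$, $\pi_\B(d)=b$, $\Lip_\D(d)\leq r$, and analogously $\pi_\A(d')=a'$, $\pi_\B(d')=b'$, $\Lip_\D(d')\leq r$. Since $\pi_\A$ and $\pi_\B$ are *-homomorphisms, they preserve the Jordan and Lie products on self-adjoint parts, so $\pi_\A(\Jordan{d}{d'})=\Jordan{a}{a'}$, $\pi_\B(\Jordan{d}{d'})=\Jordan{b}{b'}$, and similarly for $\Lie{\cdot}{\cdot}$. Thus the only remaining task is to bound $\Lip_\D(\Jordan{d}{d'})$ and $\Lip_\D(\Lie{d}{d'})$ by the claimed quantity; once done, Definition (\ref{tunnel-targetset-def}) immediately places $\Jordan{b}{b'}$ and $\Lie{b}{b'}$ in the desired target sets.

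Now the Leibniz property of $\Lip_\D$, guaranteed since $(\D,\Lip_\D)$ is a {\Lqcms}, gives
\begin{equation*}
\Lip_\D(\Jordan{d}{d'}) \leq \|d\|_\D \Lip_\D(d') + \|d'\|_\D \Lip_\D(d) \leq r\left(\|d\|_\D + \|d'\|_\D\right)\text{,}
\end{equation*}
and exactly the same inequality for $\Lie{d}{d'}$. To convert this into a bound involving $a$ and $a'$, I invoke Proposition (\ref{tunnel-lift-bound-prop}) applied to $d$ with parameter $r$ (legitimate since $r\geq \Lip_\A(a)$) to obtain $\|d\|_\D \leq \|a\|_\A + 2r\tunnellength{\tau}{\Lip_\A,\Lip_\B}$, and analogously $\|d'\|_\D \leq \|a'\|_\A + 2r\tunnellength{\tau}{\Lip_\A,\Lip_\B}$. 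Substituting yields
\begin{equation*}
\Lip_\D(\Jordan{d}{d'}) \leq r\left(\|a\|_\A + \|a'\|_\A + 4r\tunnellength{\tau}{\Lip_\A,\Lip_\B}\right)\text{,}
\end{equation*}
and identically for $\Lie{d}{d'}$, which is precisely the parameter appearing in the statement.

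There is no real obstacle here: the Leibniz inequality in $\D$ and the norm bound of Proposition (\ref{tunnel-lift-bound-prop}) do all the work, while the *-homomorphism property of $\pi_\A,\pi_\B$ takes care of the algebraic identity. The only point requiring mild care is to make sure $r$ is an admissible parameter when applying Proposition (\ref{tunnel-lift-bound-prop}) to each of $d$ and $d'$ separately, which holds by the hypothesis $r \geq \max\{\Lip_\A(a),\Lip_\A(a')\}$. This is essentially why the depth of $\tau$ had to be incorporated into the length in Definition (\ref{tunnel-length-def}): without that ingredient, no norm control on lifts would be available, and the Leibniz bound could not be promoted from $\D$ down to data involving only $a$, $a'$, and $\tau$.
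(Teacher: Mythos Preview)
Your proposal is correct and follows essentially the same route as the paper: lift $b,b'$ to $d,d'\in\sa{\D}$ via the definition of the target set, bound $\|d\|_\D,\|d'\|_\D$ by Proposition~(\ref{tunnel-lift-bound-prop}), and then apply the Leibniz inequality for $\Lip_\D$ to control $\Lip_\D(\Jordan{d}{d'})$ and $\Lip_\D(\Lie{d}{d'})$. The only cosmetic difference is that the paper states the norm bounds first and then applies Leibniz, whereas you do it in the reverse order; your closing remark about the role of the depth is accurate and matches the motivation given earlier in the paper.
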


\begin{proof}
Let $a,a'\in\sa{\A}$, $r\geq \max\{\Lip_\A(a),\Lip_\A(a')\}$. Let $b\in\targetsettunnel{\gamma}{a}{r}$, $b'\in\targetsettunnel{\gamma}{a'}{r}$. Let $d,d' \in \sa{\D}$ such that:
\begin{equation*}
\Lip_\D(d)\leq r\text{, } \quad \pi_\A(d) = a\text{, } \quad \pi_\B(d)=b
\end{equation*}
and
\begin{equation*}
\Lip_\D(d')\leq r \text{, }\quad \pi_\A(d') = a' \text{, }\quad \pi_\B(d')=b'\text{.}
\end{equation*}

By Proposition (\ref{tunnel-lift-bound-prop}), we have:
\begin{equation*}
\|d\|_\D \leq \|a\|_\A + 2r\tunnellength{\tau}{\Lip_\A,\Lip_\B} \text{ and }\|d'\|_\D \leq \|a'\|_\A + 2r\tunnellength{\tau}{\Lip_\A,\Lip_\B} \text{.}
\end{equation*}

Since $(\D,\Lip_\D)$ is a {\Lqcms}, we get:
\begin{equation*}
\begin{split}
\Lip_\D(\Jordan{d}{d'}) &\leq \|d\|_\D\Lip_\D(d') + \|d'\|_\D\Lip_\D(d)\\
&\leq \left(\|a\|_\A + 2r\tunnellength{\tau}{\Lip_\A,\Lip_\B}\right)r + \left(\|a'\|_\A + 2r\tunnellength{\tau}{\Lip_\A,\Lip_\B}\right)r\\
&\leq \left(\|a\|_\A + \|a'\|_\A + 4r \tunnellength{\tau}{\Lip_\A,\Lip_\B}\right)r \text{.}
\end{split}
\end{equation*}
Since $\pi_\A(\Jordan{d}{d'}) = \Jordan{a}{a'}$, we conclude our proposition holds true for the Jordan product.

The proof for the Lie product is similar.
\end{proof}

We now turn to extending Proposition (\ref{tunnel-fundamental-prop}), Corollary (\ref{tunnel-linear-corollary}) and Proposition (\ref{tunnel-product-prop}) from tunnels to journeys. To this end, we first introduce the notion of an itinerary along a journey, which will then enable us to define target sets of journeys, as well as provide a natural tool to derive results about journeys from their analogues for tunnels.

\begin{definition}\label{itineraries-journey-def}
Let $\mathcal{C}$ be a nonempty class of {\Lqcms s} and let $\mathcal{T}$ be a class of tunnels compatible with $\mathcal{C}$. Let $(\A,\Lip_\A)$ and $(\B,\Lip_\B)$ in $\mathcal{C}$. Let $\Upsilon \in \journeyset{\mathcal{T}}{\A,\Lip_\A}{\B,\Lip_\B}$ and write:
\begin{equation*}
\Upsilon = \left( \A_j,\Lip_j,\tau_j,\A_{j+1},\Lip_{j+1}: j = 1,\ldots,n \right)\text{.}
\end{equation*}
For any $a\in\sa{\A}$ and $b\in\sa{\B}$, and for any $r\geq \Lip_\A(a)$, we define the set of \emph{$r$-itineraries from $a$ to $b$}:
\begin{equation*}
\lambdaitineraries{\Upsilon}{a}{b}{r}
\end{equation*}
by:
\begin{equation*}
\left\{ (\eta_j)_{j\in\{1,\ldots,n+1\}} : \forall j \in \{1,\ldots,n\} \quad \eta_{j+1} \in \targetsettunnel{\tau_j}{\eta_j}{r} \text{ and }\eta_1=a,\eta_{n+1}=b \right\} \text{.}
\end{equation*}
\end{definition}

\begin{notation}
Let $(\A,\Lip_\A)$ and $(\B,\Lip_\B)$ be two {\Lqcms s} and:
\begin{equation*}
\tau \in \tunnelset{\A,\Lip_\A}{\B,\Lip_\B}{}\text{.}
\end{equation*}

For any $a\in\sa{\A}$ and $r\geq \Lip_\A(a)$, we set:
\begin{equation*}
\lambdaitineraries{\tau}{a}{\cdot}{r} = \bigcup_{b\in\sa{\B}}\lambdaitineraries{\tau}{a}{b}{r} \text{.}
\end{equation*}
Similarly, for any $b\in\sa{\B}$ and $r\geq\Lip_\B(b)$, we set:
\begin{equation*}
\lambdaitineraries{\tau}{\cdot}{b}{r} = \bigcup_{a\in\sa{\B}}\lambdaitineraries{\tau}{a}{b}{r} \text{.}
\end{equation*}
\end{notation}

\begin{lemma}\label{itineraries-not-empty-lemma}
Let $\mathcal{C}$ be a nonempty class of {\Lqcms s} and let $\mathcal{T}$ be a class of tunnels compatible with $\mathcal{C}$. Let $(\A,\Lip_\A)$ and $(\B,\Lip_\B)$ in $\mathcal{C}$. Let $\Upsilon \in \journeyset{\mathcal{T}}{\A,\Lip_\A}{\B,\Lip_\B}$. For all $a\in\sa{\A}$ and $r\geq \Lip_\A(a)$, the set $\lambdaitineraries{\Upsilon}{a}{\cdot}{r}$ is not empty.
\end{lemma}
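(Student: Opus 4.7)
The plan is to proceed by a straightforward induction on the size $n$ of the journey $\Upsilon$, building up an itinerary one coordinate at a time using the non-emptiness of tunnel target sets established in Lemma (\ref{tunnelset-not-empty-lemma}).

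Write $\Upsilon = (\A_j,\Lip_j,\tau_j,\A_{j+1},\Lip_{j+1} : j = 1,\ldots,n)$ with $(\A_1,\Lip_1) = (\A,\Lip_\A)$ and $(\A_{n+1},\Lip_{n+1}) = (\B,\Lip_\B)$. Set $\eta_1 = a$, so that by hypothesis $\Lip_1(\eta_1) = \Lip_\A(a) \leq r < \infty$. The plan is to construct the remaining $\eta_2,\ldots,\eta_{n+1}$ recursively so that at every stage $j$ we both have $\eta_{j+1}\in\targetsettunnel{\tau_j}{\eta_j}{r}$ and the inductive hypothesis $\Lip_{j+1}(\eta_{j+1}) \leq r$ is preserved, which is exactly what is needed to invoke Lemma (\ref{tunnelset-not-empty-lemma}) at the next step.

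The recursive step runs as follows. Assuming $\eta_j \in \sa{\A_j}$ has been constructed with $\Lip_j(\eta_j) \leq r$, Lemma (\ref{tunnelset-not-empty-lemma}) applied to the tunnel $\tau_j$ from $(\A_j,\Lip_j)$ to $(\A_{j+1},\Lip_{j+1})$ guarantees that $\targetsettunnel{\tau_j}{\eta_j}{r}$ is a nonempty (compact) subset of $\sa{\A_{j+1}}$. Choose any $\eta_{j+1}$ in this set. Then Proposition (\ref{tunnel-fundamental-prop}) applied to $\tau_j$, $\eta_j$, and $\eta_{j+1}$ yields $\Lip_{j+1}(\eta_{j+1}) \leq r$, which is precisely the hypothesis needed to continue the recursion at step $j+1$.

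After $n$ steps we obtain a tuple $(\eta_1,\ldots,\eta_{n+1})$ with $\eta_1 = a$, $\eta_{n+1} \in \sa{\B}$, and $\eta_{j+1}\in\targetsettunnel{\tau_j}{\eta_j}{r}$ for each $j \in \{1,\ldots,n\}$. By Definition (\ref{itineraries-journey-def}) this tuple belongs to $\lambdaitineraries{\Upsilon}{a}{\eta_{n+1}}{r}$, hence $\lambdaitineraries{\Upsilon}{a}{\cdot}{r}$ is nonempty. There is no real obstacle here: the only nontrivial ingredient is the propagation of the bound $\Lip_j(\eta_j)\leq r$ along the journey, and that is delivered immediately by Proposition (\ref{tunnel-fundamental-prop}); the non-emptiness at each step is exactly Lemma (\ref{tunnelset-not-empty-lemma}).
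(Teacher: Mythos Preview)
Your proof is correct and follows exactly the approach the paper sketches: an induction on the size of the journey using Lemma~(\ref{tunnelset-not-empty-lemma}) at each step, with the Lip-norm bound propagating via Proposition~(\ref{tunnel-fundamental-prop}). The only omission is the degenerate case $\Lip_\A(a)=\infty$ (hence $r=\infty$), which the paper dispatches as trivial since surjectivity of each $\pi_j$ furnishes lifts with no Lip-norm constraint; you tacitly assume $r<\infty$ without justification, but this is a minor point.
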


\begin{proof}
This follows from an easy induction using Lemma (\ref{tunnelset-not-empty-lemma}) when $\Lip_\A(a)<\infty$, and is trivial when $\Lip_\A(a)=\infty$.
\end{proof}

Journeys are a noncommutative analogue of correspondences between {\Lqcms s}, and in particular, as the analogue of set-valued functions, one can make sense of the image of an element by a journey; this is the subject of the following definition.

\begin{definition}\label{journey-targetset-def}
Let $\mathcal{C}$ be a nonempty class of {\Lqcms s} and let $\mathcal{T}$ be a class of tunnels compatible with $\mathcal{C}$. Let $(\A,\Lip_\A)$ and $(\B,\Lip_\B)$ in $\mathcal{C}$. Let $\Upsilon \in \journeyset{\mathcal{T}}{\A,\Lip_\A}{\B,\Lip_\B}$. For any $a\in\sa{\A}$ and $r\geq \Lip_\A(a)$, we set:
\begin{equation*}
\targetsetjourney{\Upsilon}{a}{r} = \left\{ b \in \sa{\B} : \lambdaitineraries{\Upsilon}{a}{b}{r} \not= \emptyset \right\}\text{.}
\end{equation*}
\end{definition}

The first natural observation is:

\begin{lemma}\label{targetsetjourney-not-empty-lemma}
Let $\mathcal{C}$ be a nonempty class of {\Lqcms s} and let $\mathcal{T}$ be a class of tunnels compatible with $\mathcal{C}$. Let $(\A,\Lip_\A)$ and $(\B,\Lip_\B)$ in $\mathcal{C}$. Let $\Upsilon \in \journeyset{\mathcal{T}}{\A,\Lip_\A}{\B,\Lip_\B}$. For any $a\in\sa{\A}$ and $r\geq\Lip_\A(a)$, the set $\targetsetjourney{\Upsilon}{a}{r}$ is not empty.
\end{lemma}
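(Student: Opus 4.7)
The plan is to derive this as an essentially immediate consequence of Lemma \ref{itineraries-not-empty-lemma} together with Definition \ref{journey-targetset-def}. The key observation is that the target set $\targetsetjourney{\Upsilon}{a}{r}$ is, by definition, the set of endpoints of $r$-itineraries starting at $a$, so its non-emptiness reduces exactly to the non-emptiness of $\lambdaitineraries{\Upsilon}{a}{\cdot}{r}$.

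Concretely, I would write $\Upsilon = (\A_j, \Lip_j, \tau_j, \A_{j+1}, \Lip_{j+1} : j = 1, \ldots, n)$ and fix $a \in \sa{\A}$ with $r \geq \Lip_\A(a)$. By Lemma \ref{itineraries-not-empty-lemma}, the set $\lambdaitineraries{\Upsilon}{a}{\cdot}{r}$ is non-empty, so there exists a finite family $(\eta_j)_{j \in \{1, \ldots, n+1\}}$ with $\eta_1 = a$ and $\eta_{j+1} \in \targetsettunnel{\tau_j}{\eta_j}{r}$ for all $j \in \{1, \ldots, n\}$. Setting $b = \eta_{n+1} \in \sa{\A_{n+1}} = \sa{\B}$, we obtain $(\eta_j)_{j=1}^{n+1} \in \lambdaitineraries{\Upsilon}{a}{b}{r}$, hence $b \in \targetsetjourney{\Upsilon}{a}{r}$ by Definition \ref{journey-targetset-def}.

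There is no real obstacle here: the whole content is already packaged inside Lemma \ref{itineraries-not-empty-lemma}, which itself follows by induction using Lemma \ref{tunnelset-not-empty-lemma} tunnel-by-tunnel (with the case $\Lip_\A(a) = \infty$ being trivial because the *-epimorphisms defining each tunnel are surjective). The present lemma is therefore purely a bookkeeping unpacking of the definitions, and no estimate on norms, reaches, or depths is needed at this stage — those will come into play only when we bound the diameter of $\targetsetjourney{\Upsilon}{a}{r}$ in terms of $\journeylength{\Upsilon}$.
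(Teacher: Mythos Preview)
Your proof is correct and follows exactly the paper's approach: the paper's proof is the single sentence ``This follows immediately from Lemma (\ref{itineraries-not-empty-lemma}),'' and your argument is precisely the unpacking of that implication via Definition (\ref{journey-targetset-def}).
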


\begin{proof}
This follows immediately from (\ref{itineraries-not-empty-lemma}).
\end{proof}

By induction, we thus derive the following result about journeys. Observe that Assertions (2) and (3) of Corollary (\ref{journey-fundamental-corollary}) could be interpreted by stating, somewhat informally, that (reduced) journeys are morphisms for the underlying Jordan-Lie algebra structure of the self-adjoint parts of {\Lqcms s}. As we explained when presenting our results about target sets for tunnels, the following result will allow us to build isometric isomorphisms between {\Lqcms s} at distance zero, as limits of journeys, properly defined.

\begin{corollary}\label{journey-fundamental-corollary}
Let $\mathcal{C}$ be a nonempty class of {\Lqcms s} and let $\mathcal{T}$ be a class of tunnels compatible with $\mathcal{C}$. Let $(\A,\Lip_\A)$ and $(\B,\Lip_\B)$ be two {\Lqcms s} in $\mathcal{C}$ and $\Upsilon \in \journeyset{\mathcal{T}}{\A,\Lip_\A}{\B,\Lip_\B}$ be a journey from $(\A,\Lip_\A)$ to $(\B,\Lip_\B)$. Let $\dom{\Lip_\A}$ be the domain of $\Lip_\A$, i.e.:
\begin{equation*}
\dom{\Lip_\A} = \left\{ a\in\sa{\A} : \Lip_\A(a)<\infty \right\}\text{.}
\end{equation*}

\begin{enumerate}
\item If $a\in\dom{\Lip_\A}$, $r \geq \Lip_\A(a)$ and $b\in \targetsetjourney{\Upsilon}{a}{r}$ then:
\begin{equation*}
\|b\|_\B \leq \|a\|_\A + r\journeylength{\Upsilon}\text{.}
\end{equation*}
\item If $a,a'\in\dom{\Lip_\A}$, $r\geq\Lip_\A(a)$, $r'\geq\Lip_\A(a')$, $t\in\R$, and if $b\in\targetsetjourney{\Upsilon}{a}{r}$, $b'\in\targetsetjourney{\Upsilon}{a'}{r'}$ then:
\begin{equation*}
b+tb' \in \targetsetjourney{\Upsilon}{a+ta'}{r+|t|r'}\text{.}
\end{equation*}
\item If $a,a'\in\dom{\Lip_\A}$, $r\geq\max\{\Lip_\A(a),\Lip_\A(a')\}$, and if $b\in\targetsetjourney{\Upsilon}{a}{r}$, $b'\in\targetsetjourney{\Upsilon}{a'}{r}$ then:
\begin{equation*}
\Jordan{b}{b'} \in \targetsetjourney{\Upsilon}{\Jordan{a}{a'}}{(\|a\|_\A+\|a'\|_\A+4r\journeylength{\tau})r}\text{,}
\end{equation*}
while
\begin{equation*}
\Lie{b}{b'} \in \targetsetjourney{\Upsilon}{\Lie{a}{a'}}{(\|a\|_\A+\|a'\|_\A+4r\journeylength{\tau})r}\text{.}
\end{equation*}
\item If $a,a'\in\dom{\Lip_\A}$ and $r\geq \max\{\Lip_\A(a),\Lip_\A(a')\}$ then:
\begin{equation*}
\sup \left\{ \|b-b'\|_\B : b\in\targetsetjourney{\Upsilon}{a}{r},b'\in\targetsetjourney{\Upsilon}{a'}{r} \right\} \leq \|a-a'\|_\A + 2r\journeylength{\Upsilon}\text{.}
\end{equation*}
\item In particular, if $a\in\dom{\Lip_\A}$ and $r\geq\Lip_\A(a)$ then:
\begin{equation*}
\diam{\targetsetjourney{\Upsilon}{a}{r}}{\|\cdot\|_\B} \leq 2r\journeylength{\Upsilon}\text{.}
\end{equation*}
\end{enumerate}
\end{corollary}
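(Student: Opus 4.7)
The proof proceeds by induction on the size $n$ of the journey $\Upsilon = (\A_j, \Lip_j, \tau_j, \A_{j+1}, \Lip_{j+1} : j = 1, \ldots, n)$. The essential idea is that an $r$-itinerary gives a sequence $\eta_1 = a, \eta_2, \ldots, \eta_{n+1} = b$ where each $\eta_{j+1}$ lies in $\targetsettunnel{\tau_j}{\eta_j}{r}$, so that each of the propositions about tunnels (Proposition \ref{tunnel-fundamental-prop}, Corollary \ref{tunnel-linear-corollary}, Proposition \ref{tunnel-product-prop}) can be applied step-by-step along the itinerary. In particular, the second conclusion of Proposition \ref{tunnel-fundamental-prop} guarantees that $\Lip_{j+1}(\eta_{j+1}) \leq r$, so the induction is legitimately carried from one tunnel to the next without inflating the $r$-parameter.

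For assertion (1), pick any $r$-itinerary $(\eta_j)$ witnessing $b \in \targetsetjourney{\Upsilon}{a}{r}$. Applying Proposition \ref{tunnel-fundamental-prop} to each tunnel $\tau_j$ yields $\|\eta_{j+1}\|_{\A_{j+1}} \leq \|\eta_j\|_{\A_j} + r \tunnelreach{\tau_j}{\Lip_j, \Lip_{j+1}} \leq \|\eta_j\|_{\A_j} + r \tunnellength{\tau_j}{\Lip_j, \Lip_{j+1}}$, and summing telescopically delivers the stated bound. For assertion (2), given itineraries $(\eta_j)$ and $(\eta_j')$ from $a$ to $b$ and from $a'$ to $b'$ respectively, Corollary \ref{tunnel-linear-corollary}(1) shows that $(\eta_j + t \eta_j')$ is an $(r+|t|r')$-itinerary from $a+ta'$ to $b+tb'$, giving the result. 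Assertion (4) similarly follows by iterating Corollary \ref{tunnel-linear-corollary}(2): at each step $\|\eta_{j+1} - \eta_{j+1}'\|_{\A_{j+1}} \leq \|\eta_j - \eta_j'\|_{\A_j} + 2r \tunnellength{\tau_j}{\Lip_j, \Lip_{j+1}}$, and summing yields the claim. Assertion (5) is then immediate by taking $a=a'$ in (4).

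The subtlety lies in assertion (3), since Proposition \ref{tunnel-product-prop}, applied at the $j$-th tunnel, produces the bound $(\|\eta_j\|_{\A_j} + \|\eta_j'\|_{\A_j} + 4 r \tunnellength{\tau_j}{\Lip_j, \Lip_{j+1}}) r$ on the Lip-norm of the lift, which depends on the intermediate norms. To get a single uniform rate valid for every step, I will use assertion (1) applied to the prefix journey $\Upsilon_j$ consisting of $\tau_1, \ldots, \tau_{j-1}$: this gives $\|\eta_j\|_{\A_j} \leq \|a\|_\A + r \sum_{k<j} \tunnellength{\tau_k}{\Lip_k, \Lip_{k+1}}$ and likewise for $\|\eta_j'\|_{\A_j}$. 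Adding and absorbing, one obtains
\begin{equation*}
\|\eta_j\|_{\A_j} + \|\eta_j'\|_{\A_j} + 4r \tunnellength{\tau_j}{\Lip_j,\Lip_{j+1}} \leq \|a\|_\A + \|a'\|_\A + 4r \journeylength{\Upsilon}\text{,}
\end{equation*}
so the bound from Proposition \ref{tunnel-product-prop} at step $j$ is dominated by $R := (\|a\|_\A + \|a'\|_\A + 4r \journeylength{\Upsilon}) r$ independent of $j$. Consequently $(\Jordan{\eta_j}{\eta_j'})$ is an $R$-itinerary from $\Jordan{a}{a'}$ to $\Jordan{b}{b'}$, proving the Jordan case; the Lie case is identical, since Proposition \ref{tunnel-product-prop} treats both products symmetrically. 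The main obstacle, therefore, is precisely this uniform bounding of the rate for the product along the journey, which is handled cleanly because the growth rate of the norms under assertion (1) is exactly half of the coefficient of $\journeylength{\Upsilon}$ in the target rate, leaving room for the tunnel-level $4r \tunnellength{\tau_j}{\cdot,\cdot}$ term to be absorbed.
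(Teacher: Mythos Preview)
Your proof is correct and follows the same strategy as the paper's own proof, which merely states that one proceeds by induction on the size of the journey using Proposition~\ref{tunnel-fundamental-prop}, Corollary~\ref{tunnel-linear-corollary}, and Proposition~\ref{tunnel-product-prop} together with the notion of itinerary, and then refers to \cite[Proposition~5.11, Proposition~5.12]{Latremoliere13} for the details. Your handling of assertion~(3) --- bounding the intermediate norms via assertion~(1) for the prefix journey and observing that $2r\sum_{k<j}\tunnellength{\tau_k}{} + 4r\tunnellength{\tau_j}{} \leq 4r\journeylength{\Upsilon}$ so that a single uniform rate $R$ works at every step --- is exactly the kind of argument the paper defers to the cited reference, and you have made it explicit and clean.
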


\begin{proof}
We proceed by induction on the size of the journey $\Upsilon$ and using Proposition (\ref{tunnel-fundamental-prop}), Corollary (\ref{tunnel-linear-corollary}) and Proposition (\ref{tunnel-product-prop}) as well as the notion of itinerary. We refer to \cite[Proposition 5.11, Proposition 5.12]{Latremoliere13} for a similar argument.
\end{proof}

In addition, Proposition (\ref{tunnel-lift-bound-prop}) allows us to establish that target sets of journeys are compact in norm.

\begin{proposition}\label{journey-targetset-compact-prop}
Let $\mathcal{C}$ be a nonempty class of {\Lqcms s} and let $\mathcal{T}$ be a class of tunnels compatible with $\mathcal{C}$. Let $(\A,\Lip_\A)$ and $(\B,\Lip_\B)$ be two {\Lqcms s} in $\mathcal{C}$ and:
\begin{equation*}
\Upsilon \in \journeyset{\mathcal{T}}{\A,\Lip_\A}{\B,\Lip_\B}
\end{equation*}
be a journey from $(\A,\Lip_\A)$ to $(\B,\Lip_\B)$. If $a\in\sa{\A}$ with $\Lip_\A(a) < \infty$ and if $r\geq\Lip_\A(a)$ then $\targetsetjourney{\Upsilon}{a}{r}$ is compact in $\sa{\B}$.
\end{proposition}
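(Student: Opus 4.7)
The plan is to prove that $\targetsetjourney{\Upsilon}{a}{r}$ is sequentially norm-compact. Write
\begin{equation*}
\Upsilon = (\A_j,\Lip_j,\tau_j,\A_{j+1},\Lip_{j+1} : j=1,\ldots,n)
\end{equation*}
with $\tau_j = (\D_j,\Lip_{\D_j},\pi_{j,\A},\pi_{j,\B})$, and let $(b_m)_{m\in\N}$ be a sequence in $\targetsetjourney{\Upsilon}{a}{r}$. For each $m$, fix an itinerary $(\eta_{m,j})_{j=1}^{n+1} \in \lambdaitineraries{\Upsilon}{a}{b_m}{r}$ with $\eta_{m,1}=a$ and $\eta_{m,n+1}=b_m$. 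The plan is to use an iterated diagonal extraction together with the Arzel\`a–Ascoli-type characterization of Theorem (\ref{az-thm}) to extract a subsequence along which every $(\eta_{m,j})_m$ converges in norm, and then show the resulting tuple is a bona fide itinerary from $a$ to the limit of the $b_m$'s.

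First I would obtain uniform bounds on each component of the itineraries. By Proposition (\ref{tunnel-fundamental-prop}), applied iteratively along the journey, one has $\Lip_j(\eta_{m,j}) \leq r$ and $\|\eta_{m,j}\|_{\A_j} \leq \|a\|_\A + r\sum_{i=1}^{j-1}\tunnelreach{\tau_i}{\Lip_i,\Lip_{i+1}} \leq \|a\|_\A + r\,\journeylength{\Upsilon}$ for all $m$ and all $j$. Since $(\A_j,\Lip_j)$ is a {\Lqcms}, Theorem (\ref{az-thm}) implies each bounded Lip-ball in $\sa{\A_j}$ is norm-precompact, so a diagonal extraction yields a subsequence along which $\eta_{m,j} \to \eta_j$ in norm for every $j\in\{1,\ldots,n+1\}$. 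Set $b = \eta_{n+1}$, so that $b_m\to b$; note $\eta_1=a$ trivially.

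The heart of the argument is then showing $(\eta_j)_{j=1}^{n+1}$ is an itinerary, i.e. that $\eta_{j+1}\in\targetsettunnel{\tau_j}{\eta_j}{r}$ for each $j$. For each $m$ and $j$, Definition (\ref{tunnel-targetset-def}) gives a lift $d_{m,j}\in\sa{\D_j}$ with $\Lip_{\D_j}(d_{m,j})\leq r$, $\pi_{j,\A}(d_{m,j})=\eta_{m,j}$, and $\pi_{j,\B}(d_{m,j})=\eta_{m,j+1}$. This is exactly where the depth of a tunnel is needed: Proposition (\ref{tunnel-lift-bound-prop}) yields $\|d_{m,j}\|_{\D_j} \leq \|\eta_{m,j}\|_{\A_j} + 2r\,\tunnellength{\tau_j}{\Lip_j,\Lip_{j+1}}$, which is uniformly bounded in $m$ by the previous step. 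Applying Theorem (\ref{az-thm}) in $(\D_j,\Lip_{\D_j})$ and a further diagonal extraction (over the finite index set of $j$'s), we obtain a subsequence along which $d_{m,j}\to d_j$ in norm for every $j$. Lower semi-continuity of $\Lip_{\D_j}$ gives $\Lip_{\D_j}(d_j)\leq r$, and norm-continuity of $\pi_{j,\A}$ and $\pi_{j,\B}$ gives $\pi_{j,\A}(d_j)=\eta_j$ and $\pi_{j,\B}(d_j)=\eta_{j+1}$. Hence $(\eta_j)$ is an itinerary realizing $b\in\targetsetjourney{\Upsilon}{a}{r}$, completing the proof.

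The main obstacle is purely bookkeeping: keeping track of the many nested diagonal extractions and making sure norm bounds on the intermediate $\eta_{m,j}$ persist when we pass to the lifts $d_{m,j}$ in the $\D_j$. The conceptual content is entirely supplied by the two estimates already proved — Proposition (\ref{tunnel-fundamental-prop}) controls the itinerary components, Proposition (\ref{tunnel-lift-bound-prop}) (which genuinely uses the depth of each tunnel) controls the intermediate lifts — together with lower semi-continuity of Lip-norms and Theorem (\ref{az-thm}) to convert these bounds into precompactness.
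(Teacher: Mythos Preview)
Your proof is correct and uses the same key ingredients as the paper --- Proposition (\ref{tunnel-fundamental-prop}) for norm bounds on itinerary components, Proposition (\ref{tunnel-lift-bound-prop}) for norm bounds on lifts in each $\D_j$, Theorem (\ref{az-thm}) for precompactness, and lower semi-continuity of the Lip-norms --- but you organize them differently. The paper proceeds by induction on the size of the journey: assuming $\targetsetjourney{\Upsilon'}{a}{r}$ is compact for the truncated journey $\Upsilon'$ of size $n$, it realizes $\targetsetjourney{\Upsilon}{a}{r}$ as the continuous image under $\omega$ of the set $\mathfrak{l} = \{d\in\sa{\D} : \Lip_\D(d)\leq r,\ \pi(d)\in\targetsetjourney{\Upsilon'}{a}{r}\}$ in the last tunnel, and shows $\mathfrak{l}$ is closed inside a compact Lip-ball (the bound on $\|d\|_\D$ coming from Proposition (\ref{tunnel-lift-bound-prop}) applied to a single tunnel). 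Your approach instead unrolls the whole itinerary at once and argues sequential compactness via a finite diagonal extraction over the $\A_j$'s and then the $\D_j$'s. The paper's induction is slightly cleaner bookkeeping-wise (one tunnel at a time, one compactness claim at a time), while your version makes the uniform bounds along the whole itinerary explicit and avoids the inductive scaffolding; conceptually they are the same argument.
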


\begin{proof}
By Lemma (\ref{tunnelset-not-empty-lemma}), our corollary holds true for every journey $\Upsilon$ of length $1$.

Assume now that, for some $n\in\N$ and for every $\mathcal{T}$-journey $\Upsilon$ of length $n$ starting at $(\A,\Lip_\A)$, the set $\targetsetjourney{\Upsilon}{a}{r}$ is compact.

Let $\Upsilon$ be a journey from $(\A,\Lip_\A)$ to $(\B,\Lip_\B)$ of length $n+1$. Write:
\begin{equation*}
\Upsilon = \left(\A_j,\Lip_j,\tau_j,\A_{j+1},\Lip_{j+1} : j=1,\ldots,n+1 \right)
\end{equation*}
and define:
\begin{equation*}
\Upsilon' =\left(\A_j,\Lip_j,\tau_j,\A_{j+1},\Lip_{j+1} : j=1,\ldots,n \right)
\end{equation*}
which is a $\mathcal{T}$-journey from $(\A,\Lip_\A)$ to $(\A_{n+1},\Lip_{n+1})$ and has length $n$. 

Let us write $\tau_{n+1}$ as $\tau_{n+1} = (\D,\Lip_\D,\pi,\omega)$, and note that the range of $\pi$ is $\A_{n+1}$ while the range of $\omega$ is $\A_{n+2} = \B$ by Definition (\ref{journey-def}).

Let us define:
\begin{equation*}
\mathfrak{l} = \left\{ d\in\sa{\D} : \Lip_\D(d)\leq r\text{ and }\pi(d) \in \targetsetjourney{\Upsilon'}{a}{r} \right\}\text{.}
\end{equation*}

If $b\in \targetsetjourney{\Upsilon}{a}{r}$ then by Definition (\ref{journey-targetset-def}), we check that there exists $\eta\in\targetsetjourney{\Upsilon'}{a}{r}$ with $b\in\targetsettunnel{\tau_{n+1}}{\eta}{r}$, and thus there exists $d\in\mathfrak{l}$ such that $\pi(d) = \eta$ and $\omega(d) = b$ with $\Lip_\D(d)\leq r$. Thus, $\targetsetjourney{\Upsilon}{a}{r}$ is the image of $\mathfrak{l}$ by the continuous map $\omega$ (the other inclusion being true by definition), and thus it is sufficient to prove that $\mathfrak{l}$ is compact in $\D$ in order to complete our induction.

Let $d\in\mathfrak{l}$. By definition of $\mathfrak{l}$, we have $\pi(d) \in \targetsetjourney{\Upsilon'}{a}{r}$ which is assumed compact in $\A_{n+1}$. Hence in particular, there exists $M > 0$ such that:
\begin{equation*}
\forall \eta\in \targetsetjourney{\Upsilon'}{a}{r}\quad \|\eta\|_{\A_{n+1}}\leq M\text{.}
\end{equation*}

By Proposition (\ref{tunnel-lift-bound-prop}), we conclude that:
\begin{equation*}
\|d\|_\D \leq \|\eta\|_{\A_{n+1}} + 2r\tunnellength{\tau_{n+1}}{} \leq M + 2r\tunnellength{\tau_{n+1}}{}\text{.}
\end{equation*}
Thus:
\begin{equation*}
\mathfrak{l} \subseteq \{ d\in\sa{\D} : \Lip_\D(d)\leq r\text{ and }\|d\|_\D\leq  M + 2r\tunnellength{\tau_{n+1}}{} \}
\end{equation*}
and the set of the right-hand side is compact by Theorem (\ref{az-thm}) and the lower semi-continuity of $\Lip_\D$.

It is thus enough to show that $\mathfrak{l}$ is closed in order to complete our induction.

Let $(d_n)_{n\in\N}$ be a sequence in $\mathfrak{l}$ which converges to some $d\in\sa{\D}$. By lower semi-continuity of $\Lip_\D$, we have $\Lip_\D(d) \leq r$. By continuity of $\pi$ and since $\targetsetjourney{\Upsilon'}{a}{r}$ is closed (since assumed compact), we have $\pi(d) \in \targetsetjourney{\Upsilon'}{a}{r}$. Hence $d\in\mathfrak{l}$ as desired.

Thus $\mathfrak{l}$ is compact in $\D$ and thus $\targetsetjourney{\Upsilon}{a}{r}$ is compact in $\B$. Our induction is complete.
\end{proof}

We have now established the analogues of \cite[Proposition 5.11, Proposition 5.12]{Latremoliere13}, and thus we are in a position to prove, using similar methods as \cite{Latremoliere13}, the main theorem of this section.

\begin{theorem}\label{coincidence-thm}
Let $\mathcal{C}$ be a nonempty class of {\Lqcms s} and let $\mathcal{T}$ be a class of tunnels compatible with $\mathcal{C}$. Let $(\A,\Lip_\A)$ and $(\B,\Lip_\B)$ be two {\Lqcms s} in $\mathcal{C}$. If:
\begin{equation*}
\dpropinquity{\mathcal{T}}((\A,\Lip_\A),(\B,\Lip_\B)) = 0
\end{equation*}
then there exists a *-isomorphism $h : \A\rightarrow \B$ such that $\Lip_\B\circ h = \Lip_\A$.
\end{theorem}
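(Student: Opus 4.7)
The plan is to construct the candidate *-isomorphism as a pointwise limit of target-set representatives along a vanishing sequence of journeys, following the strategy indicated by the paper's reference to \cite{Latremoliere13}. Since $\dpropinquity{\mathcal{T}}((\A,\Lip_\A),(\B,\Lip_\B))=0$, for each $n\in\N$ I select a $\mathcal{T}$-journey $\Upsilon_n$ from $(\A,\Lip_\A)$ to $(\B,\Lip_\B)$ with $\journeylength{\Upsilon_n}<\frac{1}{n+1}$. By Proposition \ref{qcms-separable-prop}, I fix a countable $\Q$-linear subspace $S\subseteq \dom{\Lip_\A}$ that is dense in $\sa{\A}$ and closed under the Jordan and Lie brackets. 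For each $a\in S$ and $n\in\N$, I pick $b_n(a)\in\targetsetjourney{\Upsilon_n}{a}{\Lip_\A(a)}$, which is nonempty by Lemma \ref{targetsetjourney-not-empty-lemma}.

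By Corollary \ref{journey-fundamental-corollary}(1), $\|b_n(a)\|_\B\leq \|a\|_\A+\Lip_\A(a)\journeylength{\Upsilon_n}$, and unfolding any witnessing itinerary, Proposition \ref{tunnel-fundamental-prop} gives $\Lip_\B(b_n(a))\leq \Lip_\A(a)$. Hence $(b_n(a))_{n\in\N}$ lies in a norm-precompact subset of $\sa{\B}$ by Theorem \ref{az-thm}, so a standard diagonal extraction over the countable set $S$ produces a subsequence, still denoted by $(\Upsilon_n)_{n\in\N}$, along which $b_n(a)\to h(a)$ for every $a\in S$. The diameter bound of Corollary \ref{journey-fundamental-corollary}(4) shows that $\|h(a)-h(a')\|_\B\leq \|a-a'\|_\A$ for all $a,a'\in S$, while assertions (2) and (3) of that Corollary, together with $\journeylength{\Upsilon_n}\to 0$ controlling the diameters of the target sets containing $b_n(a)+tb_n(a')$, $\Jordan{b_n(a)}{b_n(a')}$ and $\Lie{b_n(a)}{b_n(a')}$, imply that $h$ is $\Q$-linear and respects both the Jordan and the Lie products on $S$. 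By continuity, $h$ extends uniquely to a contractive $\R$-linear, Jordan and Lie multiplicative map $h:\sa{\A}\to\sa{\B}$; complex-linear extension then yields a unital *-homomorphism $h:\A\to\B$, since $ab=\Jordan{a}{b}+i\Lie{a}{b}$ for $a,b\in\sa{\A}$. Lower semi-continuity of $\Lip_\B$ applied to $b_n(a)\to h(a)$ gives $\Lip_\B\circ h\leq \Lip_\A$.

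The main remaining obstacle is showing bijectivity of $h$ and equality of Lip-norms. I apply the same construction to the reversed journeys $\Upsilon_n^{-1}\in\mathcal{T}$, which exist by compatibility of $\mathcal{T}$, to produce a unital *-homomorphism $h':\B\to\A$ with $\Lip_\A\circ h'\leq \Lip_\B$. The key observation is that if $b\in\targetsetjourney{\Upsilon_n}{a}{r}$ via some itinerary, then reversing the itinerary shows $a\in\targetsetjourney{\Upsilon_n^{-1}}{b}{r}$. Consequently, for each $a\in S$, applying Corollary \ref{journey-fundamental-corollary}(4) to $\Upsilon_n^{-1}$ with $a\in\targetsetjourney{\Upsilon_n^{-1}}{b_n(a)}{\Lip_\A(a)}$ and any $a_n'\in\targetsetjourney{\Upsilon_n^{-1}}{h(a)}{\Lip_\A(a)}$ yields
\begin{equation*}
\|a-a_n'\|_\A \leq \|b_n(a)-h(a)\|_\B + 2\Lip_\A(a)\journeylength{\Upsilon_n^{-1}}\text{,}
\end{equation*}
whose right-hand side tends to $0$, so $h'(h(a))=a$. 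By density of $S$, $h'\circ h=\mathrm{id}_\A$; the symmetric argument gives $h\circ h'=\mathrm{id}_\B$, so $h$ is a *-isomorphism. Finally, applying $\Lip_\A\circ h'\leq \Lip_\B$ at $h(a)$ gives $\Lip_\A(a)=\Lip_\A(h'(h(a)))\leq \Lip_\B(h(a))$, which combined with $\Lip_\B\circ h\leq \Lip_\A$ yields $\Lip_\B\circ h=\Lip_\A$, as required.
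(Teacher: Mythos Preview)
Your proof is correct and follows essentially the same approach as the paper, which itself imports the argument from \cite[Theorem 5.13]{Latremoliere13}: select journeys of vanishing length, use the target-set estimates of Corollary \ref{journey-fundamental-corollary} to extract limits via a diagonal argument over a countable dense set, verify the Jordan--Lie morphism property from assertions (2) and (3), and obtain the inverse from the reversed journeys. Your choice to track individual representatives $b_n(a)$ rather than Hausdorff-converging target sets, and to take $S$ closed under the Jordan and Lie products from the outset, are harmless cosmetic variations.

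One step is compressed and deserves a word: when you conclude $h'(h(a))=a$ from $\|a-a_n'\|_\A\to 0$, you are implicitly using that \emph{any} sequence $a_n'\in\targetsetjourney{\Upsilon_n^{-1}}{h(a)}{\Lip_\A(a)}$ converges to $h'(h(a))$, even though $h(a)$ need not lie in the countable set $S'$ used to build $h'$. This extension property (the analogue of Claim 5.15 in \cite{Latremoliere13}) does follow from Corollary \ref{journey-fundamental-corollary}(4) by approximating $h(a)$ by elements of $S'$, but it is worth making explicit rather than folding it into ``the same construction.''
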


\begin{proof}
The proof of this theorem is now essentially identical to the proof of our \cite[Theorem 5.13]{Latremoliere13}, thanks to the estimates given by Corollary (\ref{journey-fundamental-corollary}) in lieu of the similar estimates in \cite[Proposition 5.11, Proposition 5.12]{Latremoliere13}, with tunnels for bridges and journeys for treks.

We only sketch the argument for the convenience of our reader. We refer the reader to the proof of \cite[Theorem 5.13]{Latremoliere13} for a detailed demonstration, and only aim in the following exposition at giving a rough idea of the construction of $h$.

By Definition (\ref{dual-propinquity-def}), and by our assumption, for all $n\in\N$, there exists a $\mathcal{T}$-journey:
\begin{equation*}
\Upsilon_n\in\journeyset{\mathcal{T}}{\A,\Lip_\A}{\B,\Lip_\B}
\end{equation*}
from $(\A,\Lip_\A)$ to $(\B,\Lip_\B)$ such that:
\begin{equation*}
\journeylength{\Upsilon_n}{} \leq \frac{1}{n+1}\text{.}
\end{equation*}
Now, fix $a\in \sa{\A}$ such that $\Lip_\A(a)<\infty$. By Assertion (1) of Corollary (\ref{journey-fundamental-corollary}), for all $r \geq \Lip_\A(a)$, the sequence of target sets $\left(\targetsetjourney{\Upsilon_n}{a}{r}\right)_{n\in\N}$ is a sequence of subsets of the compact set:
\begin{equation*}
\mathfrak{C}(a,r) = \left\{b \in \sa{\B} : \Lip_\B(b)\leq r\text{ and }\|b\|_\B\leq \|a\|_\A + r \right\}
\end{equation*}
since the length of $\Upsilon_n$ is less than $1$ for all $n\in\N$. Note that the sequence $\left(\targetsetjourney{\Upsilon_n}{a}{r}\right)_{n\in\N}$ consists of compact sets by Proposition (\ref{journey-targetset-compact-prop}).

Consequently, since the space of closed subsets of a compact metric space for the Hausdorff distance is itself compact by Blaschke's Theorem \cite[Theorem 7.3.8]{burago01}, there exists a subsequence $\left(\targetsetjourney{\Upsilon_{f(n)}}{a}{r}\right)_{n\in\N}$ which converges to some compact set $\targetsetjourney{}{a}{r}$ in the Hausdorff distance associated to $\|\cdot\|_\B$  on the norm-compact set $\mathfrak{C}(a,r)$. At the moment, the function $f:\N\rightarrow\N$ depends on $a$ and $r \geq \Lip_\A(a)$, and thus so does $\targetsetjourney{}{a}{r}$.

Now, since the sequence $\left(\diam{\targetsetjourney{\Upsilon_{f(n)}}{a}{r}}{\|\cdot\|_\B}\right)_{n\in\N}$ converges to $0$ by Assertion (4) of Corollary (\ref{journey-fundamental-corollary}), the set $\targetsetjourney{}{a}{r}$ is a singleton. Moreover, by Assertion (4) of Corollary (\ref{journey-fundamental-corollary}), for any $a$ in the domain of $\Lip_\A$ and $r\geq \Lip_\A(a)$, and for all $n\in\N$ we choose $b_n \in \targetsetjourney{\Upsilon_{f(n)}}{a}{r}$, then the sequence $(b_n)_{n\in\N}$ converges in norm to the unique element of $\targetsetjourney{}{a}{r}$.

This is the crux of Claim (5.14) in the proof of \cite[Theorem 5.13]{Latremoliere13}.

We now construct $h$ on a dense subset of $\sa{\A}$, as follows. By Proposition (\ref{qcms-separable-prop}), there exists a countable subset $\mathfrak{a}$ of the domain of $\Lip_\A$ which is norm-dense in $\sa{\A}$.

By a diagonal argument, we can find a strictly increasing $f :\N\rightarrow\N$ such that for all $a\in\mathfrak{a}$, the sequence $\left(\targetsetjourney{\Upsilon_{f(n)}}{a}{\Lip_\A(a)}\right)_{n\in\N}$ converges to a singleton, denoted $\{h(a)\}$, in the Hausdorff distance associated with $\|\cdot\|_\B$. 

Now, using density of $\mathfrak{a}$ in the domain of $\Lip_\A$, we can extend $h$ to a map from the domain of $\Lip_\A$ to $\sa{\B}$ and then prove that the resulting map is a Jordan-Lie morphism of norm $1$. This extension possess the property that, once again, if, for any $a\in\mathfrak{a}$ and $r\geq \Lip_\A(a)$, and for all $n\in\N$ we choose $b_n \in \targetsetjourney{\Upsilon_{f(n)}}{a}{r}$, then the sequence $(b_n)_{n\in\N}$ converges in norm to $h(a)$. This property follows from Assertion (3) of Corollary (\ref{journey-fundamental-corollary}), since if $a\in\mathfrak{a}$ and $a'$ is in the domain of $\Lip_\A$, then the Hausdorff distance between $\targetsetjourney{\Upsilon_{f(n)}}{a}{r}$ and $\targetsetjourney{\Upsilon_n}{a'}{r}$ is no more than $\|a-a'\|_\A+2r\journeylength{\Upsilon_{f(n)}}$, which can be made arbitrarily small by taking $n$ large enough and $a$ close enough to $a'$. This is proven in Claim (5.15) of \cite[Theorem 5.13]{Latremoliere13}.

The crucial step in proving that $h$ is a Jordan-Lie morphism utilizes Assertions (2) of Corollary (\ref{journey-fundamental-corollary}): for instance, if $a,a'$ are in the domain of $\Lip_\A$, and if $r \geq \max\{\Lip_\A(a),\Lip_\A(a')\}$, and if, for all $n\in\N$ we choose $b_n\in\targetsetjourney{\Upsilon_{f(n)}}{a}{r}$ and $b'_n\in\targetsetjourney{\Upsilon_{f(n)}}{a'}{r}$, then by Assertion (2) of Corollary (\ref{journey-fundamental-corollary}), we have:
\begin{equation*}
\begin{split}
\Jordan{b_n}{b_n'}&\in\targetsetjourney{\Upsilon_{f(n)}}{\Jordan{a}{a'}}{r\left(\|a\|_\A+\|a'\|_\A+\frac{4r}{f(n)+1}\right)}\\
&\subseteq \targetsetjourney{\Upsilon_{f(n)}}{\Jordan{a}{a'}}{r\left(\|a\|_\A+\|a'\|_\A+2r\right)}
\end{split}
\end{equation*}
and thus, the sequence $\left(\Jordan{b_n}{b_n'}\right)_{n\in\N}$ must converge to $h(\Jordan{a}{a'})$ since:
\begin{equation*}
\begin{split}
\Lip_\A\left(\Jordan{a}{a'}\right) &\leq \Lip_\A(a)\|a\|_\A + \Lip_\A(a')\|a\|_\A\\
&\leq r\left(\|a\|_\A+\|a'\|_\A\right) \leq r\left(\|a\|_\A+\|a'\|_\A+2r\right)\text{.}
\end{split}
\end{equation*}
 On the other hand, $\left(\Jordan{b_n}{b_n'}\right)_{n\in\N}$ converges to $\Jordan{h(a)}{h(a')}$. Thus:
\begin{equation*}
h(\Jordan{a}{a'})=\Jordan{h(a)}{h(a')}\text{ (see Claim (5.17) of the proof of \cite[Theorem 5.13]{Latremoliere13}).}
\end{equation*}
The same reasoning applies to linearity (see Claim (5.16) of the proof of \cite[Theorem 5.13]{Latremoliere13}) and the Lie product.

Moreover, using lower semi-continuity of $\Lip_\B$, one proves that $\Lip_\B\circ h \leq \Lip_\A$ (see Claim (5.16) of the proof of \cite[Theorem 5.13]{Latremoliere13}).

We then extend $h$ to $\A$ by density and linearity and prove that it is a *-morphism, which is natural from linearity and the Jordan-Lie morphism property on the domain of $\Lip_\A$, as shown in Claim (5.18) of the proof of \cite[Theorem 5.13]{Latremoliere13}.

The construction of the inverse of $h$ proceeds by observing that the construction of $h$ can be applied once again to the sequence of journeys $\left(\Upsilon_{f(n)}^{-1}\right)_{n\in\N}$, as shown in Claim (5.19) of the proof of \cite[Theorem 5.13]{Latremoliere13}. The key observation is that an itinerary from $a\in\sa{\A}$ to $b\in\sa{\B}$ for some journey $\Upsilon$ from $(\A,\Lip_\A)$ to $(\B,\Lip_\B)$ can be reversed into an itinerary for $\Upsilon^{-1}$ from $b$ to $a$. Once an inverse is constructed, it is easy to check that $h$ is an isometric isomorphism.

We point out that there is no unique isometric isomorphism $h$, as one may compose $h$ with isometric automorphisms on either end. This lack of uniqueness is reflected in the proof above by the use of compactness to extract a subsequence of journeys. Different subsequences may lead to different isometric isomorphisms. However, we show in Claim (5.19) of \cite[Theorem 5.13]{Latremoliere13} that once such a choice is made (after the diagonal process step),  the inverse which we construct is indeed unique --- it turns out that the sequences $\left(\targetsetjourney{\Upsilon^{-1}_{f(n)}}{a}{r}\right)_{n\in\N}$ converge to $\{h^{-1}(a)\}$ for all $a\in\sa{\A}$ and $r\geq \Lip_\A(a)$. 
\end{proof}

We conclude with a summary of the two previous sections:
\begin{theorem}
Let $\mathcal{C}$ be a nonempty class of {\Lqcms s} and let $\mathcal{T}$ be a class of tunnels compatible with $\mathcal{C}$. The dual Gro\-mov-Haus\-dorff \, $\mathcal{T}$-propinquity $\dpropinquity{\mathcal{T}}$ is a metric on the isometric isomorphic classes of {\Lqcms s} in $\mathcal{C}$, in the following sense. For any three {\Lqcms s} $(\A_1,\Lip_1)$, $(\A_2,\Lip_2)$ and $(\A_3,\Lip_3)$ in $\mathcal{C}$, we have:
\begin{enumerate}
\item $\dpropinquity{\mathcal{T}}((\A_1,\Lip_1),(\A_2,\Lip_2)) \in [0,\infty)$,
\item $\dpropinquity{\mathcal{T}}((\A_1,\Lip_1),(\A_2,\Lip_2)) = \dpropinquity{\mathcal{T}}((\A_2,\Lip_2),(\A_1,\Lip_1))$,
\item $\dpropinquity{\mathcal{T}}((\A_1,\Lip_1),(\A_2,\Lip_2)) \leq \dpropinquity{\mathcal{T}}((\A_1,\Lip_1),(\A_3,\Lip_3)) + \dpropinquity{\mathcal{T}}((\A_3,\Lip_3),(\A_2,\Lip_2))$,
\item $\dpropinquity{\mathcal{T}}((\A_1,\Lip_1),(\A_2,\Lip_2)) = 0$ if and only if there exists a isometric isomorphism $\varphi : (\A_1,\Lip_1) \rightarrow (\A_2,\Lip_2)$.
\end{enumerate}
\end{theorem}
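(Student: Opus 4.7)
The statement is a summary theorem, so the plan is essentially to collect the four axioms of a metric and cite the relevant results already established in the preceding two sections. No new ideas are required; the task is just to assemble the pieces correctly, observing that all four assertions have been proven in the body of the paper.

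First, for Assertion (1), non-negativity of $\dpropinquity{\mathcal{T}}$ is immediate from Definition \ref{dual-propinquity-def}, since the length of a journey is a sum of lengths of tunnels, each of which is the maximum of a reach and a depth, both defined as Hausdorff distances and thus non-negative. Finiteness is exactly the first statement of Proposition \ref{bounded-ncprop-prop}: because $\mathcal{T}$ is $\mathcal{C}$-compatible, Definition \ref{compatible-tunnel-def}(2) guarantees that $\journeyset{\mathcal{T}}{\A_1,\Lip_1}{\A_2,\Lip_2}$ is non-empty, so the infimum is taken over a non-empty set of finite values.

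Assertions (2) and (3) are precisely the content of Theorem \ref{triangle-thm}, obtained by using the reverse-journey and composition operations together with the observation that $\journeylength{\Upsilon^{-1}}=\journeylength{\Upsilon}$ and $\journeylength{\Upsilon_1\star\Upsilon_2}=\journeylength{\Upsilon_1}+\journeylength{\Upsilon_2}$. These operations are well-defined inside $\mathcal{T}$ thanks to Definition \ref{compatible-tunnel-def}(1), which guarantees closure under reversal.

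For Assertion (4), the implication that an isometric isomorphism yields distance zero is the second statement of Proposition \ref{bounded-ncprop-prop}: if $h:(\A_1,\Lip_1)\to(\A_2,\Lip_2)$ is an isometric isomorphism, then by Definition \ref{compatible-tunnel-def}(3), the quadruple $(\A_1,\Lip_1,\mathrm{id}_{\A_1},h)$ is a tunnel in $\mathcal{T}$, and since both its reach and depth vanish, the one-tunnel journey it defines has length zero. The converse, which is the only non-routine direction and was the main work of the previous section, is exactly Theorem \ref{coincidence-thm}.

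Thus the proof consists of four one-line citations, and there is no real obstacle: the substantive work (lower-semi-continuity bookkeeping, target-set estimates, and the diagonal extraction producing $h$) has already been done. I would write it as a short paragraph assembling these four references and noting that together they establish that $\dpropinquity{\mathcal{T}}$ descends to a genuine metric on the quotient of $\mathcal{C}$ by the equivalence relation of isometric isomorphism.
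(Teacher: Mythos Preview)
Your proposal is correct and follows essentially the same approach as the paper: the paper's own proof is even terser, simply citing Proposition~\ref{bounded-ncprop-prop} and Theorem~\ref{coincidence-thm} for the coincidence axiom and Theorem~\ref{triangle-thm} for symmetry and the triangle inequality. Your version just unpacks slightly more of the reasoning behind each citation, which is fine.
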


\begin{proof}
By Proposition (\ref{bounded-ncprop-prop}) and Theorem (\ref{coincidence-thm}), the dual $\mathcal{T}$-propinquity is null between two {\Lqcms s} if and only if they are isometrically isomorphic.

By Theorem (\ref{triangle-thm}), the dual $\mathcal{T}$-propinquity is symmetric and satisfies the triangle inequality. This completes our proof.
\end{proof}

\section{Examples of Convergences}

We prove a comparison theorem between our dual Gromov-Hausdorff propinquity and other (pseudo-)metrics on the class of {\Lqcms s}. From these comparisons, we then derive various convergence results valid for our distance. This section focuses on the dual Gromov-Hausdorff propinquity $\dpropinquity{} = \dpropinquity{\LQCMS}$, though it also applies to $\dpropinquity{\ast}$, as well as any $\mathcal{T}$-propinquity with enough tunnels to allow for the proofs below.

As it would require much exposition to detail the construction of each metric which appears in this section, we refer to \cite{Latremoliere13} for the definitions of treks, bridges as they relate to the quantum propinquity, to \cite{Rieffel00} for the construction of the quantum Gromov-Hausdorff distance, and to \cite{Rieffel10c} for the construction of the quantum proximity. A brief summary of the idea of the construction of the Gromov-Hausdorff distance, the quantum Gromov-Hausdorff distance and the quantum proximity is given in the introduction of this paper. We shall use the following notations:

\begin{notation}
The quantum Gromov-Hausdorff distance on the class of compact quantum metric spaces \cite{Rieffel00} is denoted by $\dist_q$. The quantum Gromov-Hausdorff distance restricts to a pseudo-metric on the class of {\Lqcms}.
\end{notation}

\begin{notation}
The quantum proximity, defined on the class of compact C*-metric spaces $\mathcal{CM}$ in \cite{Rieffel10c} (see Example (\ref{CM-example})), which is not known to be a pseudo-distance, is denoted by $\prox$. The dual Gromov-Hausdorff propinquity restricts to a distance on the class $\mathcal{CM}$ and our work allows for the construction of a specialized version of the dual Gromov-Hausdorff propinquity $\dpropinquity{\ast}$ whose journeys all involve only compact C*-metric spaces.
\end{notation}

\begin{notation}
Last, the quantum Gromov-Hausdorff propinquity of \cite{Latremoliere13} is denoted by $\propinquity$. The quantum propinquity can also be specialized, as its dual version in this paper, to various subclasses of {\Lqcms s}.
\end{notation}

We begin with a lemma which will be useful in relating our dual Gromov-Hausdorff propinquity with the quantum Gromov-Hausdorff distance.

\begin{lemma}\label{dist-q-lemma}
Let $\tau = (\D,\Lip_\D,\pi_\A,\pi_\B)$ be a tunnel from a {\Lqcms} $(\A,\Lip_\A)$ to a {\Lqcms} $(\B,\Lip_\B)$. Then:
\begin{equation*}
\dist_q((\A,\Lip_\A),(\B,\Lip_\B)) \leq \tunnelreach{\tau}{\Lip_\A,\Lip_\B}\text{.}
\end{equation*}
\end{lemma}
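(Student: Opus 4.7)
The plan is to manufacture, from the tunnel $\tau$, an admissible Lip-norm on the coproduct $\A\oplus\B$ (in Rieffel's original sense, so with no Leibniz requirement) whose associated Hausdorff distance between the images of $\StateSpace(\A)$ and $\StateSpace(\B)$ in $\StateSpace(\A\oplus\B)$ is within $\varepsilon$ of the reach of $\tau$, and then to let $\varepsilon\downarrow 0$. This is the ``doubling'' construction outlined informally in the discussion before Definition (\ref{tunnel-def}).

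Concretely, fix $\varepsilon > 0$ and define a seminorm on $\D\oplus\D$ by
\begin{equation*}
\Lip_\varepsilon(d_1,d_2) = \max\left\{ \Lip_\D(d_1),\,\Lip_\D(d_2),\,\varepsilon^{-1}\|d_1-d_2\|_\D \right\}.
\end{equation*}
I would first check that $\Lip_\varepsilon$ vanishes exactly on $\R(\unit_\D,\unit_\D)$, is lower semi-continuous, and that its $1$-Lipschitz, norm $1$ ball is contained in the (compact) product of the corresponding balls for $\Lip_\D$, so by Theorem (\ref{az-thm}) it is a Lip-norm making $(\D\oplus\D,\Lip_\varepsilon)$ a quantum compact metric space. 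Then I would consider the *-epimorphism $\sigma \colon \D\oplus\D\twoheadrightarrow \A\oplus\B$, $(d_1,d_2)\mapsto(\pi_\A(d_1),\pi_\B(d_2))$, and let $\Lip$ be the quotient seminorm of $\Lip_\varepsilon$ via $\sigma$; by \cite[Proposition 3.1]{Rieffel00} this is a Lip-norm on $\A\oplus\B$. Using $d_2=d_1$ (respectively $d_1=d_2$) as a test lift one sees that the quotients of $\Lip$ by the canonical projections $p_\A$ and $p_\B$ recover $\Lip_\A$ and $\Lip_\B$, so $\Lip\in\Adm(\Lip_\A,\Lip_\B)$ in Rieffel's sense.

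Now I would prove the Hausdorff estimate for this admissible Lip-norm. Given $\varphi\in\StateSpace(\A)$, Definition (\ref{tunnel-reach-def}) supplies $\psi\in\StateSpace(\B)$ with $\Kantorovich{\Lip_\D}(\varphi\circ\pi_\A,\psi\circ\pi_\B)\leq\tunnelreach{\tau}{\Lip_\A,\Lip_\B}$. For any $(a,b)\in\sa{\A\oplus\B}$ with $\Lip(a,b)\leq 1$ and any $\delta>0$, pick a lift $(d_1,d_2)$ with $\sigma(d_1,d_2)=(a,b)$ and $\Lip_\varepsilon(d_1,d_2)\leq 1+\delta$, so that $\Lip_\D(d_1)\leq 1+\delta$ and $\|d_1-d_2\|_\D\leq\varepsilon(1+\delta)$. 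Splitting
\begin{equation*}
|\varphi(a)-\psi(b)|\leq |(\varphi\circ\pi_\A)(d_1)-(\psi\circ\pi_\B)(d_1)| + |\psi(\pi_\B(d_1-d_2))|
\end{equation*}
and using the definitions of $\Kantorovich{\Lip_\D}$ and of the operator norm of $\pi_\B$ bounds the right-hand side by $(1+\delta)(\tunnelreach{\tau}{\Lip_\A,\Lip_\B}+\varepsilon)$; sending $\delta\downarrow 0$ yields $\Kantorovich{\Lip}(\varphi\circ p_\A,\psi\circ p_\B)\leq\tunnelreach{\tau}{\Lip_\A,\Lip_\B}+\varepsilon$. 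Symmetry then gives that the Hausdorff distance between $\StateSpace(\A)$ and $\StateSpace(\B)$ in $\StateSpace(\A\oplus\B)$, relative to $\Kantorovich{\Lip}$, is at most $\tunnelreach{\tau}{\Lip_\A,\Lip_\B}+\varepsilon$. Hence $\dist_q((\A,\Lip_\A),(\B,\Lip_\B))\leq\tunnelreach{\tau}{\Lip_\A,\Lip_\B}+\varepsilon$, and letting $\varepsilon\downarrow 0$ concludes.

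The only real obstacle is the verification that $\Lip_\varepsilon$ qualifies as a Lip-norm on $\D\oplus\D$ (so that the quotient $\Lip$ is a genuine Lip-norm on $\A\oplus\B$ and the $\dist_q$ machinery applies). Everything else is a routine estimate. It is worth noting that the construction does \emph{not} preserve the Leibniz property of the quotient—this is precisely the asymmetry that motivates the introduction of tunnels in place of admissible Lip-norms on $\A\oplus\B$—which is why this argument bounds $\dist_q$ but would not suffice for the quantum propinquity.
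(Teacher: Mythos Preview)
Your proof is correct and follows essentially the same approach as the paper: both construct the same ``doubling'' Lip-norm $\Lip_\varepsilon$ on $\D\oplus\D$, push it forward to an admissible Lip-norm on $\A\oplus\B$ via the same epimorphism $(d_1,d_2)\mapsto(\pi_\A(d_1),\pi_\B(d_2))$, and obtain the bound $\tunnelreach{\tau}{}+\varepsilon$. The only cosmetic difference is that the paper computes the Hausdorff estimate at the state-space level (using that $\pi_2^\ast$ is an isometry together with a triangle inequality for $\Kantorovich{\Lip_\varepsilon}$), whereas you compute it at the element level by lifting a test element $(a,b)$ and splitting $|\varphi(a)-\psi(b)|$; these are equivalent computations.
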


\begin{proof}
Let $\gamma > 0$ be given. For all $(d_1,d_2)\in\sa{\D\oplus\D}$, we set:
\begin{equation*}
\Lip_\gamma(d_1,d_2) = \max\left\{\Lip_\D(d_1),\Lip_\D(d_2),\frac{1}{\gamma}\|d_1-d_2\|_\D \right\}\text{.}
\end{equation*}
Now, for any $d\in \D$, we note that $\Lip_\gamma(d,d) = \Lip_\D(d)$, and thus $\Lip_\D$ is an admissible Lip-norm, in the sense of \cite{Rieffel00}, for the pair of Lip-norms $(\Lip_\D,\Lip_\D)$. 

Let $\pi_j : (d_1,d_2)\in \D\oplus\D\mapsto d_j$ for $j\in\{1,2\}$. A simple computation shows that if $\varphi \in \StateSpace(\D)$ then:
\begin{equation}\label{dist-q-lemma-eq-0}
\Kantorovich{\Lip_\gamma}(\varphi\circ\pi_1,\varphi\circ\pi_2)\leq \gamma\text{.}
\end{equation}
We note that this implies:
\begin{equation*}
\Haus{\Kantorovich{\Lip_\gamma}}\left(\pi_1^\ast\left(\StateSpace(\D)\right),\pi_2^\ast\left(\StateSpace(\D)\right)\right) \leq\gamma\text{.}
\end{equation*}
For our purpose, however, the following computation is what is needed. Let $\varphi\in\StateSpace(\A)$. Then there exists $\psi\in\StateSpace(\B)$ such that $\Kantorovich{\Lip_\D}(\varphi\circ\pi_\A,\psi\circ\pi_\B)\leq\tunnelreach{\tau}{\Lip_\A,\Lip_\B}$ by Definition (\ref{tunnel-reach-def}). Now, since $\pi_2^\ast$ is an isometry:
\begin{equation*}
\begin{split}
\Kantorovich{\Lip_\gamma}(\varphi\circ\pi_\A\circ\pi_1,\psi\circ\pi_\B\circ\pi_2)&\leq \Kantorovich{\Lip_\gamma}(\varphi\circ\pi_\A\circ\pi_1,\varphi\circ\pi_\A\circ\pi_2) \\
&\quad + \Kantorovich{\Lip_\gamma}(\varphi\circ\pi_\A\circ\pi_2,\psi\circ\pi_\B\circ\pi_2)\\
&\leq \gamma + \Kantorovich{\Lip_\D}(\varphi\circ\pi_\A,\psi\circ\pi_\B)\text{ by Equation (\ref{dist-q-lemma-eq-0}),}\\
&\leq\gamma + \tunnelreach{\tau}{\Lip_\A,\Lip_\B} \text{.}
\end{split}
\end{equation*}
By symmetry in the roles of $\A$ and $\B$, as well as $\pi_1$ and $\pi_2$, we obtain:
\begin{equation}\label{dist-q-lemma-eq-1}
\Haus{\Kantorovich{\Lip_\gamma}}\left( \pi_1^\ast\circ\pi_\A^\ast\left(\StateSpace(\A)\right), \pi_2^\ast\circ\pi_\B^\ast\left(\StateSpace(\B)\right)\right) \leq\gamma + \tunnelreach{\tau}{\Lip_\A,\Lip_\b} \text{.}
\end{equation}

Now, let $\pi = (\pi_\A\circ\pi_1,\pi_\B\circ\pi_2) : \D\oplus\D \rightarrow \A\oplus\B$. By construction, $\pi$ is a *-epimorphism. Let $\Lip'_\gamma$ be the quotient seminorm from $\Lip_\gamma$ for $\pi$. By \cite[Proposition 3.1]{Rieffel00}, the Lipschitz pair $(\A\oplus\B,\Lip'_\gamma)$ is a compact quantum metric space and $\pi^\ast$ is an isometry from $\StateSpace(\A\oplus\B)$ into $\StateSpace(\D\oplus\D)$.

As a remark, we note that $\Lip'_\gamma$ may not possess the Leibniz property, though it is closed by \cite[Proposition 3.3]{Rieffel00}. We shall not need this remark in what follows; however this observation justifies that we do not solely work with tunnels in the class $\mathcal{SUM}$.

Since $\pi^\ast$ is an isometry, the Hausdorff distance between $\StateSpace(\A)$ and $\StateSpace(\B)$ for $\Kantorovich{\Lip'_\gamma}$ is no more than $\gamma + \tunnelreach{\tau}{\Lip,\Lip}$ by Equation (\ref{dist-q-lemma-eq-1}). Moreover, by \cite[Proposition 3.7]{Rieffel00}, the Lip-norm $\Lip'_\gamma$ is admissible for $(\Lip_\A,\Lip_\B)$. Hence by the definition of the quantum Gromov-Hausdorff distance $\dist_q$, we have:
\begin{equation*}
\dist_q(\Lip_\A,\Lip_\B)\leq \tunnelreach{\tau}{\Lip_\A,\Lip_\B}+\gamma\text{.}
\end{equation*}
As $\gamma>0$ is arbitrary, our lemma is proven.
\end{proof}

\begin{theorem}\label{ncpropinquity-comparison-thm}
Let $(\A,\Lip_\A)$ and $(\B,\Lip_\B)$ be two {\Lqcms s}. Then:
\begin{equation}\label{comparison-eq-1}
\mathrm{dist}_q((\A,\Lip_\A),(\B,\Lip_\B))\leq \dpropinquity{}((\A,\Lip_\A),(\B,\Lip_\B)) \leq 2\propinquity((\A,\Lip_\A),(\B,\Lip_\B)) \text{.}
\end{equation}
If $(\A,\Lip_\A)$ and $(\B,\Lip_\B)$ are in some class $\mathcal{C}$ of {\Lqcms s} and $\mathcal{T}\subseteq\mathcal{G}$ are two $\mathcal{C}$-compatible classes of tunnels, then:
\begin{equation}\label{comparison-eq-2}
\dpropinquity{\mathcal{G}}((\A,\Lip_\A),(\B,\Lip_\B))\leq\dpropinquity{\mathcal{T}}((\A,\Lip_\A),(\B,\Lip_\B)) \leq \propinquity_{\mathcal{C}}((\A,\Lip_\A),(\B,\Lip_\B)) \text{.}
\end{equation}

Moreover, if $(\A,\Lip_\A)$ and $(\B,\Lip_\B)$ are both compact C*-metric spaces, then:
\begin{equation}\label{comparison-eq-3}
\dpropinquity{}((\A,\Lip_\A),(\B,\Lip_\B)) \leq \dpropinquity{\ast}((\A,\Lip_\A),(\B,\Lip_\B)) \leq \prox((\A,\Lip),(\B,\Lip_\B)) \text{.}
\end{equation}
\end{theorem}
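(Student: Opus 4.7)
The plan is to establish each of the three displayed inequalities separately, in each case by converting the objects used to define the competing (pseudo-)metric into tunnels or journeys of controlled length and then invoking the infimum in Definition (\ref{dual-propinquity-def}). The main obstacle will be Step 3 below, the passage from a bridge to a tunnel while controlling its reach and preserving the Leibniz and quotient conditions of Definition (\ref{tunnel-def}); the remaining bounds follow rather directly from Lemma (\ref{dist-q-lemma}), from monotonicity in the tunnel class, and from the zero-depth observation of Remark (\ref{null-depth-rmk}).

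For $\dist_q\leq\dpropinquity{}$ in Inequality (\ref{comparison-eq-1}), fix an arbitrary journey $\Upsilon=(\A_j,\Lip_j,\tau_j,\A_{j+1},\Lip_{j+1}:j=1,\ldots,n)$. Lemma (\ref{dist-q-lemma}) gives, for each $j$, the bound $\dist_q((\A_j,\Lip_j),(\A_{j+1},\Lip_{j+1}))\leq\tunnelreach{\tau_j}{}\leq\tunnellength{\tau_j}{}$; the triangle inequality for $\dist_q$ followed by the infimum over journeys then yields the claim. The monotonicity inequality $\dpropinquity{\mathcal{G}}\leq\dpropinquity{\mathcal{T}}$ when $\mathcal{T}\subseteq\mathcal{G}$ is immediate from Definition (\ref{dual-propinquity-def}), since every $\mathcal{T}$-journey is automatically a $\mathcal{G}$-journey and the infimum over the smaller class dominates the one over the larger. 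Specializing to $\mathcal{G}=\mathcal{LT}$ yields both $\dpropinquity{}\leq\dpropinquity{\ast}$, which is the first half of (\ref{comparison-eq-3}), as well as the first half of (\ref{comparison-eq-2}).

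The comparison with the quantum propinquity is the technical heart of the argument. Given a bridge $\gamma$ between {\Lqcms s} $(\A,\Lip_\A)$ and $(\B,\Lip_\B)$ in the sense of \cite{Latremoliere13}, I would construct a tunnel over $\A\oplus\B$ by taking the canonical projections $p_\A,p_\B$ together with a Lip-norm of the form
\begin{equation*}
\Lip_\gamma(a,b)=\max\!\left\{\Lip_\A(a),\,\Lip_\B(b),\,\varepsilon^{-1}\,N_\gamma(a,b)\right\},
\end{equation*}
where $N_\gamma$ is a seminorm built from the bridge's pivot that measures the incompatibility of $(a,b)$, and $\varepsilon$ is commensurate with the length of $\gamma$. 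The Leibniz property of $\Lip_\A$ and $\Lip_\B$, together with the multiplicative structure of $N_\gamma$, should make $\Lip_\gamma$ a Leibniz Lip-norm on $\A\oplus\B$ with the desired quotient property; since the ambient algebra is a direct sum, Remark (\ref{null-depth-rmk}) forces the depth of the resulting tunnel to vanish, so its length equals its reach. The standard bridge estimates of \cite{Latremoliere13} will then bound this reach by at most twice the length of $\gamma$, the factor of $2$ reflecting that the bridge length already packages both a height and a reach, each of which contributes to the Hausdorff distance on $\StateSpace(\A\oplus\B)$. Concatenating tunnels along the bridges of a trek yields a journey of length at most twice the trek's length, and passing to the infimum produces $\dpropinquity{}\leq 2\propinquity$; since the same construction lands in $\mathcal{SUM}_{\mathcal{C}}$ whenever $\mathcal{T}$ is $\mathcal{C}$-compatible, it also gives the specialized bound $\dpropinquity{\mathcal{T}}\leq\propinquity_{\mathcal{C}}$ of (\ref{comparison-eq-2}).

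Finally, for $\dpropinquity{\ast}\leq\prox$ in (\ref{comparison-eq-3}), any strongly Leibniz Lip-norm $\Lip$ on $\A\oplus\B$ admissible in Rieffel's sense \cite{Rieffel10c} directly defines a tunnel $\tau=(\A\oplus\B,\Lip,p_\A,p_\B)\in\mathcal{CMT}$. Its depth is zero by Remark (\ref{null-depth-rmk}), and its reach equals exactly the Hausdorff distance between the canonical images of $\StateSpace(\A)$ and $\StateSpace(\B)$ inside $(\StateSpace(\A\oplus\B),\Kantorovich{\Lip})$ that is used to define the corresponding term of $\prox$. The single-tunnel journey $(\tau)$ therefore realizes this Hausdorff distance, and taking the infimum over all admissible $\Lip$ concludes the last comparison.
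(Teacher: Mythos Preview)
Your approach is essentially the same as the paper's: the lower bound in (\ref{comparison-eq-1}) via Lemma (\ref{dist-q-lemma}) and the triangle inequality for $\dist_q$, the monotonicity in the tunnel class from Definition (\ref{dual-propinquity-def}), the bridge-to-tunnel construction on $\A_j\oplus\A_{j+1}$ with the max of the two Lip-norms and a rescaled bridge seminorm (citing \cite[Theorem 6.3]{Latremoliere13} for both the Leibniz/quotient properties and the reach bound of twice the bridge length), the zero depth from Remark (\ref{null-depth-rmk}), and the direct-sum tunnel reading of $\prox$. The paper's argument is organized identically and relies on the same external input from \cite{Latremoliere13}.

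One small point: your justification for the second inequality in (\ref{comparison-eq-2}) is not quite right as stated. You say the bridge-to-tunnel construction lands in $\mathcal{SUM}_{\mathcal{C}}$ and that this ``also gives'' $\dpropinquity{\mathcal{T}}\leq\propinquity_{\mathcal{C}}$, but landing in $\mathcal{SUM}_{\mathcal{C}}$ does not by itself place the tunnel in an arbitrary $\mathcal{C}$-compatible class $\mathcal{T}$. The paper handles this with a one-line remark that ``the proof carries if we restrict the class of {\Lqcms s} and tunnels,'' which is equally terse; in either case the intended reading is that the specialized quantum propinquity $\propinquity_{\mathcal{C}}$ is built from bridges within $\mathcal{C}$, and the resulting direct-sum tunnels then lie in the corresponding tunnel class by the compatibility hypotheses. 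You should make that dependence explicit rather than asserting it for arbitrary $\mathcal{T}$.
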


\begin{proof}
By Definition (\ref{dual-propinquity-def}), if $\mathcal{T}\subseteq\mathcal{G}$ are $\mathcal{C}$-compatible classes of tunnels, and $(\A,\Lip_\A)$ and $(\B,\Lip_\B)$ lie in $\mathcal{C}$, then:
\begin{equation*}
\dpropinquity{\mathcal{G}}((\A,\Lip_\A),(\B,\Lip_\B))\leq\dpropinquity{\mathcal{T}}((\A,\Lip_\A),(\B,\Lip_\B))\text{,}
\end{equation*}
i.e. the first halves of Inequality (\ref{comparison-eq-2}) and Inequality (\ref{comparison-eq-3}) hold.

By construction, $\dpropinquity{\ast}$ is dominated by Rieffel's proximity, and thus Inequality (\ref{comparison-eq-3}) is proven.

Let us now prove that the quantum Gromov-Hausdorff propinquity dominates the dual Gromov-Hausdorff propinquity.
 
Let $\varepsilon > 0$. We recall from \cite{Latremoliere13} that the quantum propinquity is computed as the infimum of the lengths of all treks between two given {\Lqcms s}. Let $\Gamma \in \trekset{\A,\Lip_\A}{\B,\Lip_\B}$ be a trek from $(\A,\Lip_\A)$ to $(\B,\Lip_\B)$ such that the length $\treklength{\Gamma}$ of $\Gamma$ satisfies $\treklength{\Gamma} \leq \propinquity((\A,\Lip_\A),(\B,\Lip_\B)) +\varepsilon$. 

Write $\Gamma = (\A_j,\Lip_j,\gamma_j,\A_{j+1},\Lip_{j+1} : j=1,\ldots,n)$, where $\gamma_j$ is a bridge from $(\A_j,\Lip_j)$ to $(\A_{j+1},\Lip_{j+1})$ for $j\in\{1,\ldots,n\}$ with $(\A,\Lip_\A)=(\A_1,\Lip_1)$ and $(\B,\Lip_\B)=(\A_{n+1},\Lip_{n+1})$.

For each $j\in\{1,\ldots,n\}$, $a_j\in\sa{\A_j}$ and $a_{j+1}\in\sa{\A_{j+1}}$, we set:
\begin{equation*}
\Lip^j(a_j,a_{j+1}) = \max\left\{ \Lip_j(a_j),\Lip_{j+1}(a_{j+1}),\frac{1}{\bridgelength{\gamma_j}{\Lip_j,\Lip_{j+1}}}\bridgenorm{\gamma_j}{a_j,a_{j+1}} \right\}\text{,}
\end{equation*}
where, for all $j\in\{1,\ldots,n\}$, the real number $\bridgelength{\gamma_j}{\Lip_j,\Lip_{j+1}}$ is the length of the bridge $\gamma_j$ as defined in \cite[Definition 3.17]{Latremoliere13}, and $\bridgenorm{\gamma_j}{\cdot,\cdot}$ is the bridge seminorm of $\gamma_j$ as defined in \cite[Definition 3.10]{Latremoliere13}.

By \cite[Theorem 6.3]{Latremoliere13}, $(\A_j\oplus\A_{j+1},\Lip^j)$ is a {\Lqcms} and $\tau_j = (\A_j\oplus\A_{j+1},\Lip^j,\pi^l_j,\pi^r_j)$ is a tunnel from $(\A_j,\Lip_j)$ to $(\A_{j+1},\Lip_{j+1})$, with $\pi^r_l$ and $\pi^l_r$ being the canonical surjections from $\A_j\oplus\A_{j+1}$ onto $\A_j$ and onto $\A_{j+1}$ respectively. Thus $\Upsilon = (\A_j,\Lip_j,\tau_j,\A_{j+1},\Lip_{j+1} : j=1,\ldots,n)$ is a journey from $(\A,\Lip_\A)$ to $(\B,\Lip_\B)$.

The depth of $\tau_j$ is $0$ for all $j\in\{1,\ldots,n\}$ by Remark (\ref{null-depth-rmk}). Moreover, by \cite[Theorem 6.3]{Latremoliere13}, the reach of the tunnel $\tau_j$ is bounded above by twice the length of the bridge $\gamma_j$ for all $j\in\{1,\ldots,n\}$. In conclusion, $\tunnellength{\tau_j}{\Lip_j,\Lip_{j+1}}\leq 2\bridgelength{\gamma_j}{\Lip_j,\Lip_{j+1}}$ for all $j\in\{1,\ldots,n\}$. Thus $\journeylength{\Upsilon}\leq 2\treklength{\Gamma}$ and thus by Definition (\ref{dual-propinquity-def}), we have:
\begin{equation*}
\dpropinquity{}((\A,\Lip_\A),(\B,\Lip_\B)) \leq 2\propinquity((\A,\Lip_\A),(\B,\Lip_\B)) + \varepsilon\text{.}
\end{equation*}
As $\varepsilon > 0$ is arbitrary, we have established the desired upper bound on $\dpropinquity{}$. The proof carries if we restrict the class of {\Lqcms s} and tunnels, so the upper bound in Inequality (\ref{comparison-eq-1}) is established.

The claimed lower bound in Inequality (\ref{comparison-eq-1}) proceeds as follows. Let $\varepsilon > 0$ and let:
\begin{equation*}
\Upsilon = (\A_j,\Lip_j,\tau_j,\A_{j+1},\Lip_{j+1}:j=1\,\ldots,n) \in \journeyset{\mathcal{T}}{\A,\Lip_\A}{\B,\Lip_\B}
\end{equation*}
such that:
\begin{equation*}
\journeylength{\Upsilon} = \sum_{j=1}^n \tunnellength{\tau_j}{\Lip_j,\Lip_{j+1}} \leq \dpropinquity{}((\A,\Lip_\A),(\B,\Lip_\B)) + \varepsilon \text{.}
\end{equation*}

Thus, since the quantum Gromov-Hausdorff distance satisfies the triangle inequality:
\begin{equation*}
\begin{split}
\mathrm{dist}_q((\A,\Lip_\A),(\B,\Lip_\B)) &\leq \sum_{j=1}^n \mathrm{dist}_q((\A_j,\Lip_j),(\A_{j+1},\Lip_{j+1}))\\
&\leq \sum_{j=1}^n \tunnelreach{\tau_j}{\Lip_j,\Lip_{j+1}} \text{ by Lemma (\ref{dist-q-lemma}),}\\
&\leq \sum_{j=1}^n \tunnellength{\tau_j}{\Lip_j,\Lip_{j+1}}\\
&\leq \dpropinquity{}((\A,\Lip_\A),(\B,\Lip_\B)) + \varepsilon \text{.}
\end{split}
\end{equation*}
Our theorem is thus proven as $\varepsilon > 0$ is arbitrary.
\end{proof}

We first obtain a more precise bound for the dual Gromov-Hausdorff propinquity using the known bound on the quantum propinquity:

\begin{corollary}
For any two {\Lqcms s} $(\A,\Lip_\A)$ and $(\B,\Lip_\B)$ we have:
\begin{equation*}
\dpropinquity{}((\A,\Lip_\A),(\B,\Lip_\B)) \leq \max\left\{\diam{\StateSpace(\A)}{\Kantorovich{\Lip_\A}},\diam{\StateSpace(\B)}{\Kantorovich{\Lip_\B}}\right\}\text{.}
\end{equation*}
\end{corollary}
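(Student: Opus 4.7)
The plan is to derive this corollary by combining the comparison bound $\dpropinquity{}\leq 2\propinquity$ from Inequality (\ref{comparison-eq-1}) of Theorem (\ref{ncpropinquity-comparison-thm}) with the analogous diameter bound already established for the quantum Gromov--Hausdorff propinquity in \cite{Latremoliere13}. Concretely, it is shown in \cite{Latremoliere13} that for any two {\Lqcms s} $(\A,\Lip_\A)$ and $(\B,\Lip_\B)$,
\begin{equation*}
\propinquity((\A,\Lip_\A),(\B,\Lip_\B))\leq \tfrac{1}{2}\max\left\{\diam{\StateSpace(\A)}{\Kantorovich{\Lip_\A}},\diam{\StateSpace(\B)}{\Kantorovich{\Lip_\B}}\right\}\text{,}
\end{equation*}
this being obtained by an explicit bridge construction with trivial pivot. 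Feeding this into the comparison $\dpropinquity{}\leq 2\propinquity$ yields the claimed inequality at once.

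As an alternative route (and a useful sanity check on constants) one can prove the bound directly from Example (\ref{Lqcms-example}). Let
\begin{equation*}
D=\max\left\{\diam{\StateSpace(\A)}{\Kantorovich{\Lip_\A}},\diam{\StateSpace(\B)}{\Kantorovich{\Lip_\B}}\right\}
\end{equation*}
and choose unital faithful $\ast$-representations $\omega_\A,\omega_\B$ of $\A,\B$ on a common separable Hilbert space $\Hilbert$, as is possible by Proposition (\ref{qcms-separable-prop}). Define the seminorm $\Lip$ on $\sa{\A\oplus\B}$ by $\Lip(a,b)=\max\{\Lip_\A(a),\Lip_\B(b),D^{-1}\|\omega_\A(a)-\omega_\B(b)\|_{\Hilbert\rightarrow\Hilbert}\}$ and let $\pi_\A,\pi_\B$ be the canonical projections. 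Example (\ref{Lqcms-example}) asserts that $\tau=(\A\oplus\B,\Lip,\pi_\A,\pi_\B)$ is a tunnel; the same construction arises from the canonical bridge of \cite[Proposition 4.6]{Latremoliere13} through the recipe used in the proof of Theorem (\ref{ncpropinquity-comparison-thm}), and that proof shows $\tunnellength{\tau}{\Lip_\A,\Lip_\B}\leq D$ (the depth is zero by Remark (\ref{null-depth-rmk}) and the reach is at most $D$). The singleton journey consisting of this tunnel then realizes the bound via Definition (\ref{dual-propinquity-def}).

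There is essentially no obstacle here beyond bookkeeping of constants: the factor $2$ in the passage from the quantum propinquity to the dual propinquity precisely compensates the factor $\tfrac12$ appearing in the bridge-length bound of \cite{Latremoliere13}, so the two approaches agree. I would present the short proof first (two lines invoking the known bound and Inequality (\ref{comparison-eq-1})), and only allude to the direct construction as the natural tunnel witnessing the inequality.
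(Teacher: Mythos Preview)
Your proposal is correct and matches the paper's own proof, which consists of the single line ``Apply Theorem (\ref{ncpropinquity-comparison-thm}) to \cite[Proposition 4.6]{Latremoliere13}.'' Your alternative direct construction is likewise exactly the tunnel described in Example (\ref{Lqcms-example}), where the paper already remarks that its length is at most $D$; so both of your routes are precisely those the paper has in mind.
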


\begin{proof}
Apply Theorem (\ref{ncpropinquity-comparison-thm}) to \cite[Proposition 4.6]{Latremoliere13}.
\end{proof}

We also can compare the dual Gromov-Hausdorff propinquity with the Gromov-Hausdorff distance:

\begin{corollary}
Let $(X,\mathsf{d}_X)$ and $(Y,\mathsf{d}_Y)$ be two compact metric spaces, and let $\mathsf{GH}$ be the Gro\-mov-Haus\-dorff distance \cite{Gromov}. Then:
\begin{equation*}
\dpropinquity{}((C(X),\Lip_X),(C(Y),\Lip_Y)) \leq \mathsf{GH}((X,\mathsf{d}_X),(Y,\mathsf{d}_Y))\text{,}
\end{equation*}
where $\Lip_X$ and $\Lip_Y$ are, respectively, the Lipschitz seminorms associated to $\mathsf{d}_X$ and $\mathsf{d}_Y$.
\end{corollary}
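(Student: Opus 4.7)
The plan is to exhibit, for every admissible metric on the disjoint union $X\coprod Y$, a single tunnel between $(C(X),\Lip_X)$ and $(C(Y),\Lip_Y)$ whose length is controlled by the Hausdorff distance between $X$ and $Y$ inside $X\coprod Y$; then, taking the infimum over admissible metrics and using the characterization of the Gromov--Hausdorff distance recalled in the introduction yields the bound.

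Fix $\varepsilon>\mathsf{GH}((X,\mathsf{d}_X),(Y,\mathsf{d}_Y))$ and choose an admissible metric $\mathsf{d}$ on $Z=X\coprod Y$ with $\Haus{\mathsf{d}}(X,Y)\leq\varepsilon$. Let $\Lip_{\mathsf{d}}$ be the Lipschitz seminorm on $\sa{C(Z)}$ associated with $\mathsf{d}$, and let $\pi_X:C(Z)\twoheadrightarrow C(X)$, $\pi_Y:C(Z)\twoheadrightarrow C(Y)$ be the restriction $\ast$-epimorphisms. The first step is to check that $\tau=(C(Z),\Lip_{\mathsf{d}},\pi_X,\pi_Y)$ is a tunnel in the sense of Definition~\ref{tunnel-def}. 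Since $\Lip_{\mathsf{d}}$ is a (strongly) Leibniz, lower semi-continuous Lip-norm on $C(Z)$, $(C(Z),\Lip_{\mathsf{d}})$ is a {\Lqcms}; McShane's extension theorem states precisely that every real-valued $\mathsf{d}_X$-Lipschitz function on $X$ extends to $Z$ without increasing its Lipschitz constant, which is exactly the statement that $\Lip_X$ is the quotient of $\Lip_{\mathsf{d}}$ by $\pi_X$; symmetrically for $\pi_Y$.

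The second step is to estimate $\tunnellength{\tau}{\Lip_X,\Lip_Y}$. For the depth, observe that every state on $C(Z)$ is a Radon probability measure $\mu$ on $Z$, and writing $t=\mu(X)\in[0,1]$ we decompose $\mu=t\mu_X+(1-t)\mu_Y$ with $\mu_X$ (respectively $\mu_Y$) a probability measure on $X$ (respectively $Y$). Hence $\StateSpace(C(Z))=\co{\pi_X^\ast(\StateSpace(C(X)))\cup\pi_Y^\ast(\StateSpace(C(Y)))}$, so $\tunneldepth{\tau}{\Lip_X,\Lip_Y}=0$. For the reach, the Monge--Kantorovich metric $\Kantorovich{\Lip_{\mathsf{d}}}$ restricted to probability measures is the classical Wasserstein metric for $\mathsf{d}$, and in particular coincides with $\mathsf{d}$ on Dirac measures. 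For $\mu\in\StateSpace(C(X))$, use the Kuratowski--Ryll-Nardzewski measurable selection theorem to pick a Borel map $T:X\to Y$ with $\mathsf{d}(x,T(x))\leq\varepsilon$ for all $x\in X$ (this is possible because $\Haus{\mathsf{d}}(X,Y)\leq\varepsilon$ makes the multifunction $x\mapsto\{y\in Y:\mathsf{d}(x,y)\leq\varepsilon\}$ a nonempty compact-valued upper semicontinuous map). Then $\nu=T_\ast\mu\in\StateSpace(C(Y))$ and the pushforward coupling $(\mathrm{id}\times T)_\ast\mu$ shows $\Kantorovich{\Lip_{\mathsf{d}}}(\mu,\nu)\leq\varepsilon$; by symmetry we obtain $\tunnelreach{\tau}{\Lip_X,\Lip_Y}\leq\varepsilon$, whence $\tunnellength{\tau}{\Lip_X,\Lip_Y}\leq\varepsilon$.

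The third step is the conclusion: the one-tunnel journey $(\,C(X),\Lip_X,\tau,C(Y),\Lip_Y\,)$ lies in $\journeyset{}{C(X),\Lip_X}{C(Y),\Lip_Y}$ and has length at most $\varepsilon$, so by Definition~\ref{dual-propinquity-def},
\begin{equation*}
\dpropinquity{}((C(X),\Lip_X),(C(Y),\Lip_Y))\leq\varepsilon.
\end{equation*}
Letting $\varepsilon\searrow \mathsf{GH}((X,\mathsf{d}_X),(Y,\mathsf{d}_Y))$ yields the desired inequality. The main subtlety is the reach computation: the naive comparison only bounds the Hausdorff distance between Dirac measures on $X$ and on $Y$, and one must genuinely invoke a measurable selection (or an approximation by convex combinations of Diracs together with the fact that Wasserstein balls are convex) to promote this to the full state spaces. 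Every other ingredient---the Leibniz property of classical Lipschitz seminorms, McShane's extension, and the convex decomposition of measures on a disjoint union---is immediate.
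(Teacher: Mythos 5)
Your proof is correct, but it takes a genuinely different route from the paper. The paper disposes of this corollary in one line, by combining Theorem (\ref{ncpropinquity-comparison-thm}) (domination of $\dpropinquity{}$ by the quantum propinquity $\propinquity$) with the already-known bound of \cite[Theorem 6.6]{Latremoliere13} comparing $\propinquity$ to the Gromov--Hausdorff distance for classical spaces; in other words, it factors through the bridge-and-trek machinery of the quantum propinquity. You instead build the estimate from scratch: for each admissible metric $\mathsf{d}$ on $X\coprod Y$ you produce the single tunnel $(C(X\coprod Y),\Lip_{\mathsf{d}},\pi_X,\pi_Y)$ --- which is exactly a tunnel of the $\mathcal{SUM}$ type of Example (\ref{sum-example}), so its depth vanishes by Remark (\ref{null-depth-rmk}) --- verify the quotient condition via McShane's theorem as the introduction anticipates, and bound the reach by the Hausdorff distance $\Haus{\mathsf{d}}(X,Y)$ by pushing measures forward along a selection. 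Your reach estimate is sound; you correctly identify that the only nontrivial point is promoting the pointwise bound $\mathsf{d}(x,T(x))\leq\varepsilon$ to all of $\StateSpace(C(X))$, and either the measurable-selection argument or the Dirac-approximation argument settles it (note the duality theorem is not even needed: for $\Lip_{\mathsf{d}}(f)\leq 1$ one has $|\mu(f)-T_\ast\mu(f)|\leq\int\mathsf{d}(x,T(x))\,d\mu\leq\varepsilon$ directly). What your approach buys is self-containedness and sharpness of the constant: the paper's route passes through the inequality $\dpropinquity{}\leq 2\propinquity$ of Theorem (\ref{ncpropinquity-comparison-thm}), so the reader must consult the cited external theorem to see why no factor of $2$ survives, whereas your tunnel yields the constant $1$ immediately. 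What the paper's route buys is brevity and reuse of machinery already established for the comparison with $\propinquity$ and $\dist_q$.
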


\begin{proof}
This follows from Theorem (\ref{ncpropinquity-comparison-thm}) applied to \cite[Theorem 6.6]{Latremoliere13}.
\end{proof}

We pause to comment on the relationship between the dual Gromov-Hausdorff propinquity and some other metrics in noncommutative metric geometry. The unital nuclear distance of Kerr and Li \cite{kerr09} dominates the quantum propinquity \cite{Latremoliere13}, hence it also dominates the dual Gromov-Hausdorff propinquity. It is very possible to define a matricial version of the dual Gromov-Hausdorff propinquity in the spirit of the work of Kerr in \cite{kerr02}, and such a metric would dominates Kerr's distance for the same reasons as our current version dominates Rieffel's metric. Our dual Gromov-Hausdorff propinquity solves the matter of the coincidence property of the quantum Gromov-Hausdorff distance in a different manner from Kerr's metric, which employs completely positive order-isomorphisms for this purpose; it may however prove useful to work with a matricial version of the dual Gromov-Hausdorff propinquity in the future. It is not clear how our dual Gromov-Hausdorff propinquity compares to Kerr's matricial distance. Last, another possible avenue for generalization of our current metric is to define a quantized dual Gromov-Hausdorff propinquity in analogy with the quantized Gromov-Hausdorff distance of \cite{Wu06b}, and again, the quantized dual Gromov-Hausdorff propinquity would dominate Wu's distance (on the class on which it would be defined, which presumably would be the class consisting of pairs of a C*-algebra $\A$ endowed with a sequence of Leibniz Lip-norms, one for each matrix algebra with entries in $\A$).

We now propose two examples of convergences for the dual Gromov-Hausdorff propinquity, both derived from earlier results about stronger metrics. We start with families of {\Lqcms s} given by Examples (\ref{qt-example}).

\begin{theorem}\label{qt-converge-thm}
Let $H_\infty$ be a compact Abelian group endowed with a continuous length function $\ell$. Let $(H_n)_{n\in\N}$ be a sequence of closed subgroups of $H_\infty$ converging to $H_\infty$ for the Hausdorff distance induced by $\ell$ on the class of closed subsets of $H_\infty$. Let $\sigma_\infty$ be a skew bicharacter of the Pontryagin dual $\widehat{H_\infty}$. For each $n\in\N$, we let $\sigma_n$ be a skew bicharacter of $\widehat{H_n}$, which we implicitly identity with its unique lift as a skew bicharacter of $\widehat{H_\infty}$. If $(\sigma_n)_{n\in\N}$ converges pointwise to $\sigma_\infty$, then:
\begin{equation*}
\lim_{n\rightarrow\infty} \dpropinquity{}\left(\left(C^\ast\left(\widehat{H_n},\sigma_n\right),\Lip_n\right),\left(C^\ast\left(\widehat{H_\infty},\sigma_\infty\right),\Lip_\infty\right)\right) = 0\text{,}
\end{equation*}
where for all $n\in\N\cup\{\infty\}$ and $a\in C^\ast\left(\widehat{H_n},\sigma_n\right)$ we set:
\begin{equation*}
\Lip_n (a) = \sup\left\{\frac{\|a-\alpha_n^g(a)\|_{C^\ast\left(\widehat{H_n},\sigma_n\right)}}{\ell(g)} : g\in H_n\setminus\{1_{H_\infty}\} \right\}
\end{equation*}
with $1_{H_\infty}$ is the unit of $H_\infty$ and $\alpha_n$ is the dual action of $H_n$ on $C^\ast\left(\widehat{H_n},\sigma_n\right)$.
\end{theorem}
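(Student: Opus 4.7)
The strategy is to reduce the statement to a convergence result for a stronger metric and then invoke Theorem \ref{ncpropinquity-comparison-thm}. That theorem shows that on any two {\Lqcms s} one has $\dpropinquity{} \leq 2\propinquity$, so it suffices to establish the analogous convergence for the quantum Gromov-Hausdorff propinquity $\propinquity$.

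First, I would invoke the convergence theorem for the quantum propinquity established in \cite{Latremoliere13} (which itself upgrades the earlier convergence result for the quantum Gromov-Hausdorff distance in \cite{Latremoliere05}), which states precisely that under the hypotheses on $(H_n)_{n\in\N}$, $(\sigma_n)_{n\in\N}$, and the length function $\ell$ given in the statement, one has
\begin{equation*}
\lim_{n\to\infty} \propinquity\!\left(\left(C^\ast\!\left(\widehat{H_n},\sigma_n\right),\Lip_n\right),\left(C^\ast\!\left(\widehat{H_\infty},\sigma_\infty\right),\Lip_\infty\right)\right) = 0.
\end{equation*}
The proof of that result proceeds by constructing explicit bridges: embedding all the twisted group C*-algebras into a common C*-algebra via faithful representations on a common Hilbert space (for instance on $L^2(H_\infty)$), selecting a pivot element whose commutators with generators are well-controlled, and estimating both the bridge height and the bridge reach using the Hausdorff convergence $H_n \to H_\infty$, the pointwise convergence $\sigma_n \to \sigma_\infty$, and the uniform continuity of $\ell$ on the compact group $H_\infty$.

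Second, for every $n \in \N$, Theorem \ref{ncpropinquity-comparison-thm} yields immediately
\begin{equation*}
\dpropinquity{}\!\left(\left(C^\ast\!\left(\widehat{H_n},\sigma_n\right),\Lip_n\right),\left(C^\ast\!\left(\widehat{H_\infty},\sigma_\infty\right),\Lip_\infty\right)\right) \leq 2\,\propinquity\!\left(\left(C^\ast\!\left(\widehat{H_n},\sigma_n\right),\Lip_n\right),\left(C^\ast\!\left(\widehat{H_\infty},\sigma_\infty\right),\Lip_\infty\right)\right),
\end{equation*}
and passing to the limit on the right-hand side delivers the desired convergence in the dual Gromov-Hausdorff propinquity.

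The main obstacle, should one seek a fully self-contained proof, lies entirely on the quantum-propinquity side, namely in constructing the bridges and estimating their lengths; the passage from $\propinquity$ to $\dpropinquity{}$ is essentially automatic once Theorem \ref{ncpropinquity-comparison-thm} is in hand. Alternatively, if one preferred to work with tunnels rather than bridges, one could adapt the bridge-to-tunnel recipe used in the proof of Theorem \ref{ncpropinquity-comparison-thm} to $\D_n = C^\ast\!\left(\widehat{H_n},\sigma_n\right)\oplus C^\ast\!\left(\widehat{H_\infty},\sigma_\infty\right)$, endowed with a max-type Lip-norm combining $\Lip_n$, $\Lip_\infty$, and a common-representation seminorm scaled by a parameter tending to zero with $n$; but the analytic content would reduce to the same estimates as in the bridge argument cited above.
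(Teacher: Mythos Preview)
Your proposal is correct and follows essentially the same approach as the paper: invoke the convergence result for the quantum propinquity $\propinquity$ from \cite{Latremoliere13} and then apply the comparison inequality $\dpropinquity{}\leq 2\propinquity$ of Theorem~\ref{ncpropinquity-comparison-thm}. The only minor difference is in how the cited result is described: the paper attributes the convergence in \cite[Theorem 6.8]{Latremoliere13} to the domination of the quantum propinquity by the unital nuclear distance together with the methods of \cite{kerr09} and \cite{Latremoliere05}, whereas you describe it via an explicit bridge construction; but this is commentary on the cited theorem rather than a difference in the proof strategy itself.
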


\begin{proof}
Apply \cite[Theorem 6.8]{Latremoliere13}, which itself derives from the fact that the unital nuclear distance \cite{kerr09} dominates the quantum Gromov-Hausdorff distance, and the methods of \cite{kerr09} and \cite{Latremoliere05} combined.
\end{proof}

In particular, we obtain the following corollary about quantum tori and their finite dimensional approximations:

\begin{corollary}
Let $d\in \N\setminus\{0,1\}$. Let $\Nbar = \N\cup\{\infty\}$ be the one point compactification of $\N$ and we endow the space $B$ of skew-bicharacters of $\Z^d$ with the topology of pointwise convergence. For any $k = (k_1,\ldots,k_d)\in \Nbar^d$, we denote the quotient group $\bigslant{\Z^d}{\prod_{j=1}^d k_j\Z}$ by $\Z^d_k$, where by convention $\infty\Z = \{0\}$. Every skew-bicharacter of $\Z^d_k$ is identified with its unique lift to $B$. We endow $\Nbar^d\times B$ with the product topology.

Let $\sigma$ a skew-bicharacter of $\Z^d$. Write $\infty^d = (\infty,\ldots,\infty) \in \Nbar^d$. Let $l$ be a continuous length function on $\T^d$ where $\T = \{z\in\C : |z|=1\}$. For all $c\in\N^d$, $\theta\in B$, let $\Lip_{l,c,\theta}$ be
the Lip-norm on the twisted C*-algebra $C^\ast(\Z^d_c,\theta)$ defined, for all $a\in\C^\ast(\Z^d_k,\theta)$ by:
\begin{equation*}
\Lip_{l,c,\theta}(a) = \sup \left\{ \frac{\|\alpha^g(a)-a\|_{C^\ast(\Z^d_k,\theta)}}{l(g)} : g \not= (1,\ldots,1) \right\}
\end{equation*}
where $\alpha$ is the dual action of the dual of $\Z^d_k$, seen as a subgroup of $\T^d$, on $C^\ast(\Z^d,\theta)$.

Then:
\begin{equation*}
\lim_{(c,\theta)\rightarrow (\infty^d,\sigma)} \dpropinquity{}\left(\left(C^\ast\left(\Z^d_c,\theta\right),\Lip_{l,c,\theta}\right),\left(C^\ast\left(\Z^d,\sigma\right),\Lip_{l,\infty^d,\sigma}\right)\right) = 0 \text{.}
\end{equation*}

\end{corollary}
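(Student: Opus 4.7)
The strategy is to reduce the corollary to Theorem~\ref{qt-converge-thm} applied with $H_\infty = \T^d$, equipped with the continuous length function $l$. For each $c = (c_1,\ldots,c_d) \in \Nbar^d$, Pontryagin duality identifies $\Z^d_c$ with the dual of the closed subgroup
\[
H_c = \left\{(z_1,\ldots,z_d)\in\T^d : z_j^{c_j}=1 \text{ whenever } c_j<\infty\right\}
\]
of $\T^d$ (no constraint being imposed when $c_j = \infty$); in particular $H_{\infty^d} = \T^d$. Under this identification, the dual action of $H_c$ on $C^\ast(\Z^d_c,\theta)$ is the one appearing in Theorem~\ref{qt-converge-thm}, and the Lip-norm $\Lip_{l,c,\theta}$ coincides on the nose with the Lip-norm associated there using the restriction of $l$ to $H_c$.

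Since $\Nbar$ is metrizable and $B$ is a closed subset of the compact metrizable group $\T^{\Z^d\times\Z^d}$ endowed with pointwise convergence, the product $\Nbar^d\times B$ is metrizable, so it suffices to treat sequential convergence. Fix a sequence $\bigl(c^{(n)},\theta^{(n)}\bigr)_{n\in\N}$ in $\Nbar^d \times B$ converging to $(\infty^d, \sigma)$. The convergence $\theta^{(n)}\to\sigma$ in $B$ is precisely pointwise convergence of skew-bicharacters under the lift convention, supplying one of the two hypotheses of Theorem~\ref{qt-converge-thm} directly.

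The remaining task is to establish $H_{c^{(n)}}\to\T^d$ in the Hausdorff distance induced by $l$. By continuity of $l$ and compactness of $\T^d$, given $\varepsilon>0$ there exists $\delta>0$ such that every $w=(w_1,\ldots,w_d)\in\T^d$ whose components each lie within arclength $\delta$ of $1\in\T$ satisfies $l(w)\leq\varepsilon$. For $n$ sufficiently large, each coordinate $c_j^{(n)}$ is either equal to $\infty$ --- in which case the corresponding factor of $H_{c^{(n)}}$ is already all of $\T$ --- or exceeds $2\pi/\delta$, in which case the $c_j^{(n)}$-th roots of unity are $\delta$-dense in $\T$. Either way, $H_{c^{(n)}}$ becomes $\varepsilon$-dense in $\T^d$ for the translation-invariant metric $(g,h)\mapsto l(g^{-1}h)$, so Hausdorff convergence holds. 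Theorem~\ref{qt-converge-thm} then yields the desired limit. The only mild obstacle is the bookkeeping needed to identify $\Z^d_c$ with the dual of a concrete subgroup of $\T^d$ in a manner compatible with the Lip-norms; once this is in place, Hausdorff convergence is an elementary consequence of continuity of $l$ and compactness of $\T^d$.
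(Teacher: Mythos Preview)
Your proposal is correct and follows exactly the route the paper indicates: the paper's own proof simply says the result ``may be derived from Theorem~(\ref{qt-converge-thm})'' (or alternatively from the comparison theorem combined with \cite{Latremoliere13c}), and you have carried out precisely that derivation, supplying the Pontryagin-duality identification $\widehat{H_c}\cong\Z^d_c$, the metrizability argument justifying sequential limits, and the elementary Hausdorff-convergence computation that the paper leaves implicit.
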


\begin{proof}
This result may be derived from Theorem (\ref{qt-converge-thm}), or from Theorem (\ref{ncpropinquity-comparison-thm}) applied to the work in \cite{Latremoliere13c}, which provides a more explicit and quantitative construction of bridges than the implicit approach of \cite{kerr09}.
\end{proof}

We can also recast Rieffel's work on convergences of matrix algebras to the sphere, as he develops in \cite{Rieffel10c} the necessary estimates to obtain the desired convergence for the proximity, and hence our dual Gromov-Hausdorff propinquity.

\begin{corollary}\label{rieffel-corollary}
Let $G$ be a compact connected Lie semisimple group and $\ell$ a continuous length function on $G$. Let $\pi$ be an irreducible unitary representation of $G$ on some Hilbert space $\mathscr{H}$. Let $\xi$ be a normalized highest weight vector for $\pi$. For any $n\in\N$, we denote by $\pi_n$ the restriction of $\pi^{\otimes n}$ to the irreducible space $\mathscr{H}_n$ for $\pi^{\otimes n}$ containing $\xi^{\otimes n}$. Note that $\pi_n$ is an irreducible representation for $G$ of highest weight vector $\xi^{\otimes n}$.

Let $\B_n = \B(\mathscr{H}_n)$ be the matrix algebras of linear endomorphisms of $\mathscr{H}_n$ and let $\alpha$ be the action of $G$ on $\B_n$ by conjugation.Let $H$ be the stability subgroup of the projection on $\C\xi^{\otimes n}$ and note that $H$ is independent of $n\in\N$.

Now, $G$ acts continuously and ergodically on both $A = C\left(\bigslant{G}{H}\right)$ and $\B_n$, thus defining Leibniz closed Lip-norms $\Lip_\A$ and $\Lip_n$ on $\A$ and $\B_n$, respectively. We then have:
\begin{equation*}
\lim_{n\rightarrow\infty} \dpropinquity{}((\A,\Lip_\A),(\B_n,\Lip_n)) = 0 \text{.}
\end{equation*}
\end{corollary}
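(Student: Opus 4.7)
The plan is to obtain this convergence result by reduction to Rieffel's prior convergence result for the quantum proximity $\prox$, via our comparison Theorem (\ref{ncpropinquity-comparison-thm}). The key ingredient is Inequality (\ref{comparison-eq-3}), which asserts that whenever $(\A,\Lip_\A)$ and $(\B,\Lip_\B)$ are compact C*-metric spaces in the sense of Example (\ref{CM-example}), we have
\begin{equation*}
\dpropinquity{}((\A,\Lip_\A),(\B,\Lip_\B)) \leq \prox((\A,\Lip_\A),(\B,\Lip_\B))\text{.}
\end{equation*}
Thus once we know that $\prox((\A,\Lip_\A),(\B_n,\Lip_n))\to 0$ in the setting of the statement, our conclusion follows immediately.

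The first step is to verify that $(\A,\Lip_\A)$ and $(\B_n,\Lip_n)$ are honest compact C*-metric spaces, not merely {\Lqcms s}. Since each Lip-norm is constructed from an ergodic action of the compact group $G$ by the standard recipe of Example (\ref{action-example}), the induced seminorm is defined on the dense *-subalgebra of smooth vectors, satisfies the full Leibniz inequality $\Lip(ab)\leq \|a\|\Lip(b)+\Lip(a)\|b\|$ on that subalgebra, and is lower semi-continuous; it is also strong-Leibniz on invertible elements by the standard computation $\Lip(a^{-1})\leq\|a^{-1}\|^2\Lip(a)$ valid for any derivation-type seminorm. These facts are established in \cite{Rieffel10c} precisely to place these spaces within the class $\mathcal{CM}$ to which the proximity applies.

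The second step is to invoke Rieffel's estimates in \cite{Rieffel10c}: he constructs, for each $n\in\N$, an admissible strongly Leibniz Lip-norm on $\A\oplus\B_n$ (built from coherent states and Berezin-type symbol maps associated with $\xi^{\otimes n}$) whose Hausdorff distance between the embedded state spaces $\StateSpace(\A)$ and $\StateSpace(\B_n)$ tends to zero as $n\to\infty$. This yields $\prox((\A,\Lip_\A),(\B_n,\Lip_n))\to 0$. Feeding this into Inequality (\ref{comparison-eq-3}) gives
\begin{equation*}
\dpropinquity{}((\A,\Lip_\A),(\B_n,\Lip_n)) \leq \prox((\A,\Lip_\A),(\B_n,\Lip_n)) \xrightarrow[n\to\infty]{} 0\text{,}
\end{equation*}
which is the desired convergence.

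There is no real obstacle beyond checking the hypotheses of Example (\ref{CM-example}); the substantive analytic content of the proof --- the construction of Berezin symbols, the verification that the transported metrics are close, the strong Leibniz property of the admissible Lip-norms on $\A\oplus\B_n$ --- lies entirely in \cite{Rieffel10c}, and our contribution here is simply to note that our dual Gromov-Hausdorff propinquity is dominated by $\prox$ on $\mathcal{CM}$, which has already been established in Theorem (\ref{ncpropinquity-comparison-thm}). The mild subtlety is that Rieffel's definition of compact C*-metric spaces in \cite{Rieffel10c} does not assume the underlying normed algebra to be complete, whereas our Example (\ref{CM-example}) does; however, in the present setting $\A$ and $\B_n$ are genuine C*-algebras, so this discrepancy is immaterial.
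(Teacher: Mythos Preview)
Your proof is correct and follows essentially the same approach as the paper: the paper's proof consists of the single line ``Apply Theorem (\ref{ncpropinquity-comparison-thm}) to \cite[Theorem 9.1]{Rieffel10c},'' and you have simply unpacked this by identifying Inequality (\ref{comparison-eq-3}) as the relevant part of the comparison theorem and spelling out why the spaces involved lie in $\mathcal{CM}$. Your additional remarks on the strong Leibniz property and the completeness discrepancy are accurate elaborations that the paper leaves implicit.
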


\begin{proof}
Apply Theorem (\ref{ncpropinquity-comparison-thm}) to \cite[Theorem 9.1]{Rieffel10c}.
\end{proof}

We shall observe, at the conclusion of this paper, that Rieffel proved in \cite{Rieffel10} that the finite dimensional {\Lqcms s} which appear in Corollary (\ref{rieffel-corollary}) form a Cauchy sequence for the quantum proximity. As the latter does not satisfy the triangle inequality, this observation does not derive from \cite[Theorem 9.1]{Rieffel10c}, but instead uses new estimates based upon techniques which, in turn, inspired our work in \cite{Latremoliere13}. Since we are now going to prove that the dual Gromov-Hausdorff propinquity is complete, Rieffel's result in \cite{Rieffel10} provides an alternative approach to Corollary (\ref{rieffel-corollary}).

\section{Completeness}

In this last section, we prove that the dual Gromov-Hausdorff propinquity is complete. This property is a strong motivation for our introduction of this metric. Indeed, the Gromov-Hausdorff distance is a complete metric on the class of compact metric spaces \cite{Gromov}, which is a very important property in metric geometry. Rieffel's quantum Gromov-Hausdorff is complete as well \cite{Rieffel00}. However, our quantum propinquity is not known to be complete, and our investigation in this matter reveals difficulties tied to the restrictions we placed on the type of Lip-norms involved in \cite{Latremoliere13}. The dual Gromov-Hausdorff propinquity, which we have so far proven satisfies all desirable properties of the quantum propinquity, is the result of our work on completeness for our new metrics, a property we very strongly feel is necessary for any analogue to the Gromov-Hausdorff distance in noncommutative geometry.

 Our work focuses on the dual Gromov-Hausdorff propinquity $\dpropinquity{}$, though it is possible to extend it to various specialized noncommutative propinquities, with some care.

In order to lighten our exposition, we group some common hypothesis to many results in this section. We begin with a special type of sequences of {\Lqcms s} and some associated tunnels, which will prove sufficient to establish the completeness of the dual Gromov-Hausdorff propinquity, as we will see at the end of this section: indeed, every Cauchy sequence contains a subsequence with the properties listed in Hypothesis (\ref{completeness-hypothesis-2}). The reason to choose sequences with the properties listed in Hypothesis (\ref{completeness-hypothesis-2}) is that it allows for the construction of {\Lqcms s} which will be used as tunnels. The content of Hypothesis (\ref{completeness-hypothesis-1}) defines all the basic objects which will be used repeatedly in this section to construct the limit of a Cauchy sequence of {\Lqcms s} for $\dpropinquity{}$.

\newcommand{\MLip}{{\mathsf{H}}}
\newcommand{\SLip}{{\mathsf{S}}}

\begin{notation}
Let $N \in \N$ and set $\N_N = \{ n\in \N : n\geq N \}$. If $(\A_n)_{n\in\N_N}$ is a sequence of C*-algebras, then $\prod_{n\in\N_N} \A_n$ is the C*-algebra of bounded sequences $(a_n)_{n\in\N_N}$ with $a_n\in\A_n$ for all $n\in\N_N$, with norm given by:
\begin{equation*}
\|(a_n)_{n\in\N_N}\|_\infty = \sup \{ \|a_n\|_{\A_n} : n\in \N_N \} < \infty\text{.}
\end{equation*}
\end{notation}

We begin by defining some structures associated with sequences of {\Lqcms s} and associated tunnels. We will see at the end of this section that we can work with tunnels, instead of journeys, toward our goal. For now, we shall assume in this section:

\begin{hypothesis}\label{completeness-hypothesis-1}
Let $(\A_n,\Lip_n)_{n\in\N}$ be a sequence of {\Lqcms s} and, for each $n\in\N$, let:
\begin{equation*}
\tau_n\in\tunnelset{\A_n,\Lip_n}{\A_{n+1},\Lip_{n+1}}{}\text{.}
\end{equation*}
For each $n\in\N$ we write $\tau_n = (\D_n,\Lip^n,\pi_n,\omega_n)$, where $(\D_n,\Lip^n)$ is a {\Lqcms} and $\pi_n : \D_n\twoheadrightarrow\A_n$ and $\omega_n : \D_n\twoheadrightarrow\A_{n+1}$ are *-epimorphisms.

Let $N \in \N$. Define:
\begin{equation*}
\SLip_N : (d_n)_{n\in\N_N} \in \sa{\prod_{n\in\N_N} \D_n} \longmapsto \sup \left\{\Lip^n(d_n) : n\in \N_N \right\}\text{.}
\end{equation*}

Let:
\begin{equation*}
\mathfrak{L}_N = \left\{ d=(d_n)_{n\in\N_N} \in \prod_{n\in\N_N}\D_n \middle\vert \begin{array}{l} \forall n\in \N_N\quad
d_n\in\sa{\D_n},\\
\SLip_N(d)<\infty \text{ and }\\
\forall n \in \N_N \quad \pi_{n+1}(d_{n+1})=\omega_n(d_n)
\end{array} \right\}\text{.}
\end{equation*}
For all $d\in \prod_{n\in\N_N}\D_n$, we denote $\frac{1}{2}(d+d^\ast)$ by $\Re(d)$ and $\frac{1}{2i}(d-d^\ast)$ by $\Im(d)$. We then define:
\begin{equation*}
\mathfrak{S}_N = \left\{ d \in \prod_{n\in\N_N}\D_n : \Re(d),\Im(d) \in \mathfrak{L}_N \right\}\text{.}
\end{equation*}
We also let $\mathfrak{G}_N$ be the norm closure of $\mathfrak{S}_N$ in $\prod_{n\in\N_N}\D_n$.

For all $n\in\N_N$, we denote the projection $(d_n)_{n\in\N_N} \in \mathfrak{G}_N \mapsto d_n\in\D_n$ by $\Xi_n$, and we denote $\pi_n\circ\Xi_n : \mathfrak{G} \rightarrow \A_n$ by $\Pi_n$. Note that we do not decorate either $\Xi_n$ or $\Pi_n$ with $N$, as there is no risk of ambiguity.
\end{hypothesis}

Outside of the results and proofs in this section, where we will rigorously refer to the proper set of assumptions, the text in the rest of our paper will implicitly use the notations in Hypothesis (\ref{completeness-hypothesis-1}).

The spaces $\alg{G}_N$ introduced in Hypothesis (\ref{completeness-hypothesis-1}) will be used to construct new {\Lqcms s} and tunnels. The first step toward this goal is to establish the algebraic structures of these spaces.

\begin{lemma}\label{jordan-lie-lemma}
We assume Hypothesis (\ref{completeness-hypothesis-1}).

Then $\mathfrak{L}_N$ is a Jordan-Lie subalgebra of $\sa{\prod_{n\in\N_N}\D_n}$, while $\mathfrak{S}_N$ is the linear span of $\mathfrak{L}_N\cup i\mathfrak{L}_N$ in $\prod_{n\in\N_N}\D_n$ and is a *-subalgebra of $\prod_{n\in\N_N}\D_n$. Thus $\mathfrak{G}_N$ is a C*-subalgebra of $\prod_{n\in\N_N}\D_n$.
\end{lemma}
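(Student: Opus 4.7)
The plan is to verify the three claims in order, with the real work concentrated in the first claim; the statements about $\mathfrak{S}_N$ and $\mathfrak{G}_N$ then follow by straightforward algebraic manipulations.

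For the first claim, I would establish closure of $\mathfrak{L}_N$ under real linear combinations, the Jordan product, and the Lie product, while preserving the coherence condition $\pi_{n+1}(d_{n+1}) = \omega_n(d_n)$ for all $n \in \N_N$. Real linearity of $\mathfrak{L}_N$ is immediate, since $\SLip_N$ is a seminorm (being a supremum of seminorms), and the coherence condition is linear in $d$ because each $\pi_{n+1}$ and $\omega_n$ is linear. For the Jordan and Lie products, the coherence condition is preserved because $\pi_{n+1}$ and $\omega_n$ are unital *-homomorphisms, hence preserve both Jordan and Lie products. The key estimate, and the main obstacle, is controlling $\SLip_N$ on these products: since each $(\D_n, \Lip^n)$ is a {\Lqcms}, the seminorm $\Lip^n$ is Leibniz in the sense of Definition (\ref{Leibniz-def}), so for $a = (a_n)_{n\in\N_N}, b = (b_n)_{n\in\N_N}$ in $\mathfrak{L}_N$,
$$\Lip^n(\Jordan{a_n}{b_n}) \leq \|a_n\|_{\D_n}\Lip^n(b_n) + \|b_n\|_{\D_n}\Lip^n(a_n) \leq \|a\|_\infty \SLip_N(b) + \|b\|_\infty \SLip_N(a),$$
and the analogous inequality holds for $\Lie{a_n}{b_n}$. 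Taking the supremum over $n\in\N_N$ yields $\SLip_N(\Jordan{a}{b})$ and $\SLip_N(\Lie{a}{b})$ both finite, placing both products in $\mathfrak{L}_N$.

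For the second claim, any $d \in \mathfrak{S}_N$ equals $\Re(d) + i\Im(d)$ with $\Re(d), \Im(d) \in \mathfrak{L}_N$ by definition, so $\mathfrak{S}_N \subseteq \mathfrak{L}_N + i\mathfrak{L}_N$; conversely, given $a, b \in \mathfrak{L}_N$, the real and imaginary parts of $a + ib$ are exactly $a$ and $b$, both in $\mathfrak{L}_N$, giving the reverse inclusion. To see $\mathfrak{S}_N$ is a *-subalgebra, closure under involution is immediate since $(a+ib)^\ast = a + i(-b)$ with $a, -b \in \mathfrak{L}_N$. Closure under multiplication follows by expanding $(a_1+ib_1)(a_2+ib_2)$ entrywise and applying the identity $xy = \Jordan{x}{y} + i\Lie{x}{y}$ valid for self-adjoint $x,y$, together with the Jordan-Lie subalgebra structure of $\mathfrak{L}_N$ just established, to group the result back into real and imaginary parts lying in $\mathfrak{L}_N$.

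For the third claim, $\mathfrak{G}_N$ is by definition the norm closure of the *-subalgebra $\mathfrak{S}_N$ inside the C*-algebra $\prod_{n\in\N_N}\D_n$, hence it is a norm-closed *-subalgebra of a C*-algebra, i.e., a C*-subalgebra. The entire proof hinges on the Leibniz property of each $\Lip^n$, which is precisely the feature of {\Lqcms s} that distinguishes our setting from Rieffel's original one; without it, the product of two elements of $\mathfrak{L}_N$ could have infinite $\SLip_N$, and none of these algebraic closures would hold.
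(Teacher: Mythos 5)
Your proof is correct and follows essentially the same route as the paper's: closure of $\mathfrak{L}_N$ under real linear combinations and the Jordan and Lie products via the Leibniz property of each $\Lip^n$ (together with the compatibility condition being preserved by the *-homomorphisms $\pi_{n+1}$ and $\omega_n$), and then the standard decomposition $xy = \Jordan{x}{y} + i\Lie{x}{y}$ to see that $\mathfrak{S}_N = \mathfrak{L}_N + i\mathfrak{L}_N$ is a *-subalgebra whose closure $\mathfrak{G}_N$ is a C*-subalgebra. The only difference is cosmetic: you spell out the *-algebra verification that the paper dismisses as easy, and you omit the (trivial) remark that the unit lies in $\mathfrak{L}_N$.
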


\begin{proof}
The unit $(\unit_{\D_n})_{n\in\N_N}$ lies in $\alg{L}_N$. Let $(d_n)_{n\in\N_N}$, $(d'_n)_{n\in\N_N}$ be elements in $\mathfrak{L}_N$ and let $t\in\R$. Then:
\begin{equation*}
\Lip^n(d_n+td'_n)\leq \SLip_N((d_n)_{n\in\N}) + |t|\SLip_N((d'_n)_{n\in\N_N})
\end{equation*}
for all $n\in\N_N$, so $\SLip_N((d_n+td'_n)_{n\in\N}) < \infty$. Moreover:
\begin{equation*}
\begin{split}
\pi_{n+1}(d_{n+1}+td'_{n+1}) &= \pi_{n+1}(d_{n+1}) + t\pi_{n+1}(d'_{n+1}) \\
&= \omega_n(d_n)+t\omega_n(d'_n) = \omega_n(d_n+td'_n)
\end{split}
\end{equation*}
for all $n\in\N_N$. Hence $(d_n+td'_n)_{n\in\N_N} \in \mathfrak{L}_N$ as desired.

Moreover, for all $n\in\N_N$:
\begin{equation*}
\begin{split}
\Lip^n(\Jordan{d_n}{d'_n}) &\leq \|d_n\|_{\D_n}\Lip^n(d'_n) + \|d'_n\|_{\D_n}\Lip^n(d_n)\\
&\leq \|(d_n)_{n\in\N_N}\|_\infty \SLip_N((d_n')_{n\in\N_N}) + \|(d'_n)_{n\in\N_N}\|_\infty\SLip_N((d_n)_{n\in\N_N}) < \infty
\end{split}
\end{equation*}
and, $\pi_{n+1}(\Jordan{d_{n+1}}{d'_{n+1}}) = \omega_n(\Jordan{d_n}{d'_n})$. The same computation holds for the Lie product, so $\mathfrak{L}_N$ is closed under the Jordan and Lie product, as desired.

It is now easy to check that $\mathfrak{S}_N$ is a *-subalgebra of $\prod_{n\in\N_N}\D_n$, and that it must be the smallest such *-algebra (in fact, the smallest linear space) containing $\mathfrak{L}_N\cup i\mathfrak{L}_N$. This concludes our lemma.
\end{proof}

We continue progressing toward the construction of new {\Lqcms s} based on the spaces $\alg{G}_N$ with the following result:

\begin{lemma}\label{lipschitz-pair-lemma}
Assume Hypothesis (\ref{completeness-hypothesis-1}). The pair $(\alg{G}_N,\SLip_N)$ is a Lipschitz pair.
\end{lemma}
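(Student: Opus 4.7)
The plan is to verify the three requirements of Definition (\ref{Lipschitz-pair-def}) in turn: that $\alg{G}_N$ is a unital C*-algebra, that $\SLip_N$ is a seminorm defined on a norm-dense subspace of $\sa{\alg{G}_N}$, and that its kernel coincides with $\R \unit_{\alg{G}_N}$.

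First, by Lemma (\ref{jordan-lie-lemma}), $\alg{G}_N$ is a C*-subalgebra of $\prod_{n \in \N_N} \D_n$. It is unital: the sequence $(\unit_{\D_n})_{n \in \N_N}$ lies in $\alg{L}_N$ (it is self-adjoint, has $\SLip_N$-value zero, and satisfies $\pi_{n+1}(\unit_{\D_{n+1}}) = \unit_{\A_{n+1}} = \omega_n(\unit_{\D_n})$ since each $\pi_n,\omega_n$ is a unital $\ast$-epimorphism), hence lies in $\alg{S}_N \subseteq \alg{G}_N$. That $\SLip_N$ is a seminorm on $\alg{L}_N$ is immediate from the fact that each $\Lip^n$ is a seminorm and that the supremum of seminorms is a seminorm on its finiteness domain.

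Next, I would establish that $\alg{L}_N$ is norm-dense in $\sa{\alg{G}_N}$. The key observation is that $\sa{\alg{S}_N} = \alg{L}_N$: indeed, writing an arbitrary element of $\alg{S}_N$ as $a + ib$ with $a, b \in \alg{L}_N$ (using Lemma (\ref{jordan-lie-lemma}) which identifies $\alg{S}_N$ with the linear span of $\alg{L}_N \cup i \alg{L}_N$), its adjoint is $a - ib$, so self-adjointness forces $b=0$. Now given $d \in \sa{\alg{G}_N}$, pick a sequence $(d_k)_{k \in \N}$ in $\alg{S}_N$ converging to $d$ in norm; then $\Re(d_k) = \tfrac{1}{2}(d_k + d_k^\ast) \in \alg{L}_N$ converges in norm to $\Re(d) = d$. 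Hence $\alg{L}_N$ is dense in $\sa{\alg{G}_N}$.

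Finally, to compute the kernel, let $d = (d_n)_{n \in \N_N} \in \alg{L}_N$ with $\SLip_N(d) = 0$. Then $\Lip^n(d_n) = 0$ for each $n \in \N_N$, and since $(\D_n, \Lip^n)$ is a Lipschitz pair, there exist scalars $t_n \in \R$ with $d_n = t_n \unit_{\D_n}$. The compatibility constraint $\pi_{n+1}(d_{n+1}) = \omega_n(d_n)$, together with the unitality of $\pi_{n+1}$ and $\omega_n$, gives $t_{n+1} \unit_{\A_{n+1}} = t_n \unit_{\A_{n+1}}$, whence $t_n = t_{n+1}$; thus $d = t \cdot (\unit_{\D_n})_{n\in\N_N}$ with $t = t_N$, and conversely any such scalar multiple of the unit clearly has $\SLip_N$-value zero. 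This completes the verification; no step should present a serious obstacle, as the only subtlety — showing $\sa{\alg{G}_N} \subseteq \overline{\alg{L}_N}$ — is resolved by the routine self-adjoint-part approximation just described.
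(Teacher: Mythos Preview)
Your proof is correct and follows essentially the same approach as the paper's: the kernel computation is virtually identical, and the density claim---which the paper dispatches in one line as ``by definition of $\alg{G}_N$''---you spell out via the observation $\sa{\alg{S}_N}=\alg{L}_N$ and a real-part approximation. Your version is simply more explicit, and in fact cleaner in a couple of places (you correctly write $t_n\unit_{\A_{n+1}}$ and $t=t_N$ where the paper has minor slips).
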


\begin{proof}
Let $d=(d_n)_{n\in\N_N}\in\sa{\alg{G}_N}$ with $\SLip_N(d) = 0$. Then for all $n\geq N$, we have $\Lip^n(d_n) = 0$, thus $d_n=t_n \unit_{\D_n}$ for some $t_n \in \R$. Now, for all $n\in\N_N$ we have $t_{n+1}\unit_{\A_{n+1}} = \pi_{n+1}(d_{n+1}) = \omega_n(d_n) = t_n\unit_{\A_{n}}$ and thus $d = t_0 \unit_{\alg{G}_N}$ as expected.

The domain of $\SLip_N$ is dense in $\sa{\alg{G}_N}$ by definition of $\alg{G}_N$, thus our proof is completed.
\end{proof}

By Lemma (\ref{lipschitz-pair-lemma}), $\Kantorovich{\SLip_N}$ is a metric on $\StateSpace(\alg{G}_N)$ whose topology is finer than the weak* topology \cite{Rieffel98a}, though it may not metrize the weak* topology, or even be bounded. We shall prove in the next few lemmas that $(\alg{G}_N,\SLip_N)$ is indeed a {\Lqcms} and can be used to build tunnels, under the following additional assumption:

\begin{hypothesis}\label{completeness-hypothesis-2}
Assume Hypothesis (\ref{completeness-hypothesis-1}), as well as $\sum_{j=0}^\infty \tunnellength{\tau_j}{\Lip_j,\Lip_{j+1}} < \infty$, and we denote $\sum_{j=N}^\infty \tunnellength{\tau_j}{\Lip_j,\Lip_{j+1}}$ by $M_N$.
\end{hypothesis}

This assumption is natural in our context, as in particular it would seem difficult to prove that $\Kantorovich{\SLip_N}$ is bounded without it. Note that in particular, the sequence $(\A_n,\Lip_n)_{n\in\N}$ given by Hypothesis (\ref{completeness-hypothesis-2}) is Cauchy for the dual Gromov-Hausdorff propinquity. We thus hope to show, in the next few pages, that this sequence indeed has a limit for the dual Gromov-Hausdorff propinquity.

As a first step, we show that we can lift elements from $\D_n$ to $\mathfrak{G}_N$ with $n\geq N$, while keeping both the Lip-norm and norm close to the ones of the lifted element:

\begin{lemma}\label{surjection-lemma}
Assume Hypothesis (\ref{completeness-hypothesis-2}). Let $n\in\N_N$, $d_n \in \sa{\D_n}$ and $\varepsilon > 0$. There exists $d \in \mathfrak{L}_N$ such that $\Xi_n(d) = d_n$, $\Lip^n(d_n)\leq\SLip_N(d)\leq \Lip^n(d_n)+\varepsilon$ and:
\begin{equation*}
\|d\|_{\mathfrak{G}_N} \leq \|d_n\|_{\D_n} + 2(\Lip^n(d_n)+\varepsilon)M_N\text{.}
\end{equation*}
Moreover, if $d \in \alg{G}_N$ and $k \in \{0,\ldots,N\}$ then there exists $f=(f_n)_{n\in\N_k}\in \mathfrak{G}_k$ such that $(f_n)_{n\in\N_N} = (d_n)_{n\in\N_N}$ and:
\begin{equation*}
\SLip_N(d)\leq\SLip_k(f)\leq\SLip_N(d)+\varepsilon\text{ and }\|f\|_{\alg{G}_k}\leq \|d\|_{\alg{G}_N} + 2(\SLip_N(d)+\varepsilon)\sum_{j=k}^{N-1}\tunnellength{\tau_j}{\Lip_j,\Lip_{j+1}}\text{.}
\end{equation*}
\end{lemma}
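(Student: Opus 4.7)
The strategy is to construct $d=(d_m)_{m\in\N_N}\in\mathfrak{L}_N$ by lifting one coordinate at a time, propagating from the given $d_n$ both forward to $d_{n+1},d_{n+2},\ldots$ and backward to $d_{n-1},\ldots,d_N$. At every step, the quotient property from Definition \ref{tunnel-def} controls the growth of the Lip-seminorm, while Proposition \ref{tunnel-lift-bound-prop} controls the growth of the operator norm.

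Fix $\varepsilon>0$ and pick a positive sequence $(\varepsilon_k)_{k\in\N_N\setminus\{n\}}$ with $\sum_k\varepsilon_k<\varepsilon$. For the forward step from $d_m$ to $d_{m+1}$ with $m\geq n$: set $b=\omega_m(d_m)\in\sa{\A_{m+1}}$, note that $\Lip_{m+1}(b)\leq\Lip^m(d_m)$ because $\Lip_{m+1}$ is the quotient of $\Lip^m$ through $\omega_m$, and use the quotient property of $\pi_{m+1}$ to select $d_{m+1}\in\sa{\D_{m+1}}$ with $\pi_{m+1}(d_{m+1})=b$ and $\Lip^{m+1}(d_{m+1})\leq\Lip^m(d_m)+\varepsilon_{m+1}$. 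Proposition \ref{tunnel-lift-bound-prop} applied to $\tau_{m+1}$ then yields $\|d_{m+1}\|_{\D_{m+1}}\leq\|b\|_{\A_{m+1}}+2\Lip^{m+1}(d_{m+1})\tunnellength{\tau_{m+1}}{}$, and contractivity of the $\ast$-morphism $\omega_m$ bounds $\|b\|_{\A_{m+1}}\leq\|d_m\|_{\D_m}$. The backward step from $d_m$ to $d_{m-1}$ is strictly analogous: I lift $\pi_m(d_m)$ through $\omega_{m-1}$ and invoke Proposition \ref{tunnel-lift-bound-prop} for the reversed tunnel $\tau_{m-1}^{-1}$, whose length equals $\tunnellength{\tau_{m-1}}{}$ because reach and depth are symmetric in the two surjections of a tunnel.

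Telescoping the one-step Lip and norm bounds and using $\sum_k\varepsilon_k<\varepsilon$ gives, for every $m\geq N$, the estimates $\Lip^m(d_m)\leq\Lip^n(d_n)+\varepsilon$ and $\|d_m\|_{\D_m}\leq\|d_n\|_{\D_n}+2(\Lip^n(d_n)+\varepsilon)M_N$; the sum of tunnel-lengths generated by the telescoping ranges over $\N_N\setminus\{n\}$ and is therefore bounded above by $M_N$. Taking the supremum in $m$ yields the announced control on $\SLip_N(d)$ and $\|d\|_{\alg{G}_N}$; self-adjointness of each lift (available because $\Lip^m$ is a seminorm on $\sa{\D_m}$) together with the compatibility $\pi_{m+1}(d_{m+1})=\omega_m(d_m)$, enforced by construction, place $d$ in $\mathfrak{L}_N$. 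The lower bound $\Lip^n(d_n)\leq\SLip_N(d)$ is immediate from $\Xi_n(d)=d_n$.

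For the moreover, I apply only the backward construction, starting now from the given sequence $d\in\mathfrak{G}_N$ and extending down to indices in $\N_k$. When $d\in\mathfrak{S}_N$, I decompose $d=\Re(d)+i\Im(d)$ with $\Re(d),\Im(d)\in\mathfrak{L}_N$, extend each summand coordinate-wise into $\mathfrak{L}_k$ using the backward step above, and recombine linearly; only the tunnels $\tau_{N-1},\ldots,\tau_k$ appear, producing $\sum_{j=k}^{N-1}\tunnellength{\tau_j}{}$ in place of $M_N$. The case $d\in\mathfrak{G}_N\setminus\mathfrak{S}_N$ reduces to the previous one by a norm-approximation of $d$ by elements of $\mathfrak{S}_N$, giving $f\in\mathfrak{G}_k$ as a norm-limit of the corresponding extensions. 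The only delicate point throughout is bookkeeping: at each iteration one must apply Proposition \ref{tunnel-lift-bound-prop} to the correct tunnel ($\tau_{m+1}$ going forward, $\tau_{m-1}^{-1}$ going backward) so that each $\tunnellength{\tau_j}{}$ contributes at most once to the telescoping sum, which is precisely what delivers $M_N$ rather than a larger quantity.
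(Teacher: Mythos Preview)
Your main construction is essentially the paper's: build the extension inductively one coordinate at a time, using the quotient property of the tunnel surjections (Definition~\ref{tunnel-def}) to control the Lip-seminorm at each step and Proposition~\ref{tunnel-lift-bound-prop} to control the norm, with a summable $\varepsilon$-budget in place of the paper's explicit $2^{-j}$ weights. The forward and backward steps, the telescoping, and the resulting bounds are all correct and match the paper's argument.

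There is one genuine gap, in the ``moreover'' for general $d\in\alg{G}_N$. Your claim that the extension $f$ arises as a \emph{norm-limit of the corresponding extensions} does not hold as stated: if $d^{(j)}\to d$ in $\alg{G}_N$ with $d^{(j)}\in\mathfrak{S}_N$, the norm bound you have for each $f^{(j)}$ involves $\SLip_N(d^{(j)})$, which need not stay bounded (and typically will not when $\SLip_N(d)=\infty$), so $(f^{(j)})$ need not be Cauchy in $\alg{G}_k$. The fix is short: your construction already shows that the range of the $\ast$-homomorphism $T^k_N:\alg{G}_k\to\alg{G}_N$ contains $\mathfrak{S}_N$, which is dense in $\alg{G}_N$; since $\ast$-homomorphisms between $C^\ast$-algebras have closed range, $T^k_N$ is surjective. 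For such $d$ the inequalities in the statement are vacuous (both right-hand sides equal $+\infty$, and the lower bound $\SLip_N(d)\leq\SLip_k(f)$ holds automatically because $\SLip_k(f)\geq\SLip_N(T^k_N(f))$), so mere surjectivity is all that is needed. The paper's own proof, incidentally, only writes out the case $d\in\mathfrak{L}_N$ explicitly and leaves this point implicit.
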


\begin{proof}
We proceed by induction. To simplify notations, let $s_n = \sum_{j=1}^n\frac{1}{2^j}$ and, by convention, $s_{-1} = s_0 = 0$. 

Assume that for some $K\geq n$, we have $(d_n,\ldots,d_K)\in\oplus_{j=n}^K\sa{\D_j}$ such that $\omega_j(d_j)=\pi_{j+1}(d_{j+1})$ for all $j$ such that $n\leq j\leq K-1$, and:
\begin{equation*}
\Lip^j(d_j)\leq \Lip^n(d_n)+s_{j-n}\varepsilon
\end{equation*}
while:
\begin{equation*}
\|d_j\|_{\D_j}\leq \|d_n\|_{\D_n} + 2(\Lip^n(d_n)+s_{j-n-1}\varepsilon)\sum_{j=n+1}^j\tunnellength{\tau_j}{\Lip_j,\Lip_{j+1}}
\end{equation*}
for all $j\in\{n,\ldots,K\}$. Note that this assumption is met for $K = n$ trivially.

Let $b = \omega_K(d_K) \in \sa{\A_{K+1}}$. By Definition (\ref{tunnel-def}), there exists $d_{K+1} \in \sa{\D_{K+1}}$ such that $\pi_{K+1}(d_{K+1})=b$ and $\Lip^{K+1}(d_{K+1})\leq \Lip_{K+1}(b)+\frac{1}{2^{K+1-n}}\varepsilon$. Since $\Lip_{K+1}(b)\leq\Lip^K(d_K)$, we conclude that $\Lip^{K+1}(d_{K+1})\leq \Lip^n(d_n)+s_{K+1-n}\varepsilon$.

Moreover, by Proposition (\ref{tunnel-lift-bound-prop}), we have:
\begin{equation*}
\begin{split}
\|d_{K+1}\|_{\D_{K+1}} &\leq \|b\|_{\D_K} + 2\Lip_{K+1}(b)\tunnelreach{\tau_{K+1}}{\Lip_K,\Lip_{K+1}}\\
&\leq \|d_K\|_{\D_K} + 2\Lip^K(d_K)\tunnelreach{\tau_{K+1}}{\Lip_K,\Lip_{K+1}}\\
&\leq \|d_n\|_{\D_n} + 2(\Lip^n(d_n)+s_{K-n}\varepsilon)\sum_{j=n+1}^{K+1} \tunnellength{\tau_j}{\Lip_j,\Lip_{j+1}}\text{.}
\end{split}
\end{equation*}

Thus, our induction is complete and we have obtained a sequence $(d_j)_{j\geq K} \in \prod_{j\in\N_n}\D_j$ with the desired property. 

We can complete this sequence to a sequence of $\mathfrak{G}_N$ by a similar (finite) induction for $j\in\{N,\ldots,n-1\}$, which will prove the second assertion of our lemma as well. We thus assume given a sequence $(d_j)_{j\in\N_n}\in\alg{L}_n$ and we wish to find an extension to $\alg{L}_N$ for some $N\leq n$. Assume that for some $j \in \{N+1,\ldots,n\}$, we have constructed $d_j\in\sa{\D_j}$ such that:
\begin{align*}
\Lip^n(d_n) &\leq \Lip^j(d_j) \leq \Lip^n(d_n)+s_{n-j+1}\varepsilon\\
\intertext{ and }
\|d_j\|_{\D_j} &\leq \|d_n\|_{\alg{G}_n} + 2(\Lip^n(d_n)+s_{n-j+1}\varepsilon)\sum_{k=j}^{n-1}\tunnellength{\tau_k}{\Lip_k,\Lip_{k+1}}\text{.}
\end{align*}
These assumptions are met for $j=n$ trivially. Now, let $a = \pi_j(d_j)$. Then there exists $d_{j-1} \in \sa{\D_{j-1}}$ such that $\omega(d_{j-1}) = a$ and:
\begin{equation*}
\Lip^{j-1}(d_{j-1})\leq \Lip_j(a)+\frac{1}{2^j}\varepsilon\leq\Lip^{j-1}(d_{j-1})+\frac{1}{2^{n-j}}\varepsilon\text{.}
\end{equation*}
Now $\Lip^{j-1}(d_{j-1})\leq \Lip^n(d_n) + s_{n-j}\varepsilon$. Moreover, we conclude from Proposition (\ref{tunnel-lift-bound-prop}) that:
\begin{equation*}
\|d_{j-1}\|_{\D_{j-1}} \leq \|a\|_{\A_j} + 2\Lip_j(a)\tunnellength{\tau_{j-1}}{\Lip_{j-1},\Lip_j}\text{.}
\end{equation*}

Hence:
\begin{equation*}
\|d_{j-1}\|_{\D_{j-1}} \leq \|d_n\|_{\D_n} + 2(\Lip^n(d_n)+s_{n-j}\varepsilon)\sum_{k=j-1}^{n-1} \tunnellength{\tau_k}{\Lip_k,\Lip_{k+1}}\text{.}
\end{equation*}
Thus our induction is complete.
\end{proof}

We thus establish that the quotient seminorms of $\SLip_N$ for the canonical projections $\Xi_n$ and $\Pi_n$ ($n\in\N_N$) are given by the Lip-norms on $\D_n$ and $\A_n$, respectively: thus, $(\alg{G}_N,\SLip_N)$ can be used to form tunnels, except that we have yet to prove that $\SLip_N$ is a Lip-norm --- a fact which will in fact rely on the next corollaries as well.

We start with:
\begin{corollary}\label{D-isometry-corollary}
Assume Hypothesis (\ref{completeness-hypothesis-2}). Let $n\in\N_N$. The map:
\begin{equation*}
\Xi_n : (d_j)_{j\in\N_N}\in\mathfrak{G}_N \rightarrow d_n \in \D_n
\end{equation*}
is a *-epimorphism. Moreover:
\begin{equation*}
\forall d_n\in\sa{\D_n}\quad \Lip^n(d_n) =\inf\left\{\SLip_N(d) : d\in\sa{\mathfrak{G}_N}, \Xi_n(d)=d_n\right\}\text{.}
\end{equation*}
Consequently, if $\varphi,\psi\in\StateSpace(\D_n)$ then:
\begin{equation*}
\Kantorovich{\Lip^n}(\varphi,\psi) = \Kantorovich{\SLip_N}(\varphi\circ\Xi_N,\psi\circ\Xi_N)\text{.}
\end{equation*}
\end{corollary}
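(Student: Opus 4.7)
The plan is to deduce all three assertions directly from Lemma (\ref{surjection-lemma}), together with standard facts about quotient seminorms and their duality with Monge-Kantorovich metrics. I read the final displayed equation in the statement as involving $\Xi_n$ rather than $\Xi_N$, which matches the flow of the result.

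First I would verify that $\Xi_n$ is a *-epimorphism. As the restriction to $\alg{G}_N$ of the coordinate projection $\prod_{j\in\N_N}\D_j\to\D_n$, it is automatically a unital *-homomorphism. For surjectivity, Lemma (\ref{surjection-lemma}) furnishes, for every $d_n\in\sa{\D_n}$ with $\Lip^n(d_n)<\infty$, a lift in $\mathfrak{L}_N\subseteq\alg{G}_N$. Since $\dom{\Lip^n}$ is norm-dense in $\sa{\D_n}$, and since $\Xi_n$ is linear, $\Xi_n(\alg{G}_N)$ contains $\dom{\Lip^n}+i\dom{\Lip^n}$, hence is dense in $\D_n$; because the image of a *-homomorphism of C*-algebras is norm-closed, $\Xi_n$ is onto.

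Next I would establish the quotient-seminorm identity. The inequality $\Lip^n(\Xi_n(d))\leq\SLip_N(d)$ is immediate from the defining supremum of $\SLip_N$, and this already yields $\Lip^n(d_n)\leq\inf\{\SLip_N(d):\Xi_n(d)=d_n\}$. For the reverse inequality, Lemma (\ref{surjection-lemma}) gives, for each $\varepsilon>0$ and each $d_n\in\sa{\D_n}$ in the domain of $\Lip^n$, a lift $d\in\sa{\alg{G}_N}$ with $\Xi_n(d)=d_n$ and $\SLip_N(d)\leq\Lip^n(d_n)+\varepsilon$; letting $\varepsilon\to 0$ completes the equality, so $\Lip^n$ is the quotient of $\SLip_N$ under $\Xi_n$.

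Finally, the Monge-Kantorovich equality is the standard consequence that $\Xi_n^\ast$ is an isometry on state spaces whenever $\Lip^n$ is the quotient of $\SLip_N$ along $\Xi_n$. The inequality $\Kantorovich{\SLip_N}(\varphi\circ\Xi_n,\psi\circ\Xi_n)\leq\Kantorovich{\Lip^n}(\varphi,\psi)$ follows by testing against $d\in\sa{\alg{G}_N}$ with $\SLip_N(d)\leq 1$ and invoking $\Lip^n(\Xi_n(d))\leq 1$. The reverse inequality follows by lifting test elements: given $d_n\in\sa{\D_n}$ with $\Lip^n(d_n)\leq 1$ and $\varepsilon>0$, choose a lift $d\in\sa{\alg{G}_N}$ with $\SLip_N(d)\leq 1+\varepsilon$; then $|\varphi(d_n)-\psi(d_n)|\leq(1+\varepsilon)\Kantorovich{\SLip_N}(\varphi\circ\Xi_n,\psi\circ\Xi_n)$, and sending $\varepsilon\to 0$, followed by the supremum over $d_n$, concludes. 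I do not expect any substantive obstacle here, since Lemma (\ref{surjection-lemma}) has already absorbed the only non-routine ingredient, namely the inductive construction of a lift threading all the tunnels with quantitative control on both norm and Lip-norm.
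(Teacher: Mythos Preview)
Your proof is correct and follows exactly the approach the paper intends: the paper's own proof consists of the single line ``Immediate from Lemma (\ref{surjection-lemma}),'' and you have simply unpacked the routine details (surjectivity via density and closed range, the two inequalities for the quotient seminorm, and the standard duality for the Monge-Kantorovich metric). Your observation that $\Xi_N$ in the final display should read $\Xi_n$ is also correct.
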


\begin{proof}
Immediate from Lemma (\ref{surjection-lemma}).
\end{proof}

We also observe that we can describe the full state space of $\alg{G}_N$:
\begin{corollary}\label{bipolar-corollary}
The set $\co{\bigcup_{n\in\N_N}\Xi_N^\ast(\StateSpace(\D_n))}$ is $\StateSpace(\alg{G}_N)$.
\end{corollary}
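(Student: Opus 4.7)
The plan is to use the bipolar theorem (Hahn--Banach separation in the weak* topology on $\alg{G}_N^\ast$). The inclusion $\co{\bigcup_{n\in\N_N}\Xi_n^\ast(\StateSpace(\D_n))}\subseteq \StateSpace(\alg{G}_N)$ is immediate: by Lemma \ref{jordan-lie-lemma}, each $\Xi_n:\alg{G}_N\twoheadrightarrow \D_n$ is a unital *-morphism, so $\Xi_n^\ast$ carries states to states; since $\StateSpace(\alg{G}_N)$ is convex and weak*-compact by Banach--Alaoglu, it contains the closed convex hull of the union of the $\Xi_n^\ast(\StateSpace(\D_n))$.

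For the reverse inclusion, I would argue by contradiction. Suppose $\varphi\in\StateSpace(\alg{G}_N)$ does not lie in the closed convex set $K=\co{\bigcup_{n\in\N_N}\Xi_n^\ast(\StateSpace(\D_n))}$. The continuous dual of $\alg{G}_N^\ast$ endowed with its weak* topology is $\alg{G}_N$ itself, and since a state is uniquely determined by its values on the self-adjoint part, we may invoke the real Hahn--Banach separation theorem to find $a\in\sa{\alg{G}_N}$ and $t\in\R$ such that
\begin{equation*}
\psi(a) \leq t < \varphi(a) \quad \text{for all } \psi \in K\text{.}
\end{equation*}

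The next step is to convert this separation into a pointwise inequality on coordinates. Applied to $\psi = \mu\circ\Xi_n$ with $\mu\in\StateSpace(\D_n)$, we obtain $\mu(\Xi_n(a))\leq t$ for every state $\mu$ on $\D_n$, so by the C*-algebraic characterization of the norm on self-adjoint elements (e.g. \cite[Theorem 4.3.4]{Kadison97}), $\Xi_n(a)\leq t\unit_{\D_n}$, that is $a_n := \Xi_n(a)$ satisfies $t\unit_{\D_n}-a_n\geq 0$ in $\D_n$.

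The final step, which is the only place requiring care, is to lift this coordinatewise positivity to $\alg{G}_N$. Since each $t\unit_{\D_n}-a_n\geq 0$, the element $t\unit - a = (t\unit_{\D_n}-a_n)_{n\in\N_N}$ is positive in the product C*-algebra $\prod_{n\in\N_N}\D_n$; but by Lemma \ref{jordan-lie-lemma}, $\alg{G}_N$ is a C*-subalgebra of this product, and positivity is preserved under inclusion of C*-subalgebras. Therefore $t\unit_{\alg{G}_N}-a\geq 0$ in $\alg{G}_N$, so $\varphi(a)\leq t$, contradicting $\varphi(a)>t$. The principal (and essentially only) obstacle is this passage from coordinatewise positivity to positivity in $\alg{G}_N$, which is handled by the subalgebra argument just described.
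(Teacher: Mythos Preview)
Your proof is correct and follows essentially the same approach as the paper: both arguments rest on a bipolar/Hahn--Banach separation principle applied to the weak*-closed convex set $K$ inside $\StateSpace(\alg{G}_N)$. The paper's version is terse --- it records that the only element of $\alg{G}_N$ annihilated by every state in $\bigcup_{n\in\N_N}\Xi_n^\ast(\StateSpace(\D_n))$ is $0$ and then cites \cite[Proposition 5.4]{Fell88} --- whereas you spell out the separation step explicitly, reducing to coordinatewise positivity and lifting it back via the C*-subalgebra inclusion, which is exactly the content behind that citation.
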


\begin{proof}
The polar of $\bigcup_{n\in\N_N}\Xi^\ast(\StateSpace(\D_n))$ is:
\begin{equation*}
\left\{ (d_n)_{n\in\N_N} \in \alg{G}_N : \forall n \in \N_N \quad \|d_n\|_{\D_n} = 0 \right\} = \{ 0 \}
\end{equation*}
which gives our conclusion by an application of the bipolar theorem as in \cite[Proposition 5.4]{Fell88}.
\end{proof}


We continue to prove that the quotient seminorms of $\SLip_N$ by the projections $\Pi_n$ are given as expected:

\begin{corollary}\label{isometry-corollary}
Assume Hypothesis (\ref{completeness-hypothesis-2}). For all $n\in\N_N$ and $a\in\sa{\A_n}$, and for $\Pi_n = \pi_n\circ\Xi_n$, we have:
\begin{equation*}
\Lip_n(a) = \inf \{ \SLip_N(d) : d\in\mathfrak{L}_N, \Pi_n(d) = a \}\text{.}
\end{equation*} 
\end{corollary}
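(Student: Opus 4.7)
The plan is to prove this corollary as a quick consequence of the two preceding results, Lemma (\ref{surjection-lemma}) and Corollary (\ref{D-isometry-corollary}). The statement is really that the quotient of $\SLip_N$ by the composition $\Pi_n = \pi_n \circ \Xi_n$ agrees with $\Lip_n$; since $\Lip_n$ is already, by Definition (\ref{tunnel-def}), the quotient of $\Lip^n$ by $\pi_n$, and since Corollary (\ref{D-isometry-corollary}) identifies $\Lip^n$ as the quotient of $\SLip_N$ by $\Xi_n$, the result will fall out by the usual ``quotient of a quotient'' principle.

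First I would fix $n\in\N_N$ and $a\in\sa{\A_n}$ and note that the inequality
\begin{equation*}
\Lip_n(a) \leq \inf\{\SLip_N(d) : d\in\mathfrak{L}_N,\, \Pi_n(d) = a\}
\end{equation*}
is immediate: any $d\in\alg{L}_N$ with $\Pi_n(d) = a$ yields $d_n = \Xi_n(d)\in\sa{\D_n}$ with $\pi_n(d_n)=a$ and $\Lip^n(d_n)\leq \SLip_N(d)$ by definition of $\SLip_N$, so $\Lip_n(a)\leq \Lip^n(d_n) \leq \SLip_N(d)$ by Definition (\ref{tunnel-def}).

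For the reverse inequality, I would fix $\varepsilon>0$. By Definition (\ref{tunnel-def}), pick $d_n\in\sa{\D_n}$ with $\pi_n(d_n) = a$ and $\Lip^n(d_n) \leq \Lip_n(a) + \tfrac{\varepsilon}{2}$. By Lemma (\ref{surjection-lemma}) (or equivalently by Corollary (\ref{D-isometry-corollary})), choose $d\in\mathfrak{L}_N$ with $\Xi_n(d) = d_n$ and $\SLip_N(d)\leq \Lip^n(d_n) + \tfrac{\varepsilon}{2}$. Then $\Pi_n(d) = \pi_n(d_n) = a$ and
\begin{equation*}
\SLip_N(d)\leq \Lip_n(a) + \varepsilon\text{,}
\end{equation*}
so taking the infimum and letting $\varepsilon\to 0$ gives the desired equality. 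No step is expected to be an obstacle here; the content is entirely in Lemma (\ref{surjection-lemma}) and Corollary (\ref{D-isometry-corollary}), which have already been established.
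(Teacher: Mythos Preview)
Your proof is correct and follows essentially the same approach as the paper's: lift $a$ to $d_n\in\sa{\D_n}$ via Definition (\ref{tunnel-def}), then lift $d_n$ to $\mathfrak{L}_N$ via Lemma (\ref{surjection-lemma}), combining the two $\varepsilon$-approximations. The paper's version is terser (it leaves the easy inequality implicit), but the argument is the same.
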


\begin{proof}
Let $\varepsilon > 0$. Let $d\in\D_n$ such that $\Lip_n(a)\leq \Lip^n(d)\leq\Lip_n(a)+\varepsilon$. By Lemma (\ref{surjection-lemma}), there exists $g\in\mathfrak{G}_N$ such that $\Xi_n(g)=d$ and $\Lip^n(d)\leq\SLip_N(g)\leq\Lip^n(d) +\varepsilon$. The Lemma is thus proven since $\Pi_N(g) = \pi_n\circ\Xi_n(g)$ while $\Lip_n(a)\leq\SLip_N(g)\leq\Lip_n(a)+2\varepsilon$ and $\varepsilon > 0$ is arbitrary.
\end{proof}

Last, we also have another isometry between {\Lqcms s}:
\begin{corollary}\label{tail-isometry-corollary}
Assume Hypothesis (\ref{completeness-hypothesis-2}). Let $K \leq N \in \N$ and let:
\begin{equation*}
T_N^K : (d_n)_{n\in\N_K} \in \alg{G}_K \longmapsto (d_n)_{n\in\N_N} \in \alg{G}_N\text{.}
\end{equation*}
The map $T_N^K$ is a *-epimorphism and the quotient seminorm of $\SLip_K$ for $T_N^K$ is $\SLip_N$. In particular, the dual map $(T_N^K)^\ast$ induces an isometry from $(\StateSpace(\alg{G}_N),\Kantorovich{\SLip_N})$ into $(\StateSpace(\alg{G}_K),\Kantorovich{\SLip_K})$.
\end{corollary}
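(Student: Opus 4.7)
The plan is to reduce every claim to Lemma (\ref{surjection-lemma}) combined with the standard fact that a $\ast$-homomorphism between C*-algebras automatically has closed range. First I would verify that $T_N^K$ is a well-defined, unital $\ast$-homomorphism: truncation of a sequence respects every pointwise operation, it sends the compatibility condition at indices in $\N_K$ down to the compatibility at indices in $\N_N$, and it is contractive since $\|T_N^K(f)\|_{\alg{G}_N} = \sup_{n\geq N}\|f_n\|_{\D_n} \leq \sup_{n\geq K}\|f_n\|_{\D_n} = \|f\|_{\alg{G}_K}$. In particular, $T_N^K$ sends $\alg{L}_K$ into $\alg{L}_N$, hence $\alg{S}_K$ into $\alg{S}_N$, and, by continuity, $\alg{G}_K$ into $\alg{G}_N$.

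Next I would prove surjectivity. The second part of Lemma (\ref{surjection-lemma}) shows that every $d \in \alg{L}_N$ admits a preimage $f \in \alg{L}_K$ under $T_N^K$, so that $T_N^K(\alg{L}_K) \supseteq \alg{L}_N$ and thus $T_N^K(\alg{S}_K) \supseteq \alg{S}_N$ by taking real and imaginary parts. Now invoke the standard fact that the range of a $\ast$-homomorphism of C*-algebras is norm-closed (factor $T_N^K$ through $\alg{G}_K/\ker T_N^K$ and use that the induced injective $\ast$-morphism is isometric). Therefore $T_N^K(\alg{G}_K)$ is closed in $\alg{G}_N$ and contains the dense subset $\alg{S}_N$, whence $T_N^K(\alg{G}_K) = \alg{G}_N$.

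For the quotient-seminorm identity, one inequality is immediate: if $f \in \sa{\alg{G}_K}$ satisfies $T_N^K(f) = d$, then
\begin{equation*}
\SLip_K(f) = \sup_{n\geq K}\Lip^n(f_n) \geq \sup_{n\geq N}\Lip^n(f_n) = \SLip_N(d)\text{,}
\end{equation*}
so the quotient seminorm dominates $\SLip_N$. For the reverse inequality, take $d \in \sa{\alg{L}_N}$ and apply Lemma (\ref{surjection-lemma}) with prescribed $\varepsilon > 0$ to produce a self-adjoint lift $f \in \alg{L}_K \subseteq \sa{\alg{G}_K}$ with $\SLip_K(f) \leq \SLip_N(d) + \varepsilon$; letting $\varepsilon \to 0$ gives the matching upper bound, and in the remaining case $\SLip_N(d) = \infty$ the inequality is vacuous.

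Finally, once $T_N^K$ is a $\ast$-epimorphism whose associated quotient seminorm is exactly $\SLip_N$, the statement about $(T_N^K)^\ast$ being an isometric embedding from $(\StateSpace(\alg{G}_N),\Kantorovich{\SLip_N})$ into $(\StateSpace(\alg{G}_K),\Kantorovich{\SLip_K})$ is a routine duality, identical to \cite[Proposition 3.1]{Rieffel00}: evaluate the Monge-Kantorovich distance via the unit ball of the Lip-norm, and use that this unit ball in $\sa{\alg{G}_N}$ is exactly the $T_N^K$-image of the unit ball in $\sa{\alg{G}_K}$, up to arbitrarily small $\varepsilon$. The only nontrivial step in the whole proof is the extension of surjectivity from $\alg{L}_N$ to all of $\alg{G}_N$, and that is handled cleanly by the closed-range property of $\ast$-homomorphisms, which avoids any need to control Lip-norms along approximating sequences.
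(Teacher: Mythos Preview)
Your proof is correct and follows essentially the same approach as the paper, which simply invokes Lemma (\ref{surjection-lemma}) in one sentence. Your argument is in fact more careful than the paper's: the paper's proof asserts the lifting for arbitrary $d\in\alg{G}_N$, but Lemma (\ref{surjection-lemma}) only directly provides lifts for $d\in\alg{L}_N$ (the bound $\SLip_K(f)\leq\SLip_N(d)+\varepsilon$ is vacuous when $\SLip_N(d)=\infty$), so your use of the closed-range property of $\ast$-homomorphisms to pass from $\alg{S}_N$ to its closure $\alg{G}_N$ fills a small gap that the paper glosses over.
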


\begin{proof}
By Lemma (\ref{surjection-lemma}), if $d\in\mathfrak{G}_N$ and for all $\varepsilon > 0$, there exists $f \in \mathfrak{G}_K$ such that $T_N^K(f) = d$, $\SLip_N(d)\leq\SLip_K(f)\leq \SLip_N(f)+\varepsilon$. This completes our proof.
\end{proof}

Now, although $\SLip_N$ is not yet proven to be a Lip-norm on $\alg{G}_N$, the metric $\Kantorovich{\SLip_N}$ on $\StateSpace(\alg{G}_N)$ is well-defined, and we have constructed various isometries for this metric in Corollary (\ref{D-isometry-corollary}), Corollary (\ref{isometry-corollary}) and Corollary (\ref{tail-isometry-corollary}). We thus can estimate the Hausdorff distance between the state spaces of the {\Lqcms s} $(\D_n,\Lip^n)$, for all $n\geq N$, in $(\StateSpace(\alg{G}_N),\Kantorovich{\SLip_N})$.

\begin{lemma}\label{Dn-distance-lemma}
We assume Hypothesis (\ref{completeness-hypothesis-2}). Then for all $n\geq N$:
\begin{equation*}
\Haus{\Kantorovich{\SLip_N}}(\Xi_n^\ast(\StateSpace(\D_n)),\Pi_n^\ast(\StateSpace(\A_{n+1}))) \leq 2\tunnellength{\tau_n}{\Lip_{n},\Lip_{n+1}}
\end{equation*}
and
\begin{equation*}
\Haus{\Kantorovich{\SLip_N}}(\Xi_n^\ast(\StateSpace(\D_n)),\Xi_{n+1}^\ast(\StateSpace(\D_{n+1}))) \leq 2\max\{\tunnellength{\tau_n}{\Lip_{n},\Lip_{n+1}},\tunnellength{\tau_{n+1}}{\Lip_{n+1},\Lip_{n+2}}\}\text{.}
\end{equation*}
\end{lemma}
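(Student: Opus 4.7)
The plan hinges on the observation, immediate from the coupling condition defining $\mathfrak{L}_N$ (and extending to $\mathfrak{G}_N$ by continuity of all surjections involved), that
\[
\omega_n\circ\Xi_n=\pi_{n+1}\circ\Xi_{n+1}=\Pi_{n+1}\text{,}
\]
so $\Pi_{n+1}^\ast(\StateSpace(\A_{n+1}))=\Xi_n^\ast(\omega_n^\ast(\StateSpace(\A_{n+1})))=\Xi_{n+1}^\ast(\pi_{n+1}^\ast(\StateSpace(\A_{n+1})))$ sits as a common subset of $\Xi_n^\ast(\StateSpace(\D_n))$ and $\Xi_{n+1}^\ast(\StateSpace(\D_{n+1}))$. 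Combined with the isometry property of each $\Xi_k^\ast$ supplied by Corollary \ref{D-isometry-corollary}, this reduces both inequalities to distances internal to some $\StateSpace(\D_k)$ equipped with $\Kantorovich{\Lip^k}$. I read the right-hand side of the first inequality as $\Pi_{n+1}^\ast(\StateSpace(\A_{n+1}))$, since $\Pi_n$ has codomain $\A_n$ while the subscript $n+1$ on $\A$ forces that reading.

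For the first inequality, the $\Xi_n^\ast$-isometry reduces the task to bounding
\[
\Haus{\Kantorovich{\Lip^n}}\bigl(\StateSpace(\D_n),\omega_n^\ast(\StateSpace(\A_{n+1}))\bigr)\text{,}
\]
and the reverse Hausdorff direction vanishes by the inclusion $\omega_n^\ast(\StateSpace(\A_{n+1}))\subseteq\StateSpace(\D_n)$. Given $\mu\in\StateSpace(\D_n)$ and $\varepsilon>0$, use the depth of $\tau_n$ together with density of finite convex combinations in $\co{\cdot}$ to pick $\nu=t\nu_1+(1-t)\nu_2$ with $\nu_1\in\pi_n^\ast(\StateSpace(\A_n))$, $\nu_2\in\omega_n^\ast(\StateSpace(\A_{n+1}))$, $t\in[0,1]$, satisfying $\Kantorovich{\Lip^n}(\mu,\nu)\leq\tunneldepth{\tau_n}{}+\varepsilon$. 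Then use the reach of $\tau_n$ to pick $\nu_1'\in\omega_n^\ast(\StateSpace(\A_{n+1}))$ within $\tunnelreach{\tau_n}{}$ of $\nu_1$; by convexity of $\omega_n^\ast(\StateSpace(\A_{n+1}))$, the element $\nu':=t\nu_1'+(1-t)\nu_2$ still lies in the target set, and
\[
\Kantorovich{\Lip^n}(\mu,\nu')\leq\tunneldepth{\tau_n}{}+\tunnelreach{\tau_n}{}+\varepsilon\leq 2\tunnellength{\tau_n}{\Lip_n,\Lip_{n+1}}+\varepsilon\text{.}
\]
Letting $\varepsilon\to 0$ closes the first inequality.

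For the second inequality, the key is an elementary \emph{common subset} bound: in any metric space $(E,\mathsf{d})$, if $C\subseteq A\cap B$ then for every $a\in A$ one has $\dist(a,B)\leq\dist(a,C)$ because $C\subseteq B$, and symmetrically for $B$, so $\Haus{\mathsf{d}}(A,B)\leq\max\{\Haus{\mathsf{d}}(A,C),\Haus{\mathsf{d}}(B,C)\}$. Applying this with $A=\Xi_n^\ast(\StateSpace(\D_n))$, $B=\Xi_{n+1}^\ast(\StateSpace(\D_{n+1}))$, and $C=\Pi_{n+1}^\ast(\StateSpace(\A_{n+1}))$, the first inequality bounds the $(A,C)$-Hausdorff distance by $2\tunnellength{\tau_n}{\Lip_n,\Lip_{n+1}}$, while the very same argument applied to $\tau_{n+1}$ (with the roles of $\pi$ and $\omega$ swapped, since we now wish to approximate $\StateSpace(\D_{n+1})$ by $\pi_{n+1}^\ast(\StateSpace(\A_{n+1}))$) bounds the $(B,C)$-Hausdorff distance by $2\tunnellength{\tau_{n+1}}{\Lip_{n+1},\Lip_{n+2}}$, yielding the claimed $2\max$ estimate.

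The main obstacle is conceptual rather than calculational: recognizing that $\Pi_{n+1}^\ast(\StateSpace(\A_{n+1}))$ is the \emph{right} common subset, as it is the only piece of $\StateSpace(\mathfrak{G}_N)$ manifestly visible as an image from both $\D_n$ and $\D_{n+1}$. Without this observation, a naïve triangle-inequality argument through $\Pi_{n+1}^\ast(\StateSpace(\A_{n+1}))$ would yield only $2(\tunnellength{\tau_n}{}+\tunnellength{\tau_{n+1}}{})$ instead of the sharper $2\max$ bound of the statement. The closure in $\co{\cdot}$ is handled routinely by the $\varepsilon$-approximation above.
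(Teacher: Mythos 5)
Your proof is correct and follows essentially the same route as the paper's: the same depth-then-reach estimate on a convex combination (replacing the $\pi$-component by a nearby $\omega$-component, or vice versa), followed by the observation that $\Pi_{n+1}^\ast(\StateSpace(\A_{n+1})) = \Xi_n^\ast\left(\omega_n^\ast(\StateSpace(\A_{n+1}))\right) = \Xi_{n+1}^\ast\left(\pi_{n+1}^\ast(\StateSpace(\A_{n+1}))\right)$ is a common subset of both state-space images, which the paper exploits pointwise in a single computation rather than through your abstract $\max$-lemma. The only caveat is your reading of the mismatched index: the paper's own proof and its later uses (e.g.\ in Proposition (\ref{Z-prop})) indicate the first inequality is intended as $\Pi_n^\ast(\StateSpace(\A_n))$ rather than $\Pi_{n+1}^\ast(\StateSpace(\A_{n+1}))$, but your argument yields both versions by the evident symmetry of reach and depth in $\pi_n$ and $\omega_n$, so nothing is lost.
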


\begin{proof}
Let $\varphi\in\StateSpace(\D_{n+1})$ and $\varepsilon > 0$. By Definition (\ref{tunnel-depth-def}) of the depth of a tunnel, and by Corollary (\ref{D-isometry-corollary}), there exists $\psi\in\StateSpace(\A_{n+1})$, $\eta\in\StateSpace(\A_{n+2})$ and $t\in[0,1]$ such that, if we set:
\begin{equation*}
\mu=t\psi\circ\pi_{n+1}+(1-t)\eta\circ\omega_{n+1} \in\co{\pi_{n+1}^\ast(\StateSpace(\A_{n+1}))\cup\omega_{n+1}^\ast (\StateSpace(\A_{n+2}))}
\end{equation*}
then:
\begin{equation*}
\begin{split}
\Kantorovich{\SLip_N}(\Xi_{n+1}^\ast(\varphi),\Xi_{n+1}^\ast(\mu)) &= \Kantorovich{\Lip^{n+1}}(\varphi,\mu)\\
& \leq \varepsilon + \tunneldepth{\tau_{n+1}}{\Lip_{n+1},\Lip_{n+2}}\text{.}
\end{split}
\end{equation*}

Now, by Definition (\ref{tunnel-reach-def}) of the reach of a tunnel, we have:
\begin{equation*}
\Haus{\Kantorovich{\Lip^{n+1}}}\left(\StateSpace(\A_{n+1}),\StateSpace(\A_{n+2})\right) \leq \tunnelreach{\tau_{n+1}}{}\text{.}
\end{equation*}

So there exists $\theta \in \StateSpace(\A_{n+1})$ such that:
\begin{equation*}
\Kantorovich{\Lip^{n+1}}(\theta\circ\pi_{n+1},\eta\circ\omega_{n+1}) \leq \tunnellength{\tau_{n+1}}{\Lip_{n+1},\Lip_{n+2}}\text{.}
\end{equation*}

We set $\nu = t\psi+(1-t)\theta\in\StateSpace(\A_{n+1})$, and we then observe that:
\begin{equation*}
\begin{split}
\Kantorovich{\SLip_N}&(\Xi_{n+1}^\ast(\varphi), \Pi^\ast_{n+1}(\nu))\\
&= \Kantorovich{\SLip_N}(\Xi_{n+1}^\ast(\varphi), \Pi^\ast_{n+1}(t\psi+(1-t)\theta)) \\
&= \Kantorovich{\Lip^{n+1}}(\varphi, \pi^\ast_{n+1}(t\psi+(1-t)\theta)) \text{ by Corollary (\ref{D-isometry-corollary})}\\
&\leq \Kantorovich{\Lip^{n+1}}(\varphi,t\pi_{n+1}^\ast(\psi)+(1-t)\omega_{n+1}^\ast(\eta)) \\
&\quad + \Kantorovich{\Lip^{n+1}}(t\pi_{n+1}^\ast(\psi)+(1-t)\omega_{n+1}^\ast(\eta),t\pi_{n+1}^\ast(\psi)+(1-t)\pi_{n+1}^\ast(\theta))\\
&\leq  \tunnellength{\tau_{n+1}}{} + \\
&\quad \sup\left\{\left|t\psi(\pi_{n+1}(d))+(1-t)\eta(\omega_{n+1}(d))\right.\right.\\
&\quad \left.\left. - t\psi(\pi_{n+1}(d))-(1-t)\theta(\pi_{n+1}(d))\right| : d\in\sa{\D_{n+1}}, \Lip^{n+1}(d)\leq 1\right\}\\
&=  \tunnellength{\tau_{n+1}}{} + \sup\left\{(1-t)\left|\eta(\omega_{n+1}(d))-\theta(\pi_{n+1}(d))\right| : d\in\sa{\D_{n+1}}, \Lip^{n+1}(d)\leq 1\right\}\\
&= \tunnellength{\tau_{n+1}}{} + (1-t)\Kantorovich{\Lip^{n+1}}(\omega_{n+1}^\ast(\eta),\pi_{n+1}^\ast(\theta))\\
&\leq 2\tunnellength{\tau_{n+1}}{\Lip_{n+1},\Lip_{n+2}}\text{.}
\end{split}
\end{equation*}
Thus we have proven:
\begin{equation*}
\Kantorovich{\SLip_N}\left(\Xi_{n+1}^\ast(\varphi),\Pi_{n+1}^\ast(\StateSpace(\A_{n+1}))\right) \leq 2\tunnellength{\tau_{n+1}}{}\text{.}
\end{equation*}

Last, by definition of $\mathfrak{G}_N$, we have:
\begin{equation*}
\Pi_{n+1}^\ast(\nu)=\Xi_{n+1}^\ast\circ\pi^\ast_{n+1}(\nu) = \Xi_n^\ast\circ\omega_n^\ast(\nu) \in \Xi_n^\ast(\StateSpace(\D_n))\text{,}
\end{equation*}
since $\pi_{n+1}\circ\Xi_{n+1} = \omega_n\circ\Xi_n$ by Hypothesis (\ref{completeness-hypothesis-1}). In conclusion:
\begin{equation*}
\Kantorovich{\SLip_N}\left(\Xi_{n+1}^\ast(\varphi),\Xi_{n}^\ast(\StateSpace(\D_{n}))\right) \leq 2\tunnellength{\tau_{n+1}}{}\text{.}
\end{equation*}
The proof is similar when the roles of $\D_n$ and $\D_{n+1}$ are exchanged. This concludes our lemma.
\end{proof}

In particular, under Hypothesis (\ref{completeness-hypothesis-2}), the sequence $(\Xi_n^\ast(\StateSpace(\D_n)))_{n\in\N_N}$ is Cauchy for the Hausdorff distance associated with $\Kantorovich{\SLip_N}$. However, without additional information on $\Kantorovich{\SLip_N}$, we can not conclude about the convergence of this sequence.

We now prove, with the next few lemmas, that $\SLip_N$ are Lip-norms for all $N\in\N$. The first matter to address is to prove that $\Kantorovich{\SLip_N}$ is bounded for all $N\in\N$. To this end, we work first with:

\begin{notation}
For all $n\in\N$, let:
\begin{equation*}
\alg{H}_n = \left\{ (d_j)_{j=0}^n \in \oplus_{j=0}^n \D_j : \forall j\in\{0,\ldots,n-1\} \quad \omega_j(d_j) = \pi_{j+1}(d_{j+1}) \right\}\text{,}
\end{equation*}
and let:
\begin{equation*}
\mathsf{H}_n((d_j)_{j=0}^n) = \max\{ \Lip^j(d_j) : j\in\{0,\ldots,n\} \}\text{.}
\end{equation*}
\end{notation}

The key observation regarding the {\Lqcms s} $(\alg{H}_n,\MLip_n)$ $(n\in\N)$ is given by:

\begin{lemma}\label{uniform-bound-lemma}
Assume Hypothesis (\ref{completeness-hypothesis-2}). Let $T_N : (d_n)_{n\in\N}\in\mathfrak{G}_0 \mapsto (d_n)_{n\in \{0,\ldots,N\}}\in\alg{H}_N$. Then $T_N$ is a *-epimorphism such that the quotient of $\SLip_0$ by $T_N$ is $\MLip_N$. 

In particular, $T_N^\ast$ induces an isometry from $(\StateSpace(\alg{H}_N),\Kantorovich{\MLip_N})$ into $(\StateSpace(\alg{G}_0),\Kantorovich{\SLip_0})$.

There exists $k \geq 0$ such that for all $n\in\N$, we have:
\begin{equation*}
\diam{\StateSpace(\alg{H}_n)}{\Kantorovich{\mathsf{H}_n}} \leq k \text{.}
\end{equation*}

Consequently, $(\alg{H}_n,\MLip_n)$ is a {\Lqcms} for all $n\in\N$.
\end{lemma}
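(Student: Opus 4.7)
The plan is to handle the algebraic and quotient parts of $T_N$ via a lifting induction modeled on Lemma (\ref{surjection-lemma}), then use the resulting isometric embedding $T_N^\ast$ to transfer a uniform diameter bound from $\StateSpace(\alg{G}_0)$ down to each $\StateSpace(\alg{H}_n)$, and finally apply Theorem (\ref{az-thm}) to verify that $(\alg{H}_n,\MLip_n)$ is a {\Lqcms} for every $n\in\N$.

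Step one: $T_N$ is manifestly a continuous unital *-homomorphism, so only surjectivity and the identification of the quotient seminorm require work. Given a self-adjoint $(d_0,\dots,d_N) \in \alg{H}_N$ with $\MLip_N((d_j)_{j=0}^N) < \infty$ and $\varepsilon > 0$, I would invoke the lifting property built into Definition (\ref{tunnel-def}), together with the norm estimate of Proposition (\ref{tunnel-lift-bound-prop}), to construct inductively self-adjoint elements $d_{N+1}, d_{N+2}, \dots$ satisfying $\pi_{n+1}(d_{n+1}) = \omega_n(d_n)$ and $\Lip^{n+1}(d_{n+1}) \leq \Lip^n(d_n) + 2^{-(n-N+1)}\varepsilon$, with $\sup_n \|d_n\|_{\D_n}$ controlled exactly as in the proof of Lemma (\ref{surjection-lemma}). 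This yields an extension in $\alg{L}_0$ of $(d_0,\ldots,d_N)$ whose $\SLip_0$-value is at most $\MLip_N((d_j)_{j=0}^N) + \varepsilon$; splitting into real and imaginary parts handles arbitrary elements of $\alg{H}_N$. This simultaneously establishes surjectivity of $T_N$ and that the quotient seminorm of $\SLip_0$ through $T_N$ equals $\MLip_N$, whence the isometric embedding property of $T_N^\ast$ follows by the standard duality between quotient Lip-norms and isometric embeddings of state spaces.

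Step two: to obtain a uniform diameter bound, I combine Corollary (\ref{bipolar-corollary}) and Lemma (\ref{Dn-distance-lemma}). By Corollary (\ref{bipolar-corollary}), $\StateSpace(\alg{G}_0) = \co{\bigcup_{n\in\N} \Xi_n^\ast(\StateSpace(\D_n))}$, and since passing to the closed convex hull does not increase the diameter for a metric coming from a dual seminorm, it suffices to bound the diameter of $\bigcup_{n\in\N} \Xi_n^\ast(\StateSpace(\D_n))$. By Corollary (\ref{D-isometry-corollary}), $\Xi_0^\ast$ is an isometric embedding of $(\StateSpace(\D_0),\Kantorovich{\Lip^0})$, whose image therefore has the same finite diameter as $(\D_0,\Lip^0)$. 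Telescoping the second estimate of Lemma (\ref{Dn-distance-lemma}) shows that every $\Xi_n^\ast(\StateSpace(\D_n))$ lies within $\Kantorovich{\SLip_0}$-Hausdorff distance at most $4M_0$ of $\Xi_0^\ast(\StateSpace(\D_0))$. Combining these observations gives
\begin{equation*}
\diam{\StateSpace(\alg{G}_0)}{\Kantorovich{\SLip_0}} \leq \diam{\StateSpace(\D_0)}{\Kantorovich{\Lip^0}} + 8 M_0 =: k,
\end{equation*}
a finite quantity independent of $n$ by Hypothesis (\ref{completeness-hypothesis-2}). Since $T_n^\ast$ is an isometric embedding, $\diam{\StateSpace(\alg{H}_n)}{\Kantorovich{\MLip_n}} \leq k$ for all $n\in\N$.

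Step three: to conclude that $(\alg{H}_n,\MLip_n)$ is a {\Lqcms}, I would apply Theorem (\ref{az-thm}) to this Lipschitz pair for each fixed $n$. Density of $\dom{\MLip_n}$ in $\sa{\alg{H}_n}$ follows from the surjectivity of $T_n$ together with the density of $\dom{\SLip_0}$ in $\sa{\alg{G}_0}$ recorded in Lemma (\ref{lipschitz-pair-lemma}): any self-adjoint $y\in\alg{H}_n$ lifts to some $x\in\sa{\alg{G}_0}$, $x$ is approximated in norm by a sequence in $\dom{\SLip_0}$, and $T_n$ sends this sequence into $\dom{\MLip_n}$ while preserving the limit. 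The finite diameter is furnished by step two. The set $\{(d_j)_{j=0}^n \in \sa{\alg{H}_n} : \MLip_n((d_j)) \leq 1,\, \|(d_j)\|_{\alg{H}_n} \leq 1\}$ lies in the finite product of the sets $\{d \in \sa{\D_j} : \Lip^j(d)\leq 1,\, \|d\|_{\D_j}\leq 1\}$, each of which is norm-precompact by Theorem (\ref{az-thm}) applied to $(\D_j,\Lip^j)$, and is therefore itself norm-precompact in $\alg{H}_n$. The Leibniz property and the lower semi-continuity of $\MLip_n$ are both inherited coordinate-wise from the $\Lip^j$. The main obstacle in the entire argument is the diameter step, where one must convert the \emph{asymptotic} tail-closeness of Lemma (\ref{Dn-distance-lemma}) into a bound \emph{uniform} in $n$; this is precisely what the summability assumption in Hypothesis (\ref{completeness-hypothesis-2}) makes possible.
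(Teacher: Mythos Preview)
Your proof is correct, and Step~1 and Step~3 match the paper's argument essentially verbatim (the paper likewise appeals to Lemma~(\ref{surjection-lemma}) for the quotient/epimorphism claims, and to Theorem~(\ref{az-thm}) together with coordinate-wise compactness and the obvious Leibniz/lower-semicontinuity inheritance for the {\Lqcms} conclusion).

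The genuine difference is in Step~2. The paper bounds $\diam{\StateSpace(\alg{H}_n)}{\Kantorovich{\MLip_n}}$ \emph{directly}, working inside the finite truncation: citing \cite[Lemma~12.4, Corollary~12.5]{Rieffel00} it obtains
\[
\diam{\StateSpace(\alg{H}_n)}{\Kantorovich{\MLip_n}} \leq \diam{\StateSpace(\D_0)}{\Kantorovich{\Lip^0}} + 2\sum_{j=0}^{n-1}\Haus{\Kantorovich{\SLip_0}}(\Xi_j^\ast\StateSpace(\D_j),\Xi_{j+1}^\ast\StateSpace(\D_{j+1})),
\]
and then invokes Lemma~(\ref{Dn-distance-lemma}) and Hypothesis~(\ref{completeness-hypothesis-2}) to cap the right-hand side by $\diam{\StateSpace(\D_0)}{\Kantorovich{\Lip^0}} + 4M_0$. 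Only afterwards, in Lemmas~(\ref{norm-bound-lemma}) and~(\ref{finite-diameter-lemma}), does the paper deduce that $(\StateSpace(\alg{G}_0),\Kantorovich{\SLip_0})$ itself has finite diameter, via a nested-interval argument on the scalars $t$ with $\|d - t\unit\|_{\alg{H}_n}\leq k$. You instead go straight for $\diam{\StateSpace(\alg{G}_0)}{\Kantorovich{\SLip_0}}$: using Corollary~(\ref{bipolar-corollary}) to write $\StateSpace(\alg{G}_0)$ as the weak*-closed convex hull of $\bigcup_n \Xi_n^\ast\StateSpace(\D_n)$, the affineness and weak*-continuity of evaluation functionals to pass the diameter through the hull, and the telescoped Lemma~(\ref{Dn-distance-lemma}) to bound the diameter of the union. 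Then the bound on each $\StateSpace(\alg{H}_n)$ drops out of the isometric embedding $T_n^\ast$. Your route yields a slightly worse constant ($8M_0$ versus $4M_0$), but it is cleaner in that it renders Lemmas~(\ref{norm-bound-lemma}) and~(\ref{finite-diameter-lemma}) immediate corollaries rather than separate arguments; the paper's route has the virtue of staying entirely within finite-dimensional-indexed objects until the very end, mirroring Rieffel's original completeness proof.
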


\begin{proof}
We apply Lemma (\ref{surjection-lemma}) to prove the properties of $T_N$.

Fix $n\in\N$. Now, by using the same method as in \cite[Lemma 12.4,Corollary 12.5]{Rieffel00}, we have:
\begin{equation*}
\diam{\StateSpace(\alg{H}_n)}{\Kantorovich{\mathsf{H}_n}} \leq \diam{\StateSpace
(\D_0)}{\Kantorovich{\Lip_0}} + 2\sum_{j=0}^{n-1} \Haus{\Kantorovich{\Lip}}(\StateSpace(\D_j),\StateSpace(\D_{j+1})) \text{.}
\end{equation*}
By Lemma (\ref{Dn-distance-lemma}), we conclude:
\begin{equation*}
\begin{split}
\diam{\StateSpace(\alg{H}_n)}{\Kantorovich{\mathsf{H}_n}} &\leq \diam{\StateSpace(\D_0)}{\Kantorovich{\Lip_0}} + 4 \sum_{j=0}^{n-1} \tunnellength{\tau_j}{\Lip_j,\Lip_{j+1}}\\
&\leq \diam{\StateSpace(\D_0)}{\Kantorovich{\Lip_0}} + 4 M_0 \text{,}
\end{split}
\end{equation*}
where $M_0 = \sum_{j=0}^\infty \tunnellength{\tau_j}{}$ was introduced in Hypothesis (\ref{completeness-hypothesis-2}). 
We conclude by setting $k = \diam{\StateSpace(\D_0)}{\Kantorovich{\Lip_0}} +4 M_0$.

Last, we note that $\MLip_n$ is the supremum of lower semi-continuous Leibniz seminorms and thus is, itself, a lower semi-continuous Leibniz seminorm. The domain of $\MLip_n$ is trivially dense in $\sa{\alg{H}_n}$ and its kernel is reduced to the scalar multiple of the identity for the same reason as established in Lemma (\ref{lipschitz-pair-lemma}) for $\SLip_N$.

Last, $\diam{\StateSpace(\alg{H}_n)}{\Kantorovich{\MLip_n}}\leq k$ and:
\begin{equation*}
\left\{ a \in \sa{\alg{H}_n} : \MLip_n(a)\leq 1, \|a\|_{\alg{H}_n}\leq 1\right\} \subseteq \prod_{j=0}^n \left\{ d \in \sa{\D_j} : \Lip^j(d)\leq 1, \|d\|_{\D_j}\leq 1\right\}
\end{equation*}
and by Theorem (\ref{az-thm}), the lower semi-continuity of $\Lip^j$ for all $j\in\N$, and Tychonoff's theorem, the set on the right hand side is compact. Hence by Theorem (\ref{az-thm}) again, $\MLip_n$ is a Lip-norm on $\alg{H}_n$.
\end{proof}

We thus get, in a manner similar to \cite[Lemma 12.6]{Rieffel00}:
\begin{lemma}\label{norm-bound-lemma}
Assume Hypothesis (\ref{completeness-hypothesis-2}). Let $k$ be given by Lemma (\ref{uniform-bound-lemma}). For any $d\in\mathfrak{G}_0$ with $\SLip_0(d)\leq 1$, there exists $t\in\R$ such that $\|d-t\unit_{\mathfrak{G}}\|_\infty \leq k$.
\end{lemma}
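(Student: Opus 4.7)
The plan is to reduce the statement to the finite-dimensional-in-length approximations $(\alg{H}_n,\MLip_n)$, for which the previous lemma already supplies a uniform diameter bound. The key observation is that, because $\MLip_n$ is the quotient of $\SLip_0$ under $T_n$ (by Lemma (\ref{uniform-bound-lemma})), any truncation $T_n(d) = (d_0,\ldots,d_n) \in \alg{H}_n$ of $d = (d_j)_{j\in\N}\in\sa{\alg{G}_0}$ satisfies $\MLip_n(T_n(d)) \leq \SLip_0(d) \leq 1$. Moreover, the sup-norm of $T_n(d)$ in $\alg{H}_n$ coincides with $\max_{0\leq j\leq n}\|d_j\|_{\D_j}$, so that eventually taking a supremum over $n$ will recover the sup-norm in $\alg{G}_0$.

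First, I would fix once and for all a state $\varphi$ on $\D_0$, and set $t = \varphi(d_0)$. For each $n\in\N$, I would then consider the state $\tilde\varphi_n$ on $\alg{H}_n$ defined by $\tilde\varphi_n(a_0,\ldots,a_n) = \varphi(a_0)$; note $\tilde\varphi_n(T_n(d)) = \varphi(d_0) = t$, independently of $n$. The point of keeping the same $\varphi$ across all $n$ is precisely to ensure a single scalar $t$ works simultaneously at every truncation.

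Next, I would use Lemma (\ref{uniform-bound-lemma}) to obtain $\diam{\StateSpace(\alg{H}_n)}{\Kantorovich{\MLip_n}} \leq k$. Combined with $\MLip_n(T_n(d))\leq 1$ and the definition of $\Kantorovich{\MLip_n}$, this yields, for every state $\psi \in \StateSpace(\alg{H}_n)$,
\begin{equation*}
|\psi(T_n(d)) - t| = |\psi(T_n(d)) - \tilde\varphi_n(T_n(d))| \leq \MLip_n(T_n(d))\,\Kantorovich{\MLip_n}(\psi,\tilde\varphi_n) \leq k\text{.}
\end{equation*}
Taking the supremum over $\psi\in\StateSpace(\alg{H}_n)$ gives $\|T_n(d) - t\,\unit_{\alg{H}_n}\|_{\alg{H}_n} \leq k$, which, by the product structure of $\alg{H}_n$, rewrites as $\max_{0\leq j\leq n}\|d_j - t\,\unit_{\D_j}\|_{\D_j}\leq k$.

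Finally, since this bound is uniform in $n\in\N$, I would pass to the supremum over $n$ to conclude $\|d-t\,\unit_{\alg{G}_0}\|_\infty \leq k$, as required. I do not anticipate any real obstacle: the essential work has already been done in proving the uniform bound of Lemma (\ref{uniform-bound-lemma}). The only subtlety worth checking is that $d$ is indeed self-adjoint (which is automatic from the convention that $\SLip_0(d)<\infty$ forces $d\in\sa{\alg{G}_0}$) so that $T_n(d)\in\sa{\alg{H}_n}$ and the supremum defining $\|T_n(d)-t\unit\|_{\alg{H}_n}$ is controlled by states.
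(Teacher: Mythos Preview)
Your proof is correct and, in fact, more direct than the paper's. The paper does not fix $t$ in advance; instead, for each $n$ it uses \cite[Proposition 2.2]{Rieffel99} to find some $t_n$ with $\|T_n(d)-t_n\unit_{\alg{H}_n}\|_{\alg{H}_n}\leq k$, then sets $B_n=\{t\in\R:\|T_n(d)-t\unit_{\alg{H}_n}\|_{\alg{H}_n}\leq k\}$, observes that $(B_n)_{n\in\N}$ is a nested sequence of nonempty compact intervals, and extracts $t\in\bigcap_n B_n$. Your approach sidesteps this nested-interval argument entirely by choosing $t=\varphi(d_0)$ for a single fixed state $\varphi\in\StateSpace(\D_0)$, exploiting the fact that the induced states $\tilde\varphi_n$ on $\alg{H}_n$ are compatible across all $n$. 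What your route buys is an explicit formula for $t$ and a shorter argument; what the paper's route buys is perhaps a slightly more self-contained presentation that invokes the standard fact about Lip-norms and diameters rather than unpacking the Monge--Kantorovich metric by hand.
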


\begin{proof}
Let $d\in\mathfrak{G}_0$ with $\SLip_0(d)\leq 1$. Let $n\in\N$. Let:
\begin{equation*}
\Omega_n : (d_j)_{j\in\N}\in \mathfrak{G}_0 \mapsto (d_j)_{j\leq n}\in\alg{H}_n
\end{equation*}
for all $n\in\N$. 

For any $n\in\N$, set $b_n = \Omega_n(d)$, and note that there exists $t_n\in\R$ such that:
\begin{equation*}
\|d_n-t_n\unit_{B_n}\|_{\alg{H}_n} \leq \diam{\StateSpace(\alg{H}_n)}{\MLip_n} \leq k
\end{equation*}
by \cite[Proposition 2.2]{Rieffel99} and Lemma (\ref{uniform-bound-lemma}).

 Let $B_n = \{ t \in \R : \|b_n - t\unit_n\|_{\alg{H}_n} \leq k \}$. Then $B_n$ is nonempty, closed and bounded in $\R$. Moreover, if $t\in B_{n+1}$ then $\|b_n-t\unit_{\alg{H}_n}\|_{\alg{H}_n} \leq \|b_{n+1}-t\unit_{\alg{H}_{n+1}}\|_{\alg{H}_{n+1}} \leq k$. Thus $B_{n+1}\subseteq B_n$ for all $n\in\N$. Consequently, $\bigcap_{n\in\N}B_n\not=\emptyset$. Let $t\in \bigcap_{n\in\N}B_n$. By construction, we have $\|b_n - t\unit_{\alg{H}_n}\|_{\D_n}\leq k$ for all $n\in\N$, and thus $\|d-t\unit_{\mathfrak{G}_0}\|_{\mathfrak{G}_0} \leq k$ as desired.
\end{proof}

\begin{lemma}\label{finite-diameter-lemma}
Assume Hypothesis (\ref{completeness-hypothesis-2}). $(\StateSpace(\mathfrak{G}_N),\Kantorovich{\SLip_N})$ has finite diameter.
\end{lemma}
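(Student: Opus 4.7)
The plan is to first settle the case $N=0$ and then reduce the general case to it via the isometric embedding of Corollary (\ref{tail-isometry-corollary}).

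For $N=0$, I would invoke Lemma (\ref{norm-bound-lemma}): given any $d\in\sa{\mathfrak{G}_0}$ with $\SLip_0(d)\leq 1$, there exists a scalar $t\in\R$ with $\|d-t\unit_{\mathfrak{G}_0}\|_{\mathfrak{G}_0}\leq k$. Then for any $\varphi,\psi\in\StateSpace(\mathfrak{G}_0)$, the standard observation
\begin{equation*}
|\varphi(d)-\psi(d)| = |\varphi(d-t\unit_{\mathfrak{G}_0})-\psi(d-t\unit_{\mathfrak{G}_0})| \leq 2\|d-t\unit_{\mathfrak{G}_0}\|_{\mathfrak{G}_0} \leq 2k
\end{equation*}
applies; taking the supremum over all such $d$ yields $\Kantorovich{\SLip_0}(\varphi,\psi)\leq 2k$, so $\diam{\StateSpace(\mathfrak{G}_0)}{\Kantorovich{\SLip_0}}\leq 2k$ is finite.

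For arbitrary $N\geq 0$, Corollary (\ref{tail-isometry-corollary}) applied with $K=0$ provides a *-epimorphism $T_N^0:\mathfrak{G}_0\twoheadrightarrow\mathfrak{G}_N$ whose dual map is an isometry of $(\StateSpace(\mathfrak{G}_N),\Kantorovich{\SLip_N})$ into $(\StateSpace(\mathfrak{G}_0),\Kantorovich{\SLip_0})$. Hence
\begin{equation*}
\diam{\StateSpace(\mathfrak{G}_N)}{\Kantorovich{\SLip_N}} \leq \diam{\StateSpace(\mathfrak{G}_0)}{\Kantorovich{\SLip_0}} \leq 2k\text{,}
\end{equation*}
completing the proof.

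There is no real obstacle: the work has already been done in Lemmas (\ref{uniform-bound-lemma}) and (\ref{norm-bound-lemma}), where the truncations $(\alg{H}_n,\MLip_n)$ were shown to have uniformly bounded diameter via the trick from \cite[Lemma 12.4]{Rieffel00}, and the nested-interval argument produced a single scalar translation valid across the whole sequence. The only thing left is to package these into a statement about $\SLip_N$ on the full projective-limit-like algebra, which is precisely what the isometric embedding $T_N^0$ accomplishes.
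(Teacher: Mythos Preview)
Your proposal is correct and follows essentially the same approach as the paper: reduce to $N=0$ via the isometric embedding from Corollary (\ref{tail-isometry-corollary}), then use Lemma (\ref{norm-bound-lemma}) to translate any $d$ with $\SLip_0(d)\leq 1$ by a scalar so that its norm is at most $k$, giving $|\varphi(d)-\psi(d)|\leq 2k$. The only cosmetic difference is that the paper states the reduction first and the computation second, while you reverse the order.
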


\begin{proof}
Since $(\StateSpace(\alg{G}_N),\Kantorovich{\SLip_N})$ is isometric to a subset of $(\StateSpace(\alg{G}_0,\Kantorovich{\SLip_0})$ by Corollary (\ref{tail-isometry-corollary}), it is sufficient to show our lemma for $\alg{G}_0$. Let $k$ be given by Lemma (\ref{uniform-bound-lemma}).

Let $d\in\alg{G}_0$ with $\SLip_0(d)\leq 1$, and let $\varphi,\psi \in \StateSpace(\alg{G}_0)$. Let $t\in\R$ be given by Lemma (\ref{norm-bound-lemma}):
\begin{equation*}
|\varphi(d) - \psi(d)| = |\varphi(d-t\unit_\D) - \psi(d- t \unit_\D)| \leq 2k\text{.}
\end{equation*}
This concludes our lemma.
\end{proof}

To conclude that $\SLip_N$ is a Lip-norm on $\alg{G}_N$, we wish to apply Theorem (\ref{az-thm}). Since we have shown in Lemma (\ref{finite-diameter-lemma}) that $\Kantorovich{\SLip_N}$ is bounded, the following Lemma is sufficient to apply Theorem (\ref{az-thm}):

\begin{lemma}\label{compact-lemma}
Assume Hypothesis (\ref{completeness-hypothesis-2}). Let $D = \diam{\StateSpace(\mathfrak{G}_0)}{\Kantorovich{\SLip_0}}$. The set:
\begin{equation*}
\mathfrak{l} = \left\{ d \in \mathfrak{L}_0 : \SLip_0(d) \leq 1 \text{ and } \|d\|_\infty\leq D \right\}
\end{equation*}
is norm-compact in $\mathfrak{G}_0$.
\end{lemma}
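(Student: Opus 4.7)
The plan is to prove $\mathfrak{l}$ is sequentially compact (hence compact, as $\mathfrak{G}_0$ is a metric space) by a diagonal extraction together with a summability argument that upgrades pointwise convergence to uniform convergence.

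Let $(d^{(k)})_{k\in\N}$ be a sequence in $\mathfrak{l}$, where $d^{(k)} = (d^{(k)}_n)_{n\in\N}$. First I would observe that, for each fixed $n\in\N$, the $n$-th components form a relatively compact subset of $\D_n$: since $\Lip^n(d^{(k)}_n) \leq \SLip_0(d^{(k)}) \leq 1$ and $\|d^{(k)}_n\|_{\D_n}\leq D$, the sequence $(d^{(k)}_n)_k$ lies in $D\cdot\{a \in \sa{\D_n} : \Lip^n(a)\leq 1, \|a\|_{\D_n}\leq 1\}$, which is norm-compact by Theorem~\ref{az-thm} and the lower semi-continuity of $\Lip^n$. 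A standard diagonal argument therefore produces a subsequence, still denoted $(d^{(k)})_k$, such that $(d^{(k)}_n)_k$ converges in norm to some $e_n\in\sa{\D_n}$ for every $n\in\N$.

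Next I would verify that $e := (e_n)_{n\in\N}$ belongs to $\mathfrak{l}$. Coherence $\pi_{n+1}(e_{n+1}) = \omega_n(e_n)$ follows from continuity of the *-morphisms $\pi_{n+1},\omega_n$ applied to the corresponding identities for $d^{(k)}$. Lower semi-continuity of each $\Lip^n$ yields $\Lip^n(e_n)\leq \liminf_k \Lip^n(d^{(k)}_n)\leq 1$, so $\SLip_0(e)\leq 1$; and $\|e_n\|_{\D_n} = \lim_k\|d^{(k)}_n\|_{\D_n}\leq D$. Hence $e\in\mathfrak{L}_0\subseteq\mathfrak{G}_0$ and $e\in\mathfrak{l}$.

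The main obstacle is to promote componentwise convergence to convergence in the supremum norm of $\prod_n\D_n$. The key estimate comes from Proposition~\ref{tunnel-lift-bound-prop} applied to the element $d^{(k)}_n - e_n\in\sa{\D_n}$, which satisfies $\Lip^n(d^{(k)}_n - e_n)\leq 2$. Using the tunnel $\tau_n$ with the surjection $\omega_n$ playing the role of $\pi_\A$, and noting that $\omega_n(d^{(k)}_n - e_n) = \pi_{n+1}(d^{(k)}_{n+1} - e_{n+1})$ by coherence, while $\pi_{n+1}$ is contractive, we obtain
\begin{equation*}
\|d^{(k)}_n - e_n\|_{\D_n} \leq \|d^{(k)}_{n+1} - e_{n+1}\|_{\D_{n+1}} + 4\tunnellength{\tau_n}{\Lip_n,\Lip_{n+1}} \text{.}
\end{equation*}
The symmetric argument, using $\pi_n$ as the $\pi_\A$-surjection in $\tau_n$ and coherence $\pi_n(d^{(k)}_n - e_n) = \omega_{n-1}(d^{(k)}_{n-1} - e_{n-1})$ for $n\geq 1$, gives
\begin{equation*}
\|d^{(k)}_n - e_n\|_{\D_n} \leq \|d^{(k)}_{n-1} - e_{n-1}\|_{\D_{n-1}} + 4\tunnellength{\tau_n}{\Lip_n,\Lip_{n+1}} \text{.}
\end{equation*}
Iterating these two one-step estimates, for any $n,m\in\N$,
\begin{equation*}
\bigl|\|d^{(k)}_n - e_n\|_{\D_n} - \|d^{(k)}_m - e_m\|_{\D_m}\bigr| \leq 4 \sum_{j=\min(n,m)}^{\max(n,m)} \tunnellength{\tau_j}{\Lip_j,\Lip_{j+1}}\text{.}
\end{equation*}

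Given $\varepsilon>0$, Hypothesis~\ref{completeness-hypothesis-2} furnishes $N\in\N$ with $4\sum_{j\geq N}\tunnellength{\tau_j}{\Lip_j,\Lip_{j+1}} < \varepsilon/2$. For all $m\geq N$ and all $k$, the above estimate gives $\|d^{(k)}_m - e_m\|_{\D_m}\leq \|d^{(k)}_N - e_N\|_{\D_N} + \varepsilon/2$. By the componentwise convergence established above, we may choose $K$ so that for all $k\geq K$ and all $n\in\{0,\ldots,N\}$, we have $\|d^{(k)}_n - e_n\|_{\D_n} < \varepsilon/2$. Combining these two bounds yields $\sup_{n\in\N}\|d^{(k)}_n - e_n\|_{\D_n}< \varepsilon$ for all $k\geq K$, so $d^{(k)}\to e$ in $\mathfrak{G}_0$. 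Thus every sequence in $\mathfrak{l}$ has a subsequence converging in $\mathfrak{l}$, which proves $\mathfrak{l}$ is norm-compact.
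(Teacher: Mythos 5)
Your proof is correct, and it takes a genuinely different --- and more robust --- route than the one in the paper. The paper's own argument is a one-line appeal to Tychonoff's theorem: it observes that $\mathfrak{l}$ sits inside $\prod_{n\in\N}\left\{d_n\in\sa{\D_n} : \Lip^n(d_n)\leq 1 \text{ and } \|d_n\|_{\D_n}\leq D\right\}$, a product of norm-compact factors. As stated, that yields compactness only for the product (componentwise) topology, which is strictly coarser than the $\|\cdot\|_\infty$-topology on $\prod_{n\in\N}\D_n$; the passage from componentwise to uniform control is left implicit. Your proof supplies exactly that missing uniformity: after the diagonal extraction and the routine verification that the componentwise limit $e$ again lies in $\mathfrak{l}$ (lower semi-continuity of the seminorms $\Lip^n$, continuity of $\pi_{n+1}$ and $\omega_n$), you apply Proposition (\ref{tunnel-lift-bound-prop}) to $d^{(k)}_n-e_n$ through the tunnel $\tau_n$ and its reverse, together with the coherence relations defining $\mathfrak{L}_0$, to show that the quantities $\|d^{(k)}_n-e_n\|_{\D_n}$ vary in $n$ by at most $4\sum_{j}\tunnellength{\tau_j}{}$ over the relevant range of indices; the summability assumed in Hypothesis (\ref{completeness-hypothesis-2}) then upgrades componentwise convergence to convergence in $\|\cdot\|_\infty$. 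This is precisely where the coherence conditions and the Cauchy-type hypothesis must enter --- the full product of the factor sets is of course not $\|\cdot\|_\infty$-compact --- so your argument makes explicit the mechanism that the paper's Tychonoff shortcut glosses over. The only cosmetic quibble is the containment in $D\cdot\left\{a\in\sa{\D_n} : \Lip^n(a)\leq 1 \text{ and } \|a\|_{\D_n}\leq 1\right\}$, which requires $D\geq 1$; replacing $D$ by $\max\{1,D\}$, or simply noting that each factor is a norm-closed subset of a compact set by lower semi-continuity of $\Lip^n$ and Theorem (\ref{az-thm}), fixes this trivially.
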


\begin{proof}
Let $d\in\mathfrak{l}$. Then by definition:
\begin{equation*}
d \in \prod_{n\in\N} \{ d_n \in \sa{\D_n} : \Lip^n(d_n) \leq 1 \text{ and }\|d_n\|_{\D_n} \leq D \}
\end{equation*}
and the latter set is compact by Tychonoff's theorem, since each factor is compact as $(\D_n,\Lip^n)$ is an {\Lqcms} for all $n\in\N$.
\end{proof}

We can now conclude:

\begin{proposition}\label{Lqcms-GN-proposition}
Assume Hypothesis (\ref{completeness-hypothesis-2}). $(\mathfrak{G}_N,\SLip_N)$ is a {\Lqcms}.
\end{proposition}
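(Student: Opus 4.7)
The plan is to verify three conditions: that $\SLip_N$ is a Leibniz seminorm, that it is lower semi-continuous, and that it is a Lip-norm on $\mathfrak{G}_N$. The Lipschitz-pair structure and the C*-algebra structure on $\alg{G}_N$ are already in hand via Lemmas \ref{jordan-lie-lemma} and \ref{lipschitz-pair-lemma}, so there is really nothing left to do except assemble the pieces established in the preceding string of lemmas.

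First, I will deduce the Leibniz property from the observation that $\SLip_N = \sup_{n \in \N_N} \Lip^n \circ \Xi_n$ is a pointwise supremum of Leibniz seminorms. Each $\Xi_n$ is a unital *-homomorphism and is therefore norm-nonincreasing and compatible with the Jordan and Lie products. Applying the Leibniz inequality for $\Lip^n$ to $\Xi_n(\Jordan{a}{b}) = \Jordan{\Xi_n(a)}{\Xi_n(b)}$, using $\|\Xi_n(a)\|_{\D_n} \leq \|a\|_{\alg{G}_N}$, and then passing to the supremum over $n \in \N_N$ yields the Leibniz inequality for $\SLip_N$ on the Jordan product; the same argument handles the Lie product. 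Lower semi-continuity is even more immediate: each $\Lip^n \circ \Xi_n$ is lower semi-continuous on $\alg{G}_N$ as the composition of the lower semi-continuous seminorm $\Lip^n$ with the norm-continuous linear map $\Xi_n$, and pointwise suprema preserve lower semi-continuity.

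Third, to establish that $(\alg{G}_N, \SLip_N)$ is a {\qcms}, I will invoke Theorem \ref{az-thm}. The finite diameter of $(\StateSpace(\alg{G}_N), \Kantorovich{\SLip_N})$ is Lemma \ref{finite-diameter-lemma}. For norm-precompactness of $\{a \in \sa{\alg{G}_N} : \SLip_N(a) \leq 1, \|a\|_{\alg{G}_N} \leq 1\}$, the case $N = 0$ is handled directly by Lemma \ref{compact-lemma}: our set is a norm-closed subset (by the lower semi-continuity just proven) of the compact set $\mathfrak{l}$ of Lemma \ref{compact-lemma}, hence is compact. The same argument via Tychonoff's theorem carries through verbatim for arbitrary $N$; alternatively, Corollary \ref{tail-isometry-corollary} identifies $\SLip_N$ as the quotient seminorm of $\SLip_0$ under $T_N^0$, so the Lip-norm property transfers by \cite[Proposition 3.1]{Rieffel00}.

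I do not anticipate a serious obstacle, since all the substantive estimates were proven in the preceding lemmas. The one conceptual subtlety worth emphasizing is that the Leibniz property of $\SLip_N$ must be extracted directly from its definition as a supremum, rather than via the quotient description of Corollary \ref{tail-isometry-corollary}: quotients of Leibniz seminorms need not be Leibniz (indeed, this is precisely the motivating difficulty noted in the introduction for Rieffel's proximity), and so it is fortunate that the explicit construction of $\SLip_N$ avoids this issue.
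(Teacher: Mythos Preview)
Your proof is correct and follows essentially the same route as the paper: the Leibniz and lower semi-continuity properties are read off from $\SLip_N$ being a supremum of Leibniz, lower semi-continuous seminorms; the Lip-norm property for $N=0$ is obtained from Theorem~\ref{az-thm} via Lemmas~\ref{finite-diameter-lemma} and~\ref{compact-lemma}; and the general case is deduced by quotienting through Corollary~\ref{tail-isometry-corollary} and \cite[Proposition~3.1]{Rieffel00}. Your closing remark that the Leibniz property must be verified directly from the supremum definition rather than via the quotient description is a nice point and exactly in the spirit of the paper.
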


\begin{proof}
By Lemma (\ref{finite-diameter-lemma}), Lemma (\ref{compact-lemma}) and \cite[Theorem 1.9]{Rieffel98a}, we thus conclude that $(\mathfrak{G}_0,\SLip_0)$ is a compact quantum metric space. 
Moreover, $\mathfrak{G}_0$ is a unital C*-algebra, $\SLip_0$ is Leibniz as the supremum of Leibniz seminorms, and $\SLip_0$ is lower-semi-continuous as the supremum of lower semi-continuous seminorms. 

By Corollary (\ref{tail-isometry-corollary}), $(\mathfrak{G}_N,\SLip_N)$ is a quotient of $(\mathfrak{G}_0,\SLip_0)$ and thus it is also a compact quantum metric space by \cite[Proposition 3.1]{Rieffel00}. $\SLip_N$ is a lower semi-continuous Leibniz seminorm for the same reasons as $\SLip_0$ (note that $\SLip_N$ is closed by \cite[Proposition 3.3]{Rieffel00} as well).

Our proposition is thus proven.
\end{proof}

We now have established that our spaces $(\alg{G}_N,\SLip_N)$ are {\Lqcms s}, and thanks to Corollary (\ref{isometry-corollary}), we can use these spaces as part of tunnels from $(\A_n,\Lip_n)$, for $n\geq N$.

We now turn to the construction the prospective limit of the sequence $(\A_n,\Lip_n)_{n\in\N}$ for the dual Gromov-Hausdorff propinquity. The first step is to identify the state space of the prospective limit.

\begin{proposition}\label{Z-prop}
Assume Hypothesis (\ref{completeness-hypothesis-2}). The sequences $(\Pi_n^\ast(\StateSpace(\A_n)))_{n\in\N}$ and $(\Xi_n^\ast(\StateSpace(\D_n)))_{n\in\N_N}$ converges in $(\StateSpace(\mathfrak{G}_0),\Kantorovich{\SLip_0})$ to the same limit. We shall denote this common limit by $Z$.
\end{proposition}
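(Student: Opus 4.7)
The plan is a Cauchy-sequence argument in the hyperspace of compact subsets of $\StateSpace(\mathfrak{G}_0)$, using the fact already established that $(\mathfrak{G}_0,\SLip_0)$ is a {\Lqcms}. First I would invoke Proposition \ref{Lqcms-GN-proposition} to note that $\Kantorovich{\SLip_0}$ metrizes the weak-* topology on the compact space $\StateSpace(\mathfrak{G}_0)$, making $(\StateSpace(\mathfrak{G}_0),\Kantorovich{\SLip_0})$ a compact metric space. Blaschke's theorem then supplies the completeness (indeed compactness) of $(\compacts{\StateSpace(\mathfrak{G}_0)},\Haus{\Kantorovich{\SLip_0}})$. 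Moreover, each $\Xi_n^\ast(\StateSpace(\D_n))$ and each $\Pi_n^\ast(\StateSpace(\A_n))$ is an element of $\compacts{\StateSpace(\mathfrak{G}_0)}$, as the continuous image of a weak-* compact set.

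Next, I would specialize Lemma \ref{Dn-distance-lemma} to $N=0$ to obtain, for all $n\in\N$:
\begin{equation*}
\Haus{\Kantorovich{\SLip_0}}\left(\Xi_n^\ast(\StateSpace(\D_n)),\Xi_{n+1}^\ast(\StateSpace(\D_{n+1}))\right) \leq 2\max\{\tunnellength{\tau_n}{},\tunnellength{\tau_{n+1}}{}\}\text{.}
\end{equation*}
Hypothesis \ref{completeness-hypothesis-2} guarantees $\sum_j\tunnellength{\tau_j}{}<\infty$, so these upper bounds are summable and the sequence $\left(\Xi_n^\ast(\StateSpace(\D_n))\right)_{n\in\N}$ is Cauchy in the hyperspace. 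By completeness, it converges to some compact set $Z\in\compacts{\StateSpace(\mathfrak{G}_0)}$.

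Finally, Lemma \ref{Dn-distance-lemma} also supplies the bound
\begin{equation*}
\Haus{\Kantorovich{\SLip_0}}\left(\Xi_n^\ast(\StateSpace(\D_n)),\Pi_n^\ast(\StateSpace(\A_n))\right) \leq 2\tunnellength{\tau_n}{}\text{,}
\end{equation*}
which tends to $0$. The triangle inequality for the Hausdorff distance then forces $\left(\Pi_n^\ast(\StateSpace(\A_n))\right)_{n\in\N}$ to converge to the same limit $Z$. There is no genuine obstacle here: the real work was done upstream in Proposition \ref{Lqcms-GN-proposition}, which makes $\SLip_0$ a true Lip-norm so that $\Kantorovich{\SLip_0}$ metrizes a compact topology, and in Lemma \ref{Dn-distance-lemma}, which controls the per-step Hausdorff displacements by tunnel lengths. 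Once these are in hand, the proposition reduces to an entirely standard Cauchy-completeness argument in the hyperspace.
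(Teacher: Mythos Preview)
Your argument is correct and follows essentially the same scheme as the paper: invoke Proposition~\ref{Lqcms-GN-proposition} so that $(\StateSpace(\mathfrak{G}_0),\Kantorovich{\SLip_0})$ is compact metric, use summability of tunnel lengths to get a Cauchy sequence in the Hausdorff hyperspace, and then pull the second sequence along via a bound of the form $2\tunnellength{\tau_n}{}$. The only cosmetic difference is the order: the paper first shows $(\Pi_n^\ast(\StateSpace(\A_n)))_{n\in\N}$ is Cauchy directly from the reach of $\tau_n$ (via Corollary~\ref{isometry-corollary}, obtaining the sharper bound $\tunnellength{\tau_n}{}$ between consecutive terms) and then deduces convergence of $(\Xi_n^\ast(\StateSpace(\D_n)))_{n\in\N_N}$, whereas you do it in the opposite order using Lemma~\ref{Dn-distance-lemma} for both steps.
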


\begin{proof}
By assumption and Corollary (\ref{isometry-corollary}):
\begin{equation*}
\Haus{\Kantorovich{\SLip_0}}\left(\Pi^\ast_n (\StateSpace(\A_n)),\Pi^\ast_{n+1}(\StateSpace(\A_{n+1}))\right)\leq \tunnellength{\tau_n}{\Lip_n,\Lip_{n+1}}
\end{equation*}
and by Hypothesis (\ref{completeness-hypothesis-2}), $(\sum \tau(n))_{n\in\N}$ is summable. Thus $(\Pi_n^\ast(\StateSpace(\A_n)))_{n\in\N}$ is a Cauchy sequence in the complete set of all weak* compact subsets of $\StateSpace(\alg{G}_0)$ for the Hausdorff distance associated with $\Kantorovich{\SLip_N}$ (which metrizes the weak* topology of the weak* compact $\StateSpace(\alg{G}_N)$ by Proposition (\ref{Lqcms-GN-proposition})).

Thus the sequence converges to some weak* compact subset $Z$ in $\StateSpace(\alg{G}_N)$. Now, for all $n\in\N_N$, we have:
\begin{equation*}
\begin{split}
\Haus{\Kantorovich{\SLip_N}}(\Xi_n^\ast(\StateSpace(\D_n)),Z) &\leq \Haus{\Kantorovich{\Lip^n}}(\StateSpace(\D_n)),\pi_n^\ast(\StateSpace(\A_n)))\\
&\quad +\Haus{\Kantorovich{\SLip_n}}(\Pi_n^\ast(\StateSpace(\A_n)),Z)\\
&\leq 2\tunnellength{\tau_n}{\Lip_n,\Lip_{n+1}} + \Haus{\Kantorovich{\SLip_N}}(\Pi_n^\ast(\StateSpace(\A_n)),Z) \text{ by Lemma (\ref{Dn-distance-lemma}),}
\end{split}
\end{equation*}
so our proposition is proven since $(\tunnellength{\tau_n}{\Lip_n,\Lip_{n+1}})_{n\in\N}$ converges to $0$.
\end{proof}

Using Kadison functional calculus \cite{Kadisson51}, we associate to every element $a\in\alg{G}_0$ a continuous affine function $\widehat{a}$ defined on the convex set $\StateSpace(\alg{G}_0)$, endowed with the weak* topology, by setting for all $\varphi\in\StateSpace(\alg{G}_0)$:
\begin{equation*}
\widehat{a}(\varphi) = \varphi(a)\text{.}
\end{equation*}
The quantum Gromov-Hausdorff limit for the sequence $(\A_n,\Lip_n)_{n\in\N}$ is the set:
\begin{equation*}
\alg{P}_0 = \left\{ \left.\widehat{a}\right|_Z : a\in\alg{G}_0 \right\}
\end{equation*} 
where $\left.\widehat{a}\right|_Z$ is the restriction of $\widehat{a}$ to $Z\subseteq\StateSpace(\alg{G}_0)$ for all $a\in\alg{G}_0$. Yet it is not clear, from such a description, that we can use this order-unit space to build our candidate for a unital C*-algebra, and that the quotient of $\SLip_0$ on this object is a closed Leibniz Lip-norm.

To construct a C*-algebra candidate for our limit, we begin with:

\begin{lemma}\label{F-lemma}
Assume Hypothesis (\ref{completeness-hypothesis-2}). Then:
\begin{equation*}
\{ d \in \mathfrak{G}_N : \forall\varphi\in Z\quad \varphi(d) = 0\} = \{ (d_n)_{n\in\N_N} \in \mathfrak{G}_N : \lim_{n\rightarrow\infty} \|d_n\|_{\D_n} = 0\}\text{.}
\end{equation*}

We note that the set:
\begin{equation*}
\alg{I}_N = \left\{ (d_n)_{n\in\N} \in \alg{G}_N : \lim_{n\rightarrow\infty} \|d_n\|_{\D_n} = 0 \right\}
\end{equation*}
is a closed two-sided ideal of $\alg{G}_N$.
\end{lemma}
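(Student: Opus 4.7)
The plan is to prove the claimed equality by a two-sided inclusion, with the non-trivial direction using a weak$^*$ limit point argument that lands in $Z$, and then to verify the ideal property by standard means.

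For the inclusion $\supseteq$, I will start with $d=(d_n)_{n\in\N_N}\in\alg{I}_N$ and an arbitrary $\varphi\in Z$. By Proposition~\ref{Z-prop} and the definition of Hausdorff limit, there exists a sequence $\psi_n\in\Xi_n^\ast(\StateSpace(\D_n))$ with $\Kantorovich{\SLip_N}(\psi_n,\varphi)\to 0$; since $\Kantorovich{\SLip_N}$ metrizes the weak$^\ast$ topology on $\StateSpace(\alg{G}_N)$ by Proposition~\ref{Lqcms-GN-proposition}, we deduce $\psi_n\to\varphi$ in weak$^\ast$ topology, and in particular $\psi_n(d)\to\varphi(d)$. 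Writing $\psi_n = \varphi_n\circ\Xi_n$, we have $|\psi_n(d)|=|\varphi_n(d_n)|\leq\|d_n\|_{\D_n}\to 0$, whence $\varphi(d)=0$.

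For the harder inclusion $\subseteq$, I will argue by contraposition: assume $d=(d_n)_{n\in\N_N}\notin\alg{I}_N$ and produce some $\psi\in Z$ with $\psi(d)\neq 0$. Extract a subsequence $(d_{n_k})$ with $\|d_{n_k}\|_{\D_{n_k}}\geq\varepsilon$ for some $\varepsilon>0$. Decomposing $d_{n_k} = x_{n_k}+ i y_{n_k}$ into self-adjoint parts, after a further extraction we may assume, say, $\|x_{n_k}\|_{\D_{n_k}}\geq\varepsilon/2$ for all $k$. For each $k$, since $x_{n_k}$ is self-adjoint and $\|x_{n_k}\|_{\D_{n_k}} = \sup_{\varphi\in\StateSpace(\D_{n_k})}|\varphi(x_{n_k})|$, choose $\varphi_{n_k}\in\StateSpace(\D_{n_k})$ with $|\varphi_{n_k}(x_{n_k})|\geq\varepsilon/4$. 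Since $\varphi_{n_k}(x_{n_k})=\Re\varphi_{n_k}(d_{n_k})$, this gives $|\varphi_{n_k}(d_{n_k})|\geq\varepsilon/4$. Set $\psi_{n_k}=\Xi_{n_k}^\ast(\varphi_{n_k})\in\StateSpace(\alg{G}_N)$.

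The main obstacle is to guarantee that a weak$^\ast$ accumulation point of $(\psi_{n_k})$ lies in $Z$, not merely nearby. Proposition~\ref{Lqcms-GN-proposition} supplies weak$^\ast$ compactness of $\StateSpace(\alg{G}_N)$, so pass to a subsequence $\psi_{n_{k_j}}\to\psi$ weak$^\ast$. By the Hausdorff convergence in Proposition~\ref{Z-prop} and the fact that $\Kantorovich{\SLip_N}$ metrizes weak$^\ast$, we have $\Kantorovich{\SLip_N}(\psi_{n_{k_j}},Z)\to 0$, and $\psi_{n_{k_j}}\to\psi$ in $\Kantorovich{\SLip_N}$, so $\psi\in Z$ since $Z$ is closed. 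Weak$^\ast$ convergence now yields $\psi(d)=\lim_j\varphi_{n_{k_j}}(d_{n_{k_j}})$, whose modulus is bounded below by $\varepsilon/4$, contradicting $\psi(d)=0$. This proves the equality.

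Finally, for the ideal assertion, the set of sequences $(d_n)_{n\in\N_N}\in\prod_{n\in\N_N}\D_n$ with $\|d_n\|_{\D_n}\to 0$ is routinely a closed two-sided ideal of $\prod_{n\in\N_N}\D_n$ (closed-ness by an $\varepsilon/2$ triangle inequality estimate, ideal property from $\|d_n d'_n\|_{\D_n}\leq\|d\|_\infty\|d'_n\|_{\D_n}$), so its intersection $\alg{I}_N$ with the C*-subalgebra $\alg{G}_N$ given by Lemma~\ref{jordan-lie-lemma} is a closed two-sided ideal of $\alg{G}_N$.
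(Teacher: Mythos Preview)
Your proof is correct, and it follows a genuinely different path from the paper's argument. The paper never invokes weak* compactness or accumulation points; instead, both inclusions are handled by explicit $\varepsilon$-approximation: one approximates $d$ by an element $w$ of finite Lip-norm, then uses the Monge--Kantorovich estimate $|\varphi\circ\Xi_n(w)-\psi(w)|\leq\SLip_0(w)\,\Kantorovich{\SLip_0}(\varphi\circ\Xi_n,\psi)$ together with the Hausdorff closeness of $\Xi_n^\ast(\StateSpace(\D_n))$ to $Z$ to bound $|\varphi(d_n)|$ (for $\subseteq$) or $|\psi(d)|$ (for $\supseteq$) directly. Your approach instead exploits Proposition~\ref{Lqcms-GN-proposition} to identify $\Kantorovich{\SLip_N}$-convergence with weak* convergence, which lets you dispense with the Lipschitz approximation entirely: for $\supseteq$ you pass to the limit in $\psi_n(d)$ directly, and for $\subseteq$ you manufacture a witness state in $Z$ as a weak* cluster point. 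The paper's route is more self-contained and quantitative (it would work even before one knows $\SLip_N$ is a Lip-norm), while yours is cleaner once the compact quantum metric structure of $(\alg{G}_N,\SLip_N)$ is in hand. Your treatment of the ideal assertion, via intersection with the standard $c_0$-type ideal in the ambient product, is the same in spirit as what the paper leaves implicit.
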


\begin{proof}
Let $\varepsilon > 0$ and $d = (d_n)_{n\in\N} \in \alg{G}_0$, which we assume not zero without loss of generality, and such that $\varphi(d) = 0$ for all $\varphi\in Z$. By density, let $w=(w_n)_{n\in\N}\in\mathfrak{G}_0$ with $\SLip_0(w)<\infty$ and $\|d-w\|_\infty<\frac{1}{3}\varepsilon$. Note that $d\not\in\R\unit_{\alg{G}_0}$ and we can choose $w$ with $\SLip_0(w)>0$. By Proposition (\ref{Z-prop}), There exists $N\in\N$ such that for all $n\geq N$, we have:
\begin{equation*}
\Haus{\Kantorovich{\SLip_0}}(\Xi_n^\ast(\StateSpace(\D_n)),Z) \leq \frac{1}{3\SLip_0(w)}\varepsilon\text{.}
\end{equation*}
Let $\varphi\in\StateSpace(\D_n)$ for some $n\geq N$. There exists $\psi\in Z$ such that $\Kantorovich{\SLip_0}(\varphi\circ\Xi_n,\psi)\leq \frac{1}{3\SLip_0(w)}\varepsilon$. Now:
\begin{equation*}
\begin{split}
|\varphi(d_n)|&\leq|\varphi(d_n)-\varphi(w_n)|+|\varphi(w_n)|\\
&\leq \frac{1}{3}\varepsilon + |\varphi(w_n)-\psi(w)| +|\psi(w)|\\
&= \frac{1}{3}\varepsilon + |\varphi\circ\Xi_n(w)-\psi(w)| + |\psi(w)|\\
&\leq \frac{2}{3}\varepsilon + |\psi(w)-\psi(d)| \text{ as $\psi(d)=0$,}\\
&\leq \varepsilon\text{.}
\end{split}
\end{equation*}

Hence, for all $n\geq N$, we have shown $\|d_n\|_{\D_n}\leq \varepsilon$. Thus $\lim_{n\rightarrow\infty} \|d_n\|_{\D_n} = 0$.

Conversely, assume $d=(d_n)_{n\in\N}\in\alg{G}_N$ is chosen so that $\lim_{n\rightarrow\infty} \|d_n\|_{\D_n} = 0$.  Again without loss of generality, we assume $d\not=0$. Let $\varepsilon > 0$. 

There exists $w\in\alg{G}_N$ with $\SLip_N(w)<\infty$ and $\|d-w\|_{\alg{G}_N} < \frac{1}{4}\varepsilon$ since $(\alg{G}_N,\SLip_N)$ is a Lipschitz pair by Lemma (\ref{lipschitz-pair-lemma}). Since $d\not=0$ and $\lim_{n\rightarrow\infty}\|d_n\|_{\D_n}=0$, we have $d\not\in\R\unit_{\alg{G_N}}$ and thus we can pick $w\not\in\R\unit_{\alg{G_N}}$ since $\R\unit_{\alg{G}_N}$ is closed. Thus $\SLip(w)>0$.

There also exists $P\in\N$ such that for all $n\geq N$, we have $\|d_n\|_{\D_n}\leq\frac{\varepsilon}{4}$. Note that, writing $w=(w_n)_{n\in\N_N}$, we have, for all $n\geq P$, that $\|d_n-w_n\|_{\D_n}\leq\frac{1}{4}\varepsilon$, and thus $\|w_n\|_{\D_n}\leq \frac{1}{2}\varepsilon$.

Let $\psi\in Z$.  There exists $M\in\N$ such that for all $n\geq M$, there exists $\varphi\in\StateSpace(\D_n)$ such that:
\begin{equation}\label{F-lemma-eq0}
\Kantorovich{\SLip_0}(\varphi\circ\Xi_n,\psi)<\frac{1}{4\SLip_N(w)}\varepsilon
\end{equation}
by Proposition (\ref{Z-prop}). Thus, for $n=\max\{M,P\}$ and choosing $\varphi\in\StateSpace(\D_n)$ such that Inequality (\ref{F-lemma-eq0}) holds, we have:
\begin{equation*}
\begin{split}
|\psi(d)| &\leq |\psi(d)-\psi(w)|+|\psi(w)|\\
&\leq \frac{1}{4}\varepsilon + |\psi(w) -\varphi(w_n)| + |\varphi(w_n)|\\
&\leq \frac{1}{4}\varepsilon + \frac{1}{4 \SLip(w)} \varepsilon\SLip(w) + \|w_n\|_{\D_n}\\
& = \varepsilon \text{.}
\end{split}
\end{equation*}
As $\varepsilon > 0$ is arbitrary, we conclude that $\psi(d) = 0$ as desired.
\end{proof}

\newcommand{\QLip}{{\mathsf{Q}}}

We now have a candidate for our limit, as the quotient C*-algebra $\alg{F} = \bigslant{\alg{G}_0}{\alg{I}_0}$, where $\alg{I}_0$ is the ideal of $\alg{G}_0$ given in Lemma (\ref{F-lemma}).  We now must focus our attention on the Lip-norm we wish to endow it with. We start with:

\begin{lemma}\label{QLip-lemma}
Let $\QLip$ be the quotient seminorm of $\SLip_0$ on $\mathfrak{F} = \bigslant{\alg{G}_0}{\alg{I}_0}$, using the assumptions and notations of Lemma (\ref{F-lemma}). Let $q : \alg{G}_0 \twoheadrightarrow\alg{F}$ be the canonical surjection. We define:
\begin{equation*}
\forall a \in \sa{\alg{F}}\quad \QLip(a) = \inf\left\{ \SLip_0(d) : d\in\sa{\alg{G}_0}\text{ and }q(d) = a \right\}\text{.}
\end{equation*}
The seminorm $\QLip$ is a lower semi-continuous Lip-norm on $\alg{F}$.
\end{lemma}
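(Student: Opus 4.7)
The strategy is to recognize $\QLip$ as a quotient Lip-norm in the sense of [Rieffel00, Sections 3] and apply the general quotient theorem for closed Lip-norms under a unital *-epimorphism. Since $(\alg{G}_0,\SLip_0)$ is a {\Lqcms} by Proposition \ref{Lqcms-GN-proposition}, $\SLip_0$ is a closed (i.e., lower semi-continuous) Lip-norm on $\sa{\alg{G}_0}$. The map $q$, restricted to self-adjoint parts, is a unital surjection $q:\sa{\alg{G}_0}\twoheadrightarrow\sa{\alg{F}}$ of order-unit spaces (in fact of Banach spaces, since $\sa{\alg{I}_0}$ is closed). Thus the full quantitative conclusion I want will follow from [Rieffel00, Proposition 3.1] (quotient of a Lip-norm is a Lip-norm) together with [Rieffel00, Proposition 3.3] (quotient of a closed Lip-norm is closed), provided I can correctly identify $q^\ast(\StateSpace(\alg{F}))$ inside $\StateSpace(\alg{G}_0)$ and its metric geometry.

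The key preliminary step, then, is the identification of $q^\ast(\StateSpace(\alg{F}))$ with $Z$. Via the canonical bijection from states of the quotient to states of $\alg{G}_0$ annihilating $\alg{I}_0$, the set $q^\ast(\StateSpace(\alg{F}))$ equals the weak* closed convex hull of $Z$ by Lemma \ref{F-lemma} and the bipolar theorem applied to the dual pair $(\alg{G}_0,\alg{G}_0^\ast)$. But by Proposition \ref{Z-prop}, $Z$ is the Hausdorff limit in $(\StateSpace(\alg{G}_0),\Kantorovich{\SLip_0})$ of the sequence $(\Pi_n^\ast(\StateSpace(\A_n)))_{n\in\N}$ of weak*-compact and convex subsets, hence $Z$ is itself weak*-compact and convex. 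Therefore $q^\ast(\StateSpace(\alg{F}))=Z$, and moreover the isometric embedding $q^\ast:(\StateSpace(\alg{F}),\Kantorovich{\QLip})\hookrightarrow (\StateSpace(\alg{G}_0),\Kantorovich{\SLip_0})$ has image exactly $Z$, with the restricted Monge-Kantorovich metric metrizing the weak* topology on $Z$ (since $(\alg{G}_0,\SLip_0)$ is a {\Lqcms}).

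With this identification, [Rieffel00, Proposition 3.1] yields at once that $(\alg{F},\QLip)$ is a {\qcms}: $\QLip$ has norm-dense domain in $\sa{\alg{F}}$ (the image under $q$ of the domain of $\SLip_0$), vanishes exactly on $\R\unit_{\alg{F}}$, and $\Kantorovich{\QLip}$ metrizes the weak* topology of $\StateSpace(\alg{F})$. Similarly, [Rieffel00, Proposition 3.3] gives that $\QLip$ is a closed Lip-norm on the Banach space $\sa{\alg{F}}$, which, on a Banach space, is equivalent to lower semi-continuity with respect to the norm (as noted in the text immediately after Definition \ref{iso-iso-def}). The main conceptual obstacle is precisely the state-space identification of the previous paragraph: if $Z$ were merely contained in, but not equal to, $q^\ast(\StateSpace(\alg{F}))$, then the subsequent tunnel and reach estimates needed to show $(\alg{F},\QLip)$ is the limit of $(\A_n,\Lip_n)$ would break down; the convexity of $Z$, inherited from the convexity of each $\Pi_n^\ast(\StateSpace(\A_n))$ under Hausdorff convergence, is thus the essential observation that makes the argument work.
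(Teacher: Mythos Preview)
Your proof is correct and uses exactly the same core argument as the paper: once $(\alg{G}_0,\SLip_0)$ is known to be a {\Lqcms} (Proposition \ref{Lqcms-GN-proposition}), one simply invokes \cite[Proposition 3.1, Proposition 3.3]{Rieffel00}. The paper's proof is accordingly a single sentence.

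You overcomplicate matters, however, by treating the identification $q^\ast(\StateSpace(\alg{F}))=Z$ as a \emph{prerequisite} for applying Rieffel's propositions. It is not: those results apply abstractly to any quotient of a closed Lip-norm by a unital surjection of order-unit spaces, with no need to know anything about the image of the state space beyond what the propositions themselves provide. Your identification of $q^\ast(\StateSpace(\alg{F}))$ with $Z$ (via Lemma \ref{F-lemma}, bipolar, and the convexity of $Z$ as a Hausdorff limit of convex sets) is correct and genuinely useful, but it belongs to the subsequent development --- the paper carries it out in the Remark after Lemma \ref{multi-quotient-lemma} and exploits it in Lemma \ref{tunnel-length-lemma} --- not to the present lemma.
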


\begin{proof}
Since $\SLip_0$ is a closed Lip-norm, \cite[Proposition 3.1, Proposition 3.3]{Rieffel00} shows that $\QLip$ is a closed Lip-norm on $\alg{F}$. 
\end{proof}

We have yet to prove that $\QLip$, as defined in Lemma (\ref{QLip-lemma}), has the Leibniz property. While in general, the quotient of a Leibniz Lip-norm is not Leibniz, there is a natural condition to assure that the Leibniz property is inherited by quotient seminorms, which Rieffel called compatibility \cite[Definition 5.1]{Rieffel10c}. However, Rieffel's notion is too strong here. Instead, we are going to use a form of ``asymptotic'' compatibility, by using Proposition (\ref{tunnel-lift-bound-prop}) together with all the tunnels we have built so far --- one for each $N\in\N$, in the notations of Hypothesis (\ref{completeness-hypothesis-1}). We begin by observing that our quotient Lip-norm does not change, of course, if we work with any truncated subsequence:

\begin{lemma}\label{multi-quotient-lemma}
Let $T_N^0 : (d_n)_{n\in\N} \in \mathfrak{G}_0 \longmapsto (d_n)_{n\in \N_N} \in \mathfrak{G}_N$. Let:
\begin{equation*}
\alg{I}_N = \left\{ (d_n)_{n\in\N_N} \in \alg{G}_N : \lim_{n\rightarrow\infty} \|d_n\|_{\D_n} = 0 \right\}\text{.}
\end{equation*}
Then the induced map $Q_N$ defined by:
\begin{equation*}
\begin{array}{ccc}
\mathfrak{G} & \stackrel{T^0_N}{\longrightarrow} & \mathfrak{G}_N \\
\downarrow & & \downarrow \\
\mathfrak{F} & \stackrel{Q_N}{\longrightarrow} & \bigslant{\mathfrak{G}_N}{\mathfrak{I}_N}
\end{array}
\end{equation*}
is a *-isomorphism such that, for all $f\in\alg{F}$:
\begin{equation*}
\QLip(f) = \inf \left\{ \SLip_N(d) : q_N(d) = Q_N(f) \right\}\text{,}
\end{equation*}
where $q_N : \alg{G}_N\twoheadrightarrow\bigslant{\alg{G}_N}{\alg{I}_N}$ is the canonical surjection.
\end{lemma}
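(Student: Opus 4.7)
The plan is to establish three things in sequence: that $Q_N$ is well-defined, that it is a $\ast$-isomorphism, and that the quotient seminorm can be computed at the level $N$ rather than at level $0$. All three should follow from two facts already in the paper: Corollary \ref{tail-isometry-corollary} (which says $T_N^0$ is a $\ast$-epimorphism whose quotient of $\SLip_0$ is $\SLip_N$), and the tail-vanishing characterization of $\alg{I}_0$ and $\alg{I}_N$.

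First I would observe that, by inspection of the defining conditions,
\begin{equation*}
(T_N^0)^{-1}(\alg{I}_N) = \left\{(d_n)_{n\in\N}\in\alg{G}_0 : \lim_{n\to\infty}\|d_n\|_{\D_n} = 0\right\} = \alg{I}_0\text{,}
\end{equation*}
since truncating the first $N$ terms does not affect a tail limit. Combined with surjectivity of $T_N^0$ and $q_N$, this shows that the composition $q_N \circ T_N^0 : \alg{G}_0 \twoheadrightarrow \bigslant{\alg{G}_N}{\alg{I}_N}$ is a surjective $\ast$-morphism whose kernel is exactly $\alg{I}_0 = \ker q$. The first isomorphism theorem then produces a unique $\ast$-isomorphism $Q_N : \alg{F} \to \bigslant{\alg{G}_N}{\alg{I}_N}$ making the diagram commute, which handles the first two claims.

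Next, for the Lip-norm identity, I would apply quotient transitivity. By Corollary \ref{tail-isometry-corollary}, for each $g \in \sa{\alg{G}_N}$ we have $\SLip_N(g) = \inf\{\SLip_0(d') : d'\in\sa{\alg{G}_0},\ T_N^0(d') = g\}$. Therefore
\begin{equation*}
\inf\{\SLip_N(g) : g\in\sa{\alg{G}_N},\ q_N(g) = Q_N(f)\} = \inf\{\SLip_0(d') : d'\in\sa{\alg{G}_0},\ q_N(T_N^0(d')) = Q_N(f)\}\text{.}
\end{equation*}
Using the commutative square, $q_N \circ T_N^0 = Q_N \circ q$, and the injectivity of $Q_N$, the constraint $q_N(T_N^0(d')) = Q_N(f)$ is equivalent to $q(d') = f$, so the right-hand side collapses to $\inf\{\SLip_0(d') : q(d')=f\} = \QLip(f)$ by the definition of $\QLip$ in Lemma \ref{QLip-lemma}. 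This establishes the claimed formula.

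The only substantive point is the kernel computation $(T_N^0)^{-1}(\alg{I}_N) = \alg{I}_0$; everything else is diagram chasing on top of Corollary \ref{tail-isometry-corollary}. I do not anticipate any obstacle here, since both $\alg{I}_0$ and $\alg{I}_N$ are defined by the identical tail-vanishing condition on sequences whose indices eventually exceed $N$, and truncation is visibly immaterial to that condition.
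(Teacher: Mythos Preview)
Your proposal is correct and follows essentially the same approach as the paper: both establish $(T_N^0)^{-1}(\alg{I}_N)=\alg{I}_0$ to obtain $Q_N$ via the first isomorphism theorem, and both derive the Lip-norm identity from the fact that $\SLip_N$ is the quotient of $\SLip_0$ by $T_N^0$. The only cosmetic difference is that the paper unpacks the Lip-norm equality as an explicit two-sided $\varepsilon$-argument invoking Lemma~\ref{surjection-lemma} directly, whereas you route it through Corollary~\ref{tail-isometry-corollary} and a clean ``inf of inf'' transitivity step; the content is identical.
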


\begin{proof}
We note that $\alg{I}_N$ is a closed ideal of $\alg{G}_N$, so $\alg{F}_N = \bigslant{\alg{G}_N}{\alg{F}_N}$ is a unital C*-algebra. Now, $T_N^0$ maps $\mathfrak{I}$ onto $\mathfrak{I}_N$, and moreover if $f\in\mathfrak{I}_N$ and $d\in\alg{G}_0$ is chosen so that $T_N^0(d) =f$ then $d\in \alg{I}_0$ by definition of these ideals. Note that such a $d$ exists by Lemma (\ref{surjection-lemma}).

Hence, the map $Q_N$ is a well-defined *-isomorphism.

Let $f \in \mathfrak{F}$ with $\QLip(f)<\infty$ and write:
\begin{equation*}
\QLip_N(f) = \inf\left\{\SLip_N(d) : d\in\alg{G}_N,q_N(d) = f \right\}\text{.}
\end{equation*}

Let $\varepsilon > 0$. By definition of $\QLip$ in Lemma (\ref{QLip-lemma}), there exists $d\in\alg{G}_0$ such that that $q(d) = f$ and $\SLip_0(d)\leq\QLip(f)+\varepsilon$, hence $\SLip_N(T_N^0(d))\leq\QLip(f)+\varepsilon$. Hence $\QLip_N(f)\leq\QLip(f)$ as $\varepsilon > 0$ is arbitrary.

Now, let $d\in\alg{G}_N$ such that $q_N(d) = f$ and $\SLip_N(d)\leq \QLip_N(f) +\frac{1}{2}\varepsilon$. Then by Lemma (\ref{surjection-lemma}), there exists $g\in\alg{G}_0$ such that $T_N^0(g)=d$ and $\SLip_0(g)\leq\SLip_N(d)+\frac{1}{2}\varepsilon$. Thus:
\begin{equation*}
\QLip(f)\leq\SLip_0(f)\leq \QLip_N(f)+\varepsilon\text{.}
\end{equation*}
As $\varepsilon > 0$ is arbitrary, we conclude that $\QLip(f)=\QLip_N(f)$.
\end{proof}

\begin{remark}
Let $Z$ be the limit of $(\Pi_n^\ast(\StateSpace(\A_n)))_{n\in\N}$ for the Hausdorff distance in $(\StateSpace(\alg{G}_0),\Kantorovich{\SLip_0})$ as defined in Proposition (\ref{Z-prop}). Let $q:\alg{G}_0\twoheadrightarrow\alg{F}$ be the canonical surjection. By Lemma (\ref{F-lemma}), we have $q_0^\ast(\StateSpace(\alg{F})) = Z$, with the map $q_0^\ast$ being an isometry by construction. 

Now, by Lemma (\ref{multi-quotient-lemma}), we also have an isometry $q_N^\ast$ from $\StateSpace(\alg{F})$ onto some subset $Z_N$ of $\StateSpace(\alg{G}_N)$. With the same Lemma, we conclude that $Z$ is the isometric image of $Z_N$ by $T_N^{0\ast}$, and moreover since $T_N^{0\ast}$ is an isometry, we obtain that $Z_N$ is the limit of $(\StateSpace(\A_n))_{n\in\N_N}$ for the Hausdorff distance associated with $\Kantorovich{\SLip_N}$ in $\StateSpace(\alg{G}_N)$.
\end{remark}

Now, in order to apply Proposition (\ref{tunnel-lift-bound-prop}) --- and as a step toward proving our candidate for a limit is indeed the limit of our sequence --- we compute the length of the natural tunnels we have constructed. We note that $(\alg{F},\QLip)$ is not proven to be a {\Lqcms} --- a fact we shall indeed resolve using the following result --- so we can not call the quadruples $(\alg{G}_n,\SLip_n,\Pi_n,q_n)$ tunnels quite yet. However, in the sense of Remark ({\ref{generalized-tunnel-rmk}), we shall now see that such quadruples are generalized tunnels, whose length can be made arbitrarily small for $n$ large enough.

\begin{lemma}\label{tunnel-length-lemma}
Assume Hypothesis (\ref{completeness-hypothesis-2}) and let $\QLip$ be defined as in Lemma (\ref{QLip-lemma}). Let $\varepsilon > 0$. There exists $N\in\N$ such that for all $n\geq N$, and denoting by $q_n : \alg{G}_N\twoheadrightarrow\alg{F}$ the composition of the canonical surjection with the inverse of $Q_N$ defined in Lemma (\ref{multi-quotient-lemma}):
\begin{enumerate}
\item the quotient seminorms of $\SLip_n$ for $\Pi_n$ and $q_n$ are given, respectively, by $\Lip_n$ and $\QLip$,
\item we have:
\begin{equation*}
\Haus{\Kantorovich{\SLip_n}}\left(\Pi_n^\ast(\StateSpace(\A_n)),q_n^\ast(\StateSpace(\alg{F}))\right) = \Haus{\Kantorovich{\SLip_0}}\left(\Pi_n^\ast(\StateSpace(\A_n)),Z)\right) \leq \varepsilon\text{,}
\end{equation*}
\item we have:
\begin{equation*}
\Haus{\Kantorovich{\SLip_n}}\left(\StateSpace(\alg{G}_n),Z\right) \leq\varepsilon\text{,}
\end{equation*}
\item for all $f\in\sa{\alg{F}}$, there exists $d\in\sa{\alg{G}_n}$ such that:
\begin{equation*}
q_n(d) = f \text{ and }\QLip(f)\leq\SLip_n(d)\leq\QLip(f)+\varepsilon\text{ and }\|d\|_{\alg{G}_n}\leq \|f\|_{\alg{F}}+\varepsilon\text{.}
\end{equation*}
\end{enumerate}

Thus, in the terminology of Remark (\ref{generalized-tunnel-rmk}), for all $\varepsilon > 0$, there exists $N\in\N$ such that for all $n\geq N$, the quadruple $(\alg{G}_n,\SLip_n,\Pi_n,q_n)$ is a generalized tunnel from $(\A_n,\Lip_n)$ to $(\alg{F},\QLip)$ of length at most $\varepsilon$.
\end{lemma}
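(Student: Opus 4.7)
The plan is to dispatch items (1)--(3) by reading them off the machinery already in place, and to reduce (4) to a single master norm estimate obtained by iterating Proposition (\ref{tunnel-lift-bound-prop}); the generalized-tunnel conclusion is then immediate from (2) and (3) via Definition (\ref{tunnel-length-def}). For (1), the quotient formula along $\Pi_n$ is Corollary (\ref{isometry-corollary}) with $N$ replaced by $n$ (its proof, based on Lemma (\ref{surjection-lemma}), transfers verbatim inside $\alg{G}_n$), and the quotient formula along $q_n$ is Lemma (\ref{multi-quotient-lemma}) once one unfolds the definition of $q_n$ as the composite $\alg{G}_n \twoheadrightarrow \bigslant{\alg{G}_n}{\alg{I}_n} \stackrel{Q_n^{-1}}{\longrightarrow} \alg{F}$.

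For (2), I would invoke the isometry $T_n^{0,\ast}\colon \StateSpace(\alg{G}_n) \to \StateSpace(\alg{G}_0)$ supplied by Corollary (\ref{tail-isometry-corollary}). Because the map $\Pi_n$ on $\alg{G}_0$ factors as $\Pi_n \circ T_n^0$, this isometry carries $\Pi_n^\ast(\StateSpace(\A_n)) \subseteq \StateSpace(\alg{G}_n)$ onto its namesake in $\StateSpace(\alg{G}_0)$; by the remark following Lemma (\ref{multi-quotient-lemma}), it simultaneously carries $q_n^\ast(\StateSpace(\alg{F}))$ onto $Z$. The asserted equality of Hausdorff distances follows at once, and the upper bound by $\varepsilon$ is then Proposition (\ref{Z-prop}). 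For (3), I would apply Corollary (\ref{bipolar-corollary}) transferred to $\alg{G}_n$ to write $\StateSpace(\alg{G}_n) = \co{\bigcup_{m \geq n} \Xi_m^\ast(\StateSpace(\D_m))}$, and combine Lemma (\ref{Dn-distance-lemma}) (telescoped) with the convergence of $\Pi_m^\ast(\StateSpace(\A_m))$ to $Z$ to obtain $\Haus{\Kantorovich{\SLip_n}}(\Xi_m^\ast(\StateSpace(\D_m)), Z) \leq C M_n$ uniformly in $m \geq n$. Since $Z$ is convex (being a Hausdorff limit of convex sets) and the $\delta$-neighborhood of $Z$ in $(\StateSpace(\alg{G}_n), \Kantorovich{\SLip_n})$ is closed and convex, this bound passes to the closed convex hull, and (3) follows once $N$ is chosen large enough that $C M_N \leq \varepsilon$.

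The substantive content of (4) is a master norm estimate obtained by iterating Proposition (\ref{tunnel-lift-bound-prop}). For any $d = (d_m)_{m \geq n} \in \sa{\alg{G}_n}$ with $\SLip_n(d) \leq r$, the compatibility $\omega_k(d_k) = \pi_{k+1}(d_{k+1})$ combined with Proposition (\ref{tunnel-lift-bound-prop}) applied to $\tau_k$ yields $\|d_k\|_{\D_k} \leq \|d_{k+1}\|_{\D_{k+1}} + 2 r \tunnellength{\tau_k}{\Lip_k, \Lip_{k+1}}$ for each $k \geq n$; iterating this inequality, letting the upper index tend to infinity, and invoking $\|q_n(d)\|_{\alg{F}} = \limsup_m \|d_m\|_{\D_m}$ (which drops out of the proof of Lemma (\ref{F-lemma})), one obtains
\begin{equation*}
\|d\|_{\alg{G}_n} \;\leq\; \|q_n(d)\|_{\alg{F}} \,+\, 2 r M_n \text{.}
\end{equation*}
Given $f \in \sa{\alg{F}}$, I would use (1) to pick a lift $d$ with $\SLip_n(d) \leq \QLip(f) + \varepsilon/2$, and the master estimate would then produce the required norm bound, provided $N$ has been chosen so that the residual $2(\QLip(f) + \varepsilon/2) M_n$ is dominated by $\varepsilon$. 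The hard part will be exactly this last absorption step: the estimate is linear in $\QLip(f)$, so uniform smallness of $M_N$ must be balanced against the Lip-norm of $f$, and it is precisely at this point that the depth of tunnels introduced in Definition (\ref{tunnel-depth-def}) earns its keep, by giving Proposition (\ref{tunnel-lift-bound-prop}) enough strength to drive the iteration above.
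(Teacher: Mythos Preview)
Your treatment of (1)--(3) is essentially identical to the paper's: the paper cites Corollary (\ref{isometry-corollary}) and Lemma (\ref{multi-quotient-lemma}) for (1), the isometry $T_n^{0\,\ast}$ from Corollary (\ref{tail-isometry-corollary}) together with Proposition (\ref{Z-prop}) for (2), and the bipolar description of $\StateSpace(\alg{G}_n)$ plus convexity for (3), exactly as you outline.

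For (4) you take a genuinely different route. The paper does \emph{not} iterate Proposition (\ref{tunnel-lift-bound-prop}) along the chain $(\tau_k)_{k\geq n}$; instead, having established in (2) and (3) that the quadruple $(\alg{G}_n,\SLip_n,\Pi_n,q_n)$ is a generalized tunnel of reach and depth at most $\varepsilon$, it simply invokes Proposition (\ref{tunnel-lift-bound-prop}) \emph{once} on this single generalized tunnel (as permitted by Remark (\ref{generalized-tunnel-rmk})), yielding $\|d\|_{\alg{G}_n}\leq \|f\|_{\alg F}+2(\QLip(f)+\varepsilon)\,\varepsilon$ directly. Your telescoping argument is correct and produces the same final bound with $M_n$ in place of $\varepsilon$; indeed your iteration step $\|d_k\|_{\D_k}\leq \|d_{k+1}\|_{\D_{k+1}}+2r\,\tunnellength{\tau_k}{}$ follows from Proposition (\ref{tunnel-lift-bound-prop}) applied to the reversed tunnel $\tau_k^{-1}$ together with $\|\pi_{k+1}(d_{k+1})\|\leq\|d_{k+1}\|$. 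One small correction: you do not need the full equality $\|q_n(d)\|_{\alg{F}}=\limsup_m\|d_m\|_{\D_m}$ (which would require producing enough elements of $\alg{I}_n$ inside $\alg{G}_n$); the easy inequality $\|q_n(d)\|_{\alg{F}}\geq\limsup_m\|d_m\|_{\D_m}$ already suffices to pass from your telescoped bound to $\|d\|_{\alg{G}_n}\leq \|q_n(d)\|_{\alg{F}}+2rM_n$. The paper's approach is tidier because it recycles (2)--(3), whereas yours is self-contained and makes explicit why the depth was introduced.

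Your worry about the absorption step is well placed and applies equally to the paper's argument: both routes give $\|d\|_{\alg{G}_n}\leq \|f\|_{\alg{F}}+2(\QLip(f)+\varepsilon)\cdot(\text{small})$, which is linear in $\QLip(f)$ and hence cannot be made $\leq\|f\|_{\alg F}+\varepsilon$ uniformly in $f$ by choice of $N$ alone. In the paper's subsequent use (Lemma (\ref{F-Lqcms-lemma})) the index $N$ is explicitly allowed to depend on the element, so the statement of (4) should be read with that dependence in mind; your diagnosis of this point is correct.
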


\begin{proof}
Let $\varepsilon > 0$. By Proposition (\ref{Z-prop}), there exists $N\in\N$ so that for all $n\geq N$:
\begin{equation*}
\Haus{\SLip_0}\left(\Xi_N^\ast(\StateSpace(\A_n)),Z\right)<\varepsilon\text{ and }\Haus{\SLip_0}\left(\Xi_N^\ast(\StateSpace(\D_n)),Z\right)<\varepsilon\text{.}
\end{equation*}

Let $n\geq N$. By Corollary (\ref{isometry-corollary}), Lemma (\ref{QLip-lemma}) and Lemma (\ref{multi-quotient-lemma}), the quadruple $\omega_n = (\alg{G}_n,\SLip_n,\Pi_n,q_n)$ is a generalized tunnel from $(\A_n,\Lip_n)$ to $(\alg{F},\QLip)$ --- in the sense that Assertion (1) of our Lemma holds.

Now, $T_N^{0\ast}$ is an isometry from $(\StateSpace(\alg{G}_N),\Kantorovich{\SLip_N})$ into $\StateSpace(\alg{G}_0),\SLip_0)$, and we conclude that:
\begin{equation*}
\Haus{\Kantorovich{\SLip_n}}\left(\Pi_n^\ast(\StateSpace(\A_n)),q_n^\ast(\StateSpace(\alg{F}))\right) = \Haus{\Kantorovich{\SLip_0}}\left(\Pi_n^\ast(\StateSpace(\A_n)),Z)\right) \leq \varepsilon\text{.}
\end{equation*}
Thus, the reach of $\omega_n$ is bounded above by $\varepsilon$.

Let $\varphi\in\StateSpace(\D_k)$ for some $k\geq n$. Then there exists $\psi \in Z = q_n^\ast(\StateSpace(\alg{F}))$ such that $\Kantorovich{\SLip_n}(\Pi_n^\ast(\varphi),\psi)\leq\varepsilon$ by Proposition (\ref{Z-prop}). Since $\Kantorovich{\SLip_n}$ is a convex metric, we conclude by Lemma (\ref{bipolar-corollary}) that:
\begin{equation*}
\Haus{\Kantorovich{\SLip_n}}\left(\StateSpace(\alg{G}_n),Z\right) \leq\varepsilon\text{.}
\end{equation*}
This implies, in turn, that the generalized tunnel $\omega_n$ has depth at most $\varepsilon$.

The rest of the Lemma now follows from the same argument as for Proposition (\ref{tunnel-lift-bound-prop}) --- as this proposition did not use the Leibniz property of tunnels, yet used all other estimates which we have now established for  $(\alg{G}_n,\SLip_n,\Pi_n,q_n)$.


%
\end{proof}

We can now conclude that our candidate for a limit is a {\Lqcms}.

\begin{lemma}\label{F-Lqcms-lemma}
Assume Hypothesis (\ref{completeness-hypothesis-2}) and let $\QLip$ be defined as in Lemma (\ref{QLip-lemma}). The seminorm $\QLip$ is a Leibniz seminorm on $\sa{\alg{F}}$. Thus $(\alg{F},\QLip)$ is a {\Lqcms}.
\end{lemma}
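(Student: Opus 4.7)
The plan is to exploit the generalized tunnels $\omega_n = (\alg{G}_n, \SLip_n, \Pi_n, q_n)$ from Lemma \ref{tunnel-length-lemma}, whose lengths tend to zero, in order to transfer the Leibniz property of each $\SLip_n$ down to $\QLip$. The key point is that although taking a naive quotient of a Leibniz seminorm need not preserve the Leibniz property, we have not just one quotient but an asymptotically compatible family indexed by $n$, together with quantitative control on lifts of both the norm and the seminorm.

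More precisely, fix $a,b \in \sa{\alg{F}}$ with $\QLip(a),\QLip(b) < \infty$ (the Leibniz inequalities are trivial otherwise). Let $\varepsilon > 0$. By Lemma \ref{tunnel-length-lemma}(4), for all $n$ sufficiently large there exist $d_a, d_b \in \sa{\alg{G}_n}$ with $q_n(d_a) = a$, $q_n(d_b) = b$, and
\begin{equation*}
\SLip_n(d_a) \leq \QLip(a) + \varepsilon, \quad \SLip_n(d_b) \leq \QLip(b) + \varepsilon,
\end{equation*}
\begin{equation*}
\|d_a\|_{\alg{G}_n} \leq \|a\|_{\alg{F}} + \varepsilon, \quad \|d_b\|_{\alg{G}_n} \leq \|b\|_{\alg{F}} + \varepsilon.
\end{equation*}
Since $(\alg{G}_n, \SLip_n)$ is a {\Lqcms} by Proposition \ref{Lqcms-GN-proposition}, the seminorm $\SLip_n$ is Leibniz, and since $q_n$ is a unital $\ast$-morphism, $q_n(\Jordan{d_a}{d_b}) = \Jordan{a}{b}$ and $q_n(\Lie{d_a}{d_b}) = \Lie{a}{b}$. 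By the definition of $\QLip$ as the quotient seminorm (Lemma \ref{QLip-lemma}, combined with Lemma \ref{multi-quotient-lemma}),
\begin{equation*}
\QLip(\Jordan{a}{b}) \leq \SLip_n(\Jordan{d_a}{d_b}) \leq \|d_a\|_{\alg{G}_n}\SLip_n(d_b) + \|d_b\|_{\alg{G}_n}\SLip_n(d_a),
\end{equation*}
and using the bounds above this is at most $(\|a\|_\alg{F}+\varepsilon)(\QLip(b)+\varepsilon) + (\|b\|_\alg{F}+\varepsilon)(\QLip(a)+\varepsilon)$. Letting $\varepsilon \to 0$ yields the Leibniz inequality for the Jordan product, and the identical argument applies to the Lie product.

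To conclude that $(\alg{F}, \QLip)$ is a {\Lqcms}, I combine this with what has already been established: $\alg{F}$ is a unital C*-algebra (as the quotient of $\alg{G}_0$ by the closed two-sided ideal $\alg{I}_0$ of Lemma \ref{F-lemma}); $\QLip$ is a Lip-norm with weak* topology on $\StateSpace(\alg{F})$ (Lemma \ref{QLip-lemma}); it is lower semi-continuous, being a closed Lip-norm as the quotient of the closed Lip-norm $\SLip_0$ by \cite[Proposition 3.3]{Rieffel00}; and its domain is a dense Jordan-Lie subalgebra of $\sa{\alg{F}}$ because it contains $q_0(\dom{\SLip_0})$, which is dense in $\sa{\alg{F}}$ by norm-continuity of $q_0$ and density of $\dom{\SLip_0}$ in $\sa{\alg{G}_0}$, and this domain is stable under both products by the Leibniz estimates just obtained.

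The main obstacle is purely conceptual: one must resist the temptation to try to lift $a$ and $b$ simultaneously to a single fixed $\alg{G}_N$ with the Leibniz property transferring exactly, and instead accept an $\varepsilon$-slack both in the norms and in the Lip-norms of the lifts. The reason this works is that the tunnel lengths $\tunnellength{\omega_n}{}$ vanish as $n \to \infty$, so Lemma \ref{tunnel-length-lemma}(4) gives simultaneous near-optimal control of $\SLip_n$ and of the norm of lifts. Everything else is a direct combination of this control with the Leibniz property guaranteed by Proposition \ref{Lqcms-GN-proposition}.
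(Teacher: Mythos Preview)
Your proof is correct and follows essentially the same approach as the paper: lift $a$ and $b$ to $\sa{\alg{G}_n}$ via Lemma~\ref{tunnel-length-lemma}(4) with $\varepsilon$-control on both the Lip-norm and the norm of the lifts, apply the Leibniz property of $\SLip_n$ (Proposition~\ref{Lqcms-GN-proposition}), push down via Lemma~\ref{multi-quotient-lemma}, and let $\varepsilon\to 0$. Your presentation is in fact slightly tidier than the paper's, which first chooses separate indices $N_f,N_g$ and then passes to $N=\max\{N_f,N_g\}$ via $T_N^{N_f},T_N^{N_g}$; since the $N$ in Lemma~\ref{tunnel-length-lemma} does not depend on the element being lifted, your direct simultaneous choice is legitimate.
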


\begin{proof}
Let $f,g \in \alg{F}$. Let $\varepsilon > 0$. Let $N_f\in\N, x \in \sa{\alg{G}_{N_f}}$ be given by Lemma (\ref{tunnel-length-lemma}) for $f$, and $N_g\in\N,y\in\sa{\alg{G}_{N_g}}$ be given by the same Lemma for $g$. Up to replacing $x$ and $y$ by $T_{\max\{N_f,N_g\}}^{N_f}(f)$ and $T_{\max\{N_f,N_g\}}^{N_g}(g)$, we may assume $f,g \in \sa{\alg{G}_N}$ for $N=\max\{N_f,N_g\}$. 

Now:
\begin{equation*}
\begin{split}
\QLip(\Jordan{f}{g}) &\leq \SLip_N(\Jordan{x}{y}) \\
&\leq \SLip_N(x)\|y\|_{\alg{G}_N} + \|x\|_{\alg{G}_N}\SLip_N(x)\\
&\leq (\QLip(f)+\varepsilon)(\|g\|_{\alg{F}}+\varepsilon) + (\|f\|_{\alg{F}}+\varepsilon)(\QLip(g)+\varepsilon)\text{.}
\end{split}
\end{equation*}
As $\varepsilon >0$ is arbitrary, we conclude that:
\begin{equation*}
\QLip(\Jordan{f}{g})\leq \|f\|_{\alg{F}}\QLip(g) + \|g\|_{\alg{F}}\QLip(f)\text{.}
\end{equation*}
The proof for the Lie product is identical.

The conclusion of this lemma follows from Lemma (\ref{QLip-lemma}).
\end{proof}

\begin{corollary}
Assume Hypothesis (\ref{completeness-hypothesis-2}) and let $\QLip$ be defined as in Lemma (\ref{QLip-lemma}). Let $\varepsilon > 0$. There exists $N\in\N$ such that for all $n\geq N$, and denoting by $q_n : \alg{G}_N\twoheadrightarrow\alg{F}$ the composition of the canonical surjection with the inverse of $Q_N$ defined in Lemma (\ref{multi-quotient-lemma}), the quadruple $(\alg{G}_n,\SLip_n,\Pi_n,q_n)$ is a tunnel from $(\A_n,\Lip_n)$ to $(\alg{F},\QLip)$ of length at most $\varepsilon$.
\end{corollary}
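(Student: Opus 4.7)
The plan is to observe that this corollary is essentially an immediate consequence of combining the two preceding lemmas, and requires no new work beyond this combination. First I would recall that Lemma (\ref{tunnel-length-lemma}) already established, for any $\varepsilon > 0$, the existence of $N\in\N$ such that for all $n \geq N$: (i) the maps $\Pi_n$ and $q_n$ are *-epimorphisms whose quotient seminorms from $\SLip_n$ are respectively $\Lip_n$ and $\QLip$; (ii) the reach of the quadruple $(\alg{G}_n,\SLip_n,\Pi_n,q_n)$ is at most $\varepsilon$; and (iii) the depth of this quadruple is also at most $\varepsilon$. Hence $\tunnellength{(\alg{G}_n,\SLip_n,\Pi_n,q_n)}{\Lip_n,\QLip}\leq\varepsilon$ in the sense of Definition (\ref{tunnel-length-def}), once the quadruple qualifies as an actual tunnel.

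The only ingredient missing from Lemma (\ref{tunnel-length-lemma}) to promote $(\alg{G}_n,\SLip_n,\Pi_n,q_n)$ from a generalized tunnel (in the sense of Remark (\ref{generalized-tunnel-rmk})) to a genuine tunnel in the sense of Definition (\ref{tunnel-def}) is the verification that the codomain $(\alg{F},\QLip)$ is itself a {\Lqcms}. This is precisely the content of Lemma (\ref{F-Lqcms-lemma}): $\QLip$ is a lower semi-continuous Lip-norm by Lemma (\ref{QLip-lemma}), and Lemma (\ref{F-Lqcms-lemma}) supplies the Leibniz property by an asymptotic argument using the lifts provided by Lemma (\ref{tunnel-length-lemma}) inside each $(\alg{G}_n,\SLip_n)$ (which is itself a {\Lqcms} by Proposition (\ref{Lqcms-GN-proposition})). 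Meanwhile $(\alg{G}_n,\SLip_n)$ is a {\Lqcms} by Proposition (\ref{Lqcms-GN-proposition}), and $\Pi_n$, $q_n$ are unital *-epimorphisms by construction.

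Thus the quadruple $(\alg{G}_n,\SLip_n,\Pi_n,q_n)$ satisfies all the requirements of Definition (\ref{tunnel-def}), so it is a genuine tunnel from $(\A_n,\Lip_n)$ to $(\alg{F},\QLip)$, and its length has already been bounded by $\varepsilon$ in Lemma (\ref{tunnel-length-lemma}). There is no genuine obstacle here, as the serious work—establishing the Lip-norm property of $\QLip$ on the quotient, the asymptotic Leibniz inequality, and the length estimates—was already carried out in the preceding lemmas; this corollary simply records that, with $(\alg{F},\QLip)$ now known to be a {\Lqcms}, the generalized tunnels of Lemma (\ref{tunnel-length-lemma}) are legitimate tunnels in our dual propinquity framework.
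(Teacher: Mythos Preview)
Your proposal is correct and follows essentially the same approach as the paper's own proof, which simply observes that this corollary restates Lemma (\ref{tunnel-length-lemma}) now that Lemma (\ref{F-Lqcms-lemma}) has established $(\alg{F},\QLip)$ is a {\Lqcms}. Your version is more detailed in spelling out why each requirement of Definition (\ref{tunnel-def}) is met, but the logical content is identical.
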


\begin{proof}
This is simply restating Lemma (\ref{tunnel-length-lemma}), where we now can call $(\alg{G}_n,\SLip_n,\Pi_n,q_n)$ a tunnel since Lemma (\ref{F-Lqcms-lemma}) proves that $(\alg{F},\QLip)$ is a {\Lqcms}.
\end{proof}

We have now reached the conclusion of our efforts, rewarded with the desired result as follows. All the required work has been done to construct our limit and prove that it was indeed a {\Lqcms}, so we now have:

\begin{proposition}\label{convergence-prop}
The sequence $(\A_n,\Lip_n)_{n\in\N}$ converges to $(\mathfrak{F},\Lip_{\mathfrak{F}})$ for the dual Gromov-Hausdorff propinquity.
\end{proposition}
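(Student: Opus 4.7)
The plan is to observe that essentially all the work has already been carried out in the preceding sequence of lemmas and the corollary immediately preceding the proposition. The statement should be a direct consequence, requiring only that we unpack the definition of the dual propinquity.

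First, I would fix $\varepsilon > 0$. By the corollary immediately preceding the proposition (which restates Lemma~\ref{tunnel-length-lemma} now that $(\alg{F},\QLip)$ has been shown to be a {\Lqcms} via Lemma~\ref{F-Lqcms-lemma}), there exists $N\in\N$ such that for every $n\geq N$, the quadruple $\omega_n = (\alg{G}_n, \SLip_n, \Pi_n, q_n)$ is a genuine tunnel from $(\A_n,\Lip_n)$ to $(\alg{F},\QLip)$ with $\tunnellength{\omega_n}{\Lip_n,\QLip}\leq \varepsilon$. The three ingredients needed here are all in place: Corollary~\ref{isometry-corollary} and Lemmas~\ref{QLip-lemma} and \ref{multi-quotient-lemma} give that the quotient seminorms of $\SLip_n$ for $\Pi_n$ and $q_n$ are $\Lip_n$ and $\QLip$ respectively, Lemma~\ref{F-Lqcms-lemma} ensures the Leibniz property of $\QLip$ so that this quadruple is genuinely a tunnel in the sense of Definition~\ref{tunnel-def}, and the reach and depth estimates of Lemma~\ref{tunnel-length-lemma} bound the length.

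Next, I would observe that a single tunnel forms a $\mathcal{LT}$-journey of size $1$, namely
\begin{equation*}
\Upsilon_n = (\A_n, \Lip_n, \omega_n, \alg{F}, \QLip) \in \journeyset{\mathcal{LT}}{\A_n,\Lip_n}{\alg{F},\QLip},
\end{equation*}
whose length, by Definition~\ref{length-def}, coincides with $\tunnellength{\omega_n}{\Lip_n,\QLip}$. Applying Definition~\ref{dual-propinquity-def} of $\dpropinquity{}$ as an infimum of lengths of journeys, we conclude that for all $n\geq N$:
\begin{equation*}
\dpropinquity{}((\A_n,\Lip_n),(\alg{F},\QLip)) \leq \journeylength{\Upsilon_n} = \tunnellength{\omega_n}{\Lip_n,\QLip}\leq \varepsilon.
\end{equation*}
Since $\varepsilon > 0$ is arbitrary, this yields the desired convergence.

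There is no real obstacle at this final stage; the entire technical burden has been discharged in the construction of $(\alg{G}_n,\SLip_n)$, the verification that these are {\Lqcms s} (Proposition~\ref{Lqcms-GN-proposition}), the identification of the common limit $Z$ of the embedded state spaces (Proposition~\ref{Z-prop}), the passage to the quotient C*-algebra $\alg{F}=\bigslant{\alg{G}_0}{\alg{I}_0}$ (Lemma~\ref{F-lemma}), and the asymptotic compatibility argument showing $\QLip$ is Leibniz (Lemma~\ref{F-Lqcms-lemma}). The present proposition is thus a one-line corollary, and the proof should simply record this.
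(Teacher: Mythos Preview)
Your proposal is correct and follows essentially the same approach as the paper's own proof: fix $\varepsilon>0$, invoke Lemma~\ref{tunnel-length-lemma} (or its corollary) to obtain for all $n\geq N$ a tunnel of length at most $\varepsilon$, and conclude that $\dpropinquity{}((\A_n,\Lip_n),(\alg{F},\QLip))\leq\varepsilon$. Your version is slightly more explicit in spelling out that a single tunnel constitutes a journey of size~$1$, but the argument is the same.
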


\begin{proof}
Let $\varepsilon > 0$. By Lemma (\ref{tunnel-length-lemma}), let $N\in \N$ such that for all $n\geq N$, we have that the tunnel $(\alg{G}_n,\SLip_n,\Pi_n,q_n)$ has length at most $\varepsilon$. Thus:
\begin{equation*}
\dpropinquity{}((\A_n,\Lip_n),(\alg{F},\QLip)) \leq \varepsilon\text{.}
\end{equation*}
This completes our proof since $(\alg{F},\QLip)$ is a {\Lqcms} by Lemma (\ref{F-Lqcms-lemma}).
\end{proof}

We now are able to prove that the dual Gromov-Hausdorff propinquity is complete. This result is the core feature of our new metric.

\begin{theorem}\label{completeness-thm}
The dual Gromov-Hausdorff propinquity $\dpropinquity{}$ is complete.
\end{theorem}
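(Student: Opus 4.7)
The plan is to reduce an arbitrary Cauchy sequence to a sequence to which Proposition \ref{convergence-prop} applies. Let $(\A_n, \Lip_n)_{n \in \N}$ be a Cauchy sequence for $\dpropinquity{}$. First, I would extract a subsequence $(\A_{f(n)}, \Lip_{f(n)})_{n \in \N}$ with $\dpropinquity{}((\A_{f(n)}, \Lip_{f(n)}),(\A_{f(n+1)}, \Lip_{f(n+1)})) < 2^{-n-1}$ for every $n \in \N$, using the Cauchy property in the standard way. For each $n$, by Definition \ref{dual-propinquity-def}, I can then choose a journey
\begin{equation*}
\Upsilon_n \in \journeyset{}{\A_{f(n)}, \Lip_{f(n)}}{\A_{f(n+1)}, \Lip_{f(n+1)}}
\end{equation*}
whose length satisfies $\journeylength{\Upsilon_n} < 2^{-n}$.

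Next, I would concatenate all these journeys into a single (infinite) sequence of tunnels. Writing $\Upsilon_n = (\A^n_j, \Lip^n_j, \tau^n_j, \A^n_{j+1}, \Lip^n_{j+1} : j = 1, \ldots, k_n)$, the tunnels $\tau^n_j$, enumerated in the obvious order, produce a sequence $(\mathfrak{B}_m, \MLip_m)_{m \in \N}$ of {\Lqcms s} together with tunnels $\theta_m \in \tunnelset{\mathfrak{B}_m, \MLip_m}{\mathfrak{B}_{m+1}, \MLip_{m+1}}{}$ such that $(\A_{f(n)}, \Lip_{f(n)})_{n \in \N}$ appears as a subsequence of $(\mathfrak{B}_m, \MLip_m)_{m \in \N}$. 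The key observation is that
\begin{equation*}
\sum_{m=0}^\infty \tunnellength{\theta_m}{\MLip_m, \MLip_{m+1}} = \sum_{n=0}^\infty \journeylength{\Upsilon_n} < \sum_{n=0}^\infty 2^{-n} < \infty\text{,}
\end{equation*}
so the sequence $(\mathfrak{B}_m, \MLip_m)_{m \in \N}$ with connecting tunnels $(\theta_m)_{m \in \N}$ satisfies Hypothesis \ref{completeness-hypothesis-2}.

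I can then invoke Proposition \ref{convergence-prop} to obtain a {\Lqcms} $(\mathfrak{F}, \QLip)$ which is the limit of $(\mathfrak{B}_m, \MLip_m)_{m \in \N}$ for $\dpropinquity{}$. Since $(\A_{f(n)}, \Lip_{f(n)})_{n \in \N}$ is a subsequence of $(\mathfrak{B}_m, \MLip_m)_{m \in \N}$, the former also converges to $(\mathfrak{F}, \QLip)$. Finally, using the Cauchy property of the original sequence together with the triangle inequality from Theorem \ref{triangle-thm}, standard arguments give
\begin{equation*}
\dpropinquity{}((\A_n, \Lip_n), (\mathfrak{F}, \QLip)) \leq \dpropinquity{}((\A_n, \Lip_n),(\A_{f(N)}, \Lip_{f(N)})) + \dpropinquity{}((\A_{f(N)}, \Lip_{f(N)}), (\mathfrak{F}, \QLip))\text{,}
\end{equation*}
both of which can be made arbitrarily small for $n, N$ large enough.

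The main obstacle --- which has already been fully addressed in the lengthy development preceding Proposition \ref{convergence-prop} --- is the construction of the limit {\Lqcms} together with verification of the Leibniz property of its Lip-norm, given that quotients of Leibniz seminorms need not be Leibniz. The crucial technical device is the use of Lemma \ref{tunnel-length-lemma} and the ``asymptotic compatibility'' afforded by taking tails of the sequence, which allows the Leibniz inequality for $\QLip$ to be recovered in Lemma \ref{F-Lqcms-lemma} from the Leibniz inequalities of $\SLip_N$ for arbitrarily large $N$. Once that work is in hand, the completeness theorem itself reduces, as above, to the routine trick of extracting a summable subsequence and unpacking journeys into tunnels.
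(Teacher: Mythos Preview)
Your proposal is correct and follows essentially the same approach as the paper: extract a rapidly convergent subsequence, unpack journeys into a single infinite sequence of tunnels, apply Proposition~\ref{convergence-prop}, and conclude via the standard Cauchy-plus-convergent-subsequence argument. Your write-up is in fact slightly cleaner than the paper's, since you observe directly that the concatenated tunnel lengths are summable (so Hypothesis~\ref{completeness-hypothesis-2} holds for the full sequence $(\mathfrak{B}_m,\MLip_m)$), whereas the paper inserts an additional subsequence extraction at that point.
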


\begin{proof}
Let $(\A_n,\Lip_n)_{n\in\N}$ be a Cauchy sequence for $\dpropinquity{}$. Let $(\A_{k_n},\Lip_{k_n})_{n\in\N}$ be a subsequence of $(\A_n,\Lip_n)_{n\in\N}$ such that $\sum_{n\in\N} \dpropinquity{}(\A_{k_n},\A_{k_{n+1}}) < \infty$. 

For each $n\in\N$, let $\Upsilon_n$ be a journey from $(\A_{k_n},\Lip_{k_n})$ to $(\A_{k_{n+1}},\Lip_{k_{n+1}})$ whose length is no more that:
\begin{equation*}
\dpropinquity{}((\A_{k_n},\Lip_{k_n}),(\A_{k_{n+1}},\Lip_{k_{n+1}})) + 2^{-n-1}\text{.}
\end{equation*}
By construction, we note that the concatenation $\Upsilon_0\star\cdots\star\Upsilon_N$ is a subfamily of the concatenation $\Upsilon_0\star\cdots\star\Upsilon_K$ for all $K\geq N$. Hence, we can define, without ambiguity, the infinite concatenation $\star_{n\in\N}\Upsilon_n$ (which is not a journey itself).

Now, let us write:
\begin{equation*}
\star_{n\in\N}\Upsilon_n = \left( \B_n,\Lip'_n,\tau_n,\B_{n+1},\Lip'_{n+1} : n\in\N\right)\text{.}
\end{equation*}

It is easy to check that $(\B_n,\Lip_n')$ is itself a Cauchy sequence for $\dpropinquity{}$. Once again, we pick $f:\N\rightarrow\N$ strictly increasing such that such that $(\B_{f(n)},\Lip'_{f(n)})_{n\in\N}$ and $(\tau_{f(n)})_{n\in\N}$ satisfy Hypothesis (\ref{completeness-hypothesis-2}). 

Thus the sequence $(\B_{f(n)},\Lip'_{f(n)})_{n\in\N}$ converges for $\dpropinquity{}$ by Proposition (\ref{convergence-prop}). Consequently, $(\B_n,\Lip'_n)_{n\in\N}$ converges for $\dpropinquity{}$ as a Cauchy sequence with a convergent subsequence. 

Now, the sequence $(\A_{k_n},\Lip_{k_n})_{n\in\N}$ is a subsequence of $(\B_n,\Lip'_n)_{n\in\N}$ and thus, it converges as well. Thus, the original sequence $(\A_n,\Lip_n)_{n\in\N}$ is a Cauchy sequence with a convergent subsequence, and thus it itself converges for $\dpropinquity{}$. This completes our proof.
\end{proof}

We conclude with an interesting remark. In \cite{Rieffel09}, a new set of Lip-norms is computed to prove that the sequence $(\B_n,\Lip_n)_{n\in\N}$ described in Corollary (\ref{rieffel-corollary}) is Cauchy. These new Lip-norms are in fact of the type used to define the quantum propinquity in \cite{Latremoliere13}. Now, since our dual Gromov-Hausdorff propinquity is complete, the estimates in \cite{Rieffel09} actually prove Corollary (\ref{rieffel-corollary}) again, thanks to Theorem (\ref{completeness-thm}).

\bibliography{../thesis}

\begin{thebibliography}{47}
\expandafter\ifx\csname natexlab\endcsname\relax\def\natexlab#1{#1}\fi
\expandafter\ifx\csname url\endcsname\relax
  \def\url#1{\texttt{#1}}\fi
\expandafter\ifx\csname urlprefix\endcsname\relax\def\urlprefix{URL }\fi

\bibitem[{{A}lfsen and {S}hultz(2001)}]{Alfsen01}
{A}lfsen, E., {S}hultz, F., 2001. State Spaces of Operator Algebras.
  Birkh{\"a}user.

\bibitem[{{B}ellissard et~al.(2010){B}ellissard, {M}arcolli, and
  {R}eihani}]{Bellissard10}
{B}ellissard, J., {M}arcolli, M., {R}eihani, K., 2010. Dynamical systems on
  spectral metric spaces. Submitted, 46 pagesArXiv: 1008.4617.

\bibitem[{{B}lackadar and {C}untz(1991)}]{Blackadar91}
{B}lackadar, B., {C}untz, J., 1991. Differential {B}anach algebra norms and
  smooth subalgebras of {C*--A}lgebras. Journal of Operator Theory 26~(2),
  255--282.

\bibitem[{{B}urago et~al.(2001){B}urago, {B}urago, and {I}vanov}]{burago01}
{B}urago, D., {B}urago, Y., {I}vanov, S., 2001. A course in Metric Geomtry.
  Vol.~33 of Graduate Texts in Mathematics. American Mathematical Society.

\bibitem[{{C}onnes(1989)}]{Connes89}
{C}onnes, A., 1989. Compact metric spaces, {F}redholm modules and
  hyperfiniteness. Ergodic Theory and Dynamical Systems 9~(2), 207--220.

\bibitem[{{C}onnes(1994)}]{Connes}
{C}onnes, A., 1994. Noncommutative Geometry. Academic Press, San Diego.

\bibitem[{{C}onnes et~al.(1998){C}onnes, {D}ouglas, and {S}chwarz}]{Connes97}
{C}onnes, A., {D}ouglas, M., {S}chwarz, A., 1998. Noncommutative geometry and
  matrix theory: Compactification on tori. JHEP 9802, hep-th/9711162.

\bibitem[{{D}obrushin(1970)}]{Dobrushin70}
{D}obrushin, R.~L., 1970. Prescribing a system of random variables by
  conditional probabilities. Theory of probability and its applications 15~(3),
  459--486.

\bibitem[{{F}ell and {D}oran(1988)}]{Fell88}
{F}ell, J. M.~G., {D}oran, R.~S., 1988. Representations of *-Algebras, Locally
  Compact Groups, and {B}anach *-Algebraic Bundles. Vol.~1. Academic Press.

\bibitem[{{G}romov(1981)}]{Gromov81}
{G}romov, M., 1981. Groups of polynomial growth and expanding maps.
  Publications math{\'e}matiques de l' {I. H. E. S.} 53, 53--78.

\bibitem[{{G}romov(1999)}]{Gromov}
{G}romov, M., 1999. Metric Structures for {R}iemannian and Non-{R}iemannian
  Spaces. Progress in Mathematics. Birkh{\"a}user.

\bibitem[{{K}adison(1951)}]{Kadisson51}
{K}adison, R., 1951. A representation theory for commutative topological
  algebras. Mem. Amer. Math. Soc. 7.

\bibitem[{{K}adison and {R}ingrose(1997)}]{Kadison97}
{K}adison, R., {R}ingrose, J., 1997. Fundamentals of the Theory of Operator
  Algebras {I}. Vol.~15 of Graduate Studies in Mathematics. AMS.

\bibitem[{{K}antorovich(1940)}]{Kantorovich40}
{K}antorovich, L.~V., 1940. On one effective method of solving certain classes
  of extremal problems. Dokl. Akad. Nauk. USSR 28, 212--215.

\bibitem[{{K}antorovich and {R}ubinstein(1958)}]{Kantorovich58}
{K}antorovich, L.~V., {R}ubinstein, G.~S., 1958. On the space of completely
  additive functions. Vestnik Leningrad Univ., Ser. Mat. Mekh. i Astron.
  13~(7), 52--59, in Russian.

\bibitem[{{K}err(2003)}]{kerr02}
{K}err, D., 2003. Matricial quantum {G}romov-{H}ausdorff distance. J. Funct.
  Anal. 205~(1), 132--167, math.OA/0207282.

\bibitem[{{K}err and {L}i(2009)}]{kerr09}
{K}err, D., {L}i, H., 2009. On {G}romov--{H}ausdorff convergence of operator
  metric spaces. J. Oper. Theory 1~(1), 83--109.

\bibitem[{{L}andsman(1998)}]{landsman98}
{L}andsman, N., 1998. Mathematical topics between classical and quantum
  mechanics. Springer Monographs in Mathematics. Springer-Verlag.

\bibitem[{{L}atr{\'e}moli{\`e}re(2005)}]{Latremoliere05}
{L}atr{\'e}moli{\`e}re, F., 2005. Approximation of the quantum tori by finite
  quantum tori for the quantum gromov-hausdorff distance. Journal of Funct.
  Anal. 223, 365--395, math.OA/0310214.

\bibitem[{{L}atr{\'e}moli{\`e}re(2007)}]{Latremoliere05b}
{L}atr{\'e}moli{\`e}re, F., 2007. Bounded-lipschitz distances on the state
  space of a {C*}-algebra. Tawainese Journal of Mathematics 11~(2), 447--469,
  math.OA/0510340.

\bibitem[{{L}atr{\'e}moli{\`e}re(2013{\natexlab{a}})}]{Latremoliere13c}
{L}atr{\'e}moli{\`e}re, F., 2013{\natexlab{a}}. Convergence of fuzzy tori and
  quantum tori for the {G}romov--{H}ausdorff {P}ropinquity: an explicit
  approach. Submitted., 41 pagesArXiv: math/1312.0069.

\bibitem[{{L}atr{\'e}moli{\`e}re(2013{\natexlab{b}})}]{Latremoliere13}
{L}atr{\'e}moli{\`e}re, F., 2013{\natexlab{b}}. The {Q}uantum
  {G}romov-{H}ausdorff {P}ropinquity. Accepted, Trans. Amer. Math. Soc., 49
  PagesArXiv: 1302.4058.

\bibitem[{{L}atr{\'e}moli{\`e}re(2013{\natexlab{c}})}]{Latremoliere12b}
{L}atr{\'e}moli{\`e}re, F., 2013{\natexlab{c}}. Quantum locally compact metric
  spaces. Journal of Functional Analysis 264~(1), 362--402, arXiv: 1208.2398.

\bibitem[{{L}i(2003)}]{li03}
{L}i, H., 2003. {$C^\ast$}-algebraic quantum {G}romov-{H}ausdorff
  distanceArXiv: math.OA/0312003.

\bibitem[{{L}i(2005)}]{Li05}
{L}i, H., 2005. {$\theta$}-deformations as compact quantum metric spaces. Comm.
  Math. Phys. 1, 213--238, arXiv: math/OA: 0311500.

\bibitem[{{L}i(2006)}]{li06}
{L}i, H., 2006. Order-unit quantum {G}romov-{H}ausdorff distance. J. Funct.
  Anal. 233~(2), 312--360.

\bibitem[{{L}i(2009)}]{Li09b}
{L}i, H., 2009. Compact quantum metric spaces and ergodic actions of compact
  quantum groups. J. Funct. Anal. 256~(5), 1341--1386.

\bibitem[{{M}c{S}hane(1934)}]{McShane34}
{M}c{S}hane, E.~J., 1934. Extension of range of functions. Bull. Amer. Math.
  Soc. 40~(12), 825--920.

\bibitem[{Morariu and Zumino(1998)}]{Zumino98}
Morariu, B., Zumino, B., 1998. Super {Y}ang-{M}ills on the noncomutative torus.
  Arnowitt Festschrift Relativity, Particle Physics, and Cosmology,
  hep-th/9807198.

\bibitem[{{N}athan {S}eiberg and {E}dward {W}itten(1999)}]{Seiberg99}
{N}athan {S}eiberg, {E}dward {W}itten, 1999. String theory and noncommutative
  geometry. JHEP 9909~(32), arXiv: hep-th/9908142.

\bibitem[{{O}zawa and {R}ieffel(2005)}]{Ozawa05}
{O}zawa, N., {R}ieffel, M.~A., 2005. Hyperbolic group {$C\sp\ast$}-algebras and
  free products {$C\sp\ast$}-algebras as compact quantum metric spaces. Canad.
  J. Math. 57, 1056--1079, arXiv: math/0302310.

\bibitem[{{P}edersen(1979)}]{Pedersen79}
{P}edersen, G.~K., 1979. {C}*-{A}lgebras and their Automorphism Groups.
  Academic Press.

\bibitem[{{R}ieffel(1998)}]{Rieffel98a}
{R}ieffel, M.~A., 1998. Metrics on states from actions of compact groups.
  Documenta Mathematica 3, 215--229, math.OA/9807084.

\bibitem[{{R}ieffel(1999)}]{Rieffel99}
{R}ieffel, M.~A., 1999. Metrics on state spaces. Documenta Math. 4, 559--600,
  math.OA/9906151.

\bibitem[{{R}ieffel(2002)}]{Rieffel02}
{R}ieffel, M.~A., 2002. Group {$C\sp\ast$}-algebras as compact quantum metric
  spaces. Documenta Mathematica 7, 605--651, arXiv: math/0205195.

\bibitem[{{R}ieffel(2004)}]{Rieffel01}
{R}ieffel, M.~A., 2004. Matrix algebras converge to the sphere for quantum
  {G}romov--{H}ausdorff distance. Mem. Amer. Math. Soc. 168~(796), 67--91,
  math.OA/0108005.

\bibitem[{{R}ieffel(2006)}]{Rieffel06}
{R}ieffel, M.~A., 2006. Lipschitz extension constants equal projection
  constants. Contemporary Math. 414, 147--162, arXiv: math/0508097.

\bibitem[{{R}ieffel(2010{\natexlab{a}})}]{Rieffel09}
{R}ieffel, M.~A., 2010{\natexlab{a}}. Distances between matrix alegbras that
  converge to coadjoint orbits. Proc. Sympos. Pure Math. 81, 173--180, arXiv:
  0910.1968.

\bibitem[{{R}ieffel(2010{\natexlab{b}})}]{Rieffel10c}
{R}ieffel, M.~A., 2010{\natexlab{b}}. {L}eibniz seminorms for "matrix algebras
  converge to the sphere". Clay Math. Proc. 11, 543--578, arXiv: 0707.3229.

\bibitem[{{R}ieffel(2010{\natexlab{c}})}]{Rieffel10}
{R}ieffel, M.~A., 2010{\natexlab{c}}. Vector bundles and {G}romov-{H}ausdorff
  distance. Journal of {K}-theory 5, 39--103, arXiv: math/0608266.

\bibitem[{{R}ieffel(2011)}]{Rieffel11}
{R}ieffel, M.~A., 2011. Leibniz seminorms and best approximation from
  {$C^\ast$}-subalgebras. Sci China Math 54~(11), 2259--2274, arXiv: 1008.3773.

\bibitem[{{R}ieffel(2012)}]{Rieffel12}
{R}ieffel, M.~A., 2012. Standard deviation is a strongly {L}eibniz seminorm.
  Submitted, 24 pagesArXiv: 1208.4072.

\bibitem[{{R}ieffel(March 2004)}]{Rieffel00}
{R}ieffel, M.~A., March 2004. {G}romov-{H}ausdorff distance for quantum metric
  spaces. Mem. Amer. Math. Soc. 168~(796), math.OA/0011063.

\bibitem[{{T}'Hooft(2002)}]{tHooft02}
{T}'Hooft, G., 2002. Determinism beneath quantum mechanics. Presentation at
  "Quo Vadis Quantum Mechanics?", Temple University,
  PhiladelphiaQuant-ph/0212095.

\bibitem[{{W}asserstein(1969)}]{Wasserstein69}
{W}asserstein, L.~N., 1969. Markov processes on a countable product space,
  describing large systems of automata. Problemy Peredachi Infomatsii 5~(3),
  64--73, in Russian.

\bibitem[{{W}u(2005)}]{Wu05}
{W}u, W., 2005. Non-commutative metrics on state spaces. J Ramanujan Math. Soc.
  20~(3), 215--214, arXiv: math.OA/04112267.

\bibitem[{{W}u(2006)}]{Wu06b}
{W}u, W., 2006. Quantized {G}romov-{H}ausdorff distance. J. Funct. Anal.
  238~(1), 58--98, arXiv: math.OA/0503344.

\end{thebibliography}

\end{document}